\numberwithin{section}{chapter}
\numberwithin{equation}{chapter}
\numberwithin{figure}{chapter}
\numberwithin{table}{chapter}
\newtheorem{theorem}{Theorem}[chapter]
\newtheorem{proposition}[theorem]{Proposition}
\newtheorem{lemma}[theorem]{Lemma}
\newtheorem{corollary}[theorem]{Corollary}
\newtheorem{observation}[theorem]{Observation}
\newtheorem*{definition}{Definition}
\newtheorem{conjecture}{Conjecture}
\newtheorem*{thm_nonumber}{Theorem}
\newtheorem*{obs_nonumber}{Observation}
\newtheorem*{prop_nonumber}{Proposition}
\newtheorem*{lemma_nonumber}{Lemma}
\newcommand{\bbF}{\mathbb{F}}
\newcommand{\bbL}{\mathbb{L}}
\newcommand{\R}{\mathbb{R}}
\newcommand{\bbS}{\mathbb{S}}
\newcommand{\Z}{\mathbb{Z}}
\newcommand{\N}{\mathbb{N}}
\newcommand{\cD}{\mathscr{D}}
\newcommand{\cE}{\mathscr{E}}
\newcommand{\cF}{\mathscr{F}_{\lambda}}
\newcommand{\cG}{\mathscr{G}}
\newcommand{\cH}{\mathscr{H}}
\newcommand{\cI}{\mathscr{I}}
\newcommand{\cO}{\mathcal{O}}
\newcommand{\cP}{\mathscr{P}}
\newcommand{\cQ}{\mathscr{Q}}
\newcommand{\cR}{\mathcal{R}}
\newcommand{\cT}{\mathscr{T}}
\newcommand{\cX}{\mathscr{X}}
\newcommand{\bfe}{\mathbf{e}}
\newcommand{\bfL}{\mathbf{L}}
\newcommand{\bfv}{\mathbf{v}}
\newcommand{\bfw}{\mathbf{w}}
\newcommand{\bfx}{\mathbf{x}}
\newcommand{\bfy}{\mathbf{y}}
\newcommand{\bA}{\bar{A}}
\newcommand{\F}{F_{\lambda}}
\newcommand{\Hl}{H_{\lambda}}
\newcommand{\Le}{\mathbb{L}^e}
\newcommand{\id}{\mathrm{id}}
\newcommand{\cIe}{\mathscr{I}^e}
\newcommand{\Ie}{I^e}
\newcommand{\Xe}{X^e}
\newcommand{\tZ}{\tilde{z}}
\newcommand{\trho}{\tilde{\rho}}
\newcommand{\brho}{\bar{\rho}}
\newcommand{\lZ}{(\lambda\Z)^2}
\newcommand{\phil}{\varphi^{\lambda}}
\newcommand{\lcm}{\mathrm{lcm}}
\newcommand{\Ot}{{\mathcal O}_{\tau}}
\def\defn#1{\textbf{#1}}
\def\Fix#1{\mathrm{Fix}\,#1}
\def\mod#1{\hskip 15pt \left(\mathrm{mod}\;\,#1\right)}
\def\hl#1{#1}
\def\fl#1{\lfloor #1\rfloor}
\def\Bfl#1{\left\lfloor #1 \right\rfloor}
\def\ceil#1{\lceil #1\rceil}
\def\Bceil#1{\left\lceil #1\right\rceil}
\title{Near-integrable behaviour\\in a family of discretised rotations}
\author{Heather Reeve-Black}
\address{School of Mathematical Sciences, Queen Mary, 
University of London, London E1 4NS, UK}
\email{h.reeve-black@qmul.ac.uk}
\date{\today}
\begin{document}

\mainmatter

\begin{titlepage}
\setcounter{page}{-2}

\raggedleft
\includegraphics[scale=0.27]{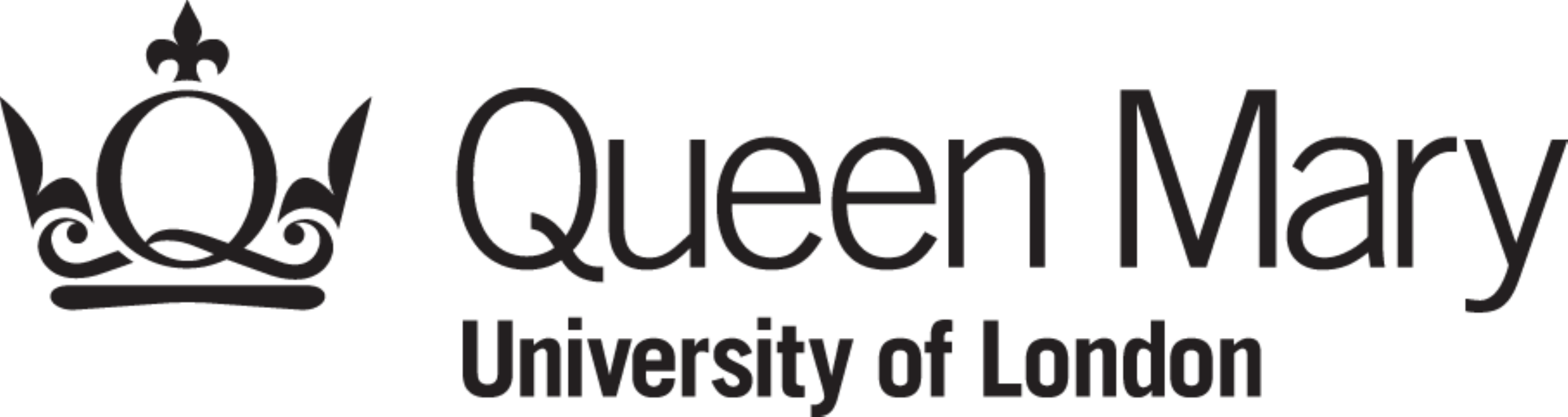}

\begin{center}

~ \\[4.5cm]

\textbf{\Huge Near-integrable behaviour\\[0.5cm]in a family of discretised rotations}\\[2cm]

{\Huge Heather Reeve-Black} \\[0.5cm]

\textsc{\large School of Mathematical Sciences\\[0.1cm] Queen Mary University of London}\\[6cm]

{\Large Submitted in partial fulfilment of the requirements\\[0.15cm]of the Degree of Doctor of Philosophy.}\\[0.3cm]

{\Large May 2014}

\end{center}
\end{titlepage}


\newpage 
\thispagestyle{empty}
~ \\[1cm]
\begin{center}
\textbf{\Large Declaration of originality}\\[1cm]
\end{center}

I, Heather Reeve-Black, confirm that the research included within this thesis is my own work or that where it has been carried out in collaboration with, or supported by others, that this is duly acknowledged below and my contribution indicated. Previously published material is also acknowledged below.

I attest that I have exercised reasonable care to ensure that the work is original, and does not to the best of my knowledge break any UK law, infringe any third party's copyright or other Intellectual Property Right, or contain any confidential material.

I accept that the College has the right to use plagiarism detection software to check the electronic version of the thesis.

I confirm that this thesis has not been previously submitted for the award of a degree by this or any other university.

The copyright of this thesis rests with the author and no quotation from it or information derived from it may be published without the prior written consent of the author.
\vspace{1cm}

\qquad Heather Reeve-Black \\

\qquad 19.05.2014
\vspace{2.5cm}

\noindent
Details of collaboration and publications: The work in this thesis was supervised by F. Vivaldi (throughout) and J.A.G. Roberts (chapter \ref{chap:Apeirogon}). The majority of chapters \ref{chap:IntegrableLimit} and \ref{chap:PerturbedDynamics} have been published in
{\it `Near-Integrable Behaviour in a Family of Discretized Rotations'}, 
H. Reeve-Black \& F. Vivaldi, Nonlinearity {\bf 26}, 2013.

\newpage 
\thispagestyle{empty}
~ \\[1cm]

\newpage 
\thispagestyle{plain} 
~ \\[1cm]
\begin{center}
\begin{minipage}{14cm}
\textsc{Abstract.} We consider a one-parameter family of invertible maps of a two-dimensional lattice, 
obtained by applying round-off to planar rotations.
All orbits of these maps are conjectured to be periodic.
We let the angle of rotation approach $\pi/2$, and show that the limit of 
vanishing discretisation is described by an integrable piecewise-affine 
Hamiltonian flow, whereby the plane foliates into families of invariant 
polygons with an increasing number of sides.

\hskip 15pt
Considered as perturbations of the flow, the lattice maps assume a different character, 
described in terms of strip maps: a variant of those found in outer billiards of polygons.
Furthermore, the flow is nonlinear (unlike the original rotation), 
and a suitably chosen Poincar\'{e} return map satisfies a twist condition.

\hskip 15pt
The round-off perturbation introduces KAM-type phenomena:
we identify the unperturbed curves which survive the perturbation, 
and show that they form a set of positive density in the phase space. 
We prove this considering symmetric orbits, under a condition that
allows us to obtain explicit values for densities.

\hskip 15pt
Finally, we show that the motion at infinity is a dichotomy:
there is one regime in which the nonlinearity tends to zero, leaving only the perturbation,
and a second where the nonlinearity dominates. 
In the domains where the nonlinearity remains, numerical evidence
suggests that the distribution of the periods of orbits is consistent with that of random dynamics, 
whereas in the absence of nonlinearity, the fluctuations result in intricate discrete resonant structures.
\end{minipage}
\end{center}


\newpage
\tableofcontents


\newpage \thispagestyle{plain}
~ \\[1cm]
\begin{center}
\textbf{\Large Acknowledgements}\\[1cm]
\end{center}

Thank-you first and foremost to Franco Vivaldi for supervising this project---I hope you've enjoyed it as much as I have.
Before I arrived at Queen Mary, I was advised that ``if you don't like Franco, then you don't like life'',
and fortunately nothing in these last three years has caused me to question my will to live.

Thank-you also to Oliver Jenkinson for his supervision during my first year at Queen Mary,
on an entirely different but equally interesting project---it's a shame I didn't find any exciting dominant measures!
Thanks to Shaun Bullett and Christian Beck, my second supervisors, 
who oversaw the bureaucracy of my progression with interest and encouragement.
Thanks to everyone at Queen Mary's School of Mathematical Sciences for giving me such an enjoyable place to work,
albeit one which was regularly without water or heating; and for buying my cakes.

Thank-you to John Roberts for hosting me in Sydney in late 2012: an incredible opportunity,
despite the cockroaches.

Thank-you to Jeroen Lamb and Stefano Luzzatto, whose lectures at Imperial College inspired me to get into the
field of dynamical systems and ergodic theory.
Thanks in particular to Jeroen for all his help and advice whilst applying for a PhD studentship.

Thank-you to all the other students I've met along the way, both at Queen Mary and elsewhere. 
Thanks to the come-dine-with-me crew for their fabulous cooking.
In particular, thanks to Julia Slipantschuk for her companionship and her tolerance of my taste in film.
Thanks to Georg Ostrovski for my first citation: the cheque's in the post.

Thank-you to the EPSRC for funding my studies, and to the Eileen Eliza Colyer Prize 
and the Queen Mary Postgraduate Research Fund for their contributions to my jet-setting lifestyle.

Finally, thank-you to the bank of mum and dad for putting a roof over my head for the duration of my studies.
Fortunately for you I didn't stay at Imperial College, otherwise you would have had to buy a house in Kensington.
Unfortunately for you, this is just the beginning: now I'm heading out into the world unemployed and overqualified.

\chapter{Introduction} \label{chap:Introduction}

In this thesis we study the family of maps given by
\begin{equation} \label{def:F}
F:\, \Z^2 \to \Z^2 \hskip 20pt
(x,y)\,\mapsto\,(\fl{\lambda x} - y, \,x)
\hskip 20pt |\lambda|<2.
\end{equation}
For each value of the real parameter $\lambda$, the function $F$ is
an invertible map on the lattice of integer points in the plane.
Despite its simplicity, this model displays a rich landscape 
of mathematical phenomena, connecting discrete dynamics and arithmetic. 

The family (\ref{def:F}) first arose as a model of elliptic motion subject to round-off \cite{Vivaldi94b}.
If we remove the floor function in equation \eqref{def:F},
we obtain the one-parameter family of linear maps of the plane
\begin{equation} \label{def:A}
A:\, \R^2 \to \R^2 \hskip 20pt
(x,y)\,\mapsto\,(\lambda x - y, \,x)
\hskip 20pt \lambda = 2\cos(2\pi\nu),
\end{equation}
which are linearly conjugate to rotation by the angle $2\pi\nu$.
The invariant curves of $A$ are ellipses, given by level sets of the functions
\begin{equation} \label{def:Q_lambda}
 \cQ_{\lambda}(x,y)=x^2-\lambda x y +y^2,
\end{equation}
and all orbits are either periodic or quasi-periodic,
according to whether the rotation number $\nu$ is rational or irrational, respectively.
The map $F$ is a discretisation of $A$, 
obtained by composing it with the piecewise-constant function
\begin{equation} \label{eq:R}
 R: \, \R^2 \to \Z^2 
 \hskip 40pt
 R(x,y) = (\fl{x}, \fl{y}),
\end{equation}
where $\fl{\cdot}$ is the floor function---the largest integer not exceeding its argument. 
The floor function models the effect of round-off, pushing 
the image point to the nearest integer point on 
the left\footnote{The choice of round-off scheme is discussed further in section \ref{sec:Round-off}.}. 
Thus the model $F$ is an example of a Hamiltonian (i.e., symplectic) map subject to uniform, invertible
round-off, of the style introduced by Rannou \cite{Rannou}. 

In this context, we think of \eqref{def:F} as a perturbed Hamiltonian system, 
so a natural property to consider is its stability \cite{Vivaldi94b,LowensteinHatjispyrosVivaldi,LowensteinVivaldi98,KouptsovLowensteinVivaldi02,Vivaldi06}. 
Since $F$ is invertible, boundedness of orbits is equivalent to periodicity.
\begin{conjecture}[\cite{Vivaldi06}] \label{conj:Periodicity}
For all real $\lambda$ with $|\lambda|<2$, all orbits of $F$ are periodic\footnote{A
general conjecture on the boundedness of discretised Hamiltonian rotations was first 
formulated in \cite{Blank94}.}.
\end{conjecture}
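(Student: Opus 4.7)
The plan is to exploit the fact that $F$ is a bounded perturbation of the linear rotation $A$, which preserves the positive definite quadratic form $\cQ_\lambda$. Writing $F(\bfx)=A(\bfx)-\bfe(\bfx)$ with error vector $\bfe(\bfx)=(\lambda x-\fl{\lambda x},0)$ of Euclidean norm at most $1$, the question reduces to controlling the drift of $\cQ_\lambda$ along forward orbits. Because $F$ is invertible on $\Z^2$ and the level sets $\{\cQ_\lambda\le c\}\cap\Z^2$ are finite for $|\lambda|<2$, boundedness of any orbit immediately implies periodicity; so it suffices to rule out escape to infinity.

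The first step is to compute the one-step drift
\[
\Delta(\bfx):=\cQ_\lambda(F(\bfx))-\cQ_\lambda(\bfx)=-2B_\lambda(A(\bfx),\bfe(\bfx))+\cQ_\lambda(\bfe(\bfx)),
\]
where $B_\lambda$ is the symmetric bilinear form associated with $\cQ_\lambda$. Since $\Delta$ grows linearly in $\|\bfx\|$, naive iteration cannot give a bound: one needs cancellations between drifts of opposite sign. Heuristically, the projection of $\bfe$ onto the gradient of $\cQ_\lambda$ depends on $\{\lambda x\}$ in a pseudo-random way, so one expects Birkhoff-type averages to vanish. Making this rigorous is the central difficulty.

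A more tractable step is the treatment of those values $\lambda=2\cos(2\pi p/q)$ for which $A$ has finite order $q$. Then $F^q$ is close to the identity, and one can try to partition $\Z^2$ into finitely many congruence classes of fundamental cells on which a high iterate of $F$ acts as a pure lattice translation. Arithmetic self-similarity of this partition, driven by renormalisation in the algebraic number field $\Q(\lambda)$ and exploiting Galois conjugates to supply contraction estimates that compensate for expansion in the original coordinates, can then reduce periodicity to checking finitely many cells. This programme has been carried through for $q\in\{4,5,6,8,10,12\}$ and could plausibly be extended to further algebraic values.

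The hardest case, and the reason the statement remains a conjecture, is that of generic irrational $\nu$, where no exact algebraic renormalisation is available. The approach developed in this thesis is complementary rather than definitive: in the near-integrable regime $\lambda\to 0$ (equivalently $\nu\to 1/4$) the dynamics is modelled by a piecewise-affine Hamiltonian flow for which KAM-type arguments yield persistence of a positive-density set of invariant curves. While this rules out escape for most orbits and lends strong quantitative support to the conjecture, orbits confined to the resonant gaps between surviving curves are not controlled. A complete proof would require either a uniform drift bound valid along every orbit or a compactness argument forbidding escape to infinity, and no such tool is currently available.
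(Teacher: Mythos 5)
This is labelled a conjecture in the paper for a reason: no proof exists, and the paper explicitly states ``We shall not make any further progress on conjecture~\ref{conj:Periodicity} in this work.'' Your ``proposal'' is therefore not (and cannot be) checked against a proof in the text; what you have written is a survey of why the problem is hard, which is the honest thing to do, and your assessment is broadly accurate.

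A few remarks on the content. Your reduction of periodicity to boundedness via invertibility and the compactness of the sublevel sets $\{\cQ_\lambda\le c\}\cap\Z^2$ (for $|\lambda|<2$) is correct and is exactly the observation the paper makes. Your one-step drift calculation is fine, and you are right that naive iteration fails because $\Delta$ grows linearly in $\|\bfx\|$; the relevant rigorous statement about the cancellations you gesture at is the central limit theorem of Vivaldi and Vladimirov for the $p$-adic case, but that only controls fluctuations for almost every orbit, not every orbit, so it does not close the gap. Your description of the algebraic case is essentially right, though be careful with the list of $q$: the proven nontrivial cases are $q\in\{5,8,10,12\}$ (the eight quadratic-irrational values of $\lambda$), while $q\in\{4,6\}$ give $\lambda\in\{0,\pm1\}$ where the statement is trivial because $F$ has finite order. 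Finally, your last paragraph overstates what the thesis's near-integrable analysis delivers: the positive-density set constructed there consists of periodic \emph{minimal} orbits, not invariant curves in the KAM sense, and these orbits do not disconnect the plane, so they do not ``rule out escape for most orbits.'' The correct reading is that the thesis exhibits structure consistent with the conjecture but provides no obstruction to escape, and the conjecture remains open for all irrational rotation numbers.
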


This is where the arithmetic flavour of the family \eqref{def:F} becomes apparent.
A closely related conjecture has been stated in the field of number theory:
the map $F$ appears (with a slightly different round-off scheme) in the 
guise of an integer sequence, as part of a problem concerning 
shift radix systems \cite{AkiyamaBrunottePethoThuswaldner,AkiyamaBrunottePethoSteiner06}.
Conjecture \ref{conj:Periodicity} holds trivially for the integer parameter values $\lambda=0,\pm1$, 
where the map $F$ is of finite order.
Beyond this, the boundedness of all round-off orbits has been proved for only 
\emph{eight} values of $\lambda$, which correspond to the rational 
values of the rotation number $\nu$ for which $\lambda$ is a quadratic irrational:
\begin{equation}\label{eq:Lambdas}
\lambda=\frac{\pm1\pm\sqrt{5}}{2},\quad \pm\sqrt{2},\quad \pm\sqrt{3}.
\end{equation}
(The denominator of $\nu$ is $5,\,10,\,8$, and $12$, respectively.)
The case $\lambda=(1-\sqrt{5})/2$ was established in 
\cite{LowensteinHatjispyrosVivaldi}, with computer assistance.
Similar techniques were used to extend the result to the other parameter 
values, but only for a set of initial conditions having full density
\cite{KouptsovLowensteinVivaldi02}.
The conjecture for the eight parameters \eqref{eq:Lambdas} was settled 
in \cite{AkiyamaBrunottePethoSteiner08} with an analytical proof.
More recently, Akiyama and Peth\H{o} \cite{AkiyamaPetho} proved that 
(\ref{def:F}) has infinitely many periodic orbits for any parameter value.
We shall not make any further progress on conjecture \ref{conj:Periodicity} in this work.

The feature of the parameter values \eqref{eq:Lambdas} which enabled the 
resolution of conjecture \ref{conj:Periodicity} in these cases, 
is that the map $F$ admits a dense and uniform embedding in 
a two-dimensional torus, where the round-off map extends continuously 
to a piecewise isometry (which has \emph{zero entropy} and is not ergodic).
The natural density on the lattice $\Z^2$ is carried into the Lebesgue measure,
namely the Haar measure on the torus.
For any other rational value of $\nu$, the parameter $\lambda$ is an algebraic number of higher degree, and
there is a similar embedding in a higher-dimensional torus \cite{LowensteinVivaldi00,BruinLambert}; 
these systems are still unexplored, even in the cubic case.

\begin{figure}[t]
        \centering
        \begin{minipage}{7cm}
          \centering
	  \includegraphics[scale=0.35]{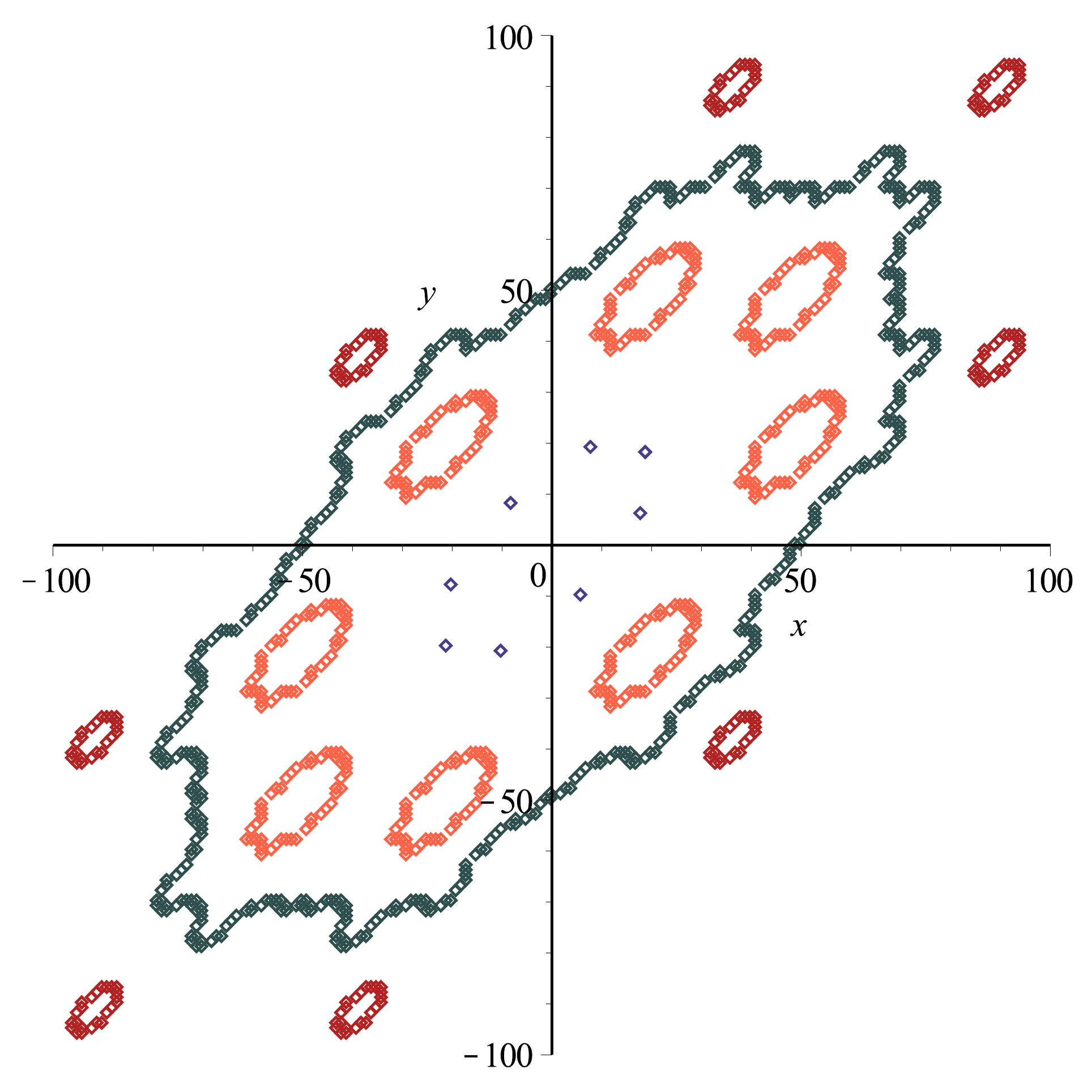} \\
	  (a) $\; \lambda=\sqrt{2}$, $\nu=1/8$ \\
        \end{minipage}
        \quad
        \begin{minipage}{7cm}
	  \centering
	  \includegraphics[scale=0.35]{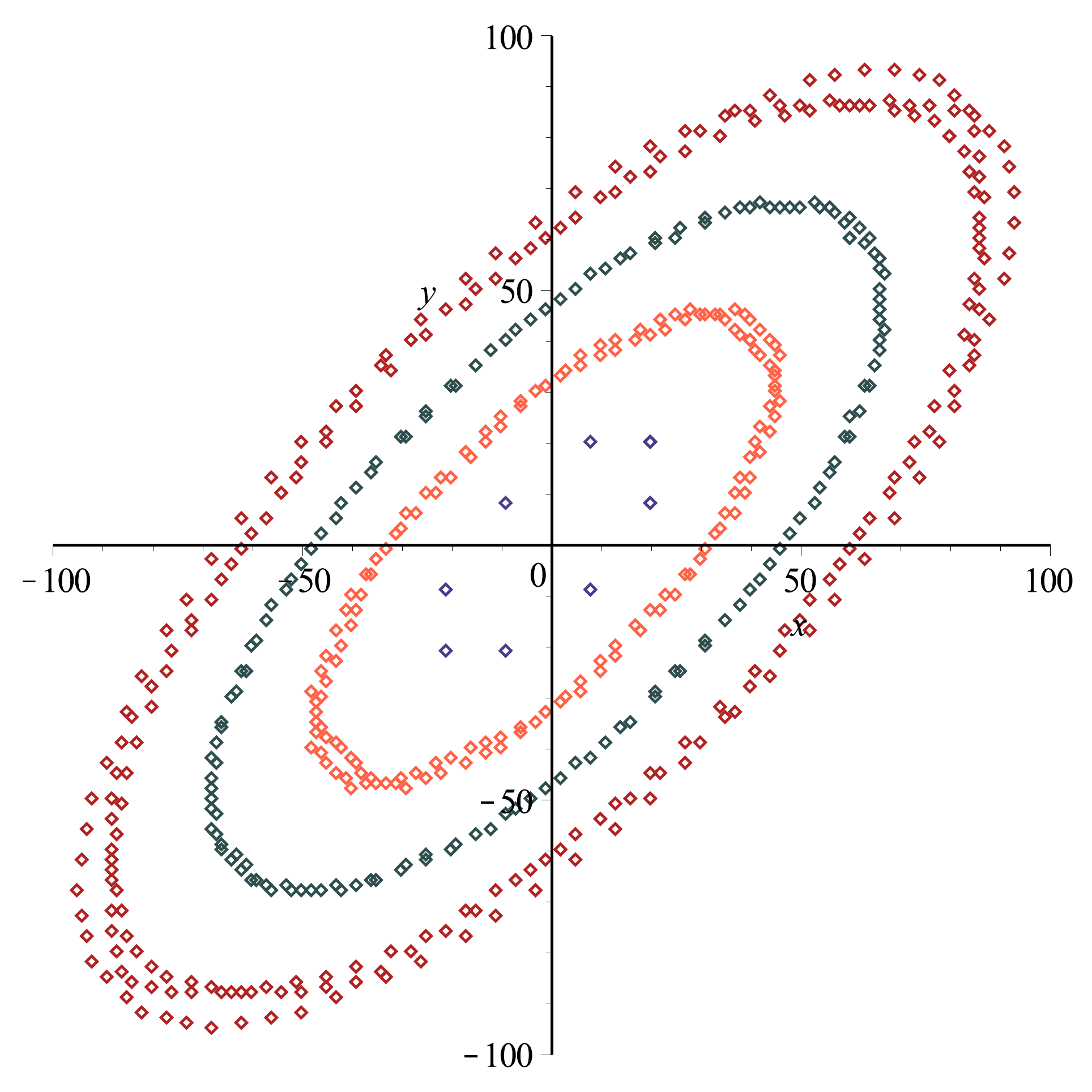} \\
	  (b) $\; \lambda=10/7$, $\nu\approx1/8$ \\
	\end{minipage}
        \caption{\hl{A selection of orbits of $F$ when the parameter $\lambda$ is (a) an algebraic integer and 
        (b) a rational number. All orbits are periodic, and the period of the orbits shown ranges from $8$ (both cases) to $235$ 
        (rational case) and $511$ (algebraic case).}}
        \label{fig:FOrbits}
\end{figure}


Irrational values of $\nu$ bring about a different dynamics, and a different 
theory. The simplest cases correspond to rational values of $\lambda$: 
in particular, to rational numbers whose denominator is the power of a prime $p$. 
In this case the map $F$ admits a dense and uniform embedding in the ring 
$\Z_p$ of $p$-adic integers \cite{BosioVivaldi}. 
The embedded system extends continuously to the composition of a full 
shift and an isometry (which has \emph{positive entropy}), 
and the natural density on $\Z^2$ is now carried into the Haar measure on $\Z_p$.
This construct was used to prove a central limit theorem for the 
departure of the round-off orbits from the unperturbed ones \cite{VivaldiVladimirov}. 
This phenomenon injects a probabilistic element in the determination of the 
period of the lattice orbits, highlighting the nature of the difficulties 
that surround conjecture \ref{conj:Periodicity}. 

\medskip

In this work we explore a new parameter regime, and the obvious next step is to
consider the \emph{approach} to a rational rotation number.
We choose the easiest such case---the approach to one of the cases \eqref{eq:Lambdas} seems
excessively complicated---and consider the limit $\lambda\to0$, 
corresponding to the rotation number $\nu\to 1/4$.
This is one of five limits (the other limits being $\lambda\to\pm1,\pm2$) 
where the dynamics at the limit is trivial because there is no round-off.

What we find is a new natural embedding of $F$, this time into the plane, 
and a new dynamical mechanism, namely a discrete-space version of \emph{linked strip maps}:
maps originally introduced in the study of \emph{outer billiards} or \emph{dual billiards} of polygons 
(for background, see \cite[Section III]{Tabachnikov}).
This construction was later generalised by Schwartz \cite{Schwartz11}.

We rescale the lattice $\Z^2$ by a factor of $\lambda$---to obtain the map $\F$ of equation 
\eqref{def:F_lambda}---then embed it in $\R^2$ (see figure \ref{fig:PolygonalOrbits}).
Now the parameter $\lambda$ controls not only the rotation number $\nu$, but also the lattice spacing.
The limiting behaviour is described by a piecewise-affine Hamiltonian.
The invariant curves of this Hamiltonian are polygons, 
and the fourth iterates of $F$ move parallel to the edge vectors of these polygons.
The role of the strip map is to aggregate this locally uniform behaviour 
into a sequence of translations: one for each edge.
The perturbation occurs near the vertices.
In this much, the map $F$ bears a strong resemblance to the strip map construction of outer billiards.
The difference in our case is that the number of sides of the invariant polygons 
increases with the distance from the origin; 
near the origin they are squares, while at infinity they approach circles.
Hence our version of the strip map is composed of an ever increasing number of components,
and results in a perturbation of increasing complexity---a feature which
cannot be achieved in outer billiards of polygons without changing the shape of the billiard.

\begin{figure}[t]
        \centering
        \begin{minipage}{7cm}
          \centering
	  \includegraphics[scale=0.35]{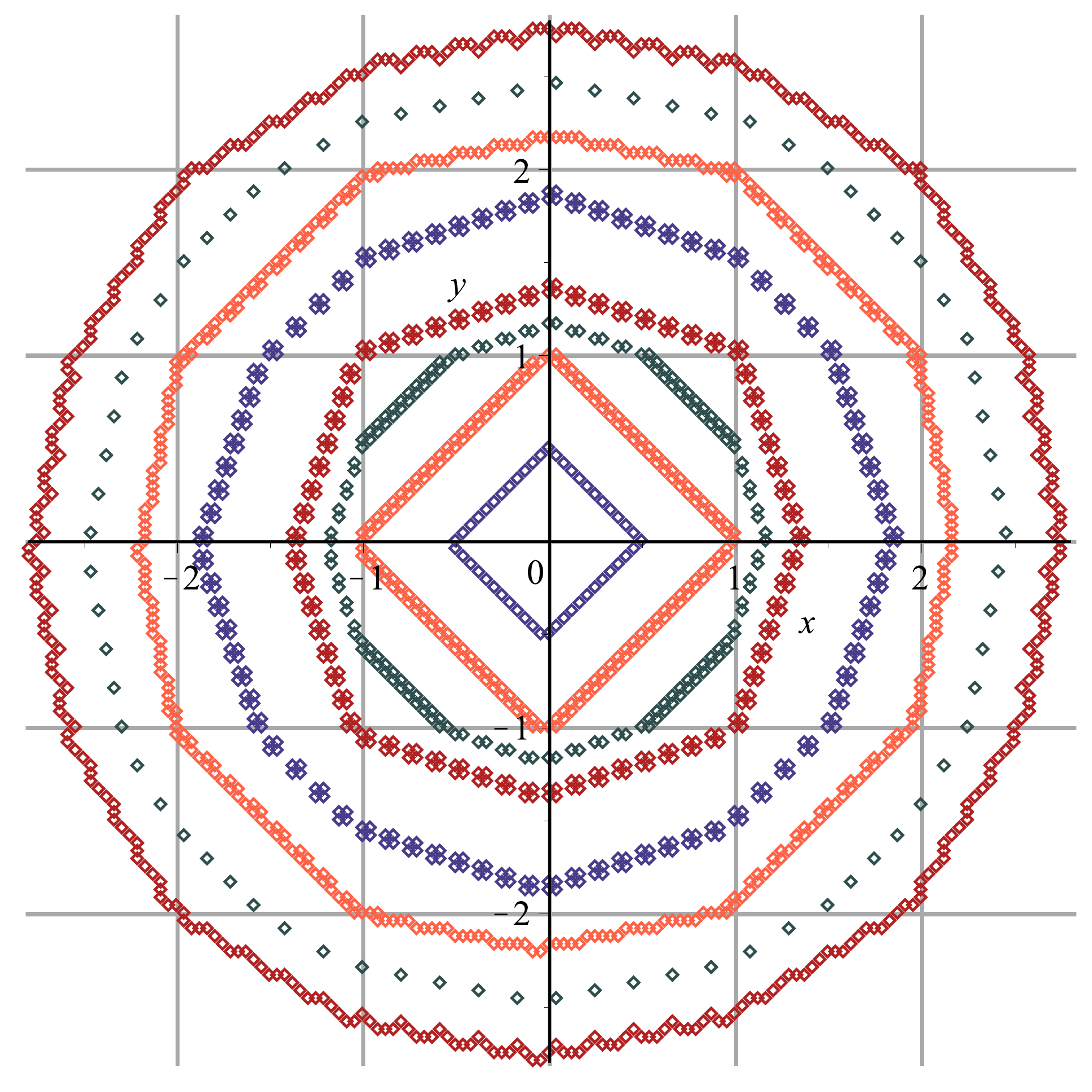} \\
	  (a) $\; \lambda=1/24$ \\
        \end{minipage}
        \quad
        \begin{minipage}{7cm}
	  \centering
	  \includegraphics[scale=0.35]{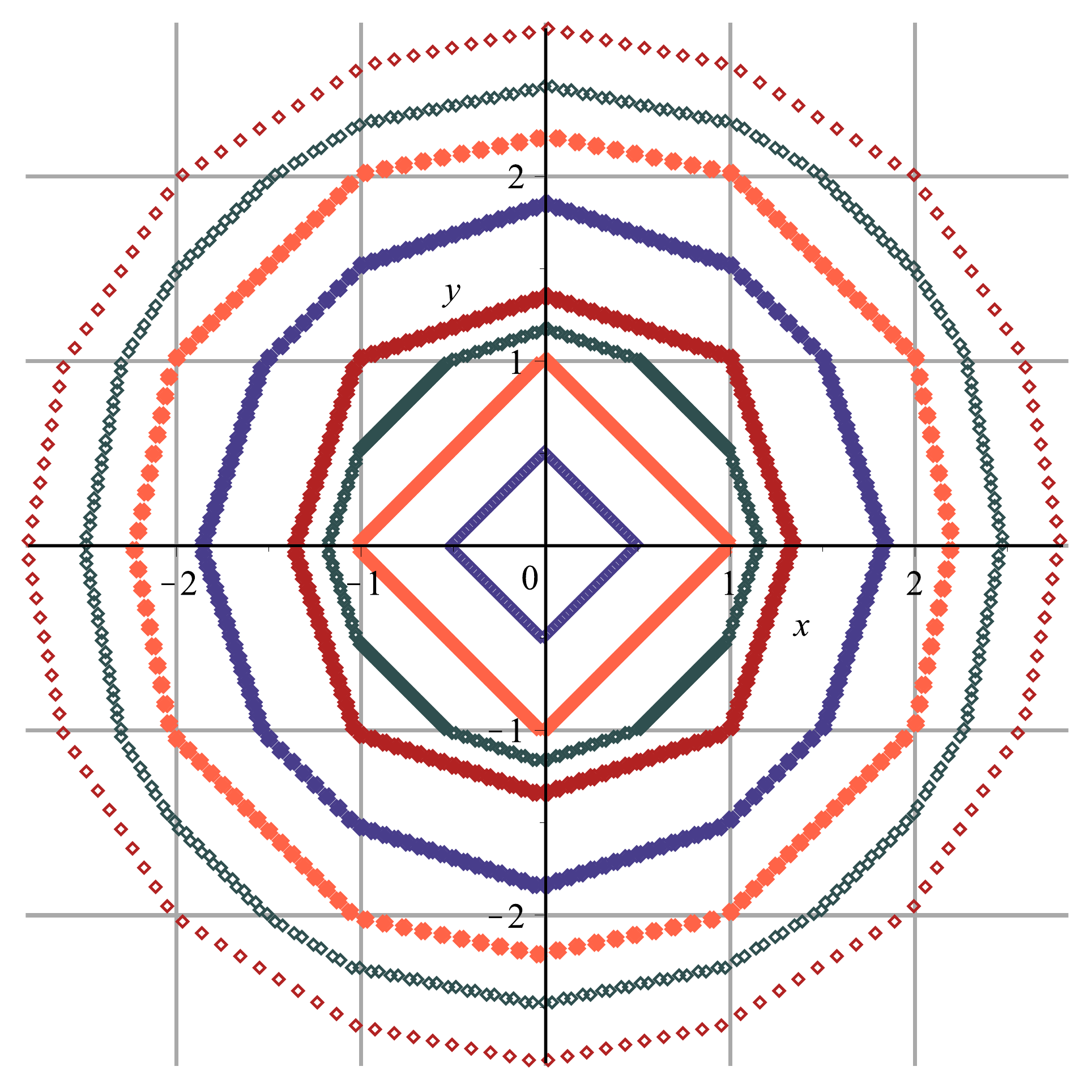} \\
	  (b) $\; \lambda=1/48$ \\
	\end{minipage}
        \caption{\hl{A selection of periodic orbits of the rescaled map $F_{\lambda}$, for two small values of 
        the parameter $\lambda$. The lattice spacing is such that each unit distance (illustrated by the grey lines) 
        contains $1/\lambda$ lattice points.
        Here $\nu\approx 1/4$ but there are no orbits which have period $4$: instead the periods of orbits 
        cluster around integer multiples of a longer recurrence time} (see figure \ref{fig:PeriodFunction}, page \pageref{fig:PeriodFunction}).}
        \label{fig:PolygonalOrbits}
\end{figure}

In this regime, we are led to consider the map $F$ as a perturbation of an
integrable Hamiltonian system, but the integrable system is no longer a rotation, and the 
perturbation is no longer caused by round-off. 
Thus the limit $\lambda\to 0$ is singular. 
Furthermore, the integrable system is nonlinear, i.e., its time-advance 
map satisfies a twist condition. The parameter $\lambda$ 
acts as a perturbation parameter, and a discrete version of near-integrable 
Hamiltonian dynamics emerges on the lattice when the perturbation is switched on.

If we were considering near-integrable Hamiltonian dynamics on the continuum, 
then we would be in the realm of KAM theory 
(for background, see \cite[section 6.3]{ArrowsmithPlace}),
according to which a positive fraction of invariant curves, identified
by their rotation number, will survive a sufficiently small smooth perturbation.
In this scenario, the complement of the KAM curves consists of a hierarchical arrangement of 
island chains and thin stochastic layers, and
the KAM curves disconnect the space, thereby ensuring the stability of the irregular orbits.
However, the map $F$ is defined on a discrete space, and the perturbation is discontinuous, 
so no such general theory applies.

Before we describe our findings in more detail, we set the scene
by outlining previous work on space discretisation, 
which consists of a patchwork of loosely connected phenomena, 
arising in a variety of different contexts.
In particular, we highlight the occurrence of near-integrable phenomena.

\vfill

\section{Near-integrability in a discrete phase space} \label{sec:discrete}

There are various approaches to space discretisation, 
which fall into two broad categories.
\begin{enumerate}[(i)]
 \item \emph{Invariant structures}\\ 
 This category comprises maps which preserve some finite or countable (and arithmetically interesting) subset of the phase space.
 This includes the restriction of algebraic maps with algebraic parameters to discrete rings or fields, and piecewise-isometries involving rational rotations. In these cases, the dynamics of the original map remain unchanged, but we consider discrete subsets of parameter values and discrete subsets of the phase space. 
 \item \emph{Round-off}\\ 
 Here we consider maps which are formed from the composition of a map of a continuum with a \emph{round-off function}, 
 which forces the map to preserve a given finite or countable set (typically a lattice). 
 In this case, the original dynamics are subject to a discontinuous perturbation, 
 so that the relationship between the dynamics of the discrete system and those of the original system is often unclear.
\end{enumerate}

We are interested in the range of dynamical behaviours that can be observed in a discrete phase space, and how these can be described. 
This question manifests itself slightly differently for the two types of discretisation. 
In the case of invariant structures, we are typically concerned with which dynamical features \emph{remain} after discretisation: 
what mark does the behaviour of the original system leave on that of its discrete counterpart? 
In the case of round-off, we are interested in those features which are \emph{created} by discretisation: 
how does the behaviour of a dynamical system change when we force it onto a lattice?

In both cases, we can ask: are the features of the original map recovered in the fine-discretisation limit?

In smooth Hamiltonian dynamics, near-integrable behaviour is characterised by the presence of invariant KAM curves, 
on which the motion is quasi-periodic, separated by periodic island chains and stochastic layers.
Reproducing the structures of KAM theory in a discrete space is problematic.
On a lattice, quasi-periodic orbits do not exist. 
Surrogate KAM surfaces must thus be identified, and their evolution must be 
tracked, as the perturbation parameter is varied.
Furthermore, these orbits need not disconnect 
the space, so their relevance to stability must be re-assessed.

We introduce the various types of discrete system below. 
In each case, we describe examples which will be relevant in what follows, 
and discuss the possibility of near-integrable phenomena.

\subsection*{Restriction to discrete rings and fields}

If an algebraic map preserves a subset of the phase space which is a discrete ring or field, 
then we can study its dynamics when restricted to this subset. 
In the finite case, this is equivalent to studying (subsets of) the periodic orbits of a map.

The dominant example in this class is the family of hyperbolic toral automorphisms (or \emph{cat maps}): 
chaotic (Anosov) Hamiltonian maps, whose set of periodic orbits is precisely the set of rational points, 
and which preserve lattices of rational points with any given denominator.
The special arithmetic properties of these maps enable a complete classification of the periodic orbits on such a rational lattice, 
and there is a wealth of literature on this topic, 
including \cite{HannayBerry,PercivalVivaldi,Keating91a,DysonFalk,EspostiIsola,BehrendsFiedler}.
The arithmetic of the denominator limits the number of allowed periods on any given lattice, 
and the resulting period distribution function is singular.

Rational restrictions of maps with milder statistical properties have also been considered. 
The Casati-Prosen triangle maps \cite{CasatiProsen,HorvatEspostiIsola} are 
a family of zero entropy maps of the torus rooted in quantum chaos,
which are conjectured to be mixing for irrational parameters,
and preserve rational lattices for rational parameters.
In this case, the maps have time-reversal symmetry, and
the distribution of periods on such lattices is conjectured to converge 
to a smooth distribution in the fine-discretisation limit \cite{NeumarkerRobertsVivaldi}.
We discuss this result further in section \ref{sec:time-reversal}.

\subsection*{Reduction to finite fields}

For an algebraic system it is natural to replace the coordinate field with a finite field, 
for instance the field $\bbF_p$ of integers modulo a prime $p$.
The resulting map has a finite phase space, so all its orbits are (eventually) periodic. 

The reduction process dispenses with the topology of the original map, but preserves algebraic properties, 
such as symmetries or the presence of an integral. 
Consequently, there is no near-integrable regime, and one witnesses a discontinuous 
transition from integrable to non-integrable behaviour. 
This transition manifests itself probabilistically via a (conjectured) abrupt 
change in the asymptotic (large field) distribution of the periods of the orbits, 
which can be used as a tool to detect integrability
\cite{RobertsVivaldi03,RobertsJogiaVivaldi,JogiaRobertsVivaldi}.
Similarly the reduction to finite fields can be used to detect 
time-reversal symmetry \cite{RobertsVivaldi05,RobertsVivaldi09} (see also section \ref{sec:time-reversal}).

\subsection*{Piecewise-isometries}

Piecewise-isometries are a generalisation of interval exchange transformations to higher dimensions, 
in which the phase space (typically the plane or the torus) is partitioned into a finite number of sets, 
called \emph{atoms}, and the map acts as a different isometry on each atom. (For background, see \cite{Goetz}.) 
It has been shown that all piecewise-isometries have zero entropy \cite{GutkinHaydn,Buzzi}. 
Furthermore, for piecewise-isometries involving rational parameters, 
the dynamics are discrete in the sense that the phase space features a 
countable hierarchy of (eventually) periodic polygons, which move rigidly under the dynamics 
(see \cite{AdlerKitchensTresser} and references therein). 
In particular, this class of piecewise-isometries include the much-studied piecewise-rotations 
of the torus with rational rotation number (see, for example, \cite{Kahng02,KouptsovLowensteinVivaldi02,Kahng04,KouptsovLowensteinVivaldi04,GoetzPoggiaspalla}).
These are the systems in which the discretised rotation $F$ was embedded 
in order to settle conjecture \ref{conj:Periodicity} for the parameter values \eqref{eq:Lambdas}.
 
An example of a family of piecewise-isometries in unbounded phase space is the family of dual billiard maps on polygons \cite[Section III]{Tabachnikov}. \hl{When the polygon has rational coordinates, the dynamics are discrete and all orbits are periodic.} 
As we have already mentioned, the dynamical mechanism which underlies the behaviour of $F$ in the limit $\lambda\to 0$
has much in common with that of outer billiards of polygons.
A near-integrable regime of a kind exists for these maps in the form of \emph{quasirational} polygons, 
for which all orbits remain bounded thanks to the existence of bounding invariants called 
\emph{necklaces}\footnote{For smooth billiard tables, 
the outer billiards map is a twist map admitting KAM curves, 
which ensure the boundedness of orbits (see \cite[Section I]{Tabachnikov}).} 
\cite{VivaldiShaidenko,Kolodziej,GutkinSimanyi}---however, 
in the quasirational case the dynamics are no longer discrete.
\hl{Only recently has an unbounded outer billiard orbit been exhibited} \cite{Schwartz07}.

\subsection*{Round-off}

One typically thinks of round-off in the context of computer arithmetic,
where real numbers are represented with finite precision.
In this context, it is the relationship between computer-generated orbits and 
the true orbits of a dynamical system which is of principal interest.
This issue can be tackled to an extent by \emph{shadowing}, 
whereby a perturbed orbit (in this case a discretised orbit) of a chaotic map
is guaranteed to be close to an orbit of the unperturbed system
(see \cite[Section 18.1]{KatokHasselblatt}, 
or \cite{HammelYorkeGrebogi,GrebogiHammelYorkeSauer} for results specific to round-off).
However, shadowing tells us nothing about whether the behaviour of perturbed orbits is typical,
or what happens to orbits over long timescales,
where round-off typically introduces irreversible behaviour
\cite{BinderJensen,BeckRoepstorff,GoraBoyarsky}.
In rare cases, round-off fluctuations act like small-amplitude noise, 
and give rise to Gaussian transport; 
more commonly, the propagation of round-off error must be described as a deterministic 
(as opposed to probabilistic) phenomenon.

A rigorous analysis of round-off in floating-point arithmetic is very difficult:
the set of representable numbers is neither uniform nor arithmetically closed,
hence calculations are performed in a modified arithmetic which is not even associative.
To put the study of round-off on a solid footing, 
it is preferable to consider calculations in fixed-point (i.e., integer) arithmetic,
which is closed under ring operations (discounting overflow).
Several authors have used explicit fixed-point approximations of real Hamiltonian maps 
in numerical experiments, which have the advantage that iteration can be performed exactly,
and that invertibility can be retained
(see Rannou et. al.~\cite{Rannou,Karney,EarnTremaine}).

Both Blank \cite{Blank89, Blank94} and Vladimirov \cite{Vladimirov} have presented
theoretical frameworks in which to study the statistical behaviour of round-off.
Blank considers the properties of ensembles of round-off maps with varying discretisation length,
whereas Vladimirov equips a discrete phase space with a measure which can be used to quantify
the deviation of exact and numerical trajectories.

In this work, we are interested in round-off as a dynamical phenomenon in its own right.
Like Rannou, we consider a uniform, invertible discretisation of a Hamiltonian map.
In fact, we consider a discretisation of a rotation---the prototypical integrable Hamiltonian map.
Applying such arithmetically well-behaved round-off to simple linear systems like the rotation
leads to dynamical phenomena which are born of discontinuity.
The model $F$, as introduced in the previous section, 
has been studied by several authors from various points of view.
We study a parameter regime in which the behaviour of $F$ can be described as near-integrable.
The only other example of near-integrability in round-off dynamics is a numerical
study of a perturbed twist map \cite{ZhangVivaldi}: 
a simpler model which we will return to in the conclusion.

\section{Main results \& outline of the thesis}

Chapter \ref{chap:Preliminaries} provides the reader with some technical background.
We discuss round-off, and justify the choice of round-off scheme employed in the model $F$.
Then we discuss time-reversal symmetry: the map $F$ is \emph{reversible} with respect to
reflection in the line $x=y$, and \emph{symmetric} orbits will play a key role in our analysis.

In chapter \ref{chap:IntegrableLimit} we consider the limit $\lambda\to 0$, 
which we refer to as the \emph{integrable limit}.
We describe the orbits closest to the origin, which have a particularly simple form,
and motivate a rescaling of the lattice $\Z^2$ by a factor of $\lambda$.
We introduce a piecewise-affine Hamiltonian function $\cP$
(equation \eqref{eq:Hamiltonian}), whose invariant curves are polygons,
representing the limiting foliation of the plane for the rescaled system (see figure \ref{fig:polygon_classes}).
The set of invariant polygons is partitioned by \emph{critical polygons}, 
which contain $\Z^2$ points, into infinitely many \emph{polygon classes}, 
which can be characterised arithmetically in terms of sums of squares.
Each polygon class is assigned a symbolic coding, which describes its 
path relative to the lattice $\Z^2$.

\begin{thm_nonumber}[Theorem \ref{thm:Polygons}, page \pageref{thm:Polygons}]
The level sets of $\cP$ are convex polygons.
The polygon $\cP(z)=\alpha$ is critical if and only if $\alpha\in\cE$,
where $\cE$ is the set of natural numbers which can be written as the sum of two squares:
\begin{displaymath}
 \cE = \{0,1,2,4,5,8,9,10,13,16,17,\dots \} .
\end{displaymath}
\end{thm_nonumber}

To match Hamiltonian flow and lattice map, we exploit the fact that, for small 
$\lambda$, the composite map $F^4$ is close to the identity. After scaling, it is 
possible to identify the action of $F^4$ with a time-advance map of the flow
(in the spirit of Takens' theorem \cite[section 6.2.2]{ArrowsmithPlace}).
This time-advance map assumes the role of the unperturbed dynamics.
The two actions agree along the sides of the polygons, but differ in vanishingly
small regions near the vertices. This discrepancy provides the perturbation mechanism.

The period function of $F$ displays a non-trivial clustering
of the periods around integer multiples of a basic \hl{recurrence time} (see
figure \ref{fig:PeriodFunction}),
and all orbits recur to a small neighbourhood of the symmetry axis $x=y$.
In section \ref{sec:Recurrence}, we define a Poincar\'{e} return map $\Phi$ to reflect this behaviour, 
and show that the \emph{return orbits}---the partial orbits \hl{iterated up to their recurrence time}---shadow 
the integrable orbits.

\FloatBarrier

\begin{figure}[t]
  \centering
  \includegraphics[scale=0.35]{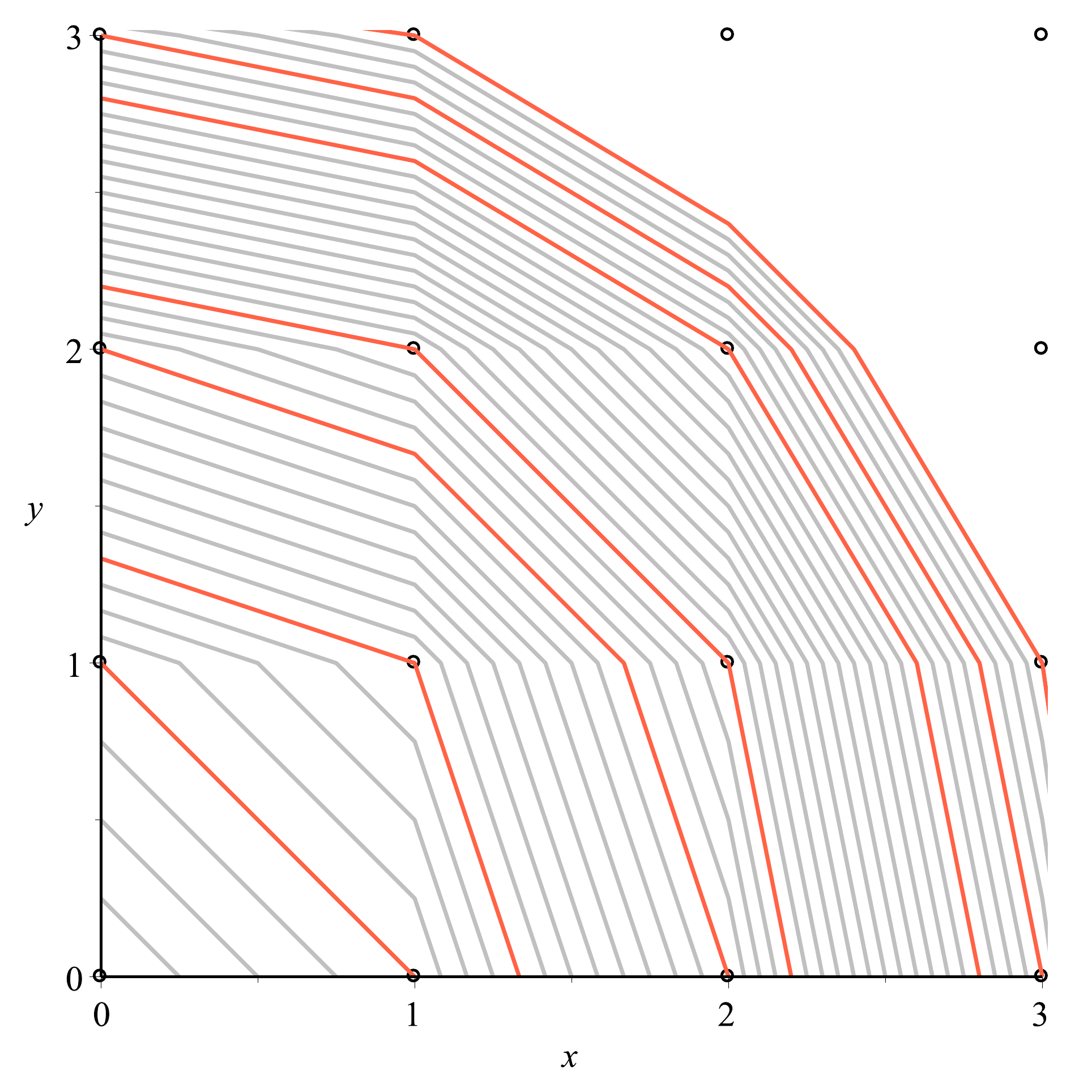}
  \caption{A selection of polygons $\cP(x,y)=\alpha$, for values of $\alpha$ in the interval $[0,10]$.
  The critical polygons are shown in red: the polygon classes are the annuli bounded between pairs of adjacent critical polygons.
  All polygons are symmetric under reflection in the coordinate axes, and in the line $x=y$.}
  \label{fig:polygon_classes}
\end{figure}

\begin{figure}[t]
        \centering
        \includegraphics[scale=0.35]{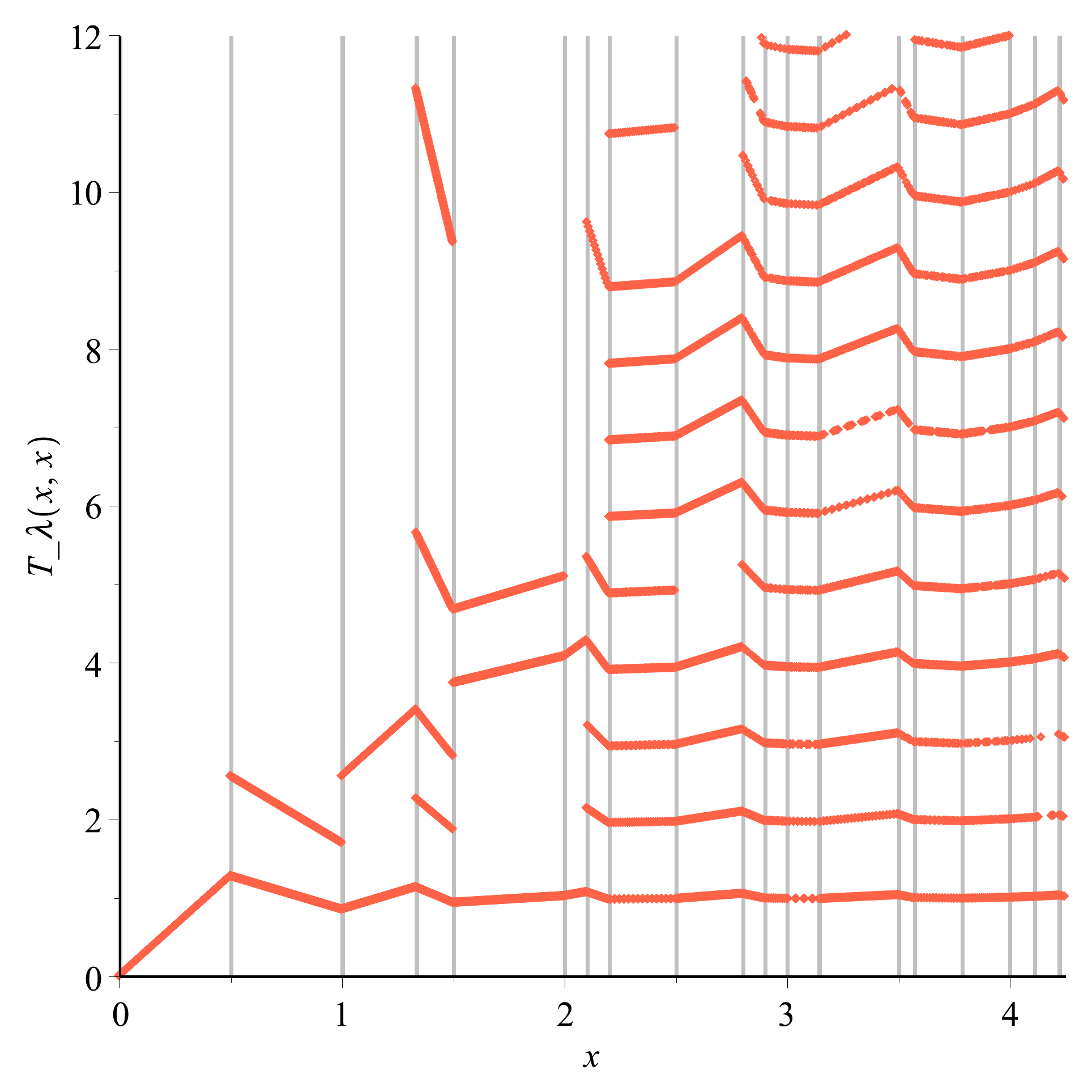}
        \caption{The normalised period function $T_\lambda(z)$ (see section \ref{sec:Recurrence}) 
        for points $z=(x,x)$, and $\lambda=1/5000$.
         The vertical lines mark the location of critical polygons, which 
         pass through lattice points.}
        \label{fig:PeriodFunction}
\end{figure}

\vfill

\FloatBarrier

\begin{thm_nonumber}[Theorem \ref{thm:Hausdorff}, page \pageref{thm:Hausdorff}]
For any $w\in\R^2$, let $\Pi(w)$ be the orbit of $w$ under the Hamiltonian flow, 
and let $\cO(w,\lambda)$ be the return orbit of the lattice point in $\lZ$ associated with $w$. 
Then
\begin{displaymath}
 \lim_{\lambda\to 0} d_H 
   \left(\Pi(w),\cO(w,\lambda)\right)=0,
\end{displaymath}
where $d_H$ is the Hausdorff distance on $\R^2$.
\end{thm_nonumber}

In section \ref{sec:IntegrableReturnMap}, 
we calculate the period of the Hamiltonian flow as a function of the value of the Hamiltonian.
This leads us to conclude that the unperturbed return map is nonlinear, and that
the nonlinearity is piecewise-affine on the polygon classes.

In section \ref{sec:LimitsPm1}, we show briefly that a similar 
construction applies in the limits $\lambda\to\pm1$, 
where the rotation number $\nu$ approaches $1/6$ and $1/3$, respectively.

\medskip

In chapter \ref{chap:PerturbedDynamics} we consider the behaviour of the perturbed return map $\Phi$.
The lowest branch of the period function of $F$ comprises the \emph{minimal orbits:} 
\hl{the fixed points of the return map, 
for which the effects of the perturbation cancel out}.
These are the orbits of the integrable system that survive the perturbation, 
and we treat them as analogues of KAM tori.
The other orbits mimic the divided phase space structure of a near-integrable 
area-preserving map, in embryonic form near the origin, and with increasing 
complexity at larger amplitudes.

The main result of this chapter is that for infinitely many polygon classes, 
the symmetric minimal orbits occupy a positive density of the phase space. 
The restriction to infinitely many classes---as opposed to all classes---stems 
from a coprimality condition we impose in order to achieve convergence
of the density.

Matching the orbits of the perturbed return map to the polygon classes of the 
integrable flow is a delicate procedure, 
requiring the exclusion of certain anomalous domains, and establishing 
that the size of these domains is negligible in the limit. 
We do this in section \ref{sec:RegularDomains}.
Then, in section \ref{sec:MainTheorems}, 
we state the chapter's main theorems.
The first theorem states that, within each polygon class, the return map 
commutes with translations by the elements of a two-dimensional lattice, 
which is independent of $\lambda$ up to scale, provided that $\lambda$ is sufficiently small
(see figure \ref{fig:lattice_Le}, page \pageref{fig:lattice_Le}).
\hl{To avoid excessive notational overhead, the below statement of the theorem has been somewhat simplified.}

\begin{thm_nonumber}[Theorem \ref{thm:Phi_equivariance}, page \pageref{thm:Phi_equivariance}]
Associated with each polygon class, indexed by $e\in\cE$, there is an integer lattice $\Le\subset\Z^2$, 
\hl{such that over a suitable domain}, and for all sufficiently small $\lambda$,
the return map $\Phi$ is equivariant under the group of 
translations generated by $\lambda\Le$:
\begin{displaymath}
 \forall l\in\Le:
 \hskip 10pt 
 \Phi(z + \lambda l) = \Phi(z) + \lambda l .
\end{displaymath}
\end{thm_nonumber}

The second theorem states that, if the symbolic coding of a polygonal class
satisfies certain coprimality conditions, then the density 
of symmetric minimal orbits among all orbits becomes 
independent of $\lambda$, provided that $\lambda$ is small enough.
This density is a positive rational number, which is computed explicitly.
As the number of sides of the polygons increases to infinity, the density 
tends to zero.
An immediate corollary of theorem \ref{thm:minimal_densities} is the existence of a positive
lower bound for the density of minimal orbits---symmetric or otherwise.

\begin{thm_nonumber}[Theorem \ref{thm:minimal_densities}, page \pageref{thm:minimal_densities}]
There is an infinite sequence of polygon classes, indexed by $e\in\cE$, 
such that within each polygon class, and for all sufficiently small $\lambda$, 
the number of symmetric fixed points of $\Phi$ modulo $\lambda\Le$ is non-zero and independent of $\lambda$. 
Thus the asymptotic density of symmetric fixed points within these polygon classes converges and is positive.
\end{thm_nonumber}

The analysis of the return map requires tracking the return orbits, and this is done
through repeated applications of a \emph{strip map}, an acceleration device which 
exploits local integrability. This is a variant of a construct introduced for outer 
billiards of polygons (see \cite[chapter 7]{Schwartz}, and references therein), 
although in our case the strip map has an increasing number of components, 
providing a dynamics of increasing complexity.
We introduce the strip map in section \ref{sec:StripMap}, and establish
some of its properties. 
There is a symbolic coding associated with the strip map; its cylinder sets
in the return domain form the congruence classes of the local lattice structure.
\hl{This fact gives a `non-Archimedean' character to the dynamics.
We prove the correspondence between the symbolic coding and the return map}
in section \ref{sec:lattice}, and this result leads to the conclusion of the proof of the main theorems.

\medskip

In chapter \ref{chap:Apeirogon} we explore the behaviour of the unperturbed return map at infinity.
A change of coordinates shows that the unperturbed return map is a linear twist map on the cylinder.
We study the asymptotics of the period function of the integrable flow, 
and find that it undergoes damped oscillations as the distance from the origin increases.
A suitable scaling uncovers a limiting functional form
which has a singularity in its derivative. 
This leads to a discontinuity in the asymptotic behaviour of the twist map. 
Typically the twist converges to zero, 
and hence the unperturbed return map converges (non-uniformly) to the identity. 
However, for the polygon classes which correspond to perfect squares 
(recall that the polygon classes are classified by the sums of squares), 
the twist converges to a non-zero value.
\hl{Again we state a somewhat simplified version of the result.}

\begin{figure}[t]
        \centering
        \includegraphics[scale=0.17]{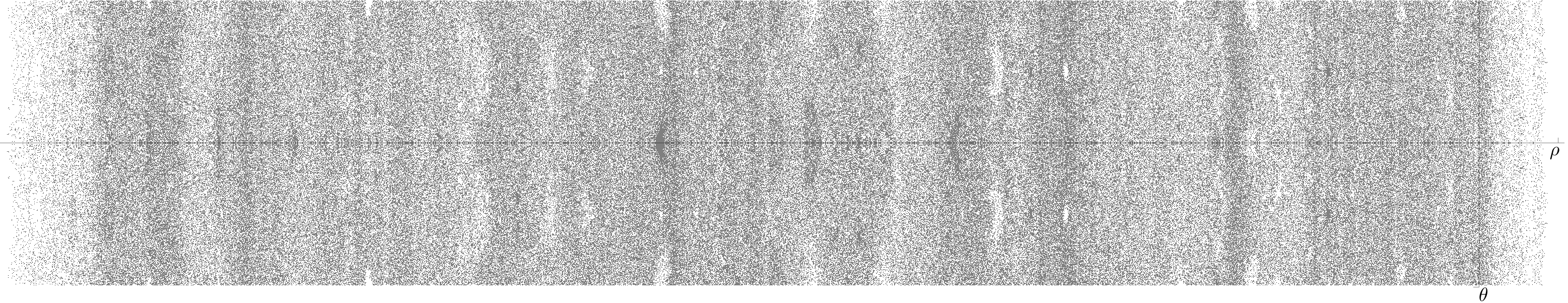} \\
        (a) $e=40000=200^2$ \\
        ~ \\
        \includegraphics[scale=0.17]{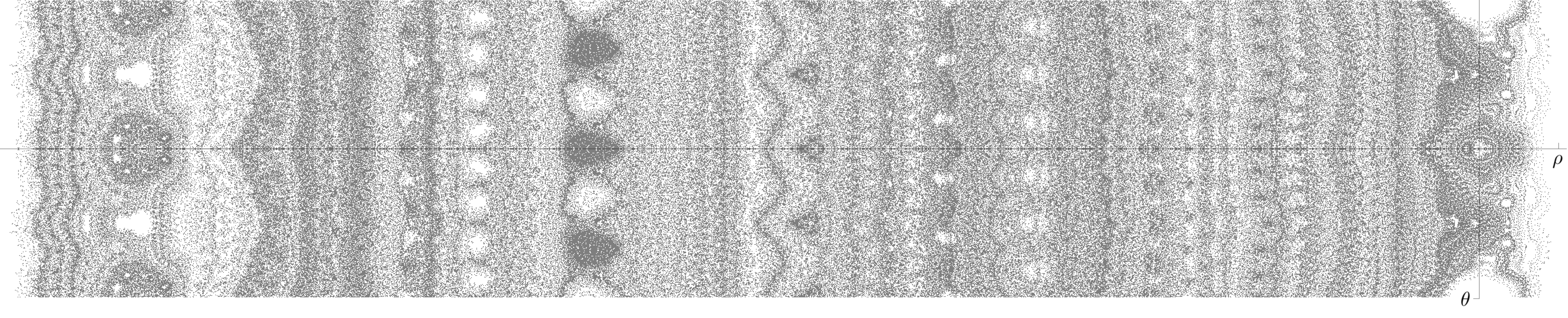} \\
        (b) $e=40309\approx200.8^2$
        \caption{\hl{Two pixel plots showing a large number of symmetric orbits of the return map $\Phi$ 
        in the cylindrical coordinates $(\theta,\rho)\in\bbS^1 \times \R$, 
        where the $\rho$-axis is the symmetry axis, and the $\theta$-axis is a fixed line of the twist dynamics.
        The resolution is such that the width of the cylinder (the $\theta$ direction) consists of approximately $280$ lattice sites.
        In both cases, the orbits plotted occupy almost half of the region of phase space pictured.
        The stark contrast between the two plots is caused by the difference in the twist $K(e)$:
        in plot (a) $K(e)\approx 4$, whereas in plot (b) $K(e)\approx -0.1$.
        The values of $\lambda$ used are (a) $\lambda\approx 7\times 10^{-9}$ and (b) $\lambda\approx 4\times 10^{-8}$.
        In figure (b), the primary resonance at the origin is clearly visible, whereas a period $2$
        resonance, which occurs at $\rho=1/2K(e)$, is seen to the left of the plot.}}
        \label{fig:resonance}
\end{figure}


\begin{thm_nonumber}[Theorem \ref{thm:Omega_e}, page \pageref{thm:Omega_e} \& Proposition \ref{prop:Tprime_asymptotics}, page \pageref{prop:Tprime_asymptotics}]
Associated with each polygon class, indexed by $e\in\cE$, there is a change of coordinates
which conjugates the unperturbed return map to the linear twist map $\Omega^e$, given by
\begin{displaymath}
 \Omega^e : \bbS^1 \times \R \to \bbS^1 \times \R
    \hskip 40pt 
 \Omega^e(\theta,\rho) = \left( \theta + K(e)\rho , \rho \right),
\end{displaymath}
where $K(e)$ is the twist.
Furthermore, as $e\to\infty$, the limiting behaviour of $K(e)$ is singular:
\begin{displaymath}
 K(e) \to \left\{ \begin{array}{ll} 4 \quad & \sqrt{e}\in\N \\ 0 \quad & \mbox{otherwise.} \end{array}\right.
\end{displaymath}
\end{thm_nonumber}

Finally, in chapter \ref{chap:PerturbedAsymptotics}, we study the perturbed dynamics at infinity.
The contents of this chapter are based on extensive numerical experiments, tracking large orbits of
$F$ in integer arithmetic. 
We study the phase portrait of the perturbed return map.
In the cases where the twist of the unperturbed return map converges to zero, 
the form of the perturbation is laid bare, and we find delicate discrete resonance structures (see figure \ref{fig:resonance}(b)). 
However, in the cases where the twist persists, the phase portrait is featureless and uniform 
over length scales comparable with the strip's width (see figure \ref{fig:resonance}(a)). 
This local uniformity allows us to compute the period distribution 
function within these polygon classes numerically, and show that it is
consistent with the period statistics of a random reversible map.

\begin{obs_nonumber}[Observation \ref{obs:De}, page \pageref{obs:De}]
As $e\to\infty$ and $K(e)\to 4$, i.e., on the subsequence of perfect squares, 
the distribution of periods among orbits in each polygon class converges to a limiting distribution.
This limiting distribution corresponds to a random reversible map in a discrete phase space of diverging cardinality.
\end{obs_nonumber}

\medskip

We finish with some concluding remarks and open questions.

\chapter{Preliminaries} \label{chap:Preliminaries}

In this chapter we provide the reader with background material
and key results on various topics which will be referred to throughout what follows.

\section{Round-off} \label{sec:Round-off}

When we speak of round-off as a method of discretisation, 
we refer to the scenario in which a map $T$ on some set $X$ is replaced by 
a perturbed map $F$ on some finite or countable subset $L\subset X$, 
which is not invariant under $T$.
The map $F$ is the composition of $T$ with a \defn{round-off function} $R$,
which associates each point in $X$ with some (nearby) point in $L$.
The choice of $L$ and $R$ are referred to as the \defn{round-off scheme}.

In chapter \ref{chap:Introduction}, we mentioned the frameworks for round-off introduced by
Blank \cite{Blank94} and Vladimirov \cite{Vladimirov}:
we discuss their respective approaches briefly here.

The most general model is given by Blank,
who introduces the notion of an \emph{$\epsilon$-discretisation} $X_{\epsilon}$ of a compact set $X$,
which is simply an ordered collection of points in which neighbouring points are
separated by a distance of at most $\epsilon$. 
The corresponding round-off function associates each point in $X$ with 
the closest point in $X_{\epsilon}$ 
(or, in case there are several such points, the point which is smallest with respect to the ordering of $X$).

Vladimirov considers discretisations of linear maps of $\R^n$
on the integer lattice $\Z^n$. 
The space is discretised via the introduction of so-called \emph{cells} $\Omega$,
which tile the space under translation by elements of $\Z^2$:
 $$ \Omega + \Z^n = \R^n, \qquad \forall z\in\Z^n\setminus\{0\}: \quad \Omega \cap (\Omega + z) = \emptyset. $$
Then the round-off function is called a \emph{quantizer}, 
and may be any map which commutes with translations by 
elements of the lattice $\Z^n$, and whose associated cells are Jordan measurable.

In our case we follow the conventions of \cite{AdlerKitchensTresser,KouptsovLowensteinVivaldi02},
borrowed from maps on the torus.
We consider a discretisation of a planar map onto a two-dimensional lattice $\bbL$ given by
 $$ \bbL = C \Z^2, $$
where $C$ is a $2\times 2$ non-singular matrix.
A \defn{fundamental domain} $\Omega$ of $\bbL$ is a set which 
tiles the plane under translation by elements of $\bbL$,
as per the above definition of a cell,
and we restrict our attention to fundamental domains whose
closure is a parallelogram.
Then the round-off function $R$ associates each point $z\in\R^2$ 
with the unique lattice point $l\in\bbL$ such that $z\in l+\Omega$, i.e.,
 $$ R: \R^2 \to \bbL \hskip 40pt R(z) = (z-\Omega)\cap\bbL. $$
 
When modelling round-off as performed in fixed-point arithmetic,
it is typical to use a uniform square lattice of the form\footnote{We use $\N$ to denote the set of positive integers.}
 $$ \bbL = \left(\frac{\Z}{N}\right)^2 \hskip 40pt N\in\N, $$
where $1/N$ is the \defn{lattice spacing},
and the fundamental domain corresponds to \defn{nearest-neighbour rounding}:
 $$ \Omega = \frac{1}{N}\;\left[-\frac{1}{2},\frac{1}{2}\right)^2. $$

\medskip

In the case of the discretised rotation $F$ of (\ref{def:F}),
the unperturbed dynamics are given by the elliptic motion $A$ of equation (\ref{def:A}),
the lattice $\bbL$ is simply the integer lattice $\Z^2$,
and the fundamental domain is the unit square $\Omega=[0,1)^2$.

\begin{figure}[t]
        \centering
        \begin{minipage}{7cm}
          \centering
	  \includegraphics[scale=0.35]{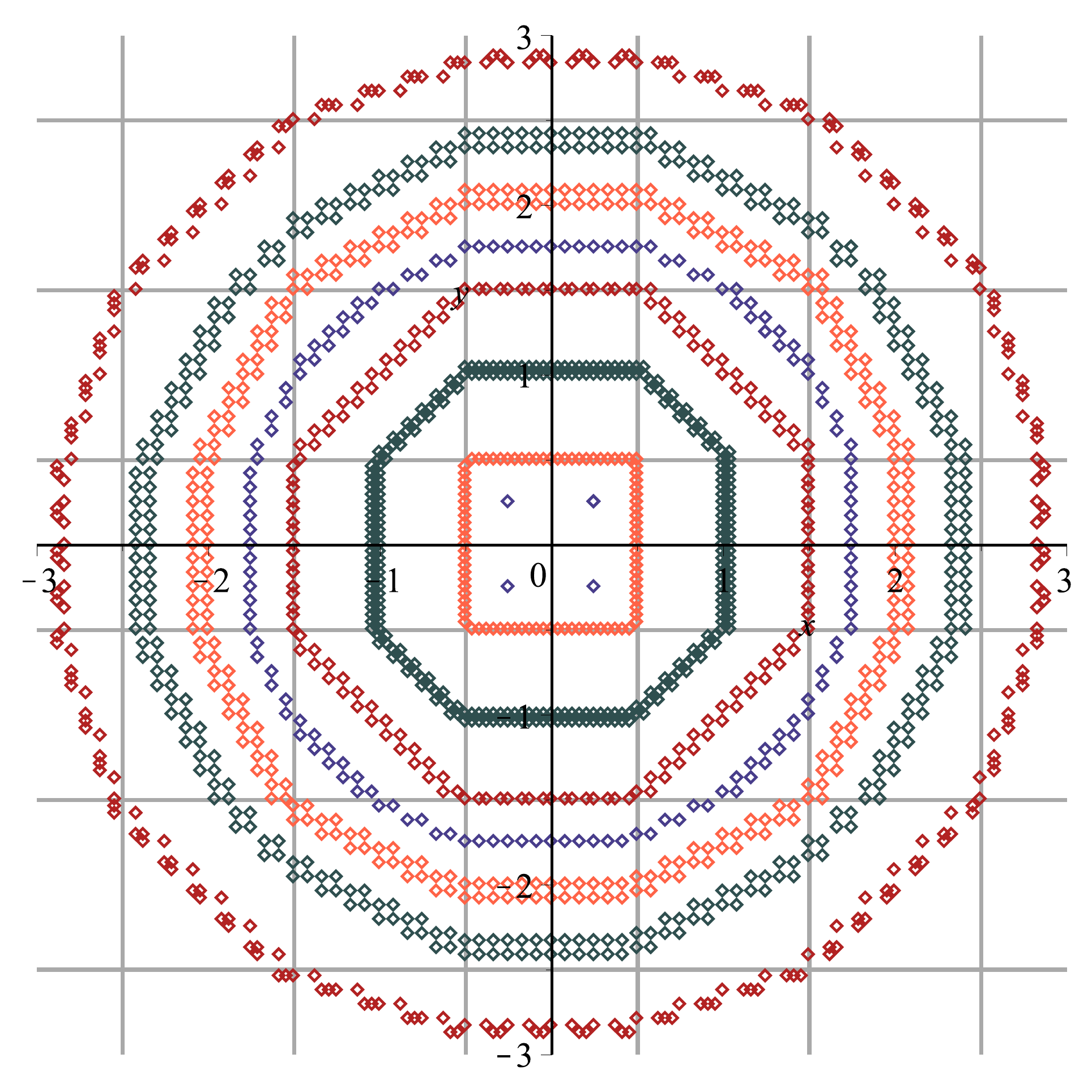} \\
	  (a) $\; \lambda=1/24$ \\
        \end{minipage}
        \quad
        \begin{minipage}{7cm}
	  \centering
	  \includegraphics[scale=0.35]{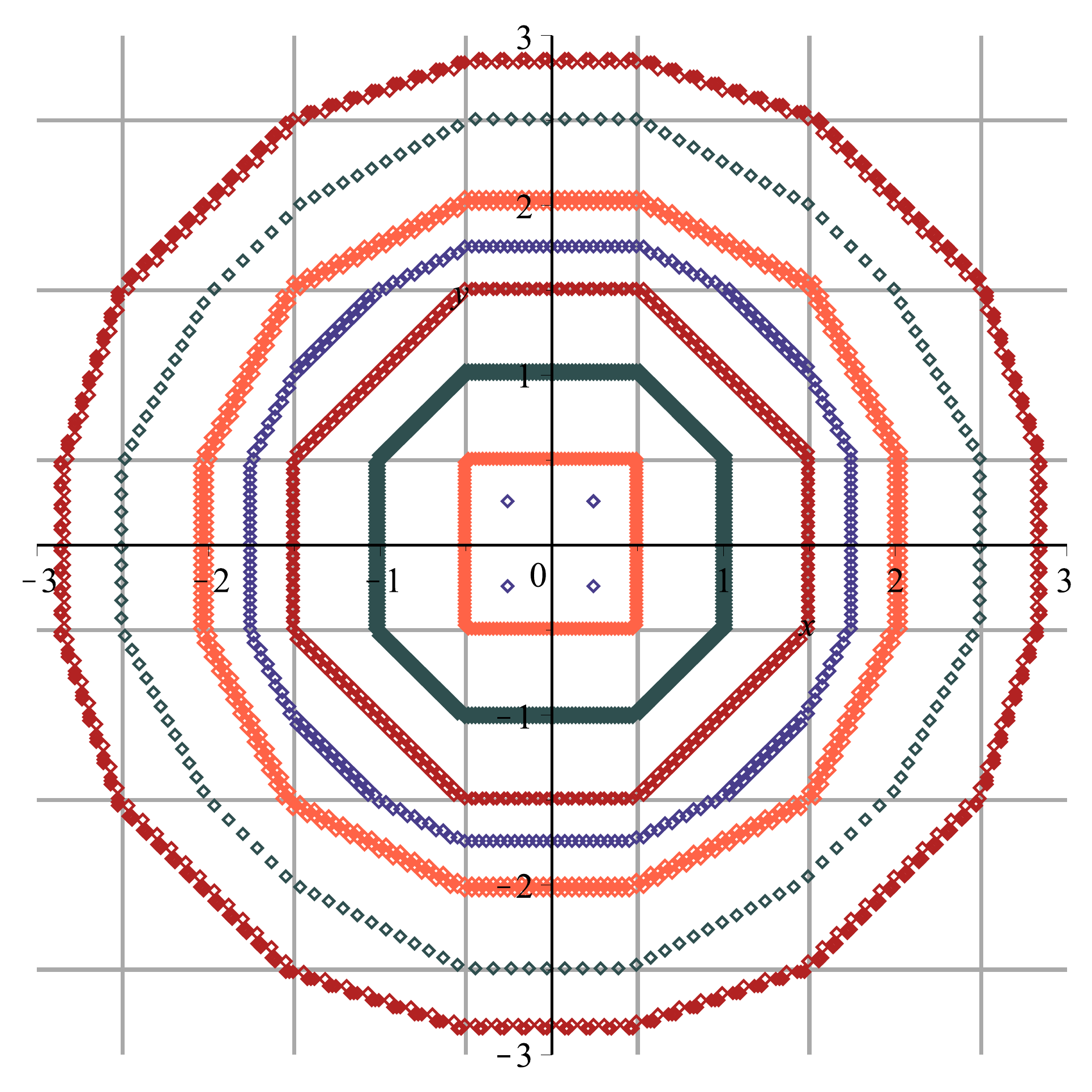} \\
	  (b) $\; \lambda=1/48$ \\
	\end{minipage}
    \caption{\hl{A selection of periodic orbits of the discretised rotation formed from the composition of 
	the elliptic motion} (\ref{def:A}) \hl{with the nearest-neighbour round-off function,
	for two small values of $\lambda$. The lattice spacing is such that each unit distance contains $1/\lambda$ lattice points.
	The grey lines are the lines $x = n+1/2$, $ y = n+1/2$ for $n\in\Z$.
	Here $\nu\approx 1/4$, and the orbits closest to the origin are periodic with period $4$.}}
    \label{fig:NearestInteger}
\end{figure}


The lattice spacing of $F$ is fixed at one, 
but there is no loss of generality: 
the linearity of the underlying dynamics $A$ ensures that 
the discretisation $F_{\alpha}$ with lattice spacing $\alpha>0$ 
(defined using the lattice $\bbL=(\alpha\Z)^2$ and $\Omega=[0,\alpha)^2$)
is conjugate to $F$ via a rescaling:
 $$ F_{\alpha}(z) = \alpha F(z/\alpha) \hskip 40pt z\in(\alpha\Z)^2. $$
We will make use of this fact in the next chapter, 
where we rescale the lattice $\Z^2$.

With regard to the choice of fundamental domain, 
the specific form of $A$ means that the round-off function only affects the $x$-coordinate,
so that $F$ is invertible (and reversible---see next section) irrespective of the choice of $\Omega$.
Furthermore, we may take $\Omega$ to be symmetric under reflection in the line $y=x$ without loss of generality.
If this is the case, then it is a straightforward exercise to show that the inverse of 
$F$ is the discretisation of the inverse of $A$:
 $$ F^{-1} = R \circ A^{-1}. $$

The choice of $\Omega=[0,1)^2$ corresponds to rounding down,
which is arithmetically nicer due to its consistency with modular arithmetic.
This choice also maximises the asymmetry of $F$ under reflection in the line $y=-x$,
i.e., the asymmetry in the direction perpendicular to the symmetry of $F$.
However, the character of the results described in this thesis is 
not heavily dependent on the choice of round-off scheme:
we compare the orbits seen in figure \ref{fig:PolygonalOrbits}
to those in figure \ref{fig:NearestInteger}, 
which were calculated using a nearest-neighbour round-off scheme (i.e., $\Omega=[-1/2,1/2)^2$).

\section{Time-reversal symmetry} \label{sec:time-reversal}

For a detailed background on the subject of time-reversal symmetry, 
we refer the reader to the surveys \cite{QuispelRoberts} and \cite{LambRoberts}, and references therein. 

In broad terms, time-reversal symmetry is a property of an invertible dynamical system, whereby reversing the direction of time 
maps valid trajectories into other valid trajectories. For example, consider the position $x(t)$ of a particle of unit mass moving 
in a conservative force field $f=-\nabla V$, where $V$ is the potential. The equation governing the motion is
\begin{equation} \label{eq:conservativeForce}
 \ddot{x} = f(x),
\end{equation}
where a dot denotes a derivative with respect to time. The equation (\ref{eq:conservativeForce}) is invariant under the transformation
$t\mapsto -t$, and if $x(t)$ is a solution, then $x(-t)$ is also a solution, with the same initial position but opposing initial velocity.

It is standard practice to write systems such as (\ref{eq:conservativeForce}) in the Hamiltonian formalism, where the motion of the 
particle is described by its position $q(t)=x(t)$ and momentum $p(t)=\dot{x}(t)$. The Hamiltonian $H(q,p)$ is given by 
 $$ H(q,p) = \frac{p^2}{2} + V(q), $$
and the governing equations are Hamilton's equations:
\begin{equation} \label{eq:HamiltonsEquations}
 \frac{dq}{dt} = \frac{\partial H}{\partial p} = p \hskip 40pt \frac{dp}{dt} = -\frac{\partial H}{\partial q} = f(q).
\end{equation}
In this setting, the time-reversal symmetry property of the system (\ref{eq:conservativeForce}) hinges upon the fact that the Hamiltonian is even in the momentum coordinate:
\begin{equation*} 
 H(q,p)= H(q,-p),
\end{equation*}
so that the system of equations (\ref{eq:HamiltonsEquations}) are invariant under the transformation
\begin{equation} \label{eq:HamitonianSymmetry}
 (t, q, p) \mapsto (-t, q, -p).
\end{equation}
Thus, in classical mechanics, time-reversal symmetry describes the invariance of a system under the reversal of the time-direction combined with a reflection in phase space, which changes the sign of the momentum coordinate.

In section \ref{sec:Hamiltonian}, we introduce an abstract, piecewise-affine Hamiltonian of the plane with such a classical time-reversal symmetry. The Hamiltonian $\cP$ (see equation (\ref{eq:Hamiltonian})), which represents the limiting dynamics of the discretised rotation $F$, is even in both coordinates, so that orbits of the corresponding flow are reversed by reflections in both axes.

However, the invariance of a system under a transformation such as (\ref{eq:HamitonianSymmetry}) is not a good definition for time-reversal symmetry, since it is not a coordinate independent property. This leads us to the definition of \defn{reversibility}, originally proposed (in a more restricted form\footnote{Devaney required that the phase space have even dimension, and that the involution $G$ fix a subspace with half the dimension of the phase space.}) by Devaney \cite{Devaney}. In this definition, the reflection in the momentum coordinate is replaced by an arbitrary \defn{involution} $G$ of the phase space, i.e., a map $G$ whose second iterate is the identity: $G^2 = \id$. The definition of reversibility can be applied to any flow (not necessarily Hamiltonian) or any map, and we shall be primarily interested in the latter.

\begin{definition}
A map $F$ is \defn{reversible} if it can be expressed as the product of two involutions:
\begin{equation} \label{eq:ReversibilityInvolutions}
 F= H\circ G \hskip 40pt H= F\circ G \hskip 40pt G^2 = H^2 = \id.
\end{equation}
The involutions $G$ and $H$ are called \defn{reversing symmetries}.
\end{definition}

A reversible map $F$ is necessarily invertible, and an equivalent definition of reversibility is to require that $F$ is conjugate to its inverse via an involution $G$:
\begin{equation*} 
 F^{-1} = G \circ F \circ G \hskip 40pt G^2 = \id.
\end{equation*}
The definition of reversibility is consistent with the idea that the involution $G$ reverses the direction of time, since if $x^{\prime}=F(x)$, then
 $$ G(x^{\prime}) = F^{-1} ( G(x)). $$

We note that reversible maps arise naturally from reversible flows, since any Poincar\'{e} return map or time-advance map of a reversible flow yields a reversible map. However, the definition (\ref{eq:ReversibilityInvolutions}) is purely algebraic, and there is no requirement that $F$ have any smoothness properties. We refer the reader to \cite{QuispelRoberts} for a wealth of examples of reversible flows and maps, both in physics and dynamical systems theory.

The lattice map $F$ of equation (\ref{def:F}) is reversible, and its reversing symmetries are given by
\begin{equation} \label{def:GH}
G(x,y) = (y,x)  \hskip 40pt H(x,y) = (\lfloor \lambda y\rfloor - x,  y).
\end{equation}

\subsection*{Symmetric orbits}

Let $O$ denote some (forwards and backwards) orbit of a reversible map $F$. The orbit $O$ is called \defn{symmetric} with respect to the reversing symmetry $G$ if it is setwise invariant under $G$:
 $$ G(O) = O. $$
An orbit which is not symmetric is called \defn{asymmetric}.

It is typical that the symmetric orbits of a dynamical system exhibit different behaviour to the asymmetric orbits. In \cite{QuispelRoberts}, symmetric orbits are associated with the type of universal behaviour displayed by conservative (symplectic) systems, whereas asymmetric orbits are associated with the type of universal behaviour displayed by dissipative systems. In our work we also differentiate between symmetric and asymmetric orbits, with the latter ultimately dominating the phase space (see section \ref{chap:PerturbedAsymptotics}).

In principle, it is not clear how to locate or identify symmetric orbits, other than by an exhaustive search of the phase space. To this end, we introduce the \defn{fixed space} $\Fix{G}$ of a reversing symmetry $G$:
 $$ \Fix{G} = \{ z \, : \; G(z)=z \}. $$
The symmetric orbits of a reversible map are structured by these fixed spaces, as we see in the following folklore theorem.

\begin{theorem}\cite[Theorem 4.2]{LambRoberts} \label{thm:SymmetricOrbits}
 Let $F$ be a reversible map in the sense of (\ref{eq:ReversibilityInvolutions}). Then the orbits of $F$ satisfy the following properties:
 \begin{enumerate}[(i)]
 \item An orbit is symmetric if and only if it intersects the set $\Fix{G}\cup\Fix{H}$.
 \item A symmetric orbit intersects the set $\Fix{G} \cup \Fix{H}$ exactly once if and only if it is either aperiodic or a fixed point.
 \item A symmetric periodic orbit which is not a fixed point intersects the set $\Fix{G} \cup \Fix{H}$ exactly twice; it has even period $2p$ if and only if it intersects one of the sets $\Fix{G} \cap F^p(\Fix{G})$ or $\Fix{H} \cap F^p(\Fix{H})$, and has odd period $2p+1$ if and only if it intersects the set $\Fix{G} \cap F^p(\Fix{H})$.
 \end{enumerate}
\end{theorem}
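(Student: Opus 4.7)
The plan is to work from the conjugation identity $GFG = F^{-1}$ (equivalently $GF^n = F^{-n}G$ for every $n \in \Z$) and its analogue $HFH = F^{-1}$. Both follow immediately from $F = HG$, $F^{-1} = (HG)^{-1} = GH$, and $G^2 = H^2 = \id$. Combined with the relation $H = FG$, these identities translate every condition of the form ``some iterate of $z \in \Fix{G}$ lies in $\Fix{G}$ or $\Fix{H}$'' into a divisibility statement on the exponent.

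For part (i), suppose first that $O$ is $G$-invariant, and pick any $z \in O$: then $G(z) = F^n(z)$ for some $n \in \Z$. If $n = 2m$ is even, the conjugation identity yields $GF^m(z) = F^{-m}G(z) = F^m(z)$, so $F^m(z) \in \Fix{G}$. If $n = 2m - 1$ is odd, the parallel calculation $HF^m(z) = FGF^m(z) = F^{n - m + 1}(z) = F^m(z)$ places $F^m(z)$ in $\Fix{H}$. Conversely, if $z \in O \cap \Fix{G}$, then $G(F^k(z)) = F^{-k}(z) \in O$ for every $k \in \Z$, so $G(O) \subseteq O$, and equality follows from $G^2 = \id$; the case $z \in \Fix{H}$ is analogous.

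For parts (ii) and (iii), the same identities let me count intersections along an orbit through $z \in \Fix{G}$: the condition $F^m(z) \in \Fix{G}$ reduces to $F^{-2m}(z) = z$, and $F^m(z) \in \Fix{H}$ reduces to $F^{1 - 2m}(z) = z$ (with an analogous pair starting from $z \in \Fix{H}$). For an aperiodic orbit the first equation forces $m = 0$ and the second has no integer solution, so $z$ is the unique intersection; a fixed orbit is trivially a singleton. For a periodic orbit of period $N \geq 2$ the conditions become $N \mid 2m$ and $N \mid 2m - 1$ respectively, yielding the nontrivial solution $m = N/2$ when $N$ is even and the unique solution $m = (N+1)/2$ when $N$ is odd; in either case a second distinct intersection is produced, completing part (ii).

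For (iii), the intersection pattern is read off from these solutions. When $N = 2p$, both intersections lie in $\Fix{G}$ (or both in $\Fix{H}$), and the further claim $z \in F^p(\Fix{G})$ amounts to $F^{-p}(z) \in \Fix{G}$, which follows from the identity $GF^{-p}(z) = F^p G(z) = F^p(z) = F^{-p}(z)$, the last step using $F^{2p}(z) = z$; thus $z \in \Fix{G} \cap F^p(\Fix{G})$, and symmetrically for $\Fix{H}$. When $N = 2p + 1$, the second intersection is $F^{p+1}(z) \in \Fix{H}$, and checking $F^{-p}(z) \in \Fix{H}$ reduces to $HF^{-p}(z) = F^p H(z) = F^{p+1}(z)$ (using $H(z) = FG(z) = F(z)$), which equals $F^{-p}(z)$ precisely because $F^{2p+1}(z) = z$; hence $z \in \Fix{G} \cap F^p(\Fix{H})$. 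The main obstacle is to keep the global and on-orbit viewpoints distinct: relations such as $F^{-p} = F^{N-p}$ are valid only along the orbit, whereas the sets $F^p(\Fix{G})$ and $F^p(\Fix{H})$ are global objects, so every membership claim must be verified via the conjugation identities rather than by direct appeal to periodicity.
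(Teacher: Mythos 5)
The paper does not prove this theorem: it is stated as a citation to Lamb and Roberts and imported wholesale, so there is no in-text argument to compare against. Your proof stands on its own, and the approach — deriving the conjugation identities $GF^nG = HF^nH = F^{-n}$ together with $H = FG$, and converting membership conditions $F^m(z) \in \Fix{G}$ or $F^m(z) \in \Fix{H}$ into divisibility constraints on $m$ — is exactly the classical method. I checked the reductions: for $z \in \Fix{G}$, your $F^m(z) \in \Fix{G} \Leftrightarrow F^{-2m}(z) = z$ and $F^m(z) \in \Fix{H} \Leftrightarrow F^{1-2m}(z) = z$ are correct, the case split in (i) on the parity of $n$ in $G(z) = F^n(z)$ is clean, and the identities used in (iii) to verify $F^{-p}(z) \in \Fix{G}$ (even period) and $F^{-p}(z) \in \Fix{H}$ (odd period) are right, including the use of $H(z) = FG(z) = F(z)$.

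Two small things to tidy. First, your argument proves only the forward directions of the biconditionals in (iii) (period $2p$ implies intersection with the even-type set; period $2p{+}1$ implies intersection with the odd-type set). The converses follow, but only by the mutual-exclusivity observation you have implicitly set up: since a symmetric non-fixed periodic orbit meets $\Fix{G} \cup \Fix{H}$ in exactly two points and your divisibility analysis shows whether those two points land in the same fixed set (even period) or in different ones (odd period), an orbit of even period cannot satisfy the odd-type condition and vice versa; it is worth stating this in one line rather than leaving it to ``the intersection pattern is read off.'' Second, in parts (ii)--(iii) you take $z \in \Fix{G}$ as your anchor; it is worth a sentence to dispose of the possibility that the orbit meets only $\Fix{H}$ — for odd period your own computation shows a $\Fix{G}$ point then exists anyway, while for even period the $\Fix{H}$-anchored analysis is genuinely needed and is the source of the $\Fix{H} \cap F^p(\Fix{H})$ alternative in the statement.
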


In dynamical systems of the plane, it is often the case that the fixed space of a reversing symmetry is a line, in which case we refer to a \defn{symmetry line}. In our case, the fixed space of the involution $G$ given in (\ref{def:GH}) is a symmetry line:
\begin{equation} \label{eq:FixG}
 \Fix{G} = \{ (x,y)\in\R^2 \, : \; x=y \},
\end{equation}
whereas the fixed set of the involution $H$ is given by the collection of line segments
\begin{equation} \label{eq:FixH}
 \Fix{H} = \{ (x,y)\in\R^2 \, : \; 2x = \lfloor \lambda y\rfloor \}.
\end{equation}
(We think of $G$ and $H$ as acting on the plane, although they will also be applied to lattice subsets thereof.)

\subsection*{Reversible and equivariant dynamics}

We note at this point that reversing symmetries do not just come in pairs, but in typically infinite sequences: if $G$ is a reversing symmetry of $F$, then $F^n\circ G$ is also a reversing symmetry for all $n\in\Z$. The set of $G$ and its iterates under the motion form a \defn{family} of reversing symmetries, and we call two reversing symmetries $G$ and $G^{\prime}$ \defn{independent} if $G^{\prime}$ is not a member of the family generated by $G$. A system which has just one family of reversing symmetries is called \defn{purely reversible}.

Furthermore, the composition of two reversing symmetries is a \defn{symmetry}, i.e., a map $S$ on the phase space that maps valid trajectories onto valid trajectories. In the case of a map $F$, this means that $F$ commutes with $S$:
 $$ S \circ F = F \circ S, $$
or, if the symmetry $S$ is invertible, that $F$ is \defn{equivariant} under $S$:
 $$ F = S^{-1} \circ F \circ S. $$
If we compose two reversing symmetries from the same family, then we obtain a trivial symmetry, i.e., an iterate of $F$. If, however, a dynamical system has a pair of independent reversing symmetries, then their composition is a non-trivial symmetry.

The group generated by the reversing symmetries of a dynamical system is called the \defn{reversing symmetry group}, which contains the \defn{symmetry group} as a normal subgroup. Thus the study of reversible dynamics can be approached as an extension to that of equivariant dynamics.

The lattice map $F$ is purely reversible---its reversing symmetry group consists of the family 
generated by the reflection $G$ of (\ref{def:GH}).

\subsection*{Time-reversal symmetry in discrete spaces} 

Finally we summarise a series of papers \cite{RobertsVivaldi05,RobertsVivaldi09,NeumarkerRobertsVivaldi} 
concerning universal behaviour among reversible maps with a finite phase space.

If an invertible map $F$ has a finite phase space, then all orbits of $F$ are periodic, and the period $T(z)$ of 
some point $z$ in the phase space is given by
\begin{equation*} 
 T(z) = \min\{k \, : \; F^k(z)=z \}.
\end{equation*}
Furthermore, if the phase space consists of $N$ points, then the \defn{period distribution function} $\cD$ of $F$ is given by
\begin{equation} \label{def:pdf}
 \cD(x) = \frac{1}{N} \, \# \{ z \, : \; T(z)\leq \kappa x \},
\end{equation}
where $\kappa$ is some scaling parameter, so that $\cD(x)$ is the fraction of points in the phase space whose period under $F$ is less than or equal to $\kappa x$.

The period distribution function $\cD$ is a non-decreasing step function, with $\cD(0)=0$ and $\cD(x)\rightarrow 1$ as $x\rightarrow\infty$. The possible values of $\cD$ are restricted to the set
 $$ \left\{ \frac{k}{N} \, : \; 0\leq k \leq N \right\}, $$
and its discontinuities lie in the set
 $$ \left\{ \frac{k}{\kappa} \, : \; k \in \N \right\}. $$
 
In \cite{RobertsVivaldi09}, it was shown that for a suitable choice of the scaling parameter $\kappa$, 
the expected period distribution function of a random reversible map on $N$ points 
converges to a universal limiting distribution as $N\rightarrow\infty$. 
This universal distribution function is given by a gamma (or Erlang) distribution
\begin{equation} \label{def:R(x)}
 \cR(x) = 1 - e^{-x}(1+x).
\end{equation}
The scaling parameter $\kappa$ depends on the fraction of the phase space occupied by the fixed spaces of the reversing symmetries of the map.

\begin{theorem} \cite[Theorem A]{RobertsVivaldi09} \label{thm:GammaDistribution}
 Let $(G,H)$ be a pair of random involutions of a set $\Omega$ with $N$ points, and let
 \begin{equation*} 
  g=\#\Fix{G} \hskip 40pt h=\#\Fix{H} \hskip 40pt \kappa = \frac{2N}{g+h}.
 \end{equation*}
 Let $\cD_N(x)$ be the expectation value of the fraction of $\Omega$ occupied by periodic orbits of $H\circ G$ with period less than $\kappa x$, computed with respect to the uniform probability. If, with increasing $N$, $g$ and $h$ satisfy the conditions
\begin{equation} \label{eq:g_h_conds}
 \lim_{N\rightarrow\infty} g(N) + h(N) = \infty \hskip 40pt \lim_{N\rightarrow\infty} \frac{g(N)+h(N)}{N} = 0,
\end{equation}
 then for all $x\geq 0$, we have the limit 
 $$ \cD_N(x) \rightarrow \cR(x), $$
where $\cR(x)$ is the universal distribution (\ref{def:R(x)}). Moreover, almost all points in $\Omega$ belong to symmetric periodic orbits.
\end{theorem}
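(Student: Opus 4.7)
The plan is to control the period distribution of $F=H\circ G$ by analyzing the hitting time of its iterates to the reversing set $S:=\Fix{G}\cup\Fix{H}$, and then to account for the fact that $\cD_N$ weights each orbit by its size. First I would invoke Theorem~\ref{thm:SymmetricOrbits} to reduce to the symmetric orbits: every non-trivial symmetric orbit meets $S$ in exactly two points, while asymmetric orbits are disjoint from $S$. If $z_0\in\Fix{G}$ and $k>0$ is the smallest index with $F^k(z_0)\in S$, the symmetric orbit of $z_0$ has period $2k$ when $F^k(z_0)\in\Fix{G}$ and $2k-1$ when $F^k(z_0)\in\Fix{H}$; starting from $\Fix{H}$ is analogous. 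So the distribution of periods on symmetric orbits is, up to this simple dictionary, the distribution of the first-return time from $S$ back to $S$.

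Next I would analyze the hitting time $T$ by a deferred-decisions argument. A uniform random involution with $g$ fixed points amounts to a uniform choice of a $g$-element set of fixed points together with a uniform perfect matching of the complement, and similarly for $H$. Starting from $z_0\in S$ and iterating $F=H\circ G$, each step $z_j\mapsto z_{j+1}$ reveals at most one previously unseen $G$-pairing followed by at most one previously unseen $H$-pairing; the revealed partners are, conditionally on the past, uniform on the as-yet-unmatched vertices. As long as $j$ is much smaller than $N$, the number of unmatched vertices is $N(1+o(1))$, so the probability that $z_{j+1}\in S$ equals $(g+h)/N\cdot(1+o(1))=2/\kappa\cdot(1+o(1))$, essentially independently of the history. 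Hence $T/(\kappa/2)$ converges in distribution to $\mathrm{Exp}(1)$, and the period $P\in\{2T,\,2T-1\}$ satisfies $P/\kappa\to\mathrm{Exp}(1)$ in distribution as well.

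To pass from the distribution of periods across orbits to $\cD_N$, which counts points and hence weights each orbit by its cardinality, I would apply size-biasing. The size-biased version of $\mathrm{Exp}(1)$ has density $xe^{-x}$, i.e.\ is $\mathrm{Gamma}(2,1)$, whose CDF is precisely $\cR(x)=1-e^{-x}(1+x)$. A consistency check corroborates this: the number of symmetric orbits is approximately $|S|/2=(g+h)/2$ and their mean period is approximately $\kappa=2N/(g+h)$, giving $N(1+o(1))$ points on symmetric orbits in expectation, and establishing that asymmetric orbits contribute a negligible fraction of $\Omega$---which simultaneously yields the ``almost all points are on symmetric periodic orbits'' assertion.

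The main obstacle is making the deferred-decisions approximation quantitative and uniform on the relevant time scale $k\sim\kappa$. For $k\ll\sqrt{N}$ conditional independence is essentially automatic, but because $\kappa=2N/(g+h)$ can be nearly linear in $N$ when $g+h$ grows only slowly, a careful bound on the accumulated dependence from the already-revealed pairings is required. Both hypotheses in (\ref{eq:g_h_conds}) enter here: $(g+h)/N\to 0$ keeps the per-step hitting probability small, so that the geometric approximation governs the tail of $T$; while $g+h\to\infty$ ensures that enough independent starting points in $S$ are available for concentration of the expected orbit counts about their means.
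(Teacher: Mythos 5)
This theorem is imported from Roberts \& Vivaldi \cite{RobertsVivaldi09} and is not proved in the thesis, so there is no in-paper argument to measure your sketch against; I can only assess it on its own terms. The outline is sound and identifies the right mechanism: reduce to symmetric orbits via Theorem~\ref{thm:SymmetricOrbits}; show, by revealing the two matchings step by step, that the $F$-step hitting time of $S=\Fix{G}\cup\Fix{H}$ from a point of $S$ is asymptotically geometric with success probability $(g+h)/N$, so that the orbit period divided by $\kappa$ tends to $\mathrm{Exp}(1)$; then size-bias --- because $\cD_N$ counts points, not orbits --- to obtain $\mathrm{Gamma}(2,1)$, whose CDF is $\cR(x)=1-e^{-x}(1+x)$. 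Roberts and Vivaldi work instead with exact finite-$N$ enumeration of arcs in the functional graph of $H\circ G$, which makes the asymptotics elementary to extract; your route is a probabilistic paraphrase of the same structure, and either can be made rigorous.

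Two places deserve sharpening. First, the deferred-decisions step is cleaner than you make it sound, and it is worth saying why: because $G$ and $H$ are involutions, the alternating $G$/$H$ path in the skeleton graph (edges $\{v,G(v)\}$, $\{v,H(v)\}$) is automatically \emph{simple} --- a freshly revealed partner cannot be a previously visited interior vertex, since that vertex's relevant matching edge was already fixed when it was first entered. So the only deviation from the ``uniform over the unrevealed pool'' heuristic is depletion of size $O(k/N)$ after $k$ half-steps, which is harmless for fixed $x$ since the relevant $k\sim x\kappa$ is $o(N)$ by the hypothesis $(g+h)/N\to 0$. The worry you flag about accumulated dependence on the scale $\kappa\gg\sqrt{N}$ is, for this reason, not actually an obstacle.

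Second, your account of why $g+h\to\infty$ is required is off target. The theorem concerns an \emph{expectation}, so no concentration is needed. The hypothesis is needed because the expected fraction of $\Omega$ lying on \emph{asymmetric} (cycle) orbits is of order $1/(g+h)$: exploring the skeleton component of a uniformly random vertex, one runs a race between revealing a fixed point (about $g+h$ targets) and closing a cycle by rediscovering the far end of the already-explored path (essentially one target), and the cycle wins with probability $\Theta(1/(g+h))$. Your consistency check ``$\tfrac{g+h}{2}\cdot\kappa\approx N$'' is an identity at leading order and does not by itself control the deficit; it is the $1/(g+h)$ estimate for the cycle mass that both makes the Gamma limit hold and delivers the ``almost all points are symmetric'' claim, and it is precisely the first condition in (\ref{eq:g_h_conds}) that drives it to zero. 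With these two repairs the sketch would go through.
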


Note that since Theorem \ref{thm:GammaDistribution} treats the composition of two random involutions, 
the resulting reversible map will be purely reversible with full probability as $N\rightarrow\infty$. 
Thus we expect (\ref{def:R(x)}) to be the limiting distribution for suitably `random' (in particular, non-integrable) 
purely reversible maps\footnote{Theorem 
\ref{thm:GammaDistribution} has recently been extended---see \cite{Hutz}.}.

Experimental evidence suggests that $\cR(x)$ is indeed the limiting distribution for a number of planar algebraic maps. 
The distribution (\ref{def:R(x)}) was first identified in \cite{RobertsVivaldi05} as the signature of time-reversal symmetry 
in non-integrable planar polynomial automorphisms (i.e., polynomial maps with a polynomial inverse), 
of which the area-preserving H\`{e}non map is an example. 
The maps were reduced to permutations of finite fields of increasing size. 
For suitably chosen parameter values, this reduction preserves the invertibility, symmetry and non-integrability 
of the original map, but also introduces modular multiplication---the ingredient which provides the `randomness'.

More recently, in \cite{NeumarkerRobertsVivaldi}, the distribution (\ref{def:R(x)}) has been observed for the 
Casati-Prosen family of maps---a two parameter family of reversible maps of the torus, 
which have zero entropy but are conjectured to be mixing. 
For rational parameter values, these maps preserve rational lattices, 
and thus can be restricted directly to finite sets of increasing size: 
the distribution $\cR(x)$ is conjectured to be the limiting distribution for a set of rational parameter values with full measure.

In chapter \ref{chap:PerturbedAsymptotics}, \hl{we consider the period distribution function of the perturbed return map $\Phi$ 
on each of the polygon classes}---indexed
by the sums of two squares. 
We provide numerical evidence that $\cR(x)$ is the limiting distribution 
along the subsequence of polygon classes which correspond to perfect squares, 
where the nonlinearity of the return map persists at infinity. 
In our case, the finite structure arises naturally from the symmetry of the system, 
since on each polygon class, the return map is equivariant with respect to a group of lattice translations. 
As the number of equivalence classes modulo this sequence of lattices diverges, 
the period distribution function converges to $\cR(x)$.

\chapter{The integrable limit}\label{chap:IntegrableLimit}

In this chapter we introduce the behaviour of the discretised rotation $F$ in the limit $\lambda\to 0$,
which we call the \defn{integrable limit}.
For ease of exposition, we assume that $\lambda>0$.
We embed the phase space $\Z^2$ into the plane,
to obtain a rescaled lattice map $\F$,
and show that the limiting dynamics are described by an integrable, piecewise-affine Hamiltonian system.
This Hamiltonian system is accompanied by a natural
partition of the plane into a countable sequence of polygonal annuli,
which classify the integrable orbits according to a certain symbolic coding.

Then we define a return map of $\F$, whose domain is a thin strip $X$ aligned along the symmetry axis.
We show that the orbits of $\F$, up to the time of first return to $X$, 
shadow the orbits of the integrable Hamiltonian system.
Furthermore, we show that the integrable dynamics are nonlinear: 
the return map corresponding to the Hamiltonian flow satisfies a twist condition.

Finally, we briefly describe the dynamics of $F$ in the limits $\lambda\to\pm 1$: 
the other parameter regimes in which the dynamics at the limit is an exact rotation. 
Preliminary observations suggest that we can expect similar behaviour to the $\lambda\to 0$ case.

Much of the work in this chapter has been published in \cite{ReeveBlackVivaldi}.

\section{The rescaled lattice map} \label{sec:Rescaling}

We make some elementary observations about the behaviour 
of orbits of the discretised rotation $F$ in the limit $\lambda\to 0$.
Recall that when $\lambda$ is small, the map $F$ is the discretisation of a
rotation whose angle is close to $\pi/2$.
We find that no orbits are periodic with period four 
(excluding the origin---a fixed point of the dynamics which we ignore in this discussion).
\hl{Instead the orbits of minimal period are the fixed points of a secondary recurrence:
in particular, the fourth iterates of $F$ induce a perturbed rotation (on polygons, rather than circles),
whose angle approaches zero as $\lambda\to 0$}.
Thus all orbits of $F$ visit all four quadrants of the plane,
and for the rest of this section we restrict our attention to those orbits which begin in the first quadrant.

In the following proposition, we describe the orbits closest to the origin.
We see that the orbit of any point, for sufficiently small $\lambda$,
is symmetric and coincides with a level set of $|x|+|y|$ everywhere except in the third quadrant,
where the orbit slips by one lattice point (see figure \ref{fig:squares}).
Each orbit \hl{is a fixed point of the secondary recurrence, 
and preserves a convex polygon---an approximate square.
We refer to the recurrence time of the orbits,
during which these invariant polygons are populated, 
as one} \defn{revolution} \hl{about the origin}.

\begin{figure}[t]
        \centering
        \includegraphics[scale=0.35]{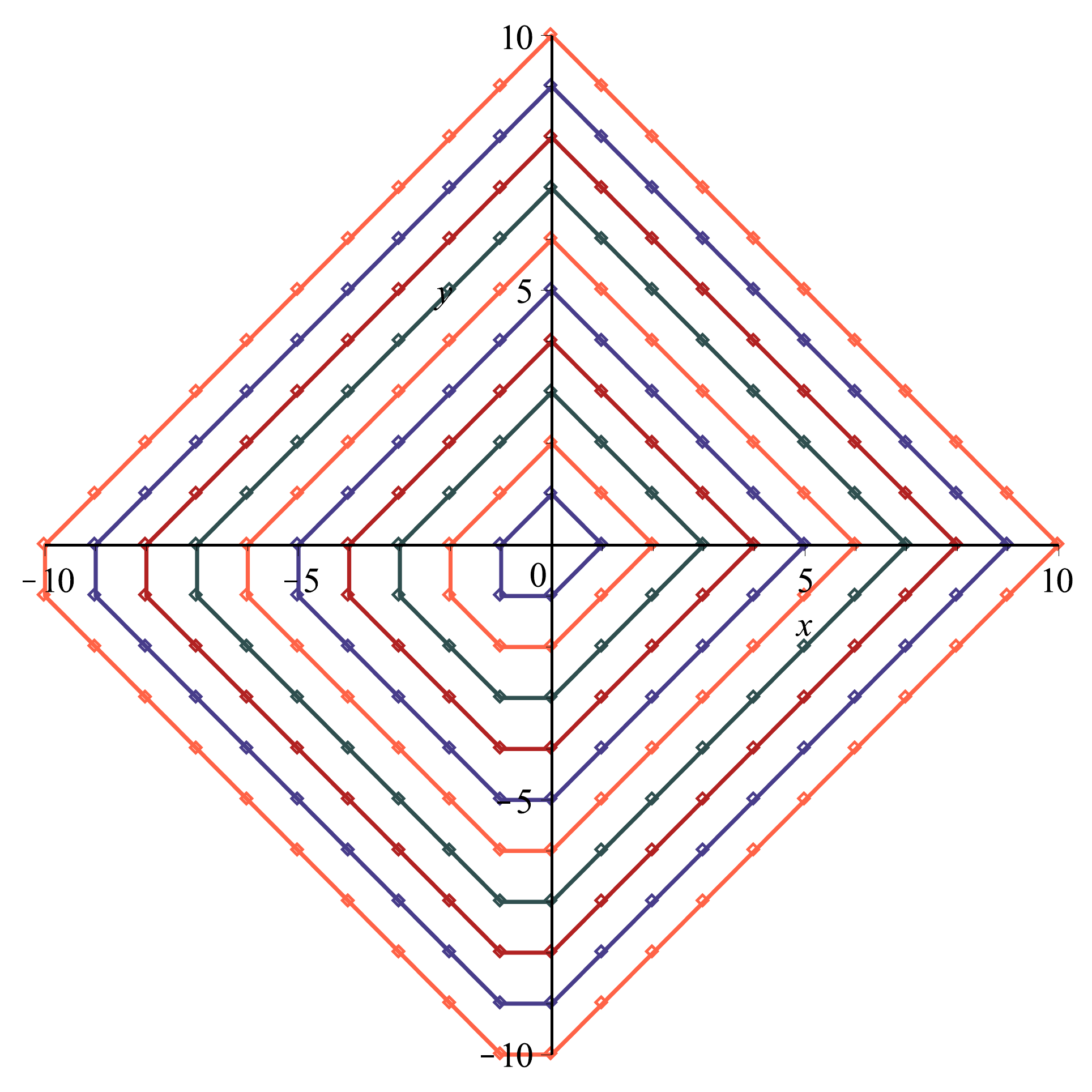}
        \caption{Periodic orbits of $F$ in the region $|x|+|y|\leq 1/\lambda$.
        \hl{Each orbit lies on a convex polygon which is close to a square.}}
        \label{fig:squares}
\end{figure}

\begin{proposition} 
Let $\lambda>0$. For all $z=(x,y)\in\Z^2$ with $x,y\geq0$ satisfying
\begin{equation} \label{eq:lambda_bound}
 x+y < \frac{1}{\lambda},
\end{equation}
the orbit of $z$ under $F$ is symmetric and periodic with minimal period
 $$ 4(x+y)+1. $$
\label{prop:square_orbits}
\end{proposition}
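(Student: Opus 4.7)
My strategy is to describe the orbit explicitly as the set of $4s+1$ vertices of a convex polygon (where $s = x+y$), and verify that $F$ acts as a cyclic permutation of this set. The degenerate case $s=0$ gives the fixed point at the origin with period $1 = 4s+1$, so assume $s \geq 1$.

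I would first establish the following local form of $F$: for any $(a,b) \in \Z^2$ with $|a| < 1/\lambda$, one has $\lfloor \lambda a\rfloor = 0$ if $a \geq 0$ and $\lfloor \lambda a\rfloor = -1$ if $a < 0$, so
\[
F(a,b) \;=\; \begin{cases} (-b,\,a) & a \geq 0, \\ (-1-b,\,a) & a < 0. \end{cases}
\]
Next I would enumerate the $4s+1$ polygon vertices: the four axis points $(\pm s,0)$ and $(0,\pm s)$; $(s-1)$ interior lattice points on each of the three standard edges $x+y=s$ (Q1), $-x+y=s$ (Q2), and $x-y=s$ (Q4); and $s$ extra vertices $(-j,\,-(s+1-j))$ for $j=1,\ldots,s$ on the Q3 ``slip'' edge, which lies on the shifted line $-x-y=s+1$. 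These points are pairwise distinct, they all have $|x| \leq s < 1/\lambda$, and together they number $4 + 3(s-1) + s = 4s+1$.

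The core computation is to trace the $F$-orbit of $(x,y)$ using the local form, showing that the iterates $z_k = F^k(x,y)$ for $k = 0, \ldots, 4s$ enumerate all the listed vertices and that $z_{4s+1} = z_0$. A direct four-step calculation gives $F^4(a,b) = (a+1,\,b-1)$ in the Q1 wedge (with intermediate iterates $(-b,a)$, $(-1-a,-b)$, $(b-1,-1-a)$), so $F^{4y}(x,y) = (s,0)$ and $F^{4y+1}(x,y) = (0,s)$. Continuing, $F^4$ acts with analogous formulas on the Q2 and Q4 edges, while on the Q3 slip the update rule becomes $(-j,\,-(s+1-j)) \mapsto (-(j+1),\,-(s-j))$. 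Carrying out the full tour verifies that $F$ cyclically permutes the vertex set with a single cycle of length $4s+1$, forcing the minimal period to be $4s+1$. Symmetry then follows from Theorem~\ref{thm:SymmetricOrbits}(i): the orbit contains $(0,s)$, and $(0,s) \in \Fix{H}$ because $\lfloor \lambda s\rfloor = 0$ satisfies the defining equation $2x = \lfloor \lambda y\rfloor$ of $\Fix{H}$.

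The main technical obstacle is the bookkeeping around the Q3 slip, where the orbit departs from the ideal square $|x|+|y| = s$ onto a zigzag edge with $|x|+|y|=s+1$; one has to verify that $F^4$ advances correctly along the slip according to the stated rule and that the orbit's $x$-coordinates remain in $(-1/\lambda,\,1/\lambda)$ throughout so that the local form of $F$ remains valid. Aside from this case analysis, the argument reduces to routine arithmetic.
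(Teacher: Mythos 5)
Your argument is correct, but it takes a genuinely different route from the paper's. You reconstruct the orbit explicitly: you enumerate all $4s+1$ points (the axis points, the interior points of the three square edges, and the $s$ points of the Q3 ``slip'' on $-x-y=s+1$), establish the local form of $F$ and the $F^4$-translation on each wedge, and then verify that $F$ acts as a single $(4s+1)$-cycle on this set. Symmetry comes at the end from Theorem~\ref{thm:SymmetricOrbits}(i), once you know $(0,s)\in\Fix{H}$ lies on the orbit. The paper does something much shorter: it never enumerates the orbit. Instead it locates exactly two special points --- $F^{-4x}(z)=(0,x+y)\in\Fix{H}$, and (splitting on the parity of $x-y$) a point $F^{-4m}(z)$ or $F^{-4m+2}(z)$ in $\Fix{G}$ --- and then reads off both the symmetry \emph{and} the period from Theorem~\ref{thm:SymmetricOrbits}(iii), which says a non-fixed symmetric periodic orbit meeting $\Fix{G}\cap F^p(\Fix{H})$ has period $2p+1$. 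Only the $F^4$ translation formula is used, and only along the Q1 edge.

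The trade-off is clear. Your approach is fully constructive --- the reader sees the polygon, the slip, and the cycle structure directly, and you don't need part (iii) of the reversibility theorem at all, only the easy part (i). But you pay for this with four separate $F^4$ computations (one per quadrant, plus the boundary transitions at the axes) and the bookkeeping that all $4s+1$ listed points are distinct and stay inside the region $|a|<1/\lambda$ where the local form of $F$ is valid; you flag this yourself as the technical obstacle. The paper's proof, by contrast, does a single $F^4$ computation, needs no Q2/Q3/Q4 analysis, and no slip bookkeeping --- but it requires the full strength of the reversibility machinery (in particular, part (iii) of Theorem~\ref{thm:SymmetricOrbits} with its precise $2p+1$ period count). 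Both are sound; yours is more elementary in the tools used but heavier in the case analysis, and it gives more explicit information about the orbit's geometry, which is in fact what the surrounding text (figure~\ref{fig:squares} and the description of the ``slip'') is describing informally.
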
 

\begin{proof}
We begin by considering the fourth iterates of $F$.
For $\lambda>0$ and $(x,y)$ satisfying $0\leq x \leq 1/\lambda-1$, $1\leq y \leq 1/\lambda$, we have
 \begin{align}
  F(x,y) &= (\fl{\lambda x} -y,x) = (-y,x), \nonumber \\
  F^2(x,y) &= (\fl{-\lambda y}-x,-y) = (-(x+1),-y), \nonumber \\
  F^3(x,y) &= (\fl{-\lambda(x+1)}+y,-(x+1)) = (y-1,-(x+1)), \nonumber \\
  F^4(x,y) &= (\fl{\lambda(y-1)}+x+1,y-1) = (x+1,y-1). \label{eq:F4}
 \end{align}
Thus every fourth iterate of $F$ translates such points by the vector $(1,-1)$.

Now let $z=(x,y)$ with $x,y\geq0$ satisfying (\ref{eq:lambda_bound}).
(We assume that $z\neq(0,0)$.)
We show that the orbit of $z$ intersects both of the fixed sets $\Fix{G}$ and $\Fix{H}$,
and thus is symmetric and periodic by theorem \ref{thm:SymmetricOrbits} part (iii)
(see page \pageref{thm:SymmetricOrbits}).
Recall that the line $\Fix{G}$ is the set of points with $x=y$, whereas the set $\Fix{H}$ includes the line segment
 $$ \{ (0,y) \, : \; \fl{\lambda y} =0 \} \subset \Fix{H} $$
(see equations (\ref{eq:FixG}) and (\ref{eq:FixH})).

Using (\ref{eq:F4}), we have that the orbit of $z$ intersects $\Fix{H}$ at the point
 $$ F^{-4x}(z) = z - x(1,-1) = (0, x+y)\in\Fix{H}. $$
To show that the orbit intersects $\Fix{G}$, there are two cases to consider.
If the difference between $x$ and $y$ is even, i.e., if
 $$ x-y = 2m \hskip 40pt m\in\Z, $$
then we have
 $$ F^{-4m}(z) = z - m(1,-1) = (x-m, y+m)\in\Fix{G}. $$
By theorem \ref{thm:SymmetricOrbits} part (iii), the orbit of $z$ is symmetric and periodic.
Furthermore, we have
 $$ F^{-4m}(z) \in \Fix{G} \cap F^{4(x-m)}(\Fix{H}), $$
so that the period of $z$ is given by
 $$ 2(4x-4m)+1 = 2(4x-2(x-y))+1 = 4(x+y)+1 $$
as required.
 
If the difference between $x$ and $y$ is odd, so that
 $$ x-y = 2m-1 \hskip 40pt m\in\Z, $$
then applying $F^{-4m}$ gives
 $$ F^{-4m}(z) = z - m(1,-1) = (x-m, y+m)=(x-m,x-m+1). $$
If we now apply $F^2$, to move the orbit into the third quadrant, then we have
\begin{align*}
 F^{-4m+1}(z) &= (-(x-m+1),x-m), \\
 F^{-4m+2}(z) &= (-(x-m+1), -(x-m+1)) \in\Fix{G}.
\end{align*}
Again the orbit of $z$ is symmetric and periodic, and since
 $$ F^{-4m+2}(z) \in \Fix{G} \cap F^{4(x-m)+2}(\Fix{H}), $$
the period of $z$ is given by
 $$ 2(4x-4m+2)+1 = 2(4x-2(x-y))+1 = 4(x+y)+1. $$
This completes the proof.
\end{proof}

However, this is just the beginning of the story, since for all $\lambda>0$ there are
points in $\Z^2$ which do not satisfy (\ref{eq:lambda_bound}).
Further from the origin, we find that not all orbits are symmetric;
that orbits trace a sequence of different polygonal shapes;
and \hl{that orbits may make more than one revolution about the origin} 
(period multiplication---see figure \ref{fig:PeriodFunction}).

If we restrict our attention to the symmetric orbits, 
in particular the orbits which intersect the positive half of the symmetry line $\Fix{G}$,
we have the following description of a collection of orbits which, 
like those in proposition \ref{prop:square_orbits},
\hl{make just one revolution about the origin}.
We refer to such orbits as \defn{minimal orbits}.
We defer the proof of proposition \ref{prop:octagon_orbits} to appendix \ref{chap:Appendix}.

\begin{proposition} \label{prop:octagon_orbits}
For all $\lambda>0$ and $x\in\N$ in the range
 $$ \frac{1}{2\lambda}+2 \leq x\leq \frac{1}{\lambda}-1, $$
the orbit of $z=(x,x)$ under $F$ is symmetric and minimal if and only if
 $$ 2x + \Bceil{\frac{1}{\lambda}} - 2\left\fl{ \frac{1}{\lambda}\right} \equiv 2 \mod{3}. $$
\end{proposition}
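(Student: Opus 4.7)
My plan is to track the orbit of $z=(x,x)$ explicitly around one revolution, exploiting the fact that for $x$ in the stated range the orbit traces an approximate regular octagon (the second polygon class after the squares of proposition \ref{prop:square_orbits}). The starting hypothesis places $\lambda x$ in the interval $[1/2+2\lambda,\,1-\lambda]$, so $\fl{\lambda x}=0$. Running through the same four-step calculation as in proposition \ref{prop:square_orbits} gives $F^4(x,x)=(x+1,x-1)$, and by induction $F^{4k}(x,x)=(x+k,x-k)$ as long as the arguments of the floor functions stay in the regime where $\fl{\lambda(x+k)}=0$, $\fl{-\lambda(x+k+1)}=-1$, etc. This side of the octagon, parallel to $(1,-1)$, is the first thing to analyse; its length is determined by the largest $k$ with $\lambda(x+k+1)\le 1$, i.e. essentially $\fl{1/\lambda}-x-1$ or $\fl{1/\lambda}-x$ depending on whether $1/\lambda\in\Z$.

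Next I would analyse the vertex transitions. When the first coordinate crosses $\fl{1/\lambda}$ the value $\fl{\lambda\cdot}$ jumps from $0$ to $1$, producing a different one-step map, and the orbit ``bends'' onto a new side parallel to a different lattice direction. Because the octagon is convex with eight sides, I need to classify the possible transition patterns at each corner, paying attention to whether $1/\lambda$ is an integer — this is exactly the source of the $\ceil{1/\lambda}-2\fl{1/\lambda}$ correction term in the conjectured condition. By performing the calculation symbolically on each of the eight sides, I obtain explicit formulae for $F^{4k}$ on each side and for the exceptional step at each vertex, allowing me to add up the total number of iterates needed to traverse the octagon once.

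Having traced the orbit, I would then invoke theorem \ref{thm:SymmetricOrbits}. The orbit starts on $\Fix{G}$; by the left/right reflection symmetry of the octagon about the line $x=y$, the orbit is symmetric if and only if after half a revolution it lands on $\Fix{H}$ (i.e.\ the short line segment along the $y$-axis identified in \eqref{eq:FixH}), and it is \emph{minimal} (completes a single revolution) if and only if after a full revolution it returns exactly to $(x,x)$ rather than to a shifted lattice point which would require several revolutions to close. Translating these two geometric requirements into arithmetic gives a linear congruence on $x$ whose modulus is the signed total shift accumulated at the vertices per revolution. Since there are, roughly speaking, three independent vertex corrections between consecutive intersections with $\Fix{G}$ (the diagonal vertex and two axis-type vertices, or equivalent grouping), this modulus comes out to $3$, and matching constants produces the stated congruence $2x+\ceil{1/\lambda}-2\fl{1/\lambda}\equiv 2\pmod 3$.

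The main obstacle will be the vertex bookkeeping: at each of the eight corners the first coordinate of the current point lies near $\pm\fl{1/\lambda}$ or $\pm(\fl{1/\lambda}+1)$, and the value taken by the floor function depends on whether $1/\lambda$ is itself an integer and on the parity/residue of $x$ modulo small integers. One must check that the orbit does \emph{not} accidentally pick up an extra lattice step on some side (which would destroy minimality by forcing a second revolution), and that no vertex configuration is inadvertently skipped in the degenerate case $1/\lambda\in\Z$. A clean way to organise this is to exploit the reversibility $F=H\circ G$ and compute only from $\Fix{G}$ forward until the orbit reaches $\Fix{H}$, then reconstruct the rest by applying $G$; this halves the casework but still requires careful handling of each of the four sides traversed in that half-revolution.
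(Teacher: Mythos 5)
Your plan is essentially the paper's proof. The paper iterates $F^4$ explicitly through the same two affine regimes (translation by $(1,-1)$ where $\fl{\lambda x}=0$ and by $(1,-3)$ where $\fl{\lambda x}=1$), handles the transition case-by-case according to whether $1/\lambda\in\Z$ (which is exactly the source of the $\ceil{1/\lambda}-2\fl{1/\lambda}$ correction), and invokes Theorem~\ref{thm:SymmetricOrbits} by checking whether the orbit lands in a segment of $\Fix{H}$ near the negative $y$-axis.

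A few corrections that will matter once you carry out the bookkeeping. The polygon traced here is not an octagon: for this range of $x$ one has $\cP(\lambda x,\lambda x)\in(1,2)$, and by the side-count formula of Theorem~\ref{thm:Polygons} the corresponding class has $4(2\fl{\sqrt{\cP}}+1)=12$ sides. The modulus $3$ does not come from counting vertex corrections; it is the quantity $2v_k+1=3$, the $y$-component of the integrable step $(1,-3)$ in the second regime, so that the residue of the $y$-coordinate modulo $3$ at the axis crossing is the invariant that decides membership in $\Fix{H}$. Symmetry is automatic since the orbit starts on $\Fix{G}$ (Theorem~\ref{thm:SymmetricOrbits}(i)); what the $\Fix{H}$ criterion actually tests is minimality, via part (iii). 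Finally, the paper's computation is lighter than your proposed side-by-side traversal: after the orbit reaches just above the positive $x$-axis at the end of the second regime, a single application of $F^3$ deposits it directly on the relevant segment $\{x=-1,\ \fl{\lambda y}=-2\}$ of $\Fix{H}$ near the negative $y$-axis, so only the first two sides ever need explicit treatment.
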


The novel element in this proposition is the appearance of congruences---a
feature which will be developed further in chapter \ref{chap:PerturbedDynamics}.

\medskip

Both propositions \ref{prop:square_orbits} and \ref{prop:octagon_orbits} suggest 
that the analysis of the limit $\lambda\to 0$ requires some scaling.
For $\lambda>0$\footnote{The rescaled lattice map with $\lambda<0$ is related to the $\lambda>0$ case
via $F_{-\lambda}=R_x \circ F_{\lambda} \circ R_y$, where $R_x$ and $R_y$ are reflections
in the $x$ and $y$ axes, respectively.}, we normalise the natural length scale $1/\lambda$ by introducing 
the \defn{scaled lattice map} $\F$, which is conjugate to $F$,
and acts on points $z=\lambda(x,y)$ of the scaled lattice $\lZ$:
\begin{equation}\label{def:F_lambda}
\F: \lZ \to \lZ 
 \hskip 40pt
 \F(z)=\lambda F(z/\lambda)
  \hskip 40pt
 \lambda>0.
\end{equation}
The discretisation length of $F_\lambda$ is $\lambda$.
Then we define the \defn{discrete vector field}, which measures the 
deviation of $\F^4$ from the identity:
\begin{equation}\label{def:v}
 \bfv: \; \lZ \to \lZ
\hskip 40pt
\bfv(z) = \F^4(z)-z.
\end{equation}

To capture the main features of $\bfv$ on the scaled lattice, 
we introduce an \defn{auxiliary vector field} $\bfw$ on the plane, 
given by
\begin{equation}\label{def:w}
 \bfw: \; \R^2 \to \Z^2
\hskip 40pt
\bfw(x,y)=(2\fl{ y }+1,-(2\fl{ x }+1)).
\end{equation}
The field $\bfw$ is constant on every translated unit square 
(called a \defn{box})
\begin{equation}\label{def:B_mn}
 B_{m,n} = \{ (x,y)\in\R^2 \, : \; \fl{x} =m, \; \fl{y} = n \}, \quad m,n\in\Z
\end{equation}
and we denote the value of $\bfw$ on $B_{m,n}$ as
\begin{equation} \label{def:w_mn}
 \bfw_{m,n}=(2n+1,-(2m+1)). 
\end{equation}

The following proposition, whose proof we postpone to below (section \ref{sec:Recurrence}, page \pageref{proof:mu_1}), 
states that if we ignore a set of points of zero density, 
then the \hl{vector fields} $\bfv$ and $\bfw$ are parallel.

\begin{proposition} \label{prop:mu_1}
 For $r>0$, we define the set
\begin{equation}\label{eq:A}
 A(r,\lambda) = \{ z\in\lZ \, : \; \| z \|_{\infty} < r \},
\end{equation}
(with $\| (u,v) \|_{\infty} = \max (|u|,|v|)$), 
and the ratio
 \begin{displaymath}
  \mu_1(r,\lambda) = \frac{ \# \{z\in A(r,\lambda) \, : \; \bfv(z) = \lambda\bfw(z) \} }{\# A(r,\lambda)} .
 \end{displaymath}
Then we have
 \begin{displaymath}
  \lim_{\lambda\to 0} \mu_1(r,\lambda) = 1 .
 \end{displaymath}
\end{proposition}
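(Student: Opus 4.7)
The plan is to establish the claim by a direct computation of $F^4$ combined with a simple density argument. First I would compute the iterates $z_n = F^n(x,y) = (x_n,y_n)$ for $n=1,2,3,4$ by writing $\lambda x = a + u$ and $\lambda y = b + v$ with $a,b\in\Z$ and $u,v\in [0,1)$, and tracking $\lfloor\lambda x_n\rfloor$ at each step. Since $z=\lambda(x,y)\in A(r,\lambda)$ forces $|a|$ and $|b|$ to be bounded by a constant depending only on $r$, we have $|\lambda a|,|\lambda b| = O(\lambda)$ uniformly on $A(r,\lambda)$. Hence, provided the fractional parts $u$ and $v$ both lie in the subinterval $(C\lambda,\,1-C\lambda)$ for a suitable constant $C = C(r)$, the three relevant floor values simplify to
\begin{equation*}
 \lfloor\lambda x_1\rfloor = -b-1, \qquad \lfloor\lambda x_2\rfloor = -a-1, \qquad \lfloor\lambda x_3\rfloor = b,
\end{equation*}
yielding
\begin{equation*}
 F^4(x,y) - (x,y) = (2b+1,\, -(2a+1)).
\end{equation*}
After rescaling, this is precisely the identity $\bfv(z) = \lambda\bfw(z)$ for $z \in B_{a,b}$; it extends the calculation (\ref{eq:F4}) from the innermost box $B_{0,0}$ to an arbitrary box.

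Next I would bound the bad set, i.e., the set of lattice points for which one of the three non-degeneracy conditions on $u,v$ fails. In scaled coordinates the bad set is contained in a finite union of horizontal and vertical strips of width $O(\lambda)$, each strip running along a box boundary $\{\lambda x = a\}$ or $\{\lambda y = b\}$ with $|a|,|b|$ bounded by a constant depending only on $r$. Within $\|z\|_\infty < r$ the total scaled area of these strips is $O(r\lambda)$, and so the number of scaled lattice points they contain is $O(r/\lambda)$. Since $\# A(r,\lambda) \sim (2r/\lambda)^2$, the bad fraction is $O(\lambda/r)$, which tends to zero as $\lambda\to 0$, giving $\mu_1(r,\lambda)\to 1$.

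The main obstacle will be the bookkeeping in the step-by-step evaluation of $F^4$: one must verify that a single constant $C=C(r)$ suffices to keep every argument $\lambda x_n$ at distance $\gtrsim C\lambda$ from every integer, simultaneously for all three intermediate steps. This reduces to checking that the intermediate iterates $x_n,y_n$ stay of order $O(1/\lambda)$, so that $\lambda x_n,\lambda y_n$ remain in a bounded interval. This is immediate from the fact that each application of $F$ differs from the rotation-like map $A$ by an $O(1)$ term supplied by the floor function, so after four iterations $\|z_n\|_\infty$ remains of the same order as $\|z_0\|_\infty$, with only an $O(1)$ correction in unscaled coordinates.
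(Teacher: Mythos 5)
Your proposal is correct and follows essentially the same strategy as the paper: compute $F^4$ step by step, observe that the iterated floor values simplify to a uniform translation provided the scaled point stays a distance $\gtrsim C\lambda$ from the grid lines $\Delta$, and then count lattice points in the resulting $O(\lambda)$-wide exceptional strips to show the bad fraction is $O(\lambda)$. The step-by-step floor evaluation you carry out is precisely the content of the paper's equations (\ref{eq:Fabcd})--(\ref{eq:v_abcd}), and your density count mirrors the paper's estimate $\#\Lambda_{m,n} = O(1/\lambda)$ against $\#A(r,\lambda) = O(1/\lambda^2)$.

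The one structural difference worth noting: the paper does not bound the bad set directly by nearness to $\Delta$. Instead it proves lemma \ref{lemma:Lambda}, which characterises the failure set exactly as the set $\Lambda$ of \emph{transition points} (points whose fourth image under $F_\lambda$ lands in a different box), and this is a slightly smaller set than your ``$u$ or $v$ within $C\lambda$ of an integer'' strips. Your coarser sufficient condition is perfectly adequate for the density count here, and is arguably simpler, but the paper's sharper characterisation via $\Lambda$ is re-used heavily in later arguments (propositions \ref{prop:mu_2}, \ref{prop:Xe}, \ref{thm:epsilon_j}, the strip map, etc.), which is why the paper factorises the proof through that lemma rather than arguing directly. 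If you only wanted proposition \ref{prop:mu_1}, your route is a clean, self-contained alternative.
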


The integrable limit of the system \eqref{def:F} is the asymptotic regime 
that results from replacing $\bfv$ by $\lambda\bfw$. 
The points where the two vector fields differ have
the property that $\lambda x$ or $\lambda y$ is close to an integer. 
The perturbation of the integrable orbits will take place in these small domains.

\section{The integrable Hamiltonian}\label{sec:Hamiltonian}

We define the real function 
\begin{equation}\label{def:P}
P:\R\to \R \hskip 40pt P(x)=\fl{x}^2+(2\fl{x}+1)\{x\},
\end{equation}
where $\{x\}$ denotes the fractional part of $x$.
The function $P$ is piecewise-affine, and coincides with the function 
$x\mapsto x^2$ on the integers, thus:
\begin{equation}\label{eq:SqrtP}
 P(\fl{ x }) = \fl{ x }^2 \hskip 40pt \fl{ \sqrt{P(x)} } = \fl{ x }.
\end{equation}
Using the second statement in (\ref{eq:SqrtP}), we can invert $P$ up to sign by defining
\begin{equation}\label{def:Pinv}
 P^{-1}: \R_{\geq 0} \to \R_{\geq 0} 
\hskip 40pt 
x \mapsto \frac{x + \fl{ \sqrt{x} } (1+\fl{ \sqrt{x} 
})}{2\fl{ \sqrt{x} }+1} ,
\end{equation}
so that $(P^{-1}\circ P)(x) = |x|$.

We define the following Hamiltonian
\begin{equation}\label{eq:Hamiltonian}
 \cP: \; \R^2 \; \to \R
 \hskip 40pt
 \cP(x,y) = P(x)+P(y).
\end{equation}
The function $\cP$ is continuous and piecewise-affine. 
It is differentiable in $\R^2\setminus \Delta$, where $\Delta$ is
the set of orthogonal lines given by
\begin{equation}\label{eq:Delta}
\Delta=\{(x,y)\in\R^2 \, : \; (x-\fl{ x})(y-\fl{ y})=0\}.
\end{equation}
The set $\Delta$ is the boundary of the boxes $B_{m,n}$, defined in (\ref{def:B_mn}).
The associated Hamiltonian vector field, defined for all points 
$(x,y)\in\R^2\setminus \Delta$, is equal to the vector field 
$\bfw$ given in (\ref{def:w}):
\begin{equation}\label{eq:HamiltonianVectorField}
 \left( \frac{ \partial\cP(x,y)}{\partial y}, 
   -\frac{\partial\cP(x,y)}{\partial x} \right) = \bfw(x,y)
 \hskip 40pt 
(x,y)\in \R^2\setminus \Delta.
\end{equation}

We say that the function
 $$ \gamma : \R \to \R^2 $$
is a \defn{flow curve} of the Hamiltonian $\cP$ if it satisfies
 $$ \frac{d\gamma(t)}{dt} = \bfw(\gamma(t)) \hskip 40pt t\in\R. $$
Then the flow $\varphi$ associated with $\cP$ is the family of \defn{time-advance maps} $\varphi^t$ 
satisfying\footnote{We claim without proof that $\varphi$ is well-defined everywhere except the origin.}
\begin{equation*} 
 \varphi^t: \R^2 \to \R^2 \hskip 40pt \varphi^t(\gamma(s)) = \gamma(s+t)
\end{equation*}
for any flow curve $\gamma$ and all $s,t\in\R$.

Proposition \ref{prop:mu_1} states that the vector fields $\bfv$ and $\lambda\bfw$ agree almost everywhere in the limit $\lambda\to 0$.
In turn, the piecewise-constant form of $\bfw$ ensures that $\phil$ is equal to $\F^4$ almost everywhere.

\begin{corollary} \label{corollary:mu_1}
 Let $r>0$ and $A(r,\lambda)$ be as in equation (\ref{eq:A}).
 Then
 \begin{displaymath}
  \lim_{\lambda\to 0} \left( \frac{ \# \{z\in A(r,\lambda) \, : \; \phil(z) = \F^4(z) \} }{\# A(r,\lambda)} \right) = 1.
 \end{displaymath}
\end{corollary}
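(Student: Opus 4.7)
The plan is to deduce Corollary \ref{corollary:mu_1} from Proposition \ref{prop:mu_1} by exploiting the piecewise-constant structure of the auxiliary field $\bfw$ on the boxes $B_{m,n}$. Proposition \ref{prop:mu_1} already provides a subset $S_1(\lambda)\subset A(r,\lambda)$ of asymptotic density one on which $\F^4(z)-z=\lambda\bfw(z)$. It remains to exhibit a subset of $A(r,\lambda)$, also of asymptotic density one, on which the time-$\lambda$ advance map $\phil$ acts on $z$ by translation by $\lambda\bfw(z)$; intersecting the two sets then gives $\phil(z)=\F^4(z)$ on a set of full density, which is the content of the corollary.

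First I would describe the flow explicitly on a single box. For $z$ in the interior of $B_{m,n}$, equation \eqref{eq:HamiltonianVectorField} gives the constant Hamiltonian vector field $\bfw_{m,n}$, so any flow curve through $z$ coincides with the linear segment $t\mapsto z+t\bfw_{m,n}$ for as long as this segment remains in $B_{m,n}$. I would then define $S_2(\lambda)\subset A(r,\lambda)$ to be the set of scaled lattice points $z\in B_{m,n}$ with $z+t\bfw_{m,n}\in B_{m,n}$ for every $t\in[0,\lambda]$. On $S_2(\lambda)$ the identity $\phil(z)=z+\lambda\bfw(z)$ holds by construction, and combining with the condition defining $S_1(\lambda)$ yields $\phil(z)=\F^4(z)$ for every $z\in S_1(\lambda)\cap S_2(\lambda)$.

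The final step is a density estimate for the complement $A(r,\lambda)\setminus S_2(\lambda)$. Since $\|\bfw(z)\|_\infty\le 2r+1$ for every $z\in A(r,\lambda)$, a point can fail to lie in $S_2(\lambda)$ only if it lies within distance $\lambda(2r+1)$ of $\Delta$. Each unit box $B_{m,n}$ contains on the order of $\lambda^{-2}$ points of $\lZ$, but only on the order of $\lambda^{-1}$ lie in the boundary strip of that width; since $A(r,\lambda)$ meets $O(r^2)$ such boxes, $\#(A(r,\lambda)\setminus S_2(\lambda))=O(r^2/\lambda)$ while $\#A(r,\lambda)$ is of order $r^2/\lambda^2$, so the complementary fraction is $O(\lambda)\to 0$. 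A union bound with the bad set from Proposition \ref{prop:mu_1} then completes the argument. The only genuine subtlety is that $\phil$ is not a priori defined on $\Delta$, where $\bfw$ is ambiguous; however, these exceptional points are absorbed into the $O(\lambda)$ boundary strip and do not affect the density estimate, which is why I expect the main technical burden to be merely the careful accounting of boundary lattice points in boxes that are clipped by the box $\|z\|_\infty<r$.
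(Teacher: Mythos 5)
Your argument is correct and rests on the same ingredients the paper uses: Proposition \ref{prop:mu_1} for the density-one set on which $\bfv=\lambda\bfw$, the piecewise-constant form of $\bfw$ on the boxes, and an $O(\lambda)$ count of lattice points in a strip around $\Delta$. The one respect in which the paper is more economical is that its intermediate step (Corollary \ref{corollary:Lambda}, via Lemma \ref{lemma:Lambda}) shows that for $z\in A(r,\lambda)$ away from the origin, being a \emph{non-transition point} already guarantees both that $\bfv(z)=\lambda\bfw(z)$ and --- since $z$ and $\F^4(z)=z+\lambda\bfw(z)$ then lie in the same (convex) box --- that the flow from $z$ stays in that box for time $\lambda$; consequently $A(r,\lambda)\setminus(\Lambda\cup\{(0,0)\})\subset S_1(\lambda)\cap S_2(\lambda)$, and the single density estimate for $\Lambda$ carried out in the proof of Proposition \ref{prop:mu_1} handles your $S_1$ and $S_2$ simultaneously. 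Your separate boundary-strip count for $S_2$ is therefore redundant, though not wrong, and it buys you the ability to treat Proposition \ref{prop:mu_1} as a black box rather than going through Lemma \ref{lemma:Lambda}. One small imprecision: for non-integer $r$ the bound should read $\|\bfw(z)\|_\infty\le 2\ceil{r}+1$ rather than $2r+1$, but this has no effect on the $O(\lambda)$ scaling.
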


\medskip

For a point $z\in\R^2$, we write $\Pi(z)$ for the orbit of $z$ under the flow $\varphi$, i.e., the level set of 
$\cP$ passing through $z$:
\begin{equation} \label{def:Pi(z)}
 \Pi(z) = \{ w\in\R^2 \, : \; \cP(w) = \cP(z) \}.
\end{equation}                                 
Below (theorem \ref{thm:Polygons}) we shall see that these sets are polygons,
whose vertices belong to $\Delta$.
The \defn{value} of a polygon $\Pi(z)$ is the real number $\cP(z)$, and if 
$\Pi(z)$ contains a lattice point, then we speak of a \defn{critical polygon}.
The critical polygons act as separatrices, and form a distinguished subset of the plane:
\begin{equation*}
\Gamma=\bigcup_{z\in\Z^2}\Pi(z).
\end{equation*}
All topological information concerning the Hamiltonian $\cP$ is
encoded in the partition of the plane generated by $\Gamma\cup\Delta$.

To characterise $\cP$ arithmetically, we consider the Hamiltonian 
\begin{equation*} 
 \cQ(x,y) = x^2 + y^2,
\end{equation*}
which is equal to the member $\cQ_{0}$ of the family of functions (\ref{def:Q_lambda}), 
and represents the unperturbed rotations (no round-off) in the limit $\lambda\to 0$. 
Its level sets are circles, and the circles containing lattice points will be
called \defn{critical circles}. 
By construction, the functions $\cP$ and $\cQ$ coincide over $\Z^2$, and 
hence the value of every critical polygon belongs to $\cQ(\Z^2)$, the set 
of non-negative integers which are representable as the sum of two squares. 
We denote this set by $\cE$.

A classical result, due to Fermat and Euler, states that
a natural number $n$ is a sum of two squares if and only if
any prime congruent to 3 modulo 4 which divides $n$ occurs 
with an even exponent in the prime factorisation of $n$  
\cite[theorem 366]{HardyWright}. 
We refer to $\cE$ as the set of \defn{critical numbers},
and use the notation
\label{def:cE}
\begin{displaymath}
 \cE = \{e_i \, : \; i\geq 0\} = \{0,1,2,4,5,8,9,10,13,16,17,\dots \} .
\end{displaymath}
There is an associated family of \defn{critical intervals}, given by
\begin{equation}\label{def:Ie}
 \cI^{e_i} = (e_i,e_{i+1}).
\end{equation}

Let us define
$$
\cE(x)=\#\{e\in\cE \, : \; e\leq x\}.
$$
The following result, due to Landau and Ramanujan, gives the asymptotic 
behaviour of $\cE(x)$ (see, e.g., \cite{MoreeKazaran})
\begin{equation}\label{eq:LandauRamanujan}
\lim_{x\to\infty}\frac{\sqrt{\ln x}}{x}\,\cE(x)=K,
\end{equation}
where $K$ is the Landau-Ramanujan constant
$$
K=\frac{1}{\sqrt{2}}\prod_{p\,\,\mathrm{prime}\atop {p\equiv 3\;\mathrm{mod}\; 4}}
\left(1-\frac{1}{p^2}\right)^{-1/2}\,=\,0.764\ldots .
$$
Furthermore, let $r(n)$ be the number of representations of the integer $n$
as a sum of two squares. To compute $r(n)$, we first factor $n$ as follows
$$
n=2^a\prod_i p_i^{b_i}\prod_j q_j^{c_j},
$$
where the $p_i$ and $q_j$ are primes congruent to 1 and 3 modulo 4, respectively.
(Each product is equal to 1 if there are no prime divisors of the corresponding type.)
Then we have \cite[theorem 278]{HardyWright}
\begin{equation}\label{eq:r}
r(n)=4\prod_i(b_i+1)\prod_j\left(\frac{1+(-1)^{c_j}}{2}\right).
\end{equation}
Note that this product is zero whenever $n$ is not a critical number, 
i.e., $r(n)=0$ if $n\notin\cE$.

We now have the following characterisation of the invariant curves of the Hamiltonian $\cP$.
\begin{theorem}\label{thm:Polygons}
The level sets $\Pi(z)$ of $\cP$ are convex polygons, invariant under 
the dihedral group $D_4$, generated by the two orientation-reversing involutions
\begin{equation}\label{eq:Dihedral}
 G:\quad (x,y) \mapsto (y,x) 
\hskip 40pt
 G':\quad (x,y) \mapsto (x,-y).
\end{equation}
The polygon $\Pi(z)$ is critical if and only if
$
 \cP(z)\in\cE.
$
The number of sides of $\Pi(z)$ is equal to
\begin{equation}\label{eq:NumberOfSides}
4(2\left\fl{\sqrt{\cP(z)}\right}+1)-r(\cP(z)),
\end{equation}
where the function $r$ is given in (\ref{eq:r}).
For every $e\in\cE$, the critical polygon with value $e$ intersects 
one and only one critical circle, namely that with the same value.
The intersection consists of $r(e)$ lattice points, and the polygon lies
inside the circle.
\end{theorem}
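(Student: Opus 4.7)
The plan is to exploit the fact that $P$ is the continuous piecewise-linear interpolant of $x\mapsto x^2$ at the integers: from the definition one checks that $P(n)=n^2$ for $n\in\Z$, that $P$ is affine with slope $2n+1$ on each interval $[n,n+1]$, and that $P(-x)=P(x)$. Since the slopes are strictly increasing in $n$, $P$ is strictly convex, and therefore $\cP(x,y)=P(x)+P(y)$ is a convex, coercive, continuous function on $\R^2$ that is affine on each box $B_{m,n}$ with gradient $(2m+1,2n+1)$, and invariant under both involutions $G:(x,y)\mapsto(y,x)$ and $G':(x,y)\mapsto(x,-y)$. Convexity and coercivity force every sublevel set $\{\cP\leq\alpha\}$ to be a compact convex body, so its boundary $\Pi(z)$ is a simple closed convex curve; affineness on each box makes this curve meet each $B_{m,n}$ in at most a line segment, so $\Pi(z)$ is a convex polygon whose vertices lie on $\Delta$. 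The $D_4$-invariance is then immediate.

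Since $P(n)=n^2$ for $n\in\Z$, the restrictions of $\cP$ and $\cQ$ to the lattice coincide, so a lattice point $(m,n)$ lies on $\Pi(z)$ iff $\cP(z)=m^2+n^2\in\cE$; this characterises the critical polygons. For the side count I would exploit the $D_4$-symmetry to reduce to the closed first quadrant, where the polygon is a monotone staircase from $(P^{-1}(\alpha),0)$ to $(0,P^{-1}(\alpha))$. In the generic case $\alpha\notin\cE$, the polygon meets the grid skeleton $\Delta$ only at non-lattice points, and must cross each of the lines $x=1,\dots,N$ and $y=1,\dots,N$, where $N=\lfloor\sqrt\alpha\rfloor$; each crossing starts a new side, so the polygon visits exactly $2N+1$ boxes per quadrant, giving $4(2N+1)$ sides overall. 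When $\alpha\in\cE$, the polygon passes through lattice points of $\Delta$, and each such passage collapses a pair of adjacent grid crossings into a single vertex and eliminates a box. Careful bookkeeping---distinguishing representations of $\alpha$ lying in the open quadrants from those on the coordinate axes, since the two types of passage produce different local reductions---should match the total loss with $r(\alpha)$.

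The final statement rests on the elementary inequality $P(x)\geq x^2$, with equality iff $x\in\Z$, which is immediate because $P$ is the chord interpolant of the strictly convex function $x\mapsto x^2$ at consecutive integers. Summing coordinates gives $\cP\geq\cQ$ on $\R^2$ with equality exactly on $\Z^2$, so $\{\cP\leq e\}\subseteq\{\cQ\leq e\}$: the critical polygon of value $e$ lies inside the closed disk bounded by the critical circle of the same value, and $\{\cP=e\}\cap\{\cQ=e\}=\{(m,n)\in\Z^2:m^2+n^2=e\}$, of cardinality $r(e)$. To rule out intersections with other critical circles, circles with $c>e$ lie entirely outside $\{\cQ\leq e\}\supseteq\{\cP\leq e\}$, while for $c<e$ one needs to show the polygon lies strictly outside $\{\cQ\leq e_i\}$, where $e_i$ is the predecessor of $e$ in $\cE$---equivalently, that the minimum of $\cQ$ on $\Pi(z)$ exceeds $e_i$. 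I expect the main obstacles to be twofold: the combinatorial bookkeeping for the side count, disentangling axial and interior contributions; and the geometric claim that the polygon is trapped in the open annulus $\{e_i<\cQ\leq e\}$, which requires a lower bound on $\cQ$ along the polygon that depends on the spacing of successive elements of $\cE$. The analytic content, namely strict convexity of $P$ and the resulting $\cP\geq\cQ$, is clean; the counting and the annulus estimate are the delicate steps.
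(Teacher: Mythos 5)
Your proposal is sound in outline and diverges from the paper's proof in two places, one of which leaves a gap that you acknowledge but do not close.

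For convexity, you observe that $P$ is the chord interpolant of $x\mapsto x^2$ at the integers, hence convex with strictly increasing slopes, so $\cP$ is a coercive convex function whose sublevel sets are compact convex bodies with piecewise-affine boundaries. This is a cleaner route than the paper's, which verifies clockwise turning of the flow by computing the determinants $\det(\bfw_{m,n},\bfw_{m',n'})$ across adjacent boxes. (One slip: $P$ is \emph{not} strictly convex, being affine on each integer interval; but convexity plus coercivity already suffice for compactness and convexity of the sublevel sets.) For the criticality characterisation and the side count your argument coincides with the paper's. Your worry that axial and interior lattice points might produce different local reductions is unfounded: in either case exactly two lines of $\Delta$ meet at the lattice point, merging two vertices into one, so each lattice point costs exactly one vertex and the total loss is $r(e)$.

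The genuine gap is your ``annulus estimate.'' From $x^2-P(x)=\{x\}^2-\{x\}\leq 0$ you correctly obtain $\cP\geq\cQ$ on $\R^2$ with equality exactly on $\Z^2$, which shows that $\Pi(e)$ lies in the closed disc $\{\cQ\leq e\}$ and meets the boundary circle at the $r(e)$ lattice points. You then state that one ``needs to show'' $\Pi(e)$ lies strictly outside $\{\cQ\leq e_i\}$, call it the delicate step, and stop. In fact the same identity yields the bound for free: since $\{x\}^2-\{x\}\in[-1/4,0]$ for $\{x\}\in[0,1)$, we have $\cQ-\cP\geq -1/2$ everywhere, so on $\Pi(e)$ one gets $\cQ\geq e-1/2>e-1\geq e_i$, as consecutive elements of $\cE$ are distinct integers. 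Supplying this closes your proof. The paper takes a more economical route that sidesteps the two-sided bound entirely: solving the simultaneous system $\cP=e$, $\cQ=e+f$ gives $\{x\}^2+\{y\}^2-\{x\}-\{y\}=f$, a circle of radius squared $f+\frac{1}{2}$ centred at $(\frac{1}{2},\frac{1}{2})$, and the constraint $\{x\},\{y\}\in[0,1)$ forces $f=0$ and $\{x\}=\{y\}=0$ in a single stroke, simultaneously ruling out all other critical circles and identifying the intersection points.
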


\begin{proof}
The symmetry properties follow from the fact that the Hamiltonian $\cP$ is 
invariant under the interchange of its arguments, and the function $P$ is even:
\begin{align*} 
P(-x) &= \fl{ -x }^2 + \{-x\}(2\fl{ -x }+1) \\
 &= \left\{ \begin{array}{ll} (-\fl{ x }-1)^2 - (1-\{x\})(2\fl{ x }+1) & x\notin\Z\\ 
(-\fl{ x })^2 & x\in\Z
\end{array} \right. \\
 &= \fl{ x }^2 + \{x\}(2\fl{ x }+1) = P(x).
\end{align*}

The vector field (\ref{eq:HamiltonianVectorField}) is piecewise-constant, 
and equal to $\bfw_{m,n}$ in the box $B_{m,n}$ (cf.~equations
(\ref{def:B_mn}) and (\ref{def:w_mn})). 
Hence a level set $\Pi(z)$ is a union of line segments. 
Since the Hamiltonian $\cP$ is continuous, $\Pi(z)$ is connected.
Thus $\Pi(z)$ is a polygonal curve. 
It is easy to verify that no three segments can have an end-point in common 
(considering end-points in the first octant will suffice). 
Equally, segments cannot intersect
inside boxes, because they are parallel there. But a non self-intersecting 
symmetric polygonal curve must be a polygon. 

Next we prove convexity. Due to dihedral symmetry, if $\Pi(z)$ is convex 
within the open first octant $0<y<x$, then it is piecewise-convex. 
Thus we suppose that $\Pi(z)$ has an edge in the box $B_{m,n}$, where 
$0< n\leq m$. The adjacent edge in the direction of the flow must be 
in one of the boxes
\begin{displaymath}
B_{m,n-1},\quad B_{m+1,n-1},\quad B_{m+1,n}.
\end{displaymath}
Using (\ref{def:w_mn}) one verifies that the three determinants
\begin{displaymath}
\det(\bfw_{m,n},\bfw_{m,n-1})
\qquad
\det(\bfw_{m,n},\bfw_{m+1,n-1})
\qquad
\det(\bfw_{m,n},\bfw_{m+1,n})
\end{displaymath}
are negative. This means that, in each case, at the boundary between adjacent boxes, 
the integral curve turns clockwise. So $\Pi(z)$ is piecewise-convex.
It remains to prove that convexity is preserved across the boundaries of the 
first octant, which belong to the fixed sets $\Fix{G}$ (the line $x=y$)
and $\Fix{G}'$ (the line $y=0$) of the involutions (\ref{eq:Dihedral}). 
Indeed, $\Pi(z)$ is either orthogonal to $\mathrm{Fix}\,G$ (in which case
convexity is clearly preserved), or has a vertex $(m,m)$ on it; in the latter case, 
the relevant determinant is $\det(\bfw_{m-1,m},\bfw_{m,m-1})=-8m<0$.
The preservation of convexity across $\Fix{G}'$ is proved similarly, and thus
$\Pi(z)$ is convex.

The statement on the criticality of $\cP(z)$ follows from 
the fact that, on $\Z^2$, we have $\cP=\cQ$.

Consider now the edges of $\Pi(z)$. The intersections of $\Pi(z)$ 
with the $x$-axis have abscissas $\pm P^{-1} (\cP(z))$. 
Using (\ref{eq:SqrtP}) we have that there are 
$2\fl{\sqrt{\cP(z)}}+1$ integer points between them, hence as 
many lines orthogonal to the $x$-axis with integer abscissa. 
The same holds for the $y$-axis. If $\Pi(z)$ is non-critical, 
it follows that $\Pi(z)$ intersects $\Delta$ in exactly 
$4(2\left\fl{\sqrt{\cP(z)}\right}+1)$ points, 
each line being intersected twice. 
Because the vector field changes across each line, 
the polygon has $4(2\left\fl{\sqrt{\cP(z)}\right}+1)$ vertices. 
If the polygon is critical, then we have $\cP(z)=e\in\cE$. At each 
of the $r(e)$ vertices that belong to $\Z^2$, two lines in $\Delta$ 
intersect, resulting in one fewer vertex. So $r(e)$ vertices must be removed 
from the count.

Next we deal with intersections of critical curves.
Let us consider two arbitrary critical curves
$$
\cP(x,y)=e
\qquad
\cQ(x,y)=e+f
\qquad
e, e+f\in\cE.
$$
This system of equations yields
$$
\{x\}^2 + \{y\}^2 -\{x\} -\{y\}=f,
$$
which is a circle with centre at $(1/2,1/2)$ and radius $\rho$, where
\begin{equation}\label{eq:rho}
\rho^2=f+\frac{1}{2}.
\end{equation}
Since we must have $0\leq\{x\},\{y\}<1$, we find $\rho^2\leq 1/2$, and
since $f$ is an integer, we obtain $\{x\}=\{y\}=f=0$. So critical
polygons and circles intersect only if they have the same value, 
and their intersection consists of lattice points. Then the 
number of these lattice points is necessarily equal to $r(e)$.

Finally, since the critical curve $\cP(x,y)=e$ is a convex polygon,
whose only intersections with the critical circle $\cQ(x,y)=e$
occur at vertices, we have that critical polygons lie inside critical circles.
\end{proof}

From this theorem it follows that the set $\Gamma$ of critical polygons
partitions the plane into concentric domains, which we call \defn{polygon classes}. 
Each domain contains a single critical circle, and has no lattice points in its interior.
The values of all the polygons in a class is a critical interval of the form (\ref{def:Ie}), 
and we associate the critical number $e\in\cE$ with the polygon class $\cP^{-1}(\cIe)$.
There is a dual arrangement for critical circles. 
Because counting critical polygons is the same as counting critical circles,
the number of critical polygons (or, equivalently, of polygon classes) contained 
in a circle of radius $\sqrt{x}$ is equal to $\cE(x)$, with asymptotic 
formula (\ref{eq:LandauRamanujan}).
From equation (\ref{eq:rho}), one can show that the total variation 
$\Delta\cQ(\alpha)$ of $\cQ$ along the polygon $\cP(z)=\alpha$ satisfies the bound
$$
\Delta\cQ(\alpha)\leq \frac{1}{2}
$$
which is strict (e.g., for $\alpha=1$). 

\subsection*{Symbolic coding of polygon classes} 

In theorem \ref{thm:Polygons} we classified the invariant curves of 
the Hamiltonian $\cP$ in terms of critical numbers. We found that the
set $\Gamma$ of critical polygons partitions the plane into concentric annular 
domains---the polygon classes. 
In this section we define a symbolic coding on the set of classes, 
which specifies the common itinerary of all orbits in a class,
taken with respect to the lattice $\Z^2$. 

Suppose that the polygon $\Pi(z)$ is non-critical. Then all vertices of 
$\Pi(z)$ belong to $\Delta\setminus \Z^2$, where $\Delta$ was 
defined in (\ref{eq:Delta}). Let $\xi$ be a vertex.
Then $\xi$ has one integer and one non-integer coordinate, and we let $u$
be the value of the non-integer coordinate. 
We say that the vertex $\xi$ is of \defn{type} $v$ if $\fl{ |u| } =v$.
Then we write $v_j$ for the type of the $j$th vertex, where the vertices of $\Pi(z)$ 
are enumerated according to their position in the plane, 
starting from the positive half of symmetry line $\Fix{G}$ and proceeding clockwise.

\begin{figure}[b]
        \centering
        \includegraphics[scale=0.9]{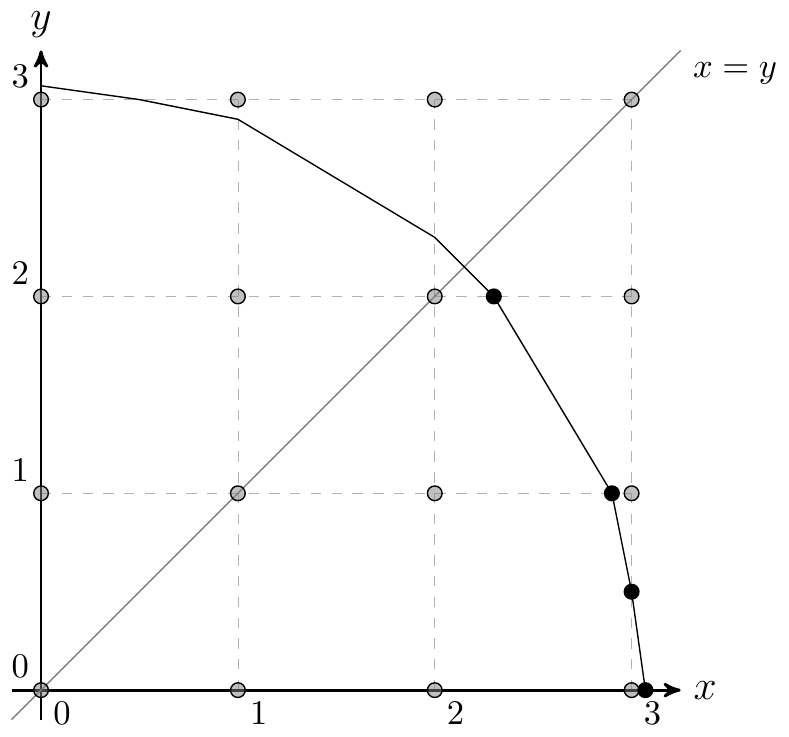}
        \caption{A polygon with $\cP(z)$ in the interval $(9,10)$ and its vertices in the first octant.}
        \label{fig:V(9)}
\end{figure}

The sequence of vertex types $v_j$ reflects the eight-fold symmetry of $\Pi(z)$. 
Hence if the $k$th vertex lies on the $x$-axis, then there are $2k-1$ vertices 
belonging to each quarter-turn, and the vertex types satisfy
\begin{equation} \label{eq:v_symmetry}
 v_j = v_{2k-j} = v_{(2k-1)i+j}, \hskip 20pt 
1\leq j \leq k, \quad 0\leq i \leq 3. 
\end{equation}
Thus it suffices to consider the vertices in the first octant, and 
the \defn{vertex list} of $\Pi(z)$ is the sequence of vertex types
\label{def:VertexList}
\begin{displaymath}
V=(v_1,\dots,v_{k}).
\end{displaymath}
We note that the vertex list can be decomposed into two disjoint subsequences; 
those entries belonging to a vertex with integer $x$-coordinate and those 
belonging to a vertex with integer $y$-coordinate. 
These subsequences are non-decreasing and non-increasing, respectively.

From theorem \ref{thm:Polygons}, it follows that for 
every $e\in\cE$, the set of polygons $\Pi(z)$ with
$
 \cP(z)\in \cIe
$
have the same vertex list.
Let $k$ be the number of entries in the vertex list. Since the polygon
$\Pi(z)$ is non-critical, equation (\ref{eq:NumberOfSides}) 
gives us that 
$4( 2\fl{ \sqrt{e} } +1) = 4(2k-1)$, and hence
\begin{equation*}
k=\#V=\fl{ \sqrt{e}} +1.
\end{equation*}
Any two polygons with the same vertex list have not only the same number of edges, 
but intersect the same collection of boxes, and have the same collection of tangent 
vectors. The critical polygons which intersect the lattice $\Z^2$, where 
the vertex list is multiply defined, form the boundaries between classes. 
The symbolic coding of these polygons is ambiguous, but this item will not
be required in our analysis.

Thus the vertex list is a function on classes, hence on $\cE$. 
For example, the polygon class identified with the interval $\cI^9=(9,10)$ 
(see figure \ref{fig:V(9)}) has vertex list
\begin{displaymath}
V(9)=( 2, 2, 0, 3).
\end{displaymath}
For each class, there are two vertex types which we can calculate explicitly: 
the first and the last. If $\alpha\in\cIe$, and the polygon $\cP(z)=\alpha$ intersects 
the symmetry line $\Fix{G}$ at some point $(x,x)$ in the first quadrant, 
then $v_1 = \fl{x}$. 
By the definition (\ref{eq:Hamiltonian}) of the Hamiltonian $\cP$, $x$ satisfies
 $$ \cP(z) = 2 P(x) = \alpha. $$
Thus inverting $P$ and using (\ref{eq:SqrtP}), it is straightforward to show that
the first vertex type is given by
\begin{equation}\label{def:v1}
 v_1 = \fl{P^{-1}(\alpha/2)} = \fl{\sqrt{e/2}} \hskip 40pt \alpha\in\cIe.
\end{equation}
Similarly the last vertex type, corresponding to the vertex on the $x$-axis, is given by
\begin{equation} \label{def:vk}
 v_k = \fl{P^{-1}(\alpha)} = \fl{\sqrt{e}} \hskip 40pt \alpha\in\cIe. 
\end{equation}

\begin{table}[!h]
        \centering
                \begin{tabular}{|l|l|}
                \hline
                e & V(e) \\
                \hline
                9 & $ (2,2,0,3) $\\
                10 & $(2,1,3,3)$ \\
                18 &  $(3,3,1,4,4)$ \\
                29 & $(3,4,2,5,5,5)$ \\
                49 & $(4,5,3,6,6,6,0,7)$ \\
                52 & $(5,4,6,6,6,1,7,7)$ \\
                \hline
                \end{tabular}
        \caption{A table showing the vertex list $V(e)$ for a selection of 
 critical numbers $e$. Notice that the first entry in the vertex list is 
 always $\fl{\sqrt{e/2}}$, the last is $\fl{\sqrt{e}}$, and the
 number of entries in the list is $k=\fl{\sqrt{e}}+1$.}
\label{table:V(e)}
\end{table}

\section{Recurrence and return map} \label{sec:Recurrence} \label{SEC:RECURRENCE}

We have already seen that the lattice map $F$ is reversible
with respect to the reflection $G$ of equation (\ref{def:GH}).
The scaled map $F_\lambda$ has the same property, and all orbits of 
$F_\lambda$ return repeatedly to a neighbourhood of the symmetry line 
$\Fix{G}$, i.e., the line $x=y$.

From equation (\ref{def:A}), the rotation number $\nu$ has the asymptotic form
\begin{displaymath}
 \nu = \frac{1}{2\pi}\arccos\left(\frac{\lambda}{2}\right) 
= \frac{1}{4}- \frac{\lambda}{4\pi} + O(\lambda^3) \hskip 40pt \lambda\to 0.
\end{displaymath}
The integer $t=4$ is the \defn{zeroth-order recurrence time} of orbits under 
$\F$, that is, the number of iterations needed for a point to return 
to an $O(\lambda)$-neighbourhood of its starting point. It turns out 
(see below---lemma \ref{lemma:Lambda}) 
that the field $\bfv(z)$ (equation (\ref{def:v})) is non-zero 
for all non-zero points $z$, so no orbit has period four.
Accordingly, for the limit $\lambda\to 0$, we define the \defn{first-order recurrence time} 
$t^*$ of the rotation to be the next time of closest approach:
\begin{equation}\label{eq:tstar}
 t^*(\lambda) = \min \left\{k\in\N \, : \; d_H(k\nu,\N) \leq d_H(4\nu,\N), \; k>4\right\}
              = \frac{\pi}{\lambda} + O(1),
\end{equation}
where $d_H$ is the Hausdorff distance, and the expression $d_H(x,A)$, with $x\in\R$, 
is to be understood as the Hausdorff distance between the sets $\{x\}$ and $A$. 

The integer $t^*$ provides a natural recurrence timescale for $\F$. 
Let $T(z)$ be the minimal period of the point $z\in\Z^2$ under $F$, 
so that $T(z/\lambda)$ is the corresponding function for 
points $z\in\lZ$ under $\F$.
(In accordance with the \hl{periodicity conjecture}, page \pageref{conj:Periodicity}, we assume that this 
function is well-defined.) 
Since, as $\lambda\to 0$, the recurrence time $t^*$ diverges,
the periods of the orbits will cluster around integer multiples 
of $t^*$, giving rise to branches of the period function (figure \ref{fig:PeriodFunction}). 
The lowest branch corresponds to orbits which perform a single revolution
about the origin---the minimal orbits---and their period is approximately equal to $t^*$.
The period function $T$ has a normalised counterpart, given by (cf.~(\ref{eq:tstar}))
\begin{equation*}
T_{\lambda}: \lZ \to \frac{\lambda}{\pi}\,\N
 \hskip 40pt
 T_\lambda(z)=\frac{\lambda}{\pi}\,T(z/\lambda).
\end{equation*}
The values of $T_\lambda$ oscillate about the integers.

We construct a Poincar\'e return map $\Phi$ on a neighbourhood of the 
positive half of the symmetry line $\Fix{G}$. 
Let $d(z)$ be the perpendicular distance between a point $z$ and 
$\Fix{G}$:
\begin{equation*}
 d(z) = d_H(z,\Fix{G}).
\end{equation*}

We define the domain $X$ of the return map $\Phi$ to be the set of points 
$z\in(\lambda\Z_{\geq0})^2$ which are closer to $\Fix{G}$ than their 
preimages under ${F}_\lambda^4$, and at least as close as their images:
\begin{equation}\label{def:X}
 X = \{z\in(\lambda\Z_{\geq0})^2 \, : \; d(z) \leq d(\F^4(z)), \; d(z) < d(\F^{-4}(z)) \}.
\end{equation}
\hl{According to corollary} \ref{corollary:mu_1} (page \pageref{corollary:mu_1}), when $\lambda$ is small,
\hl{the fourth iterates of $\F$ typically agree with $\phil$, the time-$\lambda$ advance map of the flow.
Thus, in a neighbourhood of the symmetry line $\Fix{G}$, the map $\F^4$ is
simply a translation perpendicular to $\Fix{G}$}:
 $$ \F^4(z) = z + \lambda\bfw_{m,m}  \hskip 20pt z\in B_{m,m}, \; m\in\Z_{\geq 0}, $$
\hl{where $\bfw_{m,m}$ is the local component of the Hamiltonian vector field $\bfw$ 
in $B_{m,m}$} (see equation \eqref{def:w_mn}).
It follows that the main component of $X$ in $B_{m,m}$ is a 
thin strip of width $\lambda\|\bfw_{m,m}\|$ lying parallel to the symmetry line $\Fix{G}$
(see figure \ref{fig:lattice_Le}, page \pageref{fig:lattice_Le}).
\hl{Furthermore, it is natural to identify the sides of this strip,
which are connected by the translation $z\mapsto z+\lambda\bfw_{m,m}$,
so that locally the dynamics take place on a cylinder.}
This description breaks down when $z$ is close to a vertex, 
i.e., close to the boundary of $B_{m,m}$.
We formalise these properties below (section \ref{sec:RegularDomains}).

The transit time $\tau$ to the set $X$ is well-defined for all $z\in\lZ$: 
\begin{equation} \label{eq:tau}
 \tau : \lZ \to \N
 \hskip 40pt \tau(z)=\min \{ k\in\N \, : \; \F^k(z) \in X \}.
\end{equation}
Thus the first return map $\Phi$ is the function
\begin{equation} \label{def:Phi}
  \Phi : X \to X
 \hskip 40pt
   \Phi(z) = \F^{\tau(z)}(z).
\end{equation}
We refer to the orbit of $z\in X$ up to the return time $\tau(z)$ as the 
\defn{return orbit} of $z$:
\begin{equation*}
 \Ot(z) = \{ \F^k(z) \, : \; 0\leq k \leq \tau(z) \} \hskip 40pt z\in X.
\end{equation*}
We let $\tau_{-}$ be the transit time to $X$ under $\F^{-1}$: 
\begin{displaymath}
 \tau_{-} : \lZ \to \Z_{\geq0}
 \hskip 40pt
  \tau_{-}(z) = \min \{ k\in\Z_{\geq0} \, : \; \F^{-k}(z) \in X \},
\end{displaymath}
so that the return orbit for a general $z\in\lZ$ is given by
\begin{equation*}
 \Ot(z) = \{ \F^k(z) \, : \; -\tau_{-}(z)\leq k \leq \tau(z) \} \hskip 40pt z\in \lZ.
\end{equation*}

\medskip


To associate a return orbit with an integrable orbit, we define the rescaled 
round-off function $R_{\lambda}$, which rounds points on the plane down to 
the next lattice point:
\begin{equation} \label{eq:R_lambda}
 R_{\lambda}: \R^2 \to \lZ 
  \hskip 40pt
 R_{\lambda}(w)=\lambda R(w/\lambda),
\end{equation}
where $R$ is the integer round-off function (\ref{eq:R}).
For every point $w\in\R^2$ and every $\delta>0$, the set of points
$$
\{z\in\lZ \, : \; z= R_{\lambda}(w), \; \,0< \lambda<\delta\}
$$
that represent $w$ on the lattice as $\lambda\to0$ is countably infinite.
The corresponding set of points on $\Z^2$, before rescaling, is unbounded.

According to proposition \ref{prop:mu_1}, the points of the scaled lattice 
$\lZ$ at which the (rescaled) integrable and discrete vector fields have 
different values are rare, as a proportion of lattice points. The following 
result shows that these points are also rare within each return orbit. 

\begin{proposition} \label{prop:mu_2}
 For any $w\in\R^2$, if we define the ratio
 \begin{displaymath}
  \mu_2(w,\lambda) = \frac{\#\{z\in\Ot(R_{\lambda}(w)) \, : \; \bfv(z)=\lambda\bfw(z)\}}{\# \Ot(R_{\lambda}(w))} ,
 \end{displaymath}
 then we have
 \begin{displaymath}
  \lim_{\lambda\to 0} \mu_2(w,\lambda) = 1 .
 \end{displaymath}
\end{proposition}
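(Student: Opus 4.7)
The plan is to show that the number of ``bad'' points in the return orbit---those $z \in \Ot(R_{\lambda}(w))$ with $\bfv(z) \neq \lambda\bfw(z)$---is bounded by a constant depending only on $w$, while $\#\Ot(R_\lambda(w))$ grows as $\Theta(1/\lambda)$. A direct case analysis of $\F^4(z)-z$, obtained by expanding the four applications of $F$ in terms of the floor function (as in the proof of Proposition \ref{prop:square_orbits}), shows that $\bfv(z) = \lambda\bfw(z)$ holds for every $z \in \lZ$ unless one of a small number of auxiliary quantities---linear expressions in the coordinates of $z$ with integer coefficients bounded in terms of $\|\bfw(z)\|_\infty$---evaluates within $O(\lambda)$ of an integer. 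Consequently, the bad set $B_\lambda = \{z \in \lZ : \bfv(z) \neq \lambda\bfw(z)\}$ is confined to strips of width $O(\lambda\|\bfw(z)\|_\infty)$ that accumulate near the lines of $\Delta$ defined in \eqref{eq:Delta}.

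Next I would establish a shadowing estimate: the return orbit lies within $O(\lambda)$ of the polygon $\Pi(w)$. Let $z_0=R_\lambda(w)$ and $z_k = \F^{4k}(z_0)$. Whenever $z_k$ lies in a box $B_{m,n}$ at distance more than $O(\lambda\|\bfw_{m,n}\|)$ from $\Delta$, we have $z_{k+1} = z_k + \lambda\bfw_{m,n}$, matching the Hamiltonian flow's constant velocity on that box. Hence the discrepancy $z_k - \varphi^{\lambda k}(w)$ is preserved along each side of $\Pi(w)$, and changes by at most $O(\lambda)$ each time the orbit transits between consecutive boxes. By Theorem \ref{thm:Polygons}, $\Pi(w)$ has only $O(\sqrt{\cP(w)})$ sides, so the accumulated error remains $O(\lambda)$ throughout one revolution. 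The same argument applied to $\F^{-1}$ controls the backward iterates up to time $\tau_{-}(z_0)$.

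With shadowing in hand, the count is straightforward. The portion of $\Ot(R_\lambda(w))$ lying in any one box $B_{m,n}$ visited by $\Pi(w)$ consists of $\Theta(1/\lambda)$ collinear lattice points at spacing $\Theta(\lambda)$ along a segment of length $O(1)$. Only those within an $O(\lambda)$-neighbourhood of the two endpoints---the polygon vertices---can intersect $B_\lambda$, and each such neighbourhood contains $O(1)$ orbit points. Summing over the bounded number of vertices in all four quadrants gives $\#\bigl(\Ot(R_\lambda(w)) \cap B_\lambda\bigr) \leq C(w)$, independent of $\lambda$. Since $\#\Ot(R_\lambda(w)) = \tau(z_0) + \tau_{-}(z_0) + 1 = t^*(\lambda) + O(1) = \Theta(1/\lambda)$ by \eqref{eq:tstar}, we conclude $\mu_2(w,\lambda) = 1 - O(C(w)\lambda) \to 1$.

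The principal obstacle is the shadowing estimate: at each transition between adjacent boxes, $z_k$ may cross into the new box either slightly earlier or slightly later than $\varphi^{\lambda k}(w)$, and one needs that this $O(\lambda)$ local mismatch, summed over the $O(\sqrt{\cP(w)})$ crossings in one revolution, remains uniformly $O(\lambda)$---so that the discrete orbit does not drift away from $\Pi(w)$ over its full length $\pi/\lambda + O(1)$. Note that Proposition \ref{prop:mu_1} alone does not suffice: it gives a density estimate over the ambient disc $A(r,\lambda)$, which contains $\Theta(1/\lambda^2)$ lattice points, whereas a return orbit is a one-dimensional object of only $\Theta(1/\lambda)$ points, so the pointwise shadowing is essential.
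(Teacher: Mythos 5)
Your proposal is correct and follows essentially the same route as the paper's proof: confine the bad points to the transition set $\Lambda$ (lemma \ref{lemma:Lambda}), control the number of transitions per revolution by a shadowing/confinement argument, and show $\#\Ot$ grows like $1/\lambda$. The one substantive difference is in how each argument handles the two quantitative steps. For the confinement, the paper works with the scalar quantity $\cP$ and proves (lemma \ref{lemma:cP_variation}) that $\cP$ varies by only $O(\lambda)$ along the return orbit, which traps the orbit in the annulus between $\Pi(w)^{\pm}$; this sidesteps the issue you flag as ``the principal obstacle,'' because comparing level sets of $\cP$ avoids any question of time-reparameterisation between the discrete orbit and a fixed flow trajectory $\varphi^{\lambda k}(w)$---a comparison that, as stated in your sketch, does risk drift of the discrete orbit's phase relative to the flow even when both stay on nearby level curves. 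For the orbit count, your appeal to $\#\Ot = t^*(\lambda) + O(1) = \pi/\lambda + O(1)$ via \eqref{eq:tstar} is imprecise: $t^*$ is a recurrence time for the pure rotation $A$, whereas the actual return time of $\F$ to $X$ scales like $\cT(\cP(w))/\lambda$, and $\cT(\cP(w))$ equals $\pi$ only in the limit $\cP(w)\to\infty$ (e.g. $\cT(\alpha)=4\alpha\to 0$ near the origin). This does not damage your conclusion, since $\cT(\cP(w))>0$ for $w\neq 0$ still gives $\#\Ot=\Theta(1/\lambda)$, but the paper avoids the inaccuracy by bounding $\#\Ot$ from below directly via the geometry: the orbit must traverse a distance $4x^-$ with steps $\|\bfv\|\leq\lambda\sqrt{2}(2x^++5)$.
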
 

Finally we formulate a shadowing theorem, which states that
for timescales corresponding to a first return to the domain $X$, every 
integrable orbit has a scaled return orbit that shadows it. 
Furthermore, this scaled return orbit of the round-off map 
converges to the integrable orbit in the Hausdorff metric as $\lambda\to 0$,
\hl{so that up to their natural recurrence time, orbits of $\F$ render increasingly accurate
approximations of the flow trajectories}.

\begin{theorem} \label{thm:Hausdorff}
For any $w\in\R^2$, let $\Pi(w)$ be the orbit of $w$ under the flow $\varphi$, and let
$\Ot(R_{\lambda}(w))$ be the return orbit at the \hl{corresponding}
lattice point. Then
\begin{displaymath}
 \lim_{\lambda\to 0} d_H 
   \left(\Pi(w),\Ot(R_{\lambda}(w))\right)=0,
\end{displaymath}
where $d_H$ is the Hausdorff distance on $\R^2$.
\end{theorem}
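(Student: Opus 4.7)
The plan is to establish Hausdorff convergence via two inclusions: every return-orbit point lies close to $\Pi(w)$ (containment), and every point of $\Pi(w)$ lies close to some return-orbit point (coverage). Both rely on Proposition \ref{prop:mu_2}: among the $O(1/\lambda)$ iterates filling the return orbit, all but $o(1/\lambda)$ are \emph{good}, meaning $\bfv(z) = \lambda\bfw(z)$ and hence $\F^4(z) = \phil(z)$. Along a bounded orbit, each step has size $\|\bfv(z)\| = O(\lambda\|z\|) = O(\lambda)$; the return time is $\tau(R_\lambda(w)) = O(1/\lambda)$ by the recurrence estimate \eqref{eq:tstar}.

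For containment I would use approximate conservation of $\cP$. In the interior of each box $B_{m,n}$, $\bfw$ is constant and perpendicular to $\nabla\cP$, so translation by $\lambda\bfw$ preserves $\cP$ exactly; hence at a good iterate whose displacement stays inside a single box, $\cP(z_{k+1}) = \cP(z_k)$. At every remaining iterate---a bad step, or a good step whose displacement crosses the grid $\Delta$---the change $|\cP(z_{k+1}) - \cP(z_k)|$ is bounded by the local Lipschitz constant of $\cP$ times the step size, hence $O(\lambda\|z_k\|)$. Summing the $o(1/\lambda)$ such contributions yields $|\cP(z_k) - \cP(w)| = o(1)$ uniformly in $k$, so every return-orbit point lies within $o(1)$ of the level set $\Pi(w)$.

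For coverage I would note that consecutive good iterates of $\F^4$ are related by the time-$\lambda$ map $\phil$, and hence sit at flow-distance $\lambda$ apart on a common polygonal orbit. Since $\Pi(w)$ has finite period under $\varphi$ and the return orbit completes approximately one revolution, the $\Theta(1/\lambda)$ good iterates of $\F^4$ sweep an $O(\lambda)$-dense subset of one arc of a polygon at distance $o(1)$ from $\Pi(w)$; the intermediate iterates of $\F$, related to the fourth iterates by an approximate rotation by $\pi/2$, cover the remaining three arcs by the $D_4$ symmetry of $\Pi(w)$ established in Theorem \ref{thm:Polygons}. Combined with containment, every point of $\Pi(w)$ lies within $o(1)$ of some return-orbit point, giving the claim.

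The principal obstacle will be the quantitative bound $\|\bfv - \lambda\bfw\| = O(\lambda)$ at bad iterates: containment requires each exceptional step to contribute only $O(\lambda)$ to the $\cP$-drift, so that $o(1/\lambda)$ bad steps accumulate to $o(1)$ rather than a linearly growing deficit. A secondary difficulty is that the orbit may graze a vertex of $\Pi(w)$, where $\cP$ is non-smooth and where bad iterates may cluster; I would expect this to be absorbed into the error estimates, possibly after excluding a zero-measure exceptional set of initial conditions, or by treating critical $w$ (those with $\cP(w) \in \cE$) separately so that the bootstrapped boundedness assumption used to convert $\|\bfv(z)\| = O(\lambda\|z\|)$ into $O(\lambda)$ holds throughout.
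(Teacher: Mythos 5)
Your two-inclusion skeleton (containment via approximate conservation of $\cP$, coverage via density of iterates) is exactly how the paper argues, but you have mis-identified the key ingredient and consequently get a weaker estimate than is available. The paper's proof does not invoke Proposition~\ref{prop:mu_2} at all: it cites Lemma~\ref{lemma:cP_variation} directly, which states that the variation of $\cP$ along $\Ot(R_\lambda(w))$ is $O(\lambda)$, and then concludes containment from the fact that $\nabla\cP$ has norm bounded away from zero. Routing through Proposition~\ref{prop:mu_2} is logically circuitous here, because that proposition is itself proved \emph{using} Lemma~\ref{lemma:cP_variation}; you end up re-deriving an $o(1)$ version of a bound the paper already has at rate $O(\lambda)$.

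The quantitative fact that your $o(1/\lambda)$-bad-step count obscures is the heart of Lemma~\ref{lemma:cP_variation}: the number of iterates at which $\cP$ can change per revolution is not merely $o(1/\lambda)$ but $O(1)$, since the orbit crosses each line of $\Delta$ essentially once per revolution and the total number of such crossings equals the number of sides of $\Pi(w)$, which is fixed once $w$ is fixed. Combined with the explicit step-size bound $\|\bfv(z)\|\leq \lambda\sqrt{2}(2\|z\|_\infty+5)$ established in the appendix proof of Lemma~\ref{lemma:cP_variation}, this gives $O(\lambda)$ drift immediately. That same lemma also dispatches both obstacles you raise: the per-step drift is genuinely $O(\lambda)$ (the gradient $\nabla\cP(x,y)=(2\fl{x}+1,2\fl{y}+1)$ is $O(1)$ on a bounded region and never vanishes, so there is no degeneracy near vertices to worry about), and the bootstrap you gesture at is carried out by bounding $\Ot(z)$ between the polygons $\Pi(w)^{\pm}$ with $\cP$-values $\cP(w)\pm 1$. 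Your coverage argument --- $D_4$ symmetry plus approximate $\pi/2$-rotation of $\F$ --- is correct but heavier than necessary; the paper observes simply that consecutive $\F^4$-iterates are $O(\lambda)$-apart, so together with containment the discrete orbit traces a connected $O(\lambda)$-close path around $\Pi(w)$, and the result follows. In short, your proposal would succeed, but it reproduces the same underlying estimate by a longer detour and with a lossy density bound where a direct count is sharper.
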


This result justifies the term `integrable limit' assigned to the flow $\varphi$ generated by $\cP$.
The proofs for proposition \ref{prop:mu_2} and theorem \ref{thm:Hausdorff} can be found below.

\subsection*{Transition points}

To establish propositions \ref{prop:mu_1} and \ref{prop:mu_2},
we seek to isolate the lattice points $z\in\lZ$ where 
the discrete vector field $\bfv(z)$ deviates from the scaled auxiliary 
vector field $\lambda\bfw(z)$. We say that a point $z\in\lZ$ is a 
\defn{transition point} if $z$ and its image under $F_\lambda^4$
do not belong to the same box, namely if
\begin{displaymath}
 R(\F^4(z))\not=R(z).
\end{displaymath}
Let $\Lambda$ be the set of transition points. Then
\begin{equation} \label{eq:Lambda}
 \Lambda = \bigcup_{m,n\in\Z} \Lambda_{m,n},
\end{equation}
where
\begin{displaymath}
 \Lambda_{m,n} =  \F^{-4}(B_{m,n}\cap\lZ)\setminus B_{m,n}.
\end{displaymath}

\begin{figure}[t]
        \centering
        \includegraphics[scale=0.9]{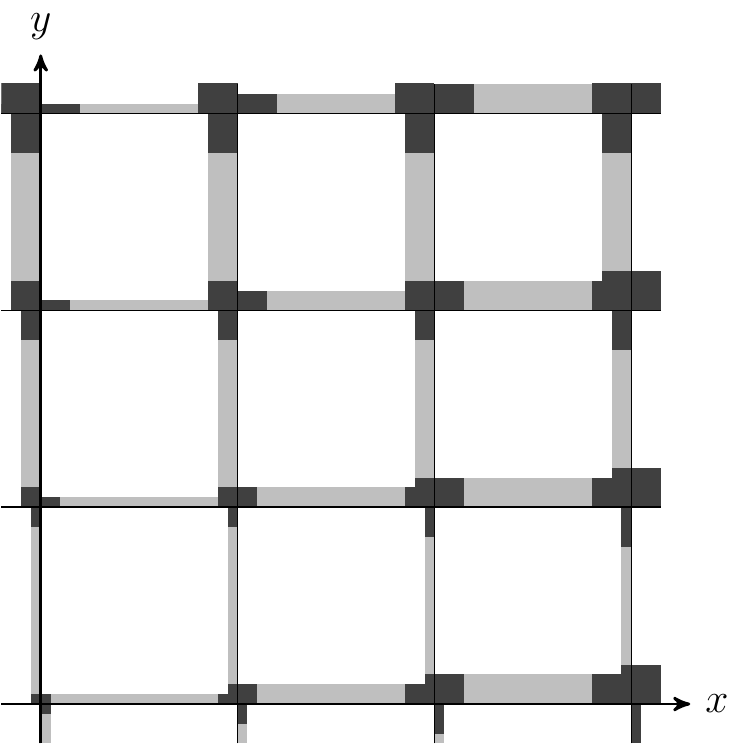}
        \caption{The structure of phase space. The boxes $B_{m,n}$, bounded by the
         set $\Delta$, include regular domains (white) where the motion is integrable,
         \hl{i.e., where $\F^4(z)=\phil(z)$}. By corollary \ref{corollary:mu_1}, page \pageref{corollary:mu_1}, 
         \hl{the lattice points in these domains have full density as $\lambda\to 0$}.
         The darker regions comprise the set $\Lambda$ of transition points, which is introduced below.
         \hl{The transition points are where the perturbation from the integrable motion occurs.}
         The darkest domains belong to the set $\Sigma\subset \Lambda$, defined in (\ref{eq:Sigma}),
         \hl{which forms a neighbourhood of the set $\Z^2$.
         Perturbed orbits which intersect the set $\Sigma$ are analogous to the critical polygons of 
         the flow, and we exclude them from our analysis.}}
        \label{fig:LambdaSigma_plot}
\end{figure}

For small $\lambda$, the set of transition points consists of thin 
strips of lattice points arranged along the lines $\Delta$ (see figure 
\ref{fig:LambdaSigma_plot}). The following key lemma states that, for sufficiently 
small $\lambda$, all points $z\not=(0,0)$ where $\bfv(z)\neq\lambda\bfw(z)$ 
are transition points. 

\begin{lemma} \label{lemma:Lambda}
Let $A(r,\lambda)$ be as in equation (\ref{eq:A}).
Then for all $r>0$ there exists $\lambda^*>0$ such that, for all $\lambda<\lambda^*$ 
and $z\in A(r,\lambda)$, we have
\begin{displaymath}
 z \notin \Lambda \cup \{(0,0)\} \quad \Rightarrow \quad \bfv(z)= \lambda\bfw(z).
\end{displaymath}
\end{lemma}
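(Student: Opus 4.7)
The plan is to compute $\F^4(z)$ explicitly in terms of nested floor functions, identify $\bfv(z) = \lambda\bfw(z)$ as arising from certain ``generic'' values of these floors, and show that any deviation from genericity forces $z$ into the transition set $\Lambda$.

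Setting $z = (u,v) \in \lambda\Z^2$ and $m = \fl{u}$, $n = \fl{v}$ (so $z \in B_{m,n}$), four iterations of $\F$ yield
\begin{equation*}
\F^4(z) = (u_4, v_4) = \bigl(u + \lambda(M_3 - M_1),\; v + \lambda(M_2 - m)\bigr),
\end{equation*}
where $M_1 = \fl{\lambda m - v}$, $M_2 = \fl{\lambda M_1 - u}$, and $M_3 = \fl{\lambda(M_2 - m) + v}$. Hence $\bfv(z) = \lambda(M_3 - M_1,\, M_2 - m)$, and the desired identity $\bfv(z) = \lambda\bfw(z) = \lambda(2n+1, -(2m+1))$ reduces to three ``generic'' equalities: $M_1 = -n-1$, $M_2 = -m-1$, $M_3 = n$. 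Since $z \in A(r,\lambda)$ gives $|m|, |n| \leq r$, choosing $\lambda^*$ with $\lambda^*(2r+3) < 1$ guarantees that for all $\lambda < \lambda^*$, each of $|\lambda m|$, $|\lambda M_1|$, $|\lambda(M_2 - m)|$ is strictly less than $1$, so each $M_i$ is restricted to one of at most three adjacent integers around its generic value.

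The crucial observation is that $M_3 = \fl{v_4}$, so the non-transition hypothesis $\fl{v_4} = n$ immediately forces $M_3 = n$. Substituting this into $u_4 = u + \lambda(n - M_1)$, I would then use the non-transition hypothesis $\fl{u_4} = m$ together with the defining formula $M_1 = \fl{\lambda m - v}$ to rule out anomalous values of $M_1$. For instance, when $m > 0$, the value $M_1 = -n$ requires $\{v\} \leq \lambda m$, whereas the non-transition of $v$ through the computed $M_2$ (which is itself either $-m-1$ or $-m-2$) forces $\{v\} \geq \lambda(2m+1)$; these intervals are disjoint. An analogous short case analysis, distinguishing the signs of $m$, rules out $M_1 = -n-2$ and also pins $M_2 = -m-1$, completing the proof.

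The main obstacle is the bookkeeping across the signs of $m$ and $n$: the small interval of $\{v\}$ that forces an anomalous $M_1$ lies near $0$ when $m > 0$ and near $1$ when $m < 0$, so compatibility with non-transition must be verified separately in each sign regime. The exclusion $z \neq 0$ is essential because $\F^4(0) = 0$ is trivially non-transition yet $\bfw(0) = (1,-1) \neq 0$; at the origin $m = n = \{u\} = \{v\} = 0$ simultaneously, which degenerates all the strict inequalities used in the generic analysis.
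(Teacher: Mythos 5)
Your proposal is correct and follows essentially the same approach as the paper's proof: your nested-floor variables $M_1,M_2,M_3$ correspond, up to sign, to the paper's box labels $a,b,c,d$ (with $M_1=-(a+1)$, $M_2=-(b+1)$, $M_3=c$, and $\fl{u_4}=d$), and both arguments confine each floor to a few adjacent integers via the $A(r,\lambda)$ bound and then show that anomalous values are incompatible with non-transition through a sign case analysis. The paper carries out the full bookkeeping—in particular isolating the $m=0$, $n=0$ boundary cases where anomalous values can survive non-transition (forcing $\lambda y=n$ and $\lambda x=m$, hence $z=(0,0)$)—whereas you present one representative sub-case ($m>0$, $M_1=-n$) and indicate the remaining sign regimes are analogous.
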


\begin{proof}
Let $r>0$ be given, and let $z=\lambda(x,y)\in A(r,\lambda)$.
We show that if $\lambda$ is sufficiently small (and $z\not=0$), 
then
\begin{displaymath}
\bfv(z)\neq \lambda\bfw(z) 
\quad \Rightarrow \quad 
 R(\F^4(z))\not=R(z).
\end{displaymath}

Since $z\in A(r,\lambda)$, we have $z\in B_{m,n}$ for some $|m|,|n|\leq\ceil{r}$, 
where $\ceil{\cdot}$ is the ceiling function, defined by the 
identity $\ceil{x}=-\fl{ -x }$. 
Through repeated applications of $\F$, we have
\begin{equation}\label{eq:Fabcd}
\begin{array}{llll}
 &\F(z)   = \lambda(-y+m,x)              &R(F_\lambda(z))=(-(a+1),m),\\
 \noalign{\vspace*{2pt}}
 &F^2_{\lambda}(z) = \lambda(-x-a-1,-y+m)         &R(F_\lambda^2(z))=(-(b+1),-(a+1)),\\
 \noalign{\vspace*{2pt}}
 &F^3_{\lambda}(z) = \lambda(y-m-b-1,-x-a-1)      &R(F_\lambda^3(z))=(c,-(b+1)), \\
 \noalign{\vspace*{2pt}}
 &F^4_{\lambda}(z) = \lambda(x+a+c+1,y-m-b-1)  \quad   &R(F_\lambda^4(z))=(d,c),
\end{array}
\end{equation}
where $m=\fl{\lambda x}$, $n=\fl{\lambda y}$, and the integers $a,b,c,d$ are given by
\begin{equation}\label{eq:abcd}
\begin{array}{rl}
  a+1 &= \ceil{\lambda (y-m)},\\
 \noalign{\vspace*{2pt}}
  b+1 &= \ceil{\lambda (x+a+1)},  \\
 \noalign{\vspace*{2pt}}
  c &= \fl{ \lambda (y-m-b-1) } \\
 \noalign{\vspace*{2pt}}
  d &= \fl{ \lambda (x+a+c+1) }.
\end{array}
\end{equation}
The integers $a$, $b$, $c$ and $d$ label the boxes in which each iterate occurs,
and also give an explicit expression for the round-off term 
$\fl{ \lambda x }$ at each step.
Thus reading from the last of these equations, the discrete vector field $\bfv$ of $z\in B_{m,n}$ is given by
\begin{equation} \label{eq:v_abcd}
\bfv(z)= F^4_{\lambda}(z) - z = 
\lambda( a+c+1, -(m+b+1)),
\end{equation}
and $z$ is a transition point whenever at least one of the equalities 
$d=m$ and $c=n$ on the final pair of box labels fails.


If the integers $m$, $a$, $b$, $c$ are sufficiently small relative to the 
number of lattice points per unit length, i.e., if
\begin{equation} \label{eq:abc_ineq}
\max (|m|,|a+1|,|m+b+1|,|a+c+1|)<1/\lambda, 
\end{equation}
then the map $F^4_{\lambda}$ moves the point $z$ at most one box in each of the $x$ and $y$ directions, 
so that the labels $a$, $b$, $c$ and $d$ satisfy
\begin{equation} \label{eq:bd_ac_sets}
b,d\in\{m-1,m,m+1\}, \hskip 20pt a,c\in\{n-1,n,n+1\}. 
\end{equation}
Similarly, (\ref{eq:abc_ineq}) dictates that the discrepancy between each of the pairs $(b,d)$, $(a,c)$ cannot be too large:
\begin{equation} \label{eq:bd_ac_ineq}
 |b-d|, |a-c| \leq 1.
\end{equation}

Letting $\lambda^* = 1/(2\ceil{r} +3)$, we obtain 
\begin{align*}
& \max (|m|,|a+1|,|m+b+1|,|a+c+1|) \\
	& \quad \leq \max (|m|+|b+1|,|a+1|+|c|) \\
	& \quad \leq \max (2|m|+|b-m|,2|n|+|a-n|+|c-n|)+1 \\
	& \quad \leq 2\ceil{r}+3 \leq 1/\lambda^*,
\end{align*}
so that (\ref{eq:bd_ac_sets}) and (\ref{eq:bd_ac_ineq}) hold for all $\lambda<\lambda^*$. 
Then the expression (\ref{eq:v_abcd}) for $\bfv$, combined with the inequality (\ref{eq:bd_ac_ineq}),
gives that $\bfv(z)= \lambda\bfw(z)$ if and only if
\begin{equation} \label{eq:v=w}
m=b \hskip 40pt n = a = c. 
\end{equation}

Suppose now that $z$ is not a transition point, so that $c=n$ and $d=m$, 
but that $\bfv(z)\neq \lambda\bfw(z)$, so that at least one of the 
equalities (\ref{eq:v=w}) fails. 
If $a\neq n$, straightforward manipulation of inequalities 
shows that the only combination of values which satisfies (\ref{eq:bd_ac_sets}) is
\begin{displaymath}
 a=n-1, \, m=0, \, b=-1, \, \lambda y = n.
\end{displaymath}
In particular, we have $b\neq m$.
Conversely if $b\neq m$, then using also the inequality (\ref{eq:bd_ac_ineq}) gives
\begin{displaymath}
 b=m-1, \, c=0, \, a=-1, \, \lambda x = m,
\end{displaymath}
so that $n=c\neq a$.
Hence, combining these, the only possibility is $m=a+1=b+1=c=0$, 
which corresponds to the unique point $z=(0,0)$.
\end{proof}

By construction, the auxiliary vector field $\bfw$ is 
equal to the Hamiltonian vector field associated 
with $\cP$ (see equation (\ref{eq:HamiltonianVectorField})). 
Since $\bfw$ is piecewise-constant, it follows that $\phil$, the time-$\lambda$ advance map of the Hamiltonian flow,
is equal to a translation by $\lambda\bfw$ everywhere except across the discontinuities of $\bfw$, 
i.e., except at transition points.

Furthermore, lemma \ref{lemma:Lambda} gives us that, for sufficiently small $\lambda$,
any $z\in A(r,\lambda)\setminus\{(0,0)\}$ \hl{which is not a transition point} satisfies $\lambda\bfw(z)=\bfv(z)$.
Hence, a simple consequence of lemma \ref{lemma:Lambda} is that $\F^4$ is equal to a
time-$\lambda$ advance of the flow everywhere except at the transition points.

\begin{corollary} \label{corollary:Lambda}
Let $A(r,\lambda)$ be as in equation (\ref{eq:A}).
Then for all $r>0$ there exists $\lambda^*>0$ such that, for all $\lambda<\lambda^*$ 
and $z\in A(r,\lambda)$, we have
\begin{displaymath}
 z \notin \Lambda \cup \{(0,0)\} \quad \Rightarrow \quad \F^4(z)= \phil(z).
\end{displaymath}
\end{corollary}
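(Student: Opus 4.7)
The plan is to combine Lemma \ref{lemma:Lambda} with the geometric observation that the Hamiltonian vector field $\bfw$ is constant on each box $B_{m,n}$, so the flow simply translates by $\lambda\bfw_{m,n}$ inside a box.

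First I would invoke Lemma \ref{lemma:Lambda} to produce a threshold $\lambda^*>0$ such that for every $\lambda<\lambda^*$ and every $z\in A(r,\lambda)$ with $z\notin\Lambda\cup\{(0,0)\}$, one has $\bfv(z)=\lambda\bfw(z)$, i.e.
\begin{displaymath}
\F^4(z) \;=\; z + \lambda\bfw(z).
\end{displaymath}
If $z\in B_{m,n}$, then $\bfw(z)=\bfw_{m,n}$ by (\ref{def:w_mn}), so $\F^4(z) = z + \lambda\bfw_{m,n}$.

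Next I would show that the line segment joining $z$ to $\F^4(z)$ lies entirely inside $B_{m,n}$. Since $z\notin\Lambda$, by the definition of $\Lambda$ in (\ref{eq:Lambda}) the image $\F^4(z)$ belongs to the same box $B_{m,n}$ as $z$. The box $B_{m,n}=[m,m+1)\times[n,n+1)$ is convex, so the segment $\{z+s\lambda\bfw_{m,n} : 0\le s\le 1\}$ is contained in $B_{m,n}$.

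It remains to compare this with the flow. By equation (\ref{eq:HamiltonianVectorField}), on the interior of $B_{m,n}$ the Hamiltonian vector field coincides with the constant vector $\bfw_{m,n}$, and the boundary $\Delta$ has Lebesgue measure zero. Hence on the segment considered above the flow curve through $z$ satisfies $\gamma'(t)=\bfw_{m,n}$, so for $0\le t\le\lambda$,
\begin{displaymath}
\gamma(t) \;=\; z + t\,\bfw_{m,n}.
\end{displaymath}
Taking $t=\lambda$ gives $\phil(z)=z+\lambda\bfw_{m,n}=\F^4(z)$, as required.

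The only mild subtlety is that the Hamiltonian vector field is not defined on $\Delta$, so one should remark that the flow is nevertheless well-defined and continuous through the interior of each box, and the boundary is traversed only at isolated instants (or not at all) along the segment from $z$ to $z+\lambda\bfw_{m,n}$; this causes no obstruction because the segment lies in the closure of a single box where $\bfw$ is constant. Once this is noted, the corollary follows immediately, and no further estimates are needed beyond those already contained in Lemma \ref{lemma:Lambda}.
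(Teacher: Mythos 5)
Your proof is correct, and it follows the same route the paper takes (the paper states the corollary as "a simple consequence of lemma~\ref{lemma:Lambda}" together with the piecewise-constancy of $\bfw$, without spelling out the details you supply). You correctly combine Lemma~\ref{lemma:Lambda} (giving $\F^4(z)=z+\lambda\bfw_{m,n}$) with the observation that $z\notin\Lambda$ forces $\F^4(z)$ into the same box, so by convexity of $B_{m,n}$ the segment never leaves it and the flow is the linear translation.

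One small remark: the "mild subtlety" in your last paragraph is slightly misplaced. The flow $\varphi$ is defined in the paper directly via $\bfw$, which is defined on all of $\R^2$ (including $\Delta$) by equation~(\ref{def:w}); so there is no issue of the vector field being undefined on the segment. The genuine subtlety is only the \emph{uniqueness} of the solution of $\gamma'=\bfw(\gamma)$, since $\bfw$ is discontinuous across $\Delta$ — and your convexity argument takes care of exactly this, because the segment from $z$ to $z+\lambda\bfw_{m,n}$ lies entirely inside the single half-open box $B_{m,n}$ where $\bfw$ is constant, so the ODE is trivially Lipschitz there. Also, "closure of a single box" should just read "a single box": $\bfw$ is constant on the half-open $B_{m,n}$, not on its closure (whose top and right edges belong to adjacent boxes carrying different values of $\bfw$); fortunately the segment stays in $B_{m,n}$ itself, so nothing breaks.
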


We now use lemma \ref{lemma:Lambda} to prove proposition \ref{prop:mu_1}, 
given in section \ref{sec:Hamiltonian}, page \pageref{prop:mu_1}.

\begin{proof}[Proof of proposition \ref{prop:mu_1}] \label{proof:mu_1}
From equation (\ref{eq:A}), we have that the number of lattice points 
in the set $A(r,\lambda)$ is given by 
\begin{displaymath}
  \# A(r,\lambda) = \left( 2 \Bceil{\frac{r}{\lambda}} -1\right)^2.
\end{displaymath}
By lemma \ref{lemma:Lambda}, for sufficiently small $\lambda$, 
every non-zero point $z\in A(r,\lambda)$ 
satisfying $\bfv(z)\neq \lambda\bfw(z)$ is a transition point, so has 
$z\in \Lambda_{m,n}$ for some $m,n\in\Z$ with $|m|,|n|\leq \ceil{r}+1$. 
Furthermore, every set $\Lambda_{m,n}$ is composed of two strips,
each of unit length, and width approximately equal to $\lambda(2m+1)$ 
and $\lambda(2n+1)$, respectively.
We can bound the number of lattice points in the set $\Lambda_{m,n}$ explicitly by
\begin{displaymath}
  \# \Lambda_{m,n} \geq \frac{|2m+1|+|2n+1|-c}{\lambda},
\end{displaymath}
for some positive constant $c$, independent of $m$ and $n$. (Indeed $c=3$ is sufficient -- 
cf.~the methods used in the proof of proposition \ref{prop:Xe}.)
It follows that for fixed $r>0$, as $\lambda\to 0$ we have the estimate
\begin{align*}
  \mu_1(r,\lambda) &= 1 - \frac{ \# \{z\in A(r,\lambda) \, : \; 
    \bfv(z) \neq \lambda\bfw(z) \} }{\# A(r,\lambda)} \\
 &\geq 1 - \frac{ \# \left(A(r,\lambda) \cap \Lambda \right)}{\# A(r,\lambda)} \\
 &\geq 1 - \frac{1}{\# A(r,\lambda)} \sum_{|m|,|n|\leq \ceil{r}+1} \#  \Lambda_{m,n} \\
 &\geq 1 - \left( 2 \Bceil{\frac{r}{\lambda}} -1\right)^{-2} \sum_{|m|,|n|\leq \ceil{r}+1} 
          \frac{|2m+1|+|2n+1|-c}{\lambda} \\
 &= 1 + O(\lambda).
\end{align*}
Since $\mu_1(r,\lambda)\leq 1$, the proof is complete.
\end{proof}

\medskip

To prove proposition \ref{prop:mu_2} and theorem \ref{thm:Hausdorff}
we need a second lemma, which bounds the variation in the Hamiltonian function 
$\cP$ along perturbed orbits $\Ot(R_{\lambda}(w))$ as $\lambda\to 0$, where $w\in\R^2$.
By corollary \ref{corollary:Lambda}, we know $\cP$ is invariant under $\F^4$ at all points $z\notin\Lambda$,
so that variations can only occur when the fourth iterates of $\F$ hit a transition point.
However, the number of transition points encountered by a perturbed orbit in one revolution 
is (essentially) equal to the number of vertices of the corresponding polygon,
which is independent of $\lambda$.
Furthermore, the magnitude of the perturbation from the integrable motion at such a transition point
is $O(\lambda)$ as $\lambda\to 0$. Hence we have the following result.

\begin{lemma} \label{lemma:cP_variation}
Let $w\in\R^2$ and let $z=R_{\lambda}(w)\in\lZ$ be the rounded lattice point associated with $w$. 
Then as $\lambda\to 0$:
 $$ \forall \xi\in \Ot(z): \hskip 20pt |\cP(\xi)-\cP(w)| = O(\lambda). $$
\end{lemma}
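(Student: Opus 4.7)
The plan is to bound $|\cP(\xi) - \cP(w)|$ through three reductions: initial rounding from $w$ to $z$, propagation along the sequence $\F^{4k}(z)$, and handling of the intermediate iterates. Since $\cP$ is continuous and piecewise-affine, it is Lipschitz on any bounded region with a constant $L$ depending on the region but not on $\lambda$. As $|z-w|\leq\lambda\sqrt{2}$ by the definition of $R_\lambda$, we have $|\cP(z)-\cP(w)|\leq L\lambda\sqrt{2}=O(\lambda)$, so it suffices to prove $|\cP(\xi)-\cP(z)|=O(\lambda)$ for all $\xi\in\Ot(z)$.

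Consider the subsequence $z_k:=\F^{4k}(z)$ for $k=0,1,\dots,K$, where $K$ is the largest index such that $z_K\in\Ot(z)$, and write the telescoping sum
\begin{displaymath}
  \cP(z_K)-\cP(z_0)=\sum_{k=0}^{K-1}\bigl[\cP(z_{k+1})-\cP(z_k)\bigr].
\end{displaymath}
If $z_k\notin\Lambda$, then by Corollary \ref{corollary:Lambda} (applied with $r$ chosen to contain the orbit) $\F^4(z_k)=\phil(z_k)$, and since $\phil$ is a time-advance map of the Hamiltonian flow associated with $\cP$, the corresponding summand vanishes. If $z_k\in\Lambda$, then equation \eqref{eq:v_abcd} together with the bounds $|a-n|,|b-m|,|c-n|\leq 1$ from the proof of Lemma \ref{lemma:Lambda} shows that $|\F^4(z_k)-z_k|=O(\lambda)$; since also $|\phil(z_k)-z_k|=O(\lambda)$ by construction of the flow, Lipschitz continuity yields $|\cP(z_{k+1})-\cP(z_k)|\leq L|\F^4(z_k)-\phil(z_k)|=O(\lambda)$.

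The main obstacle is to bound the number $N$ of transition indices $k\in\{0,\dots,K-1\}$ along the return orbit, \emph{independently} of $\lambda$. The argument is that the sequence $(z_k)$ shadows the polygon $\Pi(w)$: on the interior of each side of $\Pi(w)$, the map $\F^4$ acts as a pure translation by $\lambda\bfw_{m,n}$ within a single box $B_{m,n}$, contributing no transition points. Transitions can therefore occur only as the orbit crosses from one box to an adjacent one, i.e., near a vertex of $\Pi(w)$. Since each step of $(z_k)$ has length $O(\lambda)$ while the transition strips across $\Delta$ have width $O(\lambda)$, the orbit contributes only $O(1)$ transition points per vertex crossing. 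By Theorem \ref{thm:Polygons} the polygon $\Pi(w)$ has $4(2\fl{\sqrt{\cP(w)}}+1)-r(\cP(w))$ vertices, a quantity depending only on $w$, so $N=O(1)$ and $|\cP(z_K)-\cP(z_0)|\leq N\cdot O(\lambda)=O(\lambda)$.

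Finally, for an arbitrary $\xi=\F^j(z)\in\Ot(z)$, write $j=4k+r$ with $r\in\{0,1,2,3\}$. From the definition of $F$, the map $\F$ differs from the exact quarter-turn $(x,y)\mapsto(-y,x)$ by a round-off correction of magnitude $O(\lambda)$ on a bounded region. Because $\cP$ is invariant under this quarter-turn by Theorem \ref{thm:Polygons}, composing at most three times gives $|\cP(\F^r(z_k))-\cP(z_k)|=O(\lambda)$ via Lipschitz continuity. Combining this with the bound on $|\cP(z_k)-\cP(z_0)|$ and the initial reduction from $w$ to $z$ yields the claim.
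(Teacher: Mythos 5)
Your proof is correct and follows essentially the same route as the paper's: telescope over the subsequence $\F^{4k}(z)$, use that $\cP$ is invariant under $\F^4$ off $\Lambda$ and changes by $O(\lambda)$ at each transition, bound the number of transitions per revolution by the number of boxes the orbit intersects, and handle the intermediate iterates $\F^r(z_k)$ with $r\in\{1,2,3\}$ separately. Where you argue via Lipschitz estimates and comparison with the exact quarter-turn (under which $\cP$ is invariant by dihedral symmetry), the paper computes explicit numerical constants directly from equation \eqref{eq:v_abcd}; this is purely a difference of style, not substance.

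One phrasing caveat deserves mention: your statement that the sequence $(z_k)$ ``shadows the polygon $\Pi(w)$'' should not be taken as a premise, since shadowing is precisely the content of Theorem \ref{thm:Hausdorff}, which is proved \emph{using} this lemma. What the step actually requires is the weaker, self-contained bootstrap: $m$ transitions change $\cP$ by at most $O(m\lambda)$, so for sufficiently small $\lambda$ the orbit is confined to a fixed bounded annulus around $\Pi(w)$, which meets only $O(1)$ boxes, which in turn forces $m=O(1)$ independently of $\lambda$. The paper leaves the same step implicit (``This number is (essentially) equal to the number of sides of $\Pi(w)$, and does not scale with $\lambda$''), so you are in good company, but your wording, by invoking shadowing explicitly, reads as circular; replacing it with the bootstrap makes the argument self-contained.
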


We postpone the proof of lemma \ref{lemma:cP_variation} to appendix \ref{chap:Appendix},
and proceed with the proof of proposition \ref{prop:mu_2} (page \pageref{prop:mu_2}).

\begin{proof}[Proof of proposition \ref{prop:mu_2}]
Let $w\in \R^2$ be given, and let $z=R_{\lambda}(w)$. 
For small $\lambda$, the polygons $\Pi(z)$ and $\Pi(w)$ are close, since
 $$ \|z - w\| = O(\lambda) $$
as $\lambda\to 0$.

Consider the polygons $\Pi(w)^{\pm}$ given by
 $$ \Pi(w)^{\pm} = \{ \xi \, : \; \cP(\xi) = \cP(w)\pm1 \},  $$
where the abscissae $x^{\pm}$ of the intersections of $\Pi(w)^{\pm}$
with the positive $x$-axis are given by
 $$ x^{\pm} = P^{-1}(\cP(w)\pm1). $$
Without loss of generality, we may assume that neither of these polygons 
is critical. Thus each of these integrable orbits intersects as many boxes
as it has sides. For the larger polygon $\Pi(w)^+$, the number of sides 
(see theorem \ref{thm:Polygons}, page \pageref{thm:Polygons}) is given by
 $$ 4\left( 2\Bfl{\sqrt{\cP(w)+ 1}}+1 \right). $$

By construction, the return orbit of $z$ contains exactly one 
transition point for every time the fourth iterates of $\F$ move the orbit from one of the boxes $B_{m,n}$ to another.
Furthermore, the fourth iterates of $\F$ move points parallel to the flow within each box,
so that, per revolution, there is exactly one transition point per box that the orbit intersects.
By lemma \ref{lemma:cP_variation}, the return orbit $\Ot(z)$ is bounded
between the polygons $\Pi(w)^{\pm}$ for sufficiently small $\lambda$.
Hence the number of boxes intersected in any one revolution 
around the origin cannot exceed the number of sides of $\Pi(w)^{+}$:
 $$ \# \left(\Ot(z)\,\cap\,\Lambda \right) \leq 4\left( 2\Bfl{\sqrt{\cP(w)+ 1}}+1 \right). $$

\medskip

Now we consider the total number of points in the return orbit $\Ot(z)$.
Since the perturbed orbit is bounded below by the integrable orbit $\Pi(w)^{-}$, it must contain a point $\xi$,  
close to the positive $x$-axis, with $x$-coordinate not less than $x^-$.
Similarly for the negative $x$-axis.
The return orbit moves between neighbouring points via the action of $\F^4$, 
i.e., by translations of the vector field $\bfv$. 
If $\xi=\lambda(x,y)\in\Ot(z)$, then for sufficiently small $\lambda$, equations (\ref{eq:v_abcd}) and 
(\ref{eq:bd_ac_sets}) from the proof above can be combined to give
\begin{align*}
 \| \bfv(\xi) \| 
 &\leq \lambda \sqrt{(|2\fl{\lambda y}+1| +2)^2 + (|2\fl{\lambda x}+1| +1)^2} \\
 &\leq \lambda \sqrt{(2|\fl{\lambda y}|+3)^2 + (2|\fl{\lambda x}|+2)^2} \\
 &< \lambda \sqrt{(2|\lambda y|+5)^2 + (2|\lambda x|+4)^2} \\
 &< \lambda \sqrt{2}(2x^+ +5),
\end{align*}
where $\|\xi\|_{\infty}\leq x^+$ because the orbit is bounded above by $\Pi(w)^{+}$.
Hence, for sufficiently small $\lambda$, the number of points in the orbit
is bounded below by the distance $4x^-$ divided by the maximal length of $\bfv$ along the orbit:
\begin{displaymath}
 \# \Ot(z) \geq \frac{4x^-}{\lambda \sqrt{2}(2x^+ +5)}.
\end{displaymath}

Thus, as $\lambda\to 0$, we have the estimate
 \begin{align*}
  \mu_2(w,\lambda) &= 1 - \frac{ \# \{\xi\in  \Ot(z) \, : \; 
   \bfv(\xi) \neq \lambda\bfw(\xi) \} }{\# \Ot(z)}, \\
 &\geq 1 - \frac{ \# \left(\Ot(z)\,\cap\,\Lambda \right) }{\# \Ot(z)} , \\
  &\geq 1 - \lambda \,\frac{ \sqrt{2}(2x^+ +5)\left( 2\Bfl{\sqrt{\cP(w)+ 1}}+1 \right) }{x^-} , \\
 &= 1 + O(\lambda).
 \end{align*}
Since $\mu_2(w,\lambda)\leq 1$, the proof is complete.
\end{proof}

\medskip

Finally, we can prove theorem \ref{thm:Hausdorff} of page \pageref{thm:Hausdorff}.

\begin{proof}[Proof of theorem \ref{thm:Hausdorff}]
Let $w\in \R^2$ be given, and let $z=R_{\lambda}(w)$, 
so that $\Ot(z)$ is the return orbit which shadows the integrable orbit $\Pi(w)$.
By lemma \ref{lemma:cP_variation}, the variation in $\cP$ along the orbit
of $z$ is $O(\lambda)$ as $\lambda\to 0$.
Furthermore, the derivative of $\cP$ is bounded away from zero in a neighbourhood of $\Pi(w)$,
so that points in the orbit must be close to $\Pi(w)$ in the Hausdorff metric:
 $$ \forall \xi\in \Ot(z): \hskip 20pt d_H(\xi,\Pi(w)) = O(\lambda) $$
as $\lambda\to 0$.

Neighbouring points $\xi,\xi+\bfv(\xi)$ in the return orbit $\Ot(z)$ are also 
$O(\lambda)$-close as $\lambda\to 0$, so the result follows.
\end{proof}

\section{Nonlinearity} \label{sec:IntegrableReturnMap}

So far we have seen that, in the integrable limit, orbits of the rescaled 
discretised rotation $\F$ shadow orbits of the Hamiltonian flow $\varphi$. 
In particular, in corollary \ref{corollary:mu_1}, we showed that as $\lambda\rightarrow0$, 
$\F^4$ is equal to the time-$\lambda$ advance map $\phil$ of the flow almost 
everywhere in any bounded region.

We now introduce the period $\cT$ of the flow $\varphi$:
\begin{equation} \label{def:cT(z)}
 \cT(z) :\R^2 \rightarrow \R_{\geq 0} \hskip 40pt \cT(z) = \min\{t>0 \, : \; \varphi^t(z)=z\},
\end{equation}
so that the integrable counterpart $\cF$ to the discretised rotation $\F$ is given by
\begin{equation} \label{def:cF}
 \cF : \R^2 \rightarrow \R^2  \hskip 40pt \cF(z) = \varphi^{(\lambda - \cT(z))/4}(z).
\end{equation}
In accordance with corollary \ref{corollary:mu_1}, applying $\cF^4$ is equal to a time-$\lambda$ advance of the flow:
 $$ \cF^4(z) = \varphi^{(\lambda - \cT(z))}(z) = \phil(z). $$

As we did for $\F$ in section \ref{sec:Recurrence}, 
we can define a first return map for $\cF$.
The counterpart $\cX$ to the return domain $X$ is given by the set of points in the 
plane which are closer to $\Fix{G}$ than their preimages under $\phil$, 
and at least as close as their images. In this case, the set $\cX$ takes the simple form
\begin{equation} \label{eq:cX}
 \cX = \{ \varphi^{\lambda\theta}(x,x) \, : \; x\geq 0, \; \theta\in[-1/2,1/2) \}.
\end{equation}
We have the following explicit expression for the first return map.
 
\begin{proposition} \label{prop:cPhi(z)}
Let $z\in\cX$, and let $z^{\prime}$ be the first return of $z$ to $\cX$ \hl{under $\cF$}. 
Suppose that $z=\varphi^{\lambda\theta}(x,x)$ and  $z^{\prime}=\varphi^{\lambda\theta^{\prime}}(x,x)$, 
where $\theta,\theta^{\prime}\in[-1/2,1/2)$ and $x\in\R_{\geq 0}$. 
Then $\theta^{\prime}$ is related to $\theta$ via
\begin{equation} \label{eq:theta_prime}
 \theta^{\prime} \equiv \theta+\frac{1}{4}-\frac{\cT(z)}{4\lambda} \mod{1}.
\end{equation}
\end{proposition}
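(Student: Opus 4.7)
The plan is to reduce the first-return problem on $\Pi(z)$ to a rigid rotation on a circle, where the return map to an interval can be computed explicitly. First, the flow $\varphi$ preserves each polygon $\Pi(z)$, and the period $\cT$ is constant along $\Pi(z)$, so $\cF = \varphi^{(\lambda - \cT(z))/4}$ acts on $\Pi(z)$ as a pure time-translation. Parameterising $\Pi(z)$ by flow time $t \in \R/\cT(z)\Z$ measured from the symmetric point $(x,x)$, and rescaling via $s = t/\lambda$ and $C = \cT(z)/\lambda$, the map $\cF$ becomes the rigid rotation
\[
 R_\omega : s \mapsto s + \omega \pmod{C}, \qquad \omega \;=\; \frac{\lambda - \cT(z)}{4\lambda} \;=\; \frac14 - \frac{\cT(z)}{4\lambda},
\]
on the circle $\R/C\Z$. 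In this parameterisation, $\cX \cap \Pi(z)$ corresponds to the interval $I = [-1/2, 1/2)$, and the first-return map of $\cF$ to $\cX$ coincides with that of $R_\omega$ to $I$.

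Next I would locate the first-return time by exploiting the key algebraic identity $4\omega \equiv 1 \pmod{C}$, which yields
\[
 R_\omega^{\,4k+r}(\theta) \;\equiv\; \theta + k + r\omega \pmod{C}, \qquad k \in \Z_{\geq 0},\; r \in \{0,1,2,3\}.
\]
For each residue class $r$, a short calculation identifies the unique $k$ for which this lies in $I$: one needs $k$ in the length-one interval $[\,C/4 - 3/4 - \theta,\, C/4 + 1/4 - \theta\,)$ when $r = 1$, while $r = 0,2,3$ require $k$ near $C,\, C/2,\, 3C/4$ respectively. The associated iterates $n = 4k+r$ thus satisfy $n \approx C$ when $r = 1$ and $n \approx 4C,\, 2C,\, 3C$ otherwise. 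Hence for $\lambda$ sufficiently small (equivalently $C$ sufficiently large) the first return is realised at $n = 4k^* + 1$, where $k^*$ is the unique integer in the above interval.

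For this iterate I would then compute $\theta'$ directly. Using $(4k^*+1)\omega = k^*(1-C) + \omega$ and cancelling the multiple $k^*C$ of $C$ to land in $I$, one obtains
\[
 \theta' \;=\; \theta + k^* + \frac14 - \frac{C}{4},
\]
and since $k^* \in \Z$, reducing modulo $1$ gives
\[
 \theta' \;\equiv\; \theta + \frac14 - \frac{\cT(z)}{4\lambda} \pmod{1},
\]
which is the required identity.

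The delicate step is confirming that the first return genuinely occurs at the residue $r = 1$ and not at a smaller iterate with $r \in \{0,2,3\}$. This rests on the specific form $\omega = (1-C)/4$: the residue $r = 0$ forces $k$ to be a nonzero multiple of $C$, while $r = 2, 3$ force $k$ close to $C/2$ or $3C/4$, so only $r = 1$ with $k \approx C/4$ achieves a return of order $n \approx C$. Handling the non-generic boundary configurations (where $\theta$ or the fractional part of $C/4$ sits on an endpoint so that $k^*$ is ambiguous) is straightforward and does not alter the statement modulo $1$; once this comparison of first-return times across residue classes is in place, Proposition \ref{prop:cPhi(z)} follows immediately from the explicit expression for $\theta'$.
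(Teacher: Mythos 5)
Your proof is correct and takes essentially the same route as the paper: both reduce the problem to finding the smallest $n = 4k + r$ for which the $n$th iterate lands back in $\cX$, identify $r = 1$ as the residue achieving the first return, and read off $\theta'$ modulo $1$. Your circle-rotation framing on $\R/C\Z$ with $\omega = (1-C)/4$ is a clean cosmetic reformulation, and your explicit comparison of the approximate first-return times ($n \approx C, 2C, 3C, 4C$ for $r = 1, 2, 3, 0$) makes precise what the paper dismisses as "straightforward to see."
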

 
\begin{proof}
Suppose that $t$ is the return time of $z$ to $\cX$, so that $z^{\prime}=\cF^t(z)$. 
If $z=\varphi^{\lambda\theta}(x,x)$, then by the definition (\ref{def:cF}) of $\cF$ we have
\begin{displaymath}
 \cF^k(z) = \varphi^{\lambda\theta + k(\lambda - \cT(z))/4} (x,x) \hskip 40pt k\in\Z.
\end{displaymath}
Thus, by the expression (\ref{eq:cX}) for $\cX$, the $k$th iterate of $z$ under $\cF$ lies in the set $\cX$ if and only if
\begin{displaymath}
 \theta + k \left(\frac{\lambda - \cT(z)}{4\lambda}\right) + \frac{m\cT(z)}{\lambda} \in [-1/2,1/2)
\end{displaymath}
for some $m\in\Z$. The return time $t$ is the minimal $k\in\N$ for which this inclusion holds. 
Writing $k=4l+r$ for $l\in\Z_{\geq 0}$ and $0\leq r\leq 3$, it is straightforward to see that 
$k$ is minimal when $r=1$, $m=l$, and $l$ satisfies
\begin{displaymath}
 \theta + l - \frac{ \cT(z)}{4\lambda} \in [-3/4,1/4),
\end{displaymath}
i.e., when
\begin{displaymath}
 l + \Bfl{\theta + \frac{3}{4} - \frac{\cT(z)}{4\lambda}} =0.
\end{displaymath}
Thus the return time is given by
\begin{equation} \label{eq:tau(z)}
  t = 4\Bceil{ \frac{\cT(z)}{4\lambda} - \theta - \frac{3}{4} } + 1,
\end{equation}
where we have used the relation $-\fl{x}=\ceil{-x}$.

Now if $z^{\prime}=\varphi^{\lambda\theta^{\prime}}(x,x)$, where $\theta^{\prime}\in[-1/2,1/2)$, 
then by construction, we have
 $$ \lambda\theta^{\prime} \equiv \lambda\theta + t\left(\frac{\lambda - \cT(z)}{4}\right) \mod{\cT(z)}, $$
where we write $a \equiv b ~ (\mathrm{mod} ~ c)$ for real $c$ to denote that $(a-b)\in c\,\Z$. Thus it follows from the formula (\ref{eq:tau(z)}) for the return time that
\begin{align*}
 \lambda\theta^{\prime} 
  &\equiv \lambda\theta + \lambda \Bceil{ \frac{\cT(z)}{4\lambda}-\frac{3}{4}-\theta } + \frac{\lambda - \cT(z)}{4} \mod{\cT(z)} \\
  &\equiv -\lambda \Bfl{ \theta+\frac{3}{4}-\frac{\cT(z)}{4\lambda} } + \lambda\left( \theta+\frac{3}{4}-\frac{\cT(z)}{4\lambda}\right) - \frac{\lambda}{2} \mod{\cT(z)} \\
  &= \lambda \left\{ \theta+\frac{3}{4}-\frac{\cT(z)}{4\lambda} \right\} - \frac{\lambda}{2},
\end{align*}
where $\{x\}$ denotes the fractional part of $x$. Equivalently, dividing through by $\lambda$, we can write
 $$ \theta^{\prime} \equiv \theta+\frac{1}{4}-\frac{\cT(z)}{4\lambda} \mod{1}, $$
which completes the proof.
\end{proof}

It is natural to think of the first return map as a twist map on a cylinder with coordinates $\theta$ and $\cP(z)$,
\hl{where points of the form $\varphi^{-\lambda/2}(x,x)$ and $\varphi^{\lambda/2}(x,x)$ are
identified since they both lie in the same orbit of $\cF$} (we make this construction explicit later---see section \ref{sec:cylinder_coordinates}).
To understand how this twist map behaves, we need to study how the period function $\cT(z)$ varies with $\cP(z)$.
If the period function is not constant, the return map is nonlinear.

\subsection*{The period function} 

We now produce an explicit expression for the period $\cT$ of the Hamiltonian flow. 
Recall that $\Pi(z)$ denotes the orbit of $\varphi$ passing through the point $z\in\R^2$.
In this section we adapt this notation, and write $\Pi(\alpha)$ for the polygon on which $\cP$ takes the value $\alpha$:
\begin{equation*} 
 \Pi(\alpha) = \{z\in\R^2 \, : \; \cP(z) = \alpha \} \hskip 40pt \alpha\in\R_{\geq 0}.
\end{equation*}
Similarly \hl{we} overload the notation $\cT$, and write $\cT(\alpha)$ to denote the period of the flow on the polygon $\Pi(\alpha)$.

Let $e\in\cE$ be a critical number, and let $\alpha\in \cIe$, so that $\Pi(\alpha)$ belongs to the polygon class associated with $e$. 
Recall the vertex list $V(e)$ associated with this class of polygons, whose first entry, denoted $v_1$, and last entry, denoted $v_k$, are given by equations (\ref{def:v1}) and (\ref{def:vk}), respectively. 
Then we have the following expression for the period of the flow $\varphi$.

\begin{proposition} \label{prop:T(alpha)}
 Let $\alpha\geq0$, and let $v_1$ and $v_k$ be given as in (\ref{def:v1}) and (\ref{def:vk}). 
 Then the period $\cT(\alpha)$ of the Hamiltonian flow on the polygon $\Pi(\alpha)$ is given by
\begin{equation} \label{eq:cT(alpha)}
 \frac{\cT(\alpha)}{8} = \frac{P^{-1}(\alpha/2)}{2v_1+1} -2 \sum_{n=v_1+1}^{v_k} \frac{P^{-1}(\alpha - n^2)}{4n^2-1}
\end{equation}
(where if $v_1=v_k$ the sum should be understood to be empty).
\end{proposition}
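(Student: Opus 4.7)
The approach is to reduce $\cT(\alpha)$ to an integral over one eighth of the polygon $\Pi(\alpha)$ and to evaluate it strip-by-strip in $x$. By the $D_4$-invariance of $\cP$ established in Theorem \ref{thm:Polygons}, the flow respects the eight-fold symmetry of $\Pi(\alpha)$, so $\cT(\alpha)=8\,T_{\mathrm{oct}}$, where $T_{\mathrm{oct}}$ denotes the time to traverse the orbit segment contained in the first octant $\{0\le y\le x\}$. This segment runs clockwise from the symmetric point $(x_0,x_0)\in\Fix{G}$, with $x_0=P^{-1}(\alpha/2)$, to the axial intersection $(x_k,0)$, with $x_k=P^{-1}(\alpha)$; its $x$-coordinate increases monotonically from $x_0$ to $x_k$, sweeping through the vertical strips indexed by $m=v_1,v_1+1,\ldots,v_k$.

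On each box $B_{m,n}$ the Hamiltonian vector field is $\bfw_{m,n}=(2n+1,-(2m+1))$, whose $y$-component $-(2m+1)$ depends only on $m=\fl{x}$. Consequently, within the strip $\{\fl{x}=m\}$ one has $dt=-dy/(2m+1)$ along the orbit. In the first octant both components of $\bfw$ have constant sign, and together with the convexity of $\Pi(\alpha)$ established in Theorem \ref{thm:Polygons} this prevents revisits of a strip once exited, so $y$ is monotone as well. Energy conservation then locates each vertical crossing: for $v_1<m\le v_k$ the equation $\cP(m,y)=m^2+P(y)=\alpha$ yields
\begin{equation*}
y=q_m := P^{-1}(\alpha-m^2),
\end{equation*}
while the orbit enters strip $v_1$ at $y=x_0$ and exits strip $v_k$ at $y=0$.

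Summing over the strips gives, when $v_1<v_k$,
\begin{equation*}
T_{\mathrm{oct}} = \frac{x_0-q_{v_1+1}}{2v_1+1} \,+\, \sum_{m=v_1+1}^{v_k-1}\frac{q_m-q_{m+1}}{2m+1} \,+\, \frac{q_{v_k}}{2v_k+1},
\end{equation*}
while the degenerate case $v_1=v_k$ collapses the first-octant orbit to a single strip and yields $T_{\mathrm{oct}}=x_0/(2v_1+1)$ directly. Re-indexing the $q_{m+1}$-subsum by $n=m+1$, absorbing the boundary contributions into the resulting sums so that each $q_n$ (for $n=v_1+1,\ldots,v_k$) receives the combined coefficient $\frac{1}{2n+1}-\frac{1}{2n-1}$, and invoking the partial-fraction identity
\begin{equation*}
\frac{1}{2n+1}-\frac{1}{2n-1} = -\frac{2}{4n^2-1}
\end{equation*}
telescopes the expression into
\begin{equation*}
T_{\mathrm{oct}} = \frac{x_0}{2v_1+1} \,-\, 2\sum_{n=v_1+1}^{v_k}\frac{q_n}{4n^2-1},
\end{equation*}
which, on multiplication by $8$, is the asserted formula (the empty-sum convention reconciling the case $v_1=v_k$). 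The only genuinely substantive step is the monotone traversal of the vertical strips, which rests on the sign pattern of $\bfw$ in the first octant together with the convexity of $\Pi(\alpha)$; everything else is algebraic bookkeeping.
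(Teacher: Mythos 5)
Your proof is correct and follows essentially the same route as the paper's: both reduce $\cT(\alpha)$ to $8T_{\mathrm{oct}}$ by $D_4$-symmetry, traverse the first-octant arc strip-by-strip in $x$, use the strip-constant $y$-velocity $-(2m+1)$ to convert $y$-increments into times, and then telescope via $\frac{1}{2n+1}-\frac{1}{2n-1}=-\frac{2}{4n^2-1}$. The only (harmless) difference is that you spell out the monotone traversal of the strips from the sign pattern of $\bfw$ in the octant, which the paper leaves implicit.
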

\begin{proof}
Take $\alpha\geq0$.
By the eight-fold symmetry of the level sets of $\cP$, as stated in theorem \ref{thm:Polygons}, 
it suffices to consider the intersection of the polygon $\Pi(\alpha)$ with the first octant. 
The point $(y,y)\in\Pi(\alpha)$ where the polygon intersects the positive half of the symmetry line $\Fix{G}$
satisfies 
 $$ y = P^{-1}(\alpha/2) \hskip 40pt \fl{y}=v_1, $$
whereas the point $(x,0)\in\Pi(\alpha)$ where the polygon intersects the positive $x$-axis satisfies 
 $$ x = P^{-1}(\alpha) \hskip 40pt \fl{x}=v_k. $$
We consider the time taken to flow between these two points.

To proceed, we partition the $y$-distance between the two points into a sequence of distances $d(n)$,
and corresponding flow-times $t(n)$. If $v_1=v_k$ the partition consists of a single element; 
we simply let $d(v_1) = P^{-1}(\alpha/2)$, and $t(v_1)$ is the time 
taken for the flow on $\Pi(\alpha)$ to move between the symmetry lines $x=y$ 
and $y=0$.

Suppose now that $v_1<v_k$.
Note that the $y$-coordinate of the vertex of $\Pi(\alpha)$ which lies in the first octant and has $x$-coordinate $x=n$ is given by
 $$ P^{-1}(\alpha - n^2). $$
(Such vertices do not exist if $v_1=v_k$.)
Then we let $d(v_1)$ denote the $y$-distance between the points where $\Pi(\alpha)$ 
intersects the symmetry line $x=y$ and the line $x=v_1+1$:
 $$ d(v_1) = P^{-1}(\alpha/2) - P^{-1}(\alpha - (v_1+1)^2), $$
and $d(v_k)$ denote the $y$-distance between the points where $\Pi(\alpha)$ 
intersects the line $x=v_k$ and the symmetry line $y=0$:
 $$ d(v_k) = P^{-1}(\alpha - v_k^2). $$
For any $n$ with $v_1+1\leq n \leq v_k-1$, 
$d(n)$ is the $y$-distance between the points where $\Pi(\alpha)$ 
intersects the lines $x=n$ and $x=n+1$:
 $$ d(n) = P^{-1}(\alpha - n^2) -P^{-1}(\alpha - (n+1)^2). $$
Similarly, $t(v_1)$ is the time taken for the flow on $\Pi(\alpha)$ 
to move between the symmetry line $x=y$ and $x=v_1+1$, $t(v_1)$ 
is the time taken to move between the line $x=v_k$ and the symmetry 
line $y=0$, and for $v_1+1\leq n \leq v_k-1$, $t(n)$ is the time 
taken to flow between the lines $x=n$ and $x=n+1$.

With this notation, and using the symmetry of the flow $\varphi$, 
the period $\cT(\alpha)$ satisfies
\begin{equation} \label{eq:T(r)_sum_t(n)}
 \frac{\cT(\alpha)}{8} = \sum_{n=v_1}^{v_k} t(n).
\end{equation}

For any $n\geq 0$, the auxiliary vector field has constant $y$-component 
between the lines $x=n$ and $x=n+1$, given by $-(2n+1)$ (see equation (\ref{def:w})).
Hence the times $t(n)$ and the distances $d(n)$ are related by
\begin{equation} \label{eq:t(n)}
 t(n) = \frac{d(n)}{2n+1} \hskip 40pt v_1\leq n\leq v_k.
\end{equation}

If $v_1=v_k$ then the result follows from the definition of $d(v_1)$. 
If $v_1<v_k$, substituting (\ref{eq:t(n)}) and the definition of the $d(n)$ into (\ref{eq:T(r)_sum_t(n)}) gives:
\begin{align*}
 \frac{\cT(\alpha)}{8} &= \sum_{n=v_1}^{v_k} \frac{d(n)}{2n+1} \\
 &= \frac{P^{-1}(\alpha/2)}{2v_1+1} + \sum_{n=v_1+1}^{v_k} \frac{P^{-1}(\alpha - n^2)}{2n+1}
 - \sum_{n=v_1}^{v_k-1} \frac{P^{-1}(\alpha - (n+1)^2)}{2n+1} \\
&= \frac{P^{-1}(\alpha/2)}{2v_1+1} -2 \sum_{n=v_1+1}^{v_k} \frac{P^{-1}(\alpha - n^2)}{4n^2-1},
\end{align*}
as required.
\end{proof}

Now we consider the derivative of the period function. For any integer $n$, the function $P^{-1}$, defined on $\R_{\geq0}$ in equation (\ref{def:Pinv}), satisfies $P^{-1}(n^2) = n$ and is affine on the interval $[n^2,(n+1)^2]$.
Hence $P^{-1}$ is differentiable at every $x$ which is not a perfect square, with:
 $$ \frac{dP^{-1}(x)}{dx} = \frac{1}{2\fl{\sqrt{x}}+1}, \hskip 40pt \sqrt{x} \notin\Z. $$
Letting $x=\alpha - n^2$, we have that the derivative exists whenever $\alpha$ cannot be expressed as a sum of squares.
Considered as a function of $\alpha$, $P^{-1}(\alpha - n^2)$ is constant on the intervals $\cIe$:
 $$ \frac{dP^{-1}(\alpha - n^2)}{dx} = \frac{1}{2\fl{\sqrt{e - n^2}}+1} \hskip 40pt \alpha\in\cIe. $$
Thus if $\alpha\in\cIe$ is non-critical, then $\cT(\alpha)$ is differentiable and
\begin{equation} \label{eq:Tprime(alpha)}
 \frac{\cT^{\prime}(\alpha)}{4} = \frac{1}{(2v_1+1)^2} -4 \sum_{n=v_1+1}^{v_k} \frac{1}{(4n^2-1)(2\fl{\sqrt{e - n^2}}+1)},
\end{equation}
which is a function of $e$ only. Hence the period function $\cT$ is piecewise-affine, with constant derivative on each of the intervals $\cIe$.

\section{The limits $\lambda\to \pm1$}\label{sec:LimitsPm1}

The limit $\lambda\to 0$ is not the only limit which describes an 
approach to an exact rotation (where the round-off has no effect). 
The limits $\lambda\to \pm 1$, corresponding to $\nu\to 1/6,1/3$, 
describe the approach to the cases where $F^6=\id$ and $F^3=\id$, respectively. 
It is also possible to describe these limiting dynamics with a piecewise-affine Hamiltonian, 
which we introduce briefly here\footnote{The limit $\lambda\to 1$ has also been investigated in \cite{Siu}.}. 

We analyse the limits $\lambda\to \pm 1$ by defining $\delta = \lambda \mp 1$ 
and letting $\delta\to 0$. Then the appropriate rescaling of $F$ is given by 
the map
\begin{equation*}
F_{\delta}: (\delta\Z)^2 \to (\delta\Z)^2 
 \hskip 40pt
 F_{\delta}(z)=\delta F(z/\delta)
\end{equation*}
(as for $\lambda$, we assume that $\delta>0$).
Correspondingly, the discrete vector fields $\bfv_{\pm}$ become
\begin{equation*}
 \bfv_{\pm}: \; (\delta\Z)^2 \to (\delta\Z)^2
\hskip 40pt
\bfv_{+}(z) = F_{\delta}^6(z)-z
\hskip 40pt
\bfv_{-}(z) = F_{\delta}^3(z)-z,
\end{equation*}
 and the auxiliary vector fields are given by
\begin{align*}
  \bfw_{+}(x,y)=2(\fl{y} -\fl{x-y},-(\fl{x} -\fl{y-x})), \\
  \bfw_{-}(x,y)=(\fl{y}+\fl{x+y}+1,-(\fl{x}+\fl{x+y}+1)).
\end{align*}
This time, $\bfw_{\pm}$ are constant on the collection of triangles 
\begin{align*}
 B_{m,n} &= \{ (x,y)\in\R^2 \, : \; \fl{ x } =m, \; \fl{ y } = n, \; \fl{ x \mp y }  = m \mp n\}, \\
 T_{m,n} &= \{ (x,y)\in\R^2 \, : \; \fl{ x } =m, \; \fl{ y } = n, \; \fl{ x \mp y }  = m \mp n \mp 1 \},
\end{align*}
where $m,n\in\Z$.

\begin{figure}[t]
        \centering
        \begin{minipage}{7cm}
          \centering
	  \includegraphics[scale=0.35]{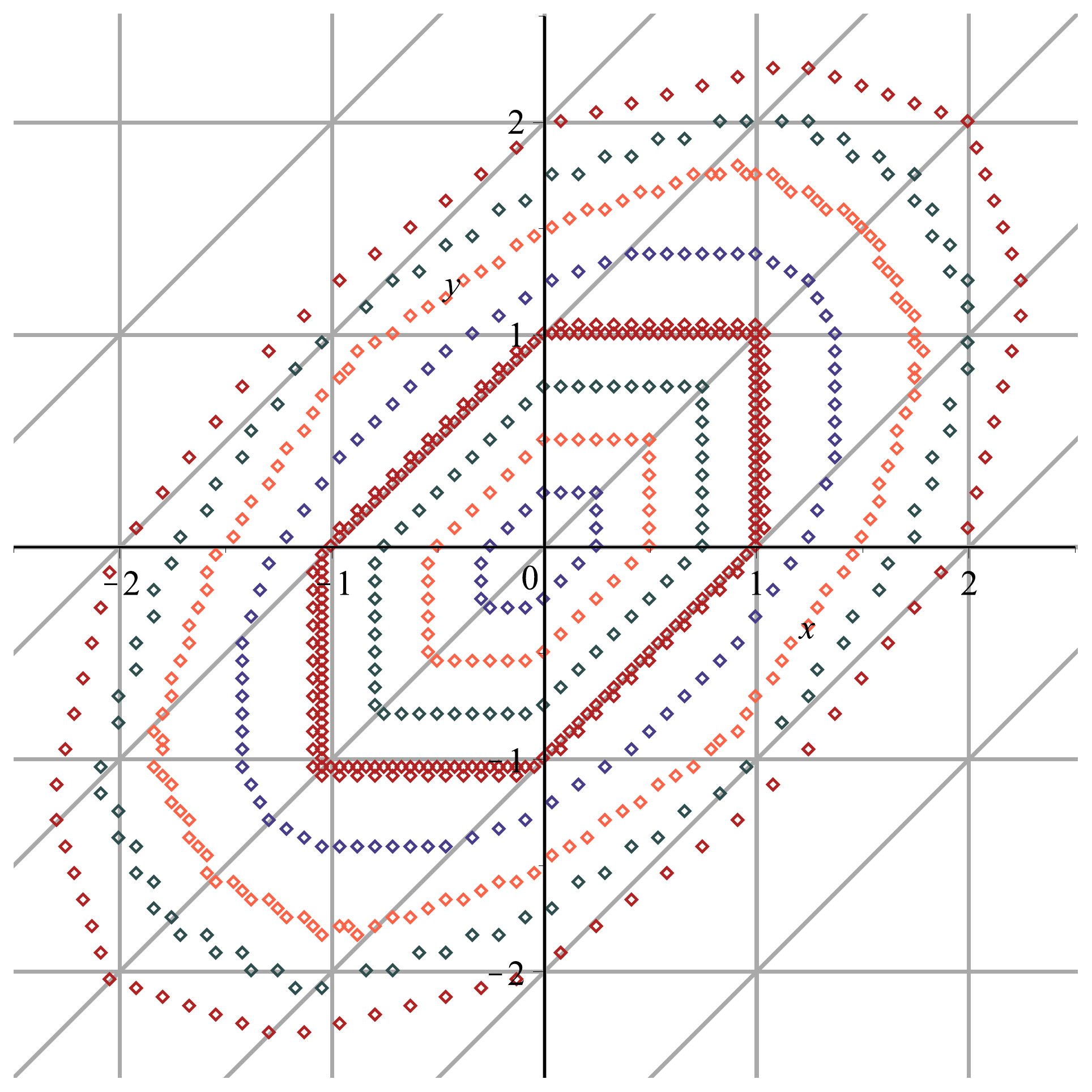} \\
	  (a) $\; \lambda=1+1/24$ \\
        \end{minipage}
        \quad
        \begin{minipage}{7cm}
	  \centering
	  \includegraphics[scale=0.35]{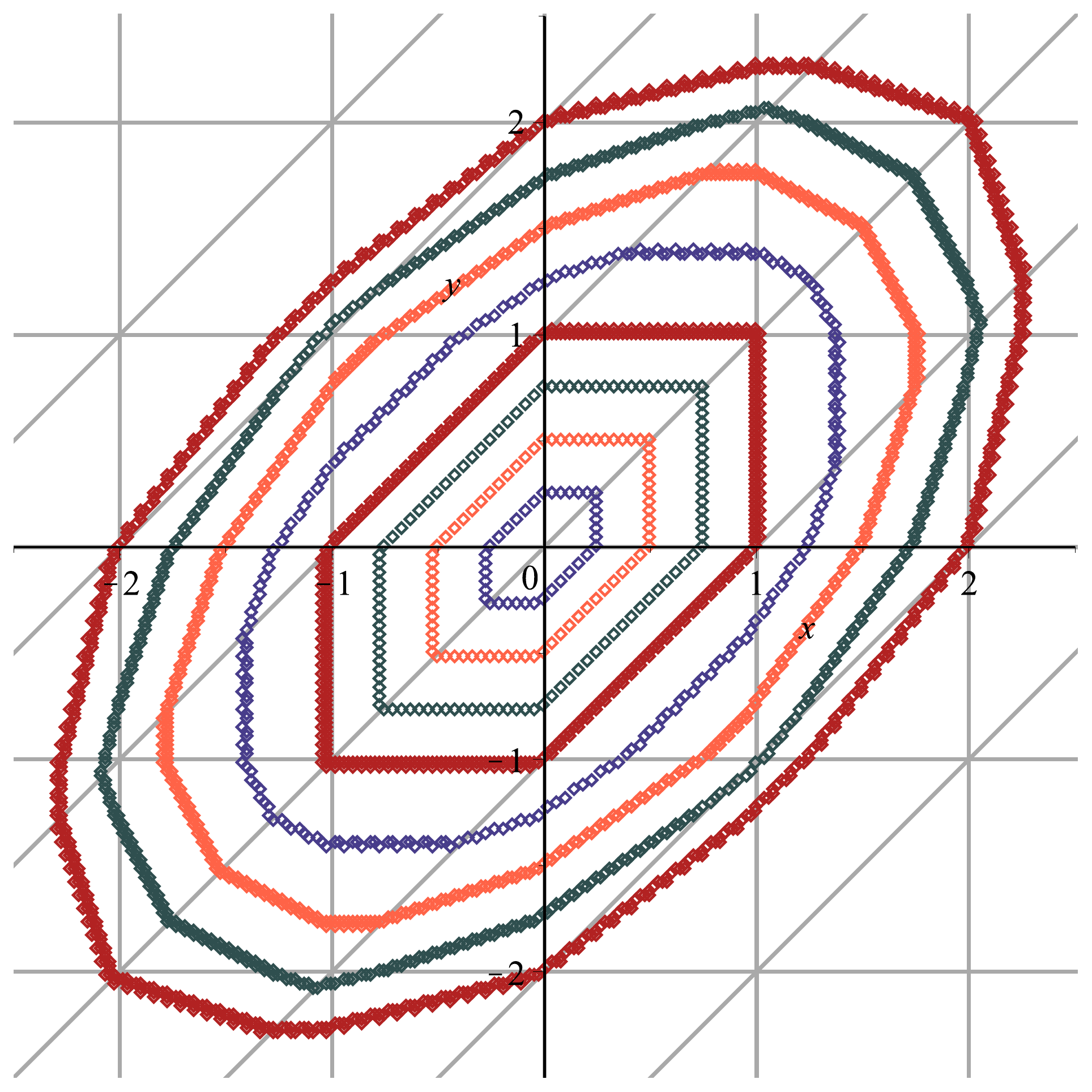} \\
	  (b) $\; \lambda=1+1/48$ \\
	\end{minipage}
        \caption{\hl{A selection of periodic orbits of the rescaled map $F_{\delta}$ in the limit $\lambda\to 1$,
        where $\nu\to 1/6$ (cf. the $\lambda\to 0$ case of figure \ref{fig:PolygonalOrbits}, page \pageref{fig:PolygonalOrbits}). 
        The grey lines show the discontinuity set $\Delta$.}}
        \label{fig:PolygonalOrbits2}
\end{figure}

As in proposition \ref{prop:mu_1}, if we ignore a subset of the lattice the lattice $(\delta\Z)^2$ of zero density, 
then the functions $\bfv$ and $\bfw$ satisfy $\bfv(z)=\delta\bfw(z)$.

Recall the piecewise-affine function $P$ defined in (\ref{def:P}). 
The Hamiltonians corresponding to the limits $\lambda\to \pm 1$ 
are given by
\begin{equation*}
 \cP: \; \R^2 \; \to \R
 \hskip 40pt
 \cP(x,y) = \left(\frac{1}{2}\right)^{(1\mp1)/2} \left( P(x)+P(x\mp y) + P(y) \right).
\end{equation*}
These Hamiltonians are again piecewise-affine and differentiable 
in $\R^2\setminus \Delta$, where $\Delta$ is the set of lines given by
\begin{equation*}
\Delta=\{(x,y)\in\R^2 \, : \; (x-\fl{ x})(y-\fl{ y})(x\mp y - \fl{ x\mp y})=0\},
\end{equation*}
the boundaries of the triangles $B_{m,n}$ and $T_{m,n}$.
One can easily verify that the Hamiltonian vector fields associated with $\cP$ 
are equal to the auxiliary vector fields $\bfw_{\pm}$ wherever they are defined.

\chapter{The perturbed dynamics}\label{chap:PerturbedDynamics}

In chapter \ref{chap:IntegrableLimit}, the positive real line was partitioned 
into the sequence of critical intervals $\cIe$, $e\in\cE$; 
accordingly, the set $\Gamma$ of critical polygons partitioned the plane into 
the sequence of polygon classes (section \ref{sec:Hamiltonian}).
We defined a map $\Phi$, 
corresponding to the first return of $F_\lambda$ to a thin strip $X$
placed along the symmetry axis, and showed that the return orbits shadow the integrable orbits (section \ref{sec:Recurrence}).

In this chapter we partition the set $X$ into sub-domains, which play the same 
role for the perturbed orbits as the polygon classes for the integrable orbits.
Within each sub-domain, the perturbed orbits have local symmetry properties
which are $\lambda$-independent (up to scale): 
$\Phi$ commutes with translations by the elements of a two-dimensional lattice.
This lattice structure arises from the cylinder sets of a symbolic coding:
an extension of the coding of the polygon classes.

Furthermore, we identify sub-domains where the fraction of minimal orbits---the
fixed points of the return map---is also $\lambda$-independent.
Thus we show that the minimal orbits occupy a positive density of the phase space as $\lambda\to 0$.

The work in this chapter has been published in \cite{ReeveBlackVivaldi}.

\section{Regular domains} \label{sec:RegularDomains}

The return orbits $\Ot(z)$ of the perturbed dynamics 
shadow the orbits $\Pi(z)$ of the integrable Hamiltonian $\cP$, 
as we saw in theorem \ref{thm:Hausdorff} (page \pageref{thm:Hausdorff}).
Hence the polygon classes provide a natural partition of the set $X$ into the sequence of sets
 $$ \cP^{-1}(\cIe) \cap X \hskip 40pt e\in\cE. $$
However, the quantity $\cP$ is not constant along perturbed orbits. 
If we define a symbolic coding on perturbed orbits, 
it does not follow that the return orbit of some point $z\in X$ with $\cP(z)\in \cIe$ 
has the same symbolic coding as the polygon class associated with $\cIe$: 
perturbed orbits which start close to a critical polygon are likely to
wander between polygon classes.
To deal with this problem, it is necessary to replace the above sequence of sets by a sequence 
of smaller \textbf{regular domains}, and then prove that, in the limit, these 
domains still have full density in $X$.

We start by defining the \textbf{edges} of $\Ot(z)$
as the non-empty sets of the form
\begin{displaymath}
 B_{m,n}\cap \Ot(z) \hskip 40pt m,n\in\Z.
\end{displaymath}
For sufficiently small $\lambda$, consecutive edges 
of $\Ot(z)$ must lie in adjacent boxes, 
and transitions between edges occur when the orbit meets the set 
$\Lambda$, defined in equation (\ref{eq:Lambda}). 
Thus we call the set $\Ot(z)\,\cap\,\Lambda$ the set of \textbf{vertices} of $\Ot(z)$.
By analogy with the vertices of the polygons, we say that the return orbit 
$\Ot(z)$ has a vertex on $x=m$ of \textbf{type} $v$ 
if there exists a point $w\in\Ot(z)$ such that
\begin{displaymath}
 w\in B_{m,v} \cap F_{\lambda}^{-4}(B_{m-1,v}) 
  \hskip 20pt \mbox{or} \hskip 20pt w\in B_{m-1,v} \cap F_{\lambda}^{-4}(B_{m,v}).
\end{displaymath}
Similarly for a vertex on $y=n$ of type $v$.
A perturbed orbit is \textbf{critical} if it has a vertex whose type is undefined, 
i.e., if there exists $w\in\Ot(z)$ such that
\begin{displaymath}
 w\in B_{m,n} \cap F_{\lambda}^{-4}(B_{m\pm1,n\pm1})
\end{displaymath}
for some $m,n\in\Z$ (see figure \ref{fig:critical_vertex}).

\begin{figure}[t]
        \centering
        \includegraphics[scale=0.75]{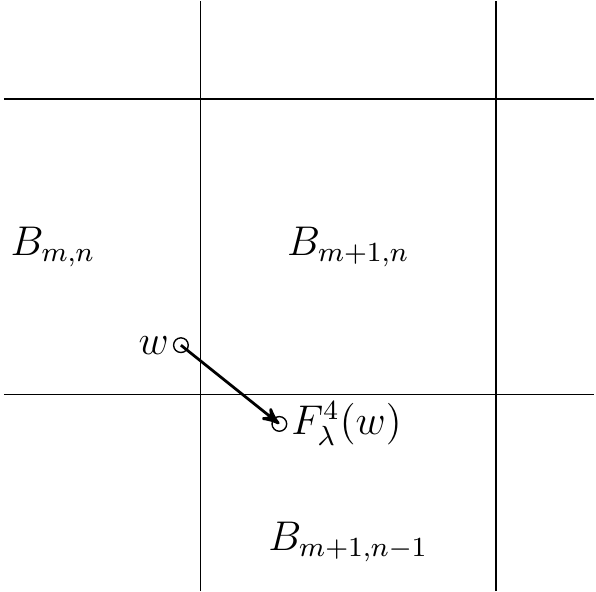}
        \caption{A critical vertex $w\in B_{m,n} \cap F_{\lambda}^{-4}(B_{m+1,n-1})$.}
        \label{fig:critical_vertex}
\end{figure}

By excluding points whose perturbed orbit is critical, 
we will construct a sequence of subsets $X^e$ of $X$ with the property that, 
for all $z\in\Xe$, the orbit $\Ot(z)$ has the same sequence of vertex types as $\Pi(z)$. 

We now give the construction of $\Xe$. 
Let the set $\Sigma\subset\Lambda$ be given by
\begin{equation}\label{eq:Sigma}
 \Sigma = \bigcup_{m,n\in\Z}\Sigma_{m,n},
\end{equation}
where
\begin{equation}\label{eq:Sigma_mn}
 \Sigma_{m,n} = \{ z\in\Lambda \, : \; \|z - (m,n)\|_{\infty} \leq \lambda( \, \|\bfw_{m,n}\|_{\infty}+2) \, \}
\end{equation}
and $\|(u,v)\|_{\infty} = \max(|u|,|v|)$. The set $\Sigma_{m,n}$ is a small domain, adjacent
to the integer point $(m,n)$ (see figure \ref{fig:LambdaSigma_plot}, page \pageref{fig:LambdaSigma_plot}).

\label{def:regular}
If $z\in X$ and $\cP(z)\in\cIe$ for some $e\in\cE$, we say that $z$ is \textbf{regular} if three properties hold:
\begin{enumerate}[(i)]
\item neither $z$ nor $\Phi(z)$ are vertices of $\Ot(z)$, i.e.,
$ \{z,\Phi(z)\}\subset X\setminus \Lambda$;
\item the return orbit $\Ot(z)$ is contained in the polygon class associated with $e$, i.e.,
$ \cP(\Ot(z)) \subset \cIe$;
\item the return orbit $\Ot(z)$ does not intersect the set $\Sigma$.
\end{enumerate}
Points which are not regular are called \textbf{irregular}. Then the set $\Xe$ is defined as
 $$ \Xe = \{ z\in X \, : \; \cP(z) \in \Ie(\lambda) \}, $$
where $\Ie(\lambda)\subset \cIe$ is the largest interval such that all points in 
$\Xe$ are regular.

In fact, we can give a more explicit expression for $\Xe$. 
Let $v_1$ be the first entry in the vertex list $V(e)$, as defined in (\ref{def:v1}). 
By construction, $\Xe$ does not intersect $\Lambda$, so that $\Xe\subset B_{v_1,v_1}\setminus\Lambda$. 
Then, for sufficiently small $\lambda$, lemma \ref{lemma:Lambda} gives us that the auxiliary vector field $\bfw$
matches the discrete vector field $\bfv$ everywhere in $\Xe$:
\begin{displaymath}
 \bfv(z) = \lambda\bfw(z) = \lambda\bfw_{v_1,v_1} \hskip 40pt z\in \Xe,
\end{displaymath}
\hl{so that the map $\F^4$ acts locally as the translation $z\mapsto z+\lambda\bfw_{v_1,v_1}$.}
Applying this to the definition (\ref{def:X}) of $X$, 
it follows that if $\lambda(x,y)\in\Xe$, then 
\begin{displaymath}
 -(2v_1+1) \leq x-y < 2v_1+1.
\end{displaymath}
Hence the set $\Xe$ is the intersection of the lattice $\lZ$ with a thin rectangle lying along the symmetry line $\Fix{G}$
(see figure \ref{fig:lattice_Le}, page \pageref{fig:lattice_Le}):
\begin{equation}\label{def:Xe}
 \Xe = \{ \lambda(x,y)\in\lZ \, : \;  -(2v_1+1) \leq x-y < 2v_1+1, \; \cP(\lambda x, \lambda y)\in\Ie(\lambda) \}.
\end{equation}
\hl{Furthermore, it is natural to identify the sides of this rectangle,
which are connected by the local vector field,
so that the dynamics take place modulo the one-dimensional module 
$\langle\lambda\bfw_{v_1,v_1}\rangle$ generated by $\lambda\bfw_{v_1,v_1}$.}

In principle, the interval $\Ie(\lambda)$ need not be uniquely defined, and may be empty. 
However, the following proposition ensures that $\Ie(\lambda)$ is well-defined for all 
sufficiently small $\lambda$, and indeed that the irregular points have zero density in $X$ 
as $\lambda\rightarrow 0$.

\begin{proposition} \label{prop:Ie}
If $e\in\cE$ and $\Ie(\lambda)$ is as above, then
 \begin{displaymath}
  \lim_{\lambda\rightarrow 0}\frac{ |\Ie(\lambda)| }{|\cIe|} = 1.
 \end{displaymath}
\end{proposition}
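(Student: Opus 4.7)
The plan is to show that each of the three conditions defining regularity excludes from $\cIe$ only a set of $\cP$-values of Lebesgue measure $O(\lambda)$ as $\lambda\to 0$; since $|\cIe|$ is a fixed positive constant, this immediately gives $|\Ie(\lambda)|/|\cIe|\to 1$. All implicit constants may depend on the fixed $e\in\cE$, but are $\lambda$-independent. Uniformity over $z\in X$ with $\cP(z)\in\cIe$ comes for free from the fact that the polygon class $\cP^{-1}(\cIe)$ is bounded and, by the symbolic coding of section \ref{sec:Hamiltonian}, all polygons in the class share the same vertex list $V(e)$ and hence the same combinatorial type.

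Condition (ii) I would handle directly via Lemma \ref{lemma:cP_variation}, which yields a uniform bound $|\cP(\xi)-\cP(z)|\leq c\lambda$ along the return orbit. Thus whenever $\cP(z)$ is at distance $>c\lambda$ from $\partial\cIe$, the inclusion $\cP(\Ot(z))\subset\cIe$ is automatic, so (ii) excludes a set of $\cP$-values of measure at most $2c\lambda$. Conditions (i) and (iii) I would treat together, since both require the return orbit to avoid a neighbourhood of $\Z^2$. The set $\Lambda$ is, by the analysis preceding Lemma \ref{lemma:Lambda}, contained in $O(\lambda)$-strips along the grid $\Delta$, and $\Sigma$ is the smaller $O(\lambda)$-neighbourhood of $\Z^2$ given in (\ref{eq:Sigma}). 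Now Theorem \ref{thm:Polygons} tells us that any lattice point with $\cP$-value in $\overline{\cIe}$ lies on one of the two critical polygons bounding the class, since the interior of the class contains no lattice points. Combined with Lemma \ref{lemma:cP_variation}, this forces $\Ot(z)$ to intersect $\Sigma$ (or $z$, $\Phi(z)$ to lie in $\Lambda$) only if $\cP(z)$ is within $O(\lambda)$ of an endpoint of $\cIe$. Because only finitely many boxes $B_{m,n}$ meet the bounded set $\cP^{-1}(\cIe)$, only finitely many such $O(\lambda)$-neighbourhoods are involved.

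Summing the three contributions gives $|\cIe\setminus\Ie(\lambda)|=O(\lambda)$ and hence the proposition. The main subtlety is not the geometry---which is dictated by the rigid combinatorial structure of the polygon class---but making sure that the per-revolution estimate of Lemma \ref{lemma:cP_variation} genuinely controls the entire return orbit uniformly, and that the count of relevant vertices, edges, and transition points is $\lambda$-independent. Both are delivered by the fact that the vertex list $V(e)$ is fixed across the class and each return orbit completes exactly one revolution about the origin.
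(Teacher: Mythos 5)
Your proposal is correct in structure and matches the paper's proof almost exactly: both treat the three regularity conditions in turn, invoke Lemma \ref{lemma:cP_variation} to reduce condition (ii) to an $O(\lambda)$ boundary exclusion, and use the fact that lattice points of $\cP$-value in $\overline{\cIe}$ sit on the bounding critical polygons to handle (iii). The one place where your argument is not yet tight is condition (i). You assert that (i) and (iii) ``both require the return orbit to avoid a neighbourhood of $\Z^2$,'' but that is not literally what (i) says: it requires $z,\Phi(z)\notin\Lambda$, and $\Lambda$ is an $O(\lambda)$-neighbourhood of the full grid $\Delta$, which is far larger than a neighbourhood of $\Z^2$. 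So ``$z\in\Lambda$'' alone does not force $z$ near a lattice point, and your appeal to Theorem \ref{thm:Polygons} does not yet close this case. The missing step---which the paper supplies explicitly---is that $z$ and $\Phi(z)$ lie in the return domain $X$, which is itself contained in an $O(\lambda)$-neighbourhood of the symmetry line $\Fix{G}$. Consequently, $z\in\Lambda$ means one coordinate is $O(\lambda)$-close to an integer, while $z\in X$ means the two coordinates are $O(\lambda)$-close to each other, and together these imply $d_H(z,\Z^2)=O(\lambda)$. With that observation added, case (i) reduces to the same critical-polygon argument as (iii), and the proof is complete.
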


\begin{proof}
Let $e\in\cE$. Consider $z\in X$ such that $\cP(z)\in \cIe$, and suppose that $z$ is irregular.
We will show that $\cP(z)$ must be $O(\lambda)$-close to the boundary of $\cIe$.

If the orbit of $z$ strays between polygon classes, i.e., if condition (ii) of regularity fails, 
then we have
\begin{displaymath}
 \exists \, w\in \Ot(z): \hskip 20pt \cP(w) \notin \cIe.
\end{displaymath}
However, in lemma \ref{lemma:cP_variation} of section \ref{sec:Recurrence} (page \pageref{lemma:cP_variation}), 
we showed that the maximum variation in $\cP$ along an orbit 
$\Ot(z)$ is of order $\lambda$ as $\lambda\rightarrow 0$:
\begin{displaymath}
 \forall z\in X,\, \forall w\in \Ot(z): 
\hskip 20pt \cP(w) - \cP(z) = O(\lambda).
\end{displaymath}
Hence if $\cIe=(e,f)$, where $f$ is the successor of $e$ in the sequence $\cE$, then
\begin{equation}\label{eq:Pvariation}
 \cP(z)=\cP(w)+O(\lambda)=
\left\{\begin{array}{ll} 
 e+O(\lambda) & \quad \cP(w)\leq e \\
 f+O(\lambda) & \quad \cP(w)\geq f. \\
\end{array}\right.
\end{equation} 
In both cases, $\cP(z)$ is near the boundary of $\cIe$.

If condition (ii) holds but $z\in\Lambda$, then one of 
its coordinates must be nearly integer:
$$ 
 d_H(z,\,\Delta) = O(\lambda),
$$
where the set $\Delta$ was defined in (\ref{eq:Delta}).
However, as the domain $X$ lies in an $O(\lambda)$-neighbourhood of the 
symmetry line $\Fix{G}$, it follows that both coordinates must be nearly 
integer, and $z$ must be close to a critical polygon:
\begin{displaymath}
 d_H(z,\,\Z^2) = O(\lambda).
\end{displaymath}
Again, it follows that $\cP(z)$ lies in a $O(\lambda)$-neighbourhood of the boundary of $\cIe$.
A similar argument holds if $\Phi(z)\in\Lambda$.

Finally, if (ii) holds but (iii) fails, i.e., if there is a point $w\in\Ot(z)\cap \Sigma$,
then by construction
\begin{displaymath}
 d_H(w,\,\Z^2) = O(\lambda),
\end{displaymath}
and (\ref{eq:Pvariation}) applies as before.

Combining these observations, we have
$$
  \frac{ |\Ie(\lambda)| }{|\cIe|} = 1 - \frac{ |\cIe\setminus\Ie(\lambda)| }{|\cIe|}
 = 1 - \frac{ O(\lambda) }{|\cIe|},
$$
and the result follows.
\end{proof}

\medskip

Now we show that the sequence of sets $\Xe$ fulfil their objective, 
which was to exclude all points $z\in X$ whose perturbed orbit is critical 
in the sense defined above.

\begin{proposition} \label{prop:Xe}
If $e\in\cE$ and $z\in\Xe$, then the perturbed orbit of $z$ is not critical.
\end{proposition}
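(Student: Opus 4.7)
The plan is to argue by contradiction: suppose $z\in\Xe$ has a critical orbit, so there exist $w\in\Ot(z)$ and integers $m,n$ with $w\in B_{m,n}$ while $F_\lambda^4(w)\in B_{m',n'}$ for some $m'-m,\,n'-n\in\{-1,+1\}$. In particular $w$ is a vertex and so $w\in\Lambda$. The geometric content of a diagonal box transition is that both coordinates of $w$ must change integer part under $F_\lambda^4$; hence both coordinates of $w$ must already be within $O(\lambda\|\bfw\|_\infty)$ of integers, forcing $w$ into a thin corner region about the integer point $p\in\Z^2$ shared by $\overline{B}_{m,n}$ and $\overline{B}_{m',n'}$. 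The goal is to make this precise enough to conclude $w\in\Sigma$, contradicting regularity condition (iii).

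First I would use condition (ii) of regularity together with lemma \ref{lemma:cP_variation} to confine $\Ot(z)$ to a bounded region independent of $\lambda$, so that the hypotheses of lemma \ref{lemma:Lambda} apply at every point of the orbit. This unlocks the identities (\ref{eq:Fabcd})--(\ref{eq:v_abcd}) and in particular the pinning of the integer labels in (\ref{eq:bd_ac_sets}): $a,c\in\{n-1,n,n+1\}$ and $b\in\{m-1,m,m+1\}$, together with the explicit formula $\bfv(w)=\lambda(a+c+1,-(m+b+1))$.

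Next I would carry out the distance estimate in one representative sign case, say $(m',n')=(m+1,n-1)$ with shared corner $p=(m+1,n)$; the transition forces $a+c+1>0$ and $m+b+1>0$, which by the pinning above is possible only when $m,n\geq 0$. Writing $w=\lambda(x,y)$, the containments $w\in B_{m,n}$ and $F_\lambda^4(w)\in B_{m+1,n-1}$ translate directly to $0 < m+1-\lambda x \leq \lambda(a+c+1)$ and $0 \leq \lambda y - n \leq \lambda(m+b+1)$. These two differences are precisely the absolute values of the coordinates of $w-p$, so
\begin{displaymath}
 \|w-p\|_\infty \;\leq\; \lambda\max(a+c+1,\,m+b+1) \;\leq\; \lambda\max(2n+3,\,2m+2),
\end{displaymath}
using (\ref{eq:bd_ac_sets}). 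A direct comparison with $\|\bfw_p\|_\infty=\max(2n+1,\,2m+3)$ then yields $\|w-p\|_\infty\leq\lambda(\|\bfw_p\|_\infty+2)$, placing $w$ in $\Sigma_p\subset\Sigma$ and contradicting condition (iii).

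The three remaining sign patterns $(m',n')=(m\pm 1,n\pm 1)$ are handled identically by the $D_4$ symmetry of the construction, with $p$ replaced by the appropriate corner and the signs of the inequalities reversed to match the local direction of $\bfw_{m,n}$. The main obstacle is precisely this case analysis: each diagonal transition is only possible when the vector field at $B_{m,n}$ points toward the relevant corner, tying each pattern $(m',n')$ to a quadrant condition on $(m,n)$, and one must verify the comparison $\lambda\max(\cdot)\leq \lambda(\|\bfw_p\|_\infty+2)$ separately in each quadrant. The constant $+2$ built into the definition of $\Sigma_{m,n}$ is tailored exactly to absorb the unit-sized fluctuations of $a,b,c$ about $n,m,n$, so the comparison should go through uniformly; this is the one step requiring careful bookkeeping.
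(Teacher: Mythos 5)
Your proposal is correct and takes essentially the same route as the paper: identify a critical vertex $w$, place it near the shared corner $p=(m+1,n)$, bound $\|w-p\|_\infty$ using the explicit vector-field formula and the pinning $a,c\in\{n-1,n,n+1\}$, $b\in\{m-1,m,m+1\}$, and conclude $w\in\Sigma_p$, contradicting regularity. The only difference is that the paper additionally uses the forced equality $c=n-1$ (coming from $F_\lambda^4(w)\in B_{m+1,n-1}$), which sharpens $a+c+1$ to $a+n\leq 2n+1$ and yields the stronger estimate $\|w-p\|_\infty\leq\lambda\|\bfw_p\|_\infty$ with no slack; you settle for $a+c+1\leq 2n+3$ and absorb the difference in the $+2$ built into $\Sigma_{m,n}$, which works just as well.
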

\begin{proof}
Let $e\in\cE$ and $z\in X$ with $\cP(z)\in\cIe$. 
Suppose that the return orbit of $z$ is critical, i.e., 
suppose there exists $w\in\Ot(z)$ such that
\begin{displaymath}
 w\in B_{m,n} \hskip 20pt \mbox{and} 
  \hskip 20pt F_{\lambda}^{4}(w)=w+\bfv(w)\in B_{m\pm1,n\pm1}
\end{displaymath}
for some $m,n\in\Z$. We will show that $z\notin\Xe$. 
For simplicity, we assume that $m$ and $n$ are both non-negative, so that 
by the orientation of the vector field in the first quadrant:
\begin{displaymath}
 \F^{4}(w)=w+\bfv(w)\in B_{m+1,n-1}.
\end{displaymath}

Recall the proof of lemma \ref{lemma:Lambda} (page \pageref{lemma:Lambda}), 
where we calculated the explicit form of the discrete vector field $\bfv$.
If $w=\lambda(x,y)\in B_{m,n}$, then by construction:
 $$ m=\fl{\lambda x} \hskip 40pt n=\fl{\lambda y}. $$
Furthermore, if $\F^{4}(w)\in B_{m+1,n-1}$, then the last line of equation (\ref{eq:Fabcd}) implies that
 $$ (d,c) = (m+1,n-1), $$
where the integers $c$ and $d$ are given by (\ref{eq:abcd}).
It follows that the perpendicular distance from $w$ to the lines $x=m+1$ and $y=n$, 
respectively, is bounded according to
\begin{align*}
-\lambda(a+n) = -\lambda(a+c+1) &\leq \lambda x - (m+1) < 0 \\
0 &\leq \lambda y - n < \lambda(m+b+1),
\end{align*}
where again the integers $a$ and $b$ are given by (\ref{eq:abcd}).
Combining this observation with the constraint (\ref{eq:bd_ac_sets}) on $a$ and $b$ gives
\begin{align*}
\| w-(m+1,n) \|_{\infty} &\leq \lambda\max( |a+n|, |m+b+1|) \\
 &\leq \lambda\max( 2n + |n-a|, 2m+1 + |m-b|) \\
 &\leq \lambda\max( 2n+1, 2m+2) \\
 &\leq \lambda \|\bfw_{m+1,n}\|_{\infty},
\end{align*}
where we have used the fact that $m$ and $n$ are non-negative.
Hence, by definition (\ref{eq:Sigma_mn}), $w\in\Sigma_{m+1,n}$ and $z$ is irregular, so $z\notin \Xe$. 
The cases where $m$ or $n$ are negative proceed similarly.
\end{proof}

\section{Lattice structure and orbits of minimal period} \label{sec:MainTheorems} \label{SEC:MAINTHEOREMS}

We turn our attention to the reversing symmetry group of the return map $\Phi$. 
In section \ref{sec:time-reversal} we introduced the reversing symmetry $G$ 
of the lattice map $\F$ (see equation (\ref{def:GH}), page \pageref{def:GH}). 
Since $\Phi$ is a return map of $\F$, it has an associated reversing symmetry. 
In the following proposition we describe the form of this reversing symmetry 
on the domains $\Xe$ (equation (\ref{def:Xe})).

\begin{proposition} \label{prop:Ge}
 For $e\in\cE$, let $G^e$ be the involution of $\Xe$ given by 
 \begin{equation} \label{eq:Ge}
  G^e(x,y) = \left\{ 
      \begin{array}{ll} (y,x) \quad & |x-y| < \lambda(2v_1+1) \\
		      (x,y) \quad & x-y = -\lambda(2v_1+1),
      \end{array} \right.
 \end{equation}
where $v_1 = \fl{\sqrt{e/2}}$ is the first entry of the vertex list associated with $\cIe$.
Then for all sufficiently small $\lambda$, $G^e$ is a reversing symmetry of $\Phi$ on $\Xe$ in the following sense:
\begin{equation*}
 \forall z,\Phi(z)\in\Xe:
 \hskip 20pt 
 \Phi^{-1}(z) = (G^e \circ \Phi \circ G^e)(z).
\end{equation*}
\end{proposition}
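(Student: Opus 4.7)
The plan is to derive the reversing symmetry of $\Phi$ from the reversibility of the underlying map $\F$. The identity $F^{-1} = G\circ F\circ G$ with $G(x,y)=(y,x)$ from \eqref{def:GH} passes to $\F$ via the scaling \eqref{def:F_lambda}, giving the intertwining $\F^{k}\circ G = G \circ \F^{-k}$ for every $k\in\Z$. Applying this at the return time $\tau(G(z))$ yields
\begin{displaymath}
G\circ \Phi \circ G(z) \;=\; G\circ \F^{\tau(G(z))}(G(z)) \;=\; \F^{-\tau(G(z))}(z),
\end{displaymath}
so the proof reduces to showing $\tau(G(z))=\tau_{-}(z)$, the backward first-return time from section \ref{sec:Recurrence}, and then matching this against the piecewise definition of $G^e$ on interior and boundary points of $\Xe$.

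For interior $z\in\Xe$ with $|x-y|<\lambda(2v_1+1)$, where $G^e=G$, I would first check that $G(z)\in\Xe$ and is again regular. The Hamiltonian $\cP$ and the set $\Sigma$ from \eqref{eq:Sigma} are $G$-invariant by the dihedral symmetry of the lattice of boxes; the relation $\F^{k}\circ G=G\circ \F^{-k}$ allows transfer of the containment and distance conditions in \eqref{def:X} and in the definition of regularity; and the strict inequality in \eqref{def:X} is preserved because on $B_{v_1,v_1}$ the map $\F^4$ acts as the perpendicular translation by $\lambda\bfw_{v_1,v_1}$ (corollary \ref{corollary:Lambda}), so $d(z)=d(\F^4(z))$ occurs only on the lower boundary $x-y=-\lambda(2v_1+1)$. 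Combined with the equivalence $\F^{k}(G(z))\in X \Leftrightarrow \F^{-k}(z)\in G(X)$ and the matching of $G(X)$ with $X$ on the interior of the strip, this yields $\tau(G(z))=\tau_{-}(z)$ and hence $G\,\Phi\, G(z)=\Phi^{-1}(z)$.

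For $z$ on the lower boundary the decisive observation is $G(z)=z+\lambda\bfw_{v_1,v_1}$, and by corollary \ref{corollary:Lambda} the map $\F^4$ acts on $B_{v_1,v_1}$ as this same translation for all sufficiently small $\lambda$, so $G(z)=\F^4(z)$. Since $G(z)\notin X$ (the strict inequality in \eqref{def:X} fails), the cylindrical identification equates $G(z)$ with $z$, motivating the choice $G^e=\id$ there. Then $G^e\,\Phi\, G^e(z)=G(\Phi(z))$, using that $\Phi(z)$ lies in the interior so $G^e$ reduces to $G$ on it, and combined with the intertwining and $\F^4(z)=G(z)$ this gives $G(\Phi(z))=\F^{4-\tau(z)}(z)$, which equals $\Phi^{-1}(z)$ provided $\tau_{-}(z)=\tau(z)-4$. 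The latter follows by converting the backward first return of $z$ into a forward condition: $\F^{-k}(z) = G(\F^{k+4}(z))$, so $\F^{-k}(z)\in X$ iff $\F^{k+4}(z)\in G(X)$; since $G(X)$ and $X$ agree on the interior and the only forward return of $z$ to $X$ is at time $\tau(z)$, minimality forces $k=\tau(z)-4$.

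The principal obstacle is the asymmetry built into \eqref{def:X}: the strict inequality $d(z)<d(\F^{-4}(z))$ breaks exact $G$-invariance of $X$ on the lower boundary. The involution $G^e$ is engineered precisely to compensate, effectively gluing the two edges of the strip---related by the translation $\lambda\bfw_{v_1,v_1}$---before $G$ descends to an involution. Making this precise requires taking $\lambda$ small enough for $\F^4$ to equal the pure translation by $\lambda\bfw_{v_1,v_1}$ throughout $\Xe$ (corollary \ref{corollary:Lambda}), and also excluding the degenerate possibility that $\Phi(z)$ itself lies on the lower boundary, which follows from the $O(1/\lambda)$ size of the recurrence time \eqref{eq:tstar} so that $\tau(z)\neq 4$ for small $\lambda$.
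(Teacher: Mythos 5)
Your proposal follows the same essential route as the paper: both arguments rest on the reversibility $\F^{-1}=G\circ\F\circ G$ and on the boundary identity $G(z)=z+\lambda\bfw_{v_1,v_1}=\F^4(z)$ for $x-y=-\lambda(2v_1+1)$, which is exactly why the paper records the relation $G^e=G$ on the interior and $G^e=\F^{-4}\circ G$ on the boundary (equation~\eqref{eq:Ge_G}). Where you differ is the direction of attack: you expand $(\Phi\circ G^e)(z)=\Phi(G(z))$ and then argue that $\tau(G(z))$ equals the backward return time, whereas the paper expands $(G^e\circ\Phi)(z)$, which is more economical because the hypothesis $\Phi(z)\in\Xe$ already tells you where $\Phi(z)$ sits (interior or boundary), whereas $\Phi(G(z))$ is a new point whose position you must re-establish before you can decide whether $G^e$ acts on it as $G$ or as the identity. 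This means your interior case as written only produces $G\Phi G(z)=\Phi^{-1}(z)$, not yet $G^e\Phi G^e(z)=\Phi^{-1}(z)$; closing that gap requires a further dichotomy on $\Phi(G(z))$, mirroring the case the paper labels ``$\Phi(z)$ on the boundary.'' The paper lists this case explicitly, while you only treat ($z$ interior, $\Phi(G(z))$ interior) and ($z$ boundary, $\Phi(z)$ interior); the remaining boundary cases need the same $\F^{\pm 4}$ correction you already deploy, so the gap is fixable but not yet filled.

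One notational point is actually wrong as stated: you identify the backward first-return time with $\tau_-(z)$ ``from section~\ref{sec:Recurrence},'' but that $\tau_-$ is defined with $k\in\Z_{\geq 0}$, so $\tau_-(z)=0$ for every $z\in X$. The quantity you need is the smallest \emph{positive} $k$ with $\F^{-k}(z)\in X$, i.e.\ $\tau(\Phi^{-1}(z))$; the equation $\tau(G(z))=\tau_-(z)$ would force $\tau(G(z))=0$, contradicting $\tau\colon\lZ\to\N$. Your surrounding reasoning about $G(X)$ versus $X$ (that they agree off the boundary because the two inequalities in~\eqref{def:X} swap under $G$) is sound and is precisely the mechanism both proofs rely on; it would be worth making explicit that $G$ sends the strict inequality $d(z)<d(\F^{-4}(z))$ to $d(z)<d(\F^{4}(z))$, since this is what confines the discrepancy to the single lattice line that $G^e$ is designed to absorb.
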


\begin{proof}
Recall that $\Xe\subset B_{v_1,v_1}\setminus\Lambda$, so that if $z\in\Xe$, then
 $$ \F^4(z) = z + \lambda\bfw(z) = z + \lambda\bfw_{v_1,v_1}. $$
Furthermore, if $z$ lies on the line $x-y=-\lambda(2v_1+1)$, 
then by the definition (\ref{def:w_mn}) of the auxiliary vector field, we have
 $$ G(z) = z + \lambda\bfw_{v_1,v_1}. $$
Combining the above, we have the following relationship between $G^e$ and $G$:
\begin{equation} \label{eq:Ge_G}
 G^e(x,y) = \left\{ 
      \begin{array}{ll} G(x,y) \quad & |x-y| < \lambda(2v_1+1), \\
		      (\F^{-4}\circ G)(x,y) \quad & x-y = -\lambda(2v_1+1).
      \end{array} \right.
\end{equation}
 
If $\tau=\tau(z)$ is the return time of $z$, then by construction
  $$ \Phi(z) = \F^{\tau}(z), $$
and the reversibility of $\F$ with respect to $G$ gives us that
\begin{equation} \label{eq:Phi_reversibility}
 (G \circ \Phi)(z) = (\F^{-\tau} \circ G)(z).
\end{equation}

Suppose now that neither $z$ nor $\Phi(z)$ lies on the line $x-y=-\lambda(2v_1+1)$.
Then combining (\ref{eq:Ge_G}) and (\ref{eq:Phi_reversibility}) gives
 $$ (G^e \circ \Phi)(z) = (\F^{-\tau} \circ G^e)(z). $$
Furthermore, this point lies in $\Xe$, so that
 $$ (G^e \circ \Phi)(z) = (\Phi^{-1} \circ G^e)(z), $$
as required.

Suppose now that $\Phi(z)$ lies on the line $x-y=-\lambda(2v_1+1)$ but $z$ does not.
In this case, combining (\ref{eq:Ge_G}) and (\ref{eq:Phi_reversibility}) gives
 $$ (G^e \circ \Phi)(z) = (\F^{-4} \circ G \circ \Phi)(z) = (\F^{-(\tau+4)} \circ G)(z) = (\Phi^{-1} \circ G^e)(z). $$
The other cases proceed similarly.
\end{proof}

\hl{Note the reversing symmetry $G^e$ of $\Phi$ is simply an adaptation of the original 
reversing symmetry $G$ of $\F$, which accounts for the natural cylindrical topology of its domain $\Xe$:}
 $$ G^e(z) = G(z) \mod{\lambda\bfw_{v_1,v_1}} \hskip 40pt z\in\Xe, $$
\hl{where we write (mod $\mathbf{a}$) for some vector $\mathbf{a}$ to 
denote congruence modulo the one-dimensional module $\langle\mathbf{a}\rangle$ 
generated by $\mathbf{a}$.}

\medskip

The return map $\Phi$ also has non-trivial symmetries. 
We define a sequence of lattices $\Le\subset\Z^2$, $e\in\cE$, independent of $\lambda$, 
such that within the domain $\Xe$, 
the return map $\Phi$ is equivariant under the group of 
translations generated by $\lambda\Le$. The construction of $\Le$ is as follows.

\begin{figure}[t]
	\centering
        \includegraphics[scale=0.85]{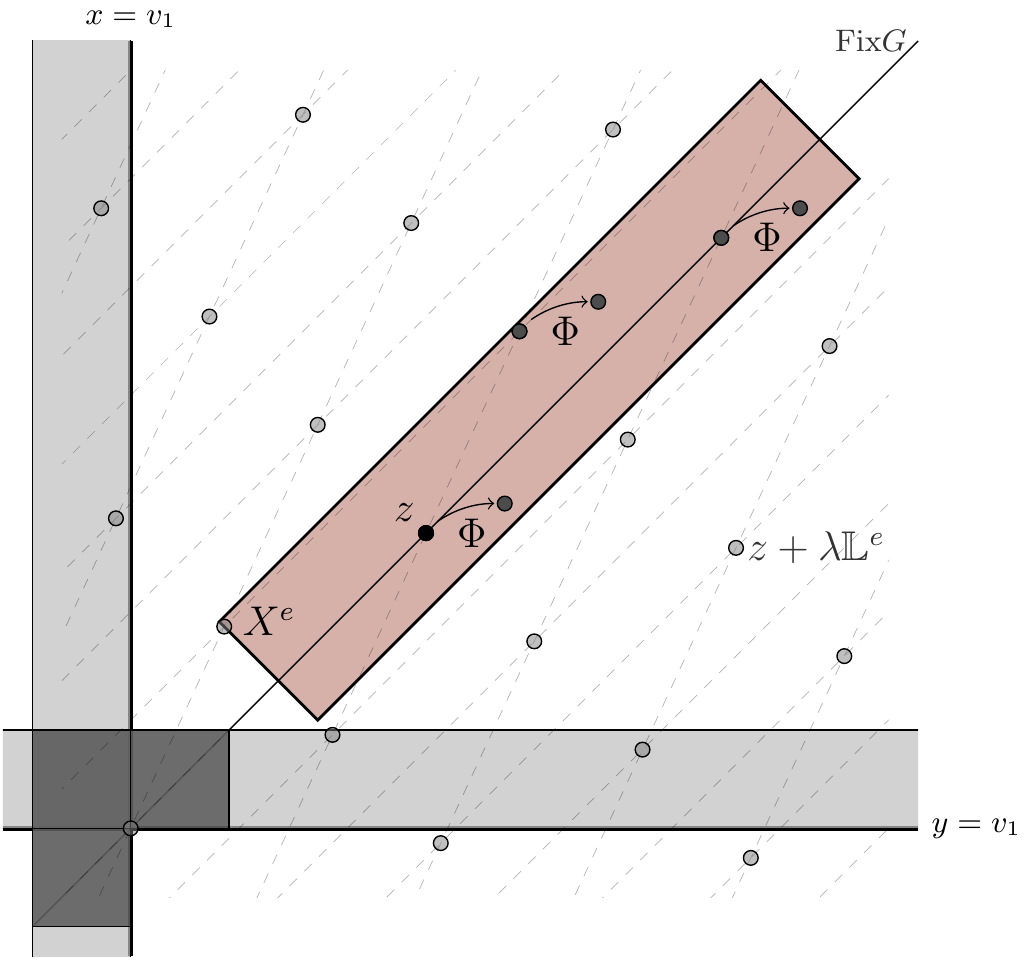}
	\caption{\hl{The set $\Xe$ (pink) is the subset of the Poincar\'{e} section $X$ whose orbits 
	under the return map $\Phi$ can be associated with the polygon class indexed by $e\in\cE$.
	The lattice $z+\lambda\Le$, for some $z\in\Xe$, is illustrated by the dashed lines:
	all points which are congruent to $z$ modulo $\lambda\Le$ exhibit the same dynamical behaviour.
	The sets $\Lambda$ and $\Sigma$ are represented in light and dark grey, respectively.}}
	\label{fig:lattice_Le}
\end{figure}

For $e\in\cE$, suppose the vertex list $V(e)=(v_1,\dots,v_k)$ 
contains $l$ distinct entries. We define the sequence 
$(\iota(j))_{1\leq j\leq l}$ such that the $\iota(j)$th entry in the vertex 
list is the $j$th distinct entry. Since all repeated entries are consecutive, 
it follows that the vertex list has the form
\begin{equation} \label{eq:v_iota}
 V(e) = (v_{\iota(1)},\dots,v_{\iota(1)},v_{\iota(2)},\dots, v_{\iota(2)}, \, 
     \dots \, ,v_{\iota(l)},\dots, v_{\iota(l)}), 
\end{equation}
with $v_{\iota(1)} = v_1$ and $v_{\iota(l)} = v_k$.
We define the vector $\bfL=\bfL(e)$ as:
\begin{equation} \label{def:L}
 \bfL = \frac{q}{2v_1+1} \; (1,1),
\end{equation}
where the natural number $q=q(e)$ is defined as follows
\begin{equation}\label{eq:q}
 q = \lcm((2v_{\iota(1)}+1)^2,(2v_{\iota(1)}+1)(2v_{\iota(2)}+1),
      \ldots ,(2v_{\iota(l-1)}+1)(2v_{\iota(l)}+1)).
\end{equation}
Here the least common multiple runs over $(2v_1+1)^2$ and all products 
of the form $(2v_j+1)(2v_{j+1}+1)$, where $v_j$ and 
$v_{j+1}$ are consecutive, distinct vertex types. 
Finally, the lattice $\Le$ is given by
\begin{equation}\label{eq:Le}
 \Le = \left\langle \bfL, \frac{1}{2}\left(\bfL-\bfw_{v_1,v_1}\right) \right\rangle,
\end{equation}
where $\langle \cdots \rangle$ denotes the $\Z$-module generated by a set 
of vectors, and the vector $\bfw_{v_1,v_1}$ given by (\ref{def:w_mn}) 
is the Hamiltonian vector field $\bfw$ in the domain $\Xe$.
We note that the vector $\bfL$ is parallel to the symmetry line $\Fix{G}$, and 
hence parallel to the domain $\Xe$, whereas the vector $\bfw_{v_1,v_1}$ is perpendicular to it.

\begin{theorem} \label{thm:Phi_equivariance}
For every $e\in\cE$, and all sufficiently small $\lambda$, 
the map $\Phi$ commutes with translations by the elements 
of $\lambda\Le$ on the domain $\Xe$:
\begin{equation} \label{eq:Phi_equivariance}
 \forall l\in\Le, \; \forall z,z+\lambda l\in\Xe:
 \hskip 20pt 
 \Phi(z + \lambda l) \equiv \Phi(z) + \lambda l \mod{\lambda\bfw_{v_1,v_1}}.
\end{equation}
\end{theorem}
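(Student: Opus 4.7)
The approach is to follow the return orbit $\Ot(z)$ box by box and track how translation of the initial condition by $\lambda l$, with $l \in \Le$, propagates through each segment. By proposition \ref{prop:Xe} and the regularity conditions defining $\Xe$, for any $z \in \Xe$ the return orbit traverses a well-defined sequence of boxes $B_{M_0}, B_{M_1}, \ldots, B_{M_N}$ starting at $M_0 = (v_1, v_1)$, with the sequence depending only on the polygon class $e$ and not on the particular $z$. Within each box $B_{M_j}$, lemma \ref{lemma:Lambda} gives that $\F^4$ acts as the translation $z \mapsto z + \lambda \bfw_{M_j}$, so the orbit spends an integer number $k_j + 1$ of iterations in $B_{M_j}$ before crossing into $B_{M_{j+1}}$ at a vertex avoiding the singular set $\Sigma$.

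First I would reduce to verifying the equivariance property for each generator of $\Le$, namely $\bfL$ and $\frac{1}{2}(\bfL - \bfw_{v_1,v_1})$. Fix such a generator $l$ and assume $z, z+\lambda l \in \Xe$. I would show that the orbit of $z + \lambda l$ visits the same sequence of boxes $(B_{M_j})$: this follows because $\lambda l$ is $O(\lambda)$, and the condition that $\Ot(z) \cap \Sigma = \emptyset$ ensures that every box transition occurs well away from the singular neighbourhoods of lattice points, with enough margin to accommodate a perturbation of size $O(\lambda)$ of the entry point.

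Next I would analyse the propagation of the shift through each box. If the shifted orbit enters $B_{M_j}$ at $\xi_j + \lambda \eta_j$, where $\xi_j$ is the entry point of the unshifted orbit and $\eta_j \in \Z^2$ is the residual shift at stage $j$, then after $k_j + \Delta k_j$ iterations of $\F^4$ it exits at $\xi_j + k_j \lambda \bfw_{M_j} + \lambda(\eta_j + \Delta k_j \bfw_{M_j})$, where the integer $\Delta k_j$ is uniquely determined by the requirement that the exit point fall in the correct transversal position to cross into $B_{M_{j+1}}$. The residual shift then evolves by $\eta_{j+1} = \eta_j + \Delta k_j \bfw_{M_j}$, up to an identical vertex increment on both orbits. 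The crucial arithmetic input is that the common coordinate of $\bfL$ equals $q/(2v_1+1)$, which by the definition of $q$ in \eqref{eq:q} is divisible by $(2v_j+1)$ for every vertex type $v_j$ appearing in $V(e)$; this divisibility guarantees that each required $\Delta k_j$ is an integer, and that the final residual shift $\eta_N$ equals $\bfL$ modulo the subgroup generated by $\bfw_{v_1,v_1}$. The second generator is handled analogously, using that both coordinates of $\bfL - \bfw_{v_1,v_1}$ are even (since $q/(2v_1+1)$ and $2v_1+1$ are both odd), so the half-shift sits in $\Z^2$.

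I expect the main obstacle to be the careful bookkeeping at the vertices, where the discrete dynamics deviates from the piecewise-constant flow of $\bfw$. Although $\F^4$ coincides with $\phil$ on the interior of each box by corollary \ref{corollary:Lambda}, the orbit of $z + \lambda l$ meets the transition set $\Lambda$ at slightly different locations from the orbit of $z$, and one must verify that the discrepancy contributes identically in both cases, so that the propagation rule for $\eta_j$ is exact rather than merely approximate. The exclusion of $\Sigma$ in the definition of $\Xe$ is precisely what is needed here: both the original and the shifted orbit avoid the critical vertex configurations of proposition \ref{prop:Xe}, leaving only generic transitions, where the offset between $\F^4(z + \lambda l)$ and $\F^4(z) + \lambda l$ can be absorbed into an integer adjustment of $\Delta k_j$ and hence cancels out in the cumulative shift.
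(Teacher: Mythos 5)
Your proposal follows the paper's general spirit---tracking the orbit box by box, exploiting regularity, and invoking the arithmetic of $q(e)$---but it has a genuine gap exactly at the point you flag as ``the main obstacle.'' You assert that the discrepancies at the vertex crossings ``contribute identically in both cases,'' so that the increment cancels out in the cumulative shift. This is the heart of the matter, and it is \emph{not} true for a generic shift of the initial condition: by proposition \ref{thm:epsilon_j} the vertex kick $\epsilon_j$ depends on the residue class $\sigma_j$ of the orbit's coordinates modulo $2v_j+1$, so two nearby orbits can be kicked differently. The increments match precisely when $z$ and $z+\lambda l$ have the same orbit code, and establishing that $\Le$-congruence is equivalent to matching orbit codes is the actual content of lemma \ref{thm:sigma_lattices}, proved by a careful induction on the vertex index using both proposition \ref{thm:epsilon_j} and the recursive arithmetic of the $q_j$'s. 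Your proposal never proves this correspondence; it only observes that the construction of $q$ ought to make things work out, which is the conclusion rather than the argument.

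Two smaller points. First, your reason for both orbits visiting the same sequence of boxes is off: the shift $\lambda l$ is not small in all directions---$\bfL$ has magnitude $q/(2v_1+1)$, which is large---so an $O(\lambda)$ argument against $\Sigma$-margins does not apply uniformly. The correct reason is simply that both $z$ and $z+\lambda l$ are in $\Xe$ by hypothesis, hence both are regular and their return orbits lie in the same polygon class by construction. Second, the propagation rule $\eta_{j+1}=\eta_j+\Delta k_j\,\bfw_{M_j}$ does not quite capture the structure: after a transition between vertices of distinct type the residual shift is purely in the non-integer coordinate direction of the new vertex (hypothesis (H2) of lemma \ref{thm:sigma_lattices}), and it is the projection onto that single direction---not a full $\Z^2$ vector---that carries the inductive information. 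Your observation that $q/(2v_1+1)$ and $2v_1+1$ are both odd, so that the half-generator lies in $\Z^2$, is correct, but without the orbit-code machinery it does not by itself close the argument.
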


There is a critical value of $\lambda$, depending on $e$, above which the statement 
of the theorem is empty, as $\Xe$ is insufficiently populated for a pair 
of points $z,z+\lambda l\in \Xe$ to exist. 
The congruence under the local (rescaled) integrable vector field 
$\lambda\bfw_{v_1,v_1}$ in equation (\ref{eq:Phi_equivariance}) 
\hl{invokes the cylindrical topology of $\Xe$, which} is necessary for 
the case that $\Phi(z) + \lambda l\notin X$.

\medskip

For certain $e\in\cE$, we can calculate the number of congruence classes of
$\lambda\Le$ corresponding to symmetric fixed points of $\Phi$, i.e., to symmetric minimal orbits.
We define the fraction of symmetric fixed points in $\Xe$:
\begin{displaymath}
 \delta(e,\lambda) = \frac{\# \{ z\in \Xe \, : \;  G(\cO(z))=\cO(z), \; \Phi(z)=z \}}{\#\Xe},
\end{displaymath}
and prove the following result on the persistence of such points in the limit 
$\lambda\rightarrow 0$.

\begin{theorem} \label{thm:minimal_densities}
Let $e\in\cE$, and let $(v_1,\dots,v_k)$ be the vertex list of the 
corresponding polygon class.
If $2v_1+1$ or $2v_k+1$ is coprime to $2v_j+1$ for all other vertex types 
$v_j$, i.e., if
\begin{equation} \label{eq:v1_k_coprimality}
\exists \; i\in\{1,k\}, \quad \forall j\in\{1,\ldots\,k\},\quad
v_j\neq v_i \,\Rightarrow \, \gcd(2v_i+1,2v_j+1) =1,
\end{equation}
then, for sufficiently small $\lambda$, the number of symmetric fixed points
of $\Phi$ in $\Xe$ modulo $\lambda\Le$ is independent of $\lambda$. 
Thus the asymptotic density of symmetric fixed points in $\Xe$ converges,
and its value is given by
\begin{equation}\label{eq:Density}
 \lim_{\lambda\rightarrow 0} \delta(e,\lambda) = \frac{1}{(2\fl{ \sqrt{e} }+1)(2\fl{ \sqrt{e/2} }+1)}.
\end{equation}
\end{theorem}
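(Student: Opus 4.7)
The plan is to reduce the density computation to a finite count via the equivariance Theorem \ref{thm:Phi_equivariance}, and then evaluate that count through the symbolic coding of return orbits. For $\lambda$ sufficiently small, $\Xe$ contains many complete fundamental domains of $\lambda\Le$, and by $\lambda\Le$-equivariance each coset of $\lambda\Le$ meeting $\Xe$ contributes identically to the symmetric-fixed-point count. Thus $\delta(e,\lambda) = N(e,\lambda)/q$, where $N(e,\lambda)$ is the number of symmetric fixed points in a single fundamental domain and $q = |\det\Le|$ is the number of $\lZ$-points it contains (one verifies directly from \eqref{def:L} and \eqref{eq:Le} that $\det(\bfL,\tfrac{1}{2}(\bfL - \bfw_{v_1,v_1})) = q$). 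It therefore suffices to show that $N(e,\lambda) = q/((2v_1+1)(2v_k+1))$, independently of $\lambda$, and then to substitute $v_1 = \fl{\sqrt{e/2}}$ and $v_k = \fl{\sqrt{e}}$.

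Next I would decompose $\Phi$ using the strip-map construction introduced in Section \ref{sec:StripMap}. For a regular point $z\in\Xe$, Proposition \ref{prop:Xe} guarantees that the return orbit traverses the $k$ edges of the polygon class indexed by $e$ in order, and $\Phi$ factorises as a composition of $k$ strip maps, one per edge. Each strip map is a translation along the corresponding edge whose magnitude is governed by an integer ``lane'' --- a residue modulo $2v_j+1$ specifying where the orbit enters the $j$th strip. The net displacement $\Phi(z)-z \pmod{\lambda\bfw_{v_1,v_1}}$ is then a linear combination of edge vectors with these lane residues as coefficients, and the fixed-point condition becomes a system of linear congruences indexed by the edges, with the $j$th modulus equal to $2v_j+1$.

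To pass to symmetric fixed points I would invoke Proposition \ref{prop:Ge} and Theorem \ref{thm:SymmetricOrbits}: such a point must lie on $\Fix{G^e}$, which consists of a segment of the symmetry line $x=y$ together with the boundary segment $x-y = -\lambda(2v_1+1)$. This constraint pins the position along $\bfL$ to a discrete set of values and eliminates one residue degree of freedom from the system above. Under the coprimality hypothesis (\ref{eq:v1_k_coprimality}), the modulus $2v_i+1$ (for whichever $i\in\{1,k\}$ works) is coprime to every other distinct modulus $2v_j+1$, so the Chinese Remainder Theorem splits the residue system: the ``pinned'' residue is independent of the remaining ones, and the surviving congruences admit the expected unobstructed count. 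Combining this with the lcm formula (\ref{eq:q}) for $q$ yields exactly $q/((2v_1+1)(2v_k+1))$ solutions per fundamental domain, which is positive because $(2v_1+1)(2v_k+1)$ divides $q$ precisely by the same coprimality assumption.

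The principal obstacle will be step two: proving rigorously that the lane residues form independent degrees of freedom modulo the respective $2v_j+1$, and that the system of congruences produced by the strip-map composition faithfully records the fixed-point condition. In particular, the one-sided nature of the coprimality hypothesis --- requiring coprimality of just \emph{one} of $2v_1+1$ or $2v_k+1$ with the remaining moduli --- must be shown to match the dynamical fact that $\Fix{G^e}$ sits at one end of the strip $\Xe$, so that only one side of the residue system need be decoupled for the count to close. Correctly handling the bookkeeping around repeated vertex types, encoded via the index sequence $\iota$ in (\ref{eq:v_iota}), and establishing the correspondence between the symbolic coding and the congruence classes of $\lambda\Le$ (the agenda flagged in Section \ref{sec:lattice}), is where the technical weight of the argument lies.
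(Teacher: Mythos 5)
Your overall architecture is the same as the paper's: reduce to a count per fundamental domain via the $\lambda\Le$-equivariance, translate the fixed-point condition into conditions on the strip-map (orbit) code, and then count codes using the coprimality hypothesis. The scaling argument at the end, the role of Lemma \ref{thm:sigma_lattices} in identifying codes with cosets of $\Le$, and the final arithmetic $q/\bigl((2v_1+1)(2v_k+1)\bigr)$ are all correct. However, two points need repair. First, your picture of the fixed-point condition as ``a system of linear congruences indexed by the edges'' is not right: because the return map is reversible and a fixed point of $\Phi$ is the same as an orbit making a single revolution, Theorem \ref{thm:SymmetricOrbits} collapses the characterisation to exactly two constraints on the code (Lemma \ref{lemma:minimal_codes}): $\sigma_{-1}=\sigma_1$ (equivalently, $z\in\Fix{G^e}$) and a single congruence $2\sigma_k\equiv v_k \pmod{2v_k+1}$. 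There is no congruence per edge; this collapse is what makes the count tractable.

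Second, and more seriously, the Chinese Remainder Theorem shortcut does not close the argument. The code entries $\sigma_j$ are not free coordinates on $\Z^2/\Le$: the code space has cardinality $q=\lcm(\cdots)$, not $\prod_j(2v_j+1)$, precisely because consecutive entries $\sigma_j,\sigma_{j+1}$ are linked through the strip-map dynamics (each $\sigma_{j+1}$ is determined by $\sigma_j$, the transit time, and the perturbation term $\epsilon_j(\sigma_j)$, cf.~Proposition \ref{thm:epsilon_j}). In fact a coset of $\Le$ is already pinned down by a single pair $(\sigma_j,\gamma_j)$, so the entries $\sigma_1,\ldots,\sigma_{2k-1}$ are highly correlated, not CRT-independent. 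What is actually needed is the \emph{equidistribution} of $\sigma_k$ among the $q/(2v_1+1)$ classes of $\Fix{G^e}$ modulo $\Le$, and this is a dynamical statement, not a CRT statement. The paper proves it by induction along the vertex list, tracking both $\sigma_j$ and a companion residue $\gamma_j$, in Lemmas \ref{lemma:sigma_j_I} and \ref{lemma:sigma_j_II} (one for the case where $2v_k+1$ is the coprime modulus, one for $2v_1+1$); the coprimality is used within that induction to ensure that the relevant congruences are solvable and solution counts split evenly, not to factor a product space. You do flag this as ``the principal obstacle'', which is the right instinct, but the CRT route you propose for closing it would fail for the reason above, so the gap is real until that induction is carried out.
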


As for theorem \ref{thm:Phi_equivariance}, the smallness of $\lambda$ serves only to ensure that $\Xe$ 
is sufficiently populated for all congruence classes modulo $\lambda\Le$ to be represented. 

The condition (\ref{eq:v1_k_coprimality}) on the orbit code is clearly satisfied 
for infinitely many critical numbers $e$, e.g., those for which 
either $2\fl{\sqrt{e}}+1$ or
$2\fl{\sqrt{e/2}}+1$ is a prime number. 
The first violation occurs at $e=49$ (see table \ref{table:V(e)}, page \pageref{table:V(e)}), where $2v_1+1 = 9$ 
and $2v_k+1=15$ have a common factor.
We contrast this to the case of $e=52$, where $2v_k+1=15$ and $2v_2+1 = 9$ have a 
common factor, but $2v_1+1=11$ is prime, so the condition (\ref{eq:v1_k_coprimality}) holds.
Numerical experiments show that the density
of values of $e$ for which (\ref{eq:v1_k_coprimality}) holds decays very slowly,
reaching 1/2 for $e\approx 500,000$.

The stated condition on the orbit code is actually stronger than 
we require in the proof. This was done to simplify the formulation of the
theorem. We remark that the weaker condition is still not necessary for 
the validity of the density expression (\ref{eq:Density}). 
At the same time, there are values of $e$ for which the density of symmetric minimal 
orbits deviates from the given formula, and convergence is not guaranteed. 
Our numerical experiments show that these deviations are small, and don't seem 
connected to new dynamical phenomena. 
More significant are the fluctuations in the density of non-symmetric fixed points:
its dependence on $e$ is considerably less predictable than for symmetric orbits---see figure \ref{fig:Density}.

\begin{figure}[t]
        \centering
        \includegraphics[scale=1]{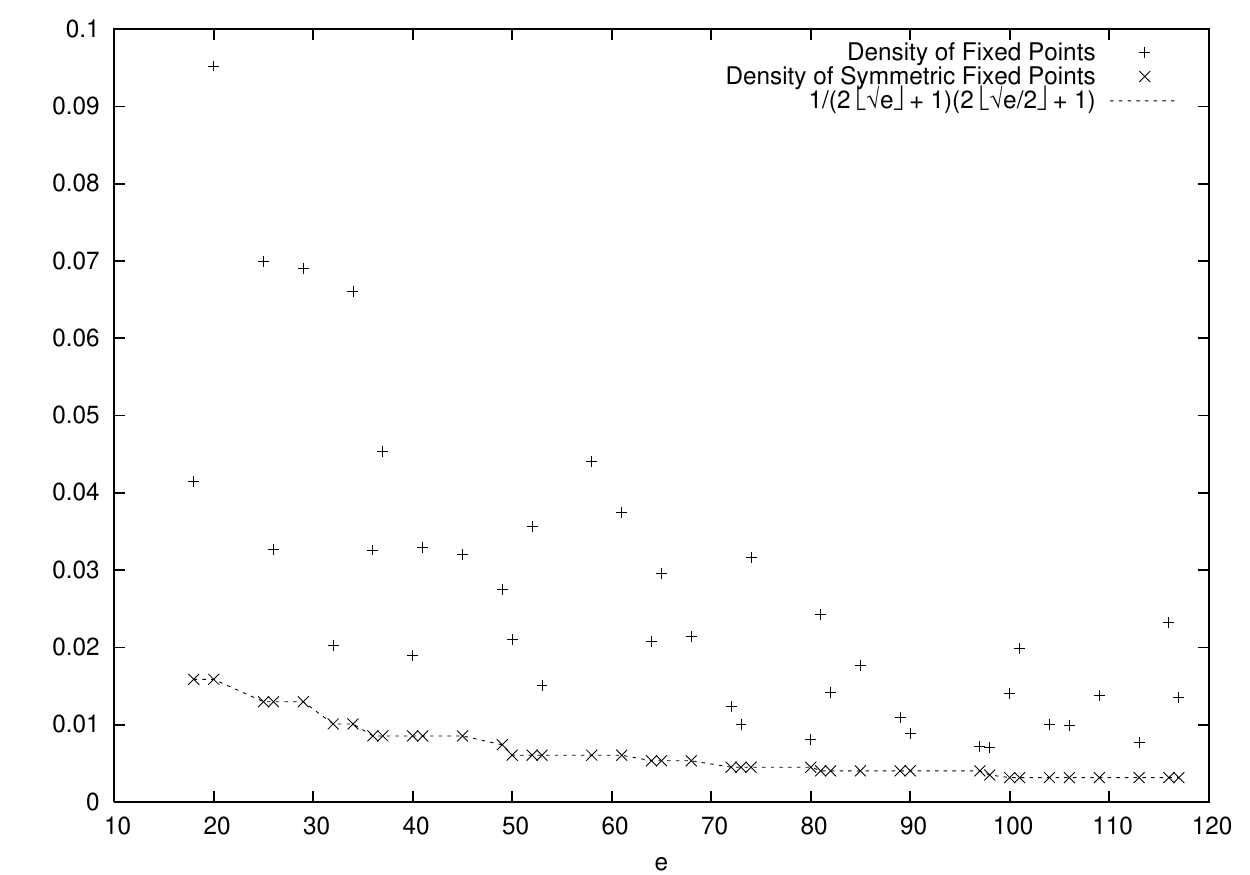}
        \caption{The density of symmetric minimal orbits, as a function 
of the critical number $e$ (calculated for suitably small values of the parameter $\lambda$). 
The solid line represents the estimate (\ref{eq:Density}).
The scattered points correspond to the density of all minimal orbits, symmetric
and non-symmetric.
}
        \label{fig:Density}
\end{figure}

The asymptotic density of symmetric fixed points in $\Xe$ provides an obvious lower bound 
for the overall density of fixed points, which we denote $\eta(e,\lambda)$:
\begin{displaymath}
 \eta(e,\lambda) = \frac{\# \{ z\in \Xe \, : \; \Phi(z)=z \}}{\# \Xe}.
\end{displaymath}

\begin{corollary} \label{cor:Density}
Let $e\in\cE$ satisfy the condition (\ref{eq:v1_k_coprimality}) of theorem \ref{thm:minimal_densities}.
Then the asymptotic density of fixed points in $\Xe$ is bounded below as follows:
 $$\liminf_{\lambda\rightarrow 0} \eta(e,\lambda) \geq \frac{1}{(2\fl{ \sqrt{e} }+1)(2\fl{ \sqrt{e/2} }+1)}. $$
\end{corollary}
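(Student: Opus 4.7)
The plan is to observe that this corollary is essentially immediate from Theorem \ref{thm:minimal_densities}, since symmetric fixed points form a subset of all fixed points. The work has already been done by the theorem; only a short monotonicity argument is needed.

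First I would note the set-theoretic inclusion
\begin{displaymath}
 \{ z\in \Xe \, : \;  G(\cO(z))=\cO(z), \; \Phi(z)=z \} \subseteq \{ z\in \Xe \, : \; \Phi(z)=z \},
\end{displaymath}
which, upon dividing by $\#\Xe$, yields $\delta(e,\lambda) \leq \eta(e,\lambda)$ for every sufficiently small $\lambda$ (in particular, small enough that $\Xe$ is non-empty, as required for the ratios to be defined).

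Next I would take $\liminf$ on both sides. Since, by Theorem \ref{thm:minimal_densities}, the hypothesis (\ref{eq:v1_k_coprimality}) implies that $\delta(e,\lambda)$ converges as $\lambda\to 0$, we may replace the $\liminf$ of $\delta(e,\lambda)$ by its genuine limit, giving
\begin{displaymath}
 \liminf_{\lambda\to 0} \eta(e,\lambda) \;\geq\; \liminf_{\lambda\to 0} \delta(e,\lambda) \;=\; \lim_{\lambda\to 0} \delta(e,\lambda) \;=\; \frac{1}{(2\fl{\sqrt{e}}+1)(2\fl{\sqrt{e/2}}+1)},
\end{displaymath}
which is exactly the claimed bound. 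Since no further estimates are needed, there is no significant obstacle here; the corollary is purely a consequence of the inclusion of symmetric fixed points into all fixed points combined with the convergence established in the main theorem.
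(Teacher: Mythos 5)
Your proof is correct and is exactly what the paper intends: since every symmetric fixed point is a fixed point, $\delta(e,\lambda)\leq\eta(e,\lambda)$, and then taking $\liminf$ and invoking the convergence of $\delta$ from Theorem~\ref{thm:minimal_densities} gives the stated bound. The paper does not spell this out, calling it an ``immediate corollary,'' and your argument supplies precisely the intended one-line justification.
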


Note that we do not suggest that the density $\eta(e,\lambda)$ converges as $\lambda\to0$, 
regardless of whether the condition (\ref{eq:v1_k_coprimality}) is satisfied or not.

\section{The strip map and symbolic coding of perturbed orbits}\label{sec:StripMap}

In section \ref{sec:Recurrence}, we saw that all non-zero points
where the discrete vector field $\bfv$ deviates from the scaled auxiliary 
vector field $\lambda\bfw$ lie in the set of transition points $\Lambda$ 
(lemma \ref{lemma:Lambda}, page \pageref{lemma:Lambda}).
The set $\Lambda$ consists of thin strips of lattice points aligned along the
lines $\Delta$, which form the boundaries of the boxes $B_{m,n}$.
In order to study the dynamics at these points, where the perturbations from the 
integrable limit occur, we define a transit map $\Psi$ to $\Lambda$ which we 
call the \textbf{strip map}:
\begin{equation} \label{eq:Psi}
  \Psi : \lZ \rightarrow \Lambda
 \hskip 40pt
  \Psi(z) = \F^{4t(z)}(z),
\end{equation}
where the transit time $t$ to $\Lambda$ is well-defined for all points excluding the origin:
\begin{displaymath}
 t(z)= \min \{ k\in\N  \, : \; \F^{4k}(z) \in \Lambda \} \hskip 40pt z\neq(0,0).
\end{displaymath}
(Since the origin plays no role in the present construction, to simplify 
notation we shall write $\lZ$ for $\lZ\setminus\{(0,0)\}$, where appropriate.) 
By abuse of notation, we define $\Psi^{-1}$ to be the transit map to $\Lambda$ 
under $\F^{-4}$. Note that $\Psi^{-1}$ is the inverse of $\Psi$ only on $\Lambda$.

If $z\in B_{m,n}\setminus\Lambda$ for some $m,n\in\Z$, 
then lemma \ref{lemma:Lambda} implies that $\Psi(z)$ 
satisfies \begin{equation} \label{eq:Psi_congruence2}
 \Psi(z) = z + \lambda t(z)\bfw_{m,n},
\end{equation}
where $\bfw_{m,n}$ is the value of the Hamiltonian vector field $\bfw$ in the box $B_{m,n}$. 
If $z\in\Lambda_{m,n}$, then we may have $\bfv(z)\neq \lambda\bfw(z)$, so the expression becomes
\begin{equation} \label{eq:Psi_congruence1}
 \Psi(z) = z + \bfv(z) + \lambda(t(z)-1)\bfw_{m,n}. 
\end{equation}

In the previous section, we identified the set $\Ot(z)\,\cap\,\Lambda$ as the set of 
vertices of the perturbed orbit $\Ot(z)$. Thus, within each quarter-turn, the 
strip map $\Psi$ represents transit to the next vertex. For $1\leq j\leq k$, where $k$ is the 
length of the vertex list at $z$, we say that the orbit $\Ot(z)$ \textbf{meets the $j$th vertex} 
at the point $\Psi^j(z)\in\Lambda$. 
For $z\in X$ regular, the polygon $\Pi(z)$ and the return orbit $\Ot(z)$ are non-critical,
and the number of sides of each is given by equation (\ref{eq:NumberOfSides}) of theorem \ref{thm:Polygons}. 
Thus the full set of vertices of $\Ot(z)$ is given by
 \begin{displaymath}
 \Ot(z)\,\cap\,\Lambda = \bigcup_{i=0}^3 \; \bigcup_{j=1}^{2k-1} \; \{ (\Psi^j\circ F_{\lambda}^i)(z) \}.
\end{displaymath}
Recall that the vertices of a polygon (or orbit) are numbered in the clockwise 
direction---the orientation of the integrable vector field $\bfw$. 
Hence the first $2k-1$ vertices (those lying in the first quarter-turn) are 
given by $(\Psi^j(z))_{1\leq j \leq 2k-1}$. 
The action of $F_{\lambda}$ moves points from one quadrant to the next in the 
opposing (anti-clockwise) direction, so that the vertices 
$((\Psi^j\circ F)(z))_{1\leq j \leq 2k-1}$ are the last $2k-1$ vertices. 
The following proposition is a simple consequence this arrangement.

\begin{proposition} \label{prop:regularity} 
Let $e\in\cE$ be a critical number, and let $k$ be the length of the
vertex list of the corresponding polygon class.
Then the return map $\Phi$ on $\Xe$ is related to $\Psi$ via
\begin{equation} \label{eq:Phi_Psi}
\Phi(z) \equiv (\Psi^{2k}\circ F_{\lambda})(z) \mod{\lambda\bfw_{v_1,v_1}}
\hskip 40pt z\in\Xe,
\end{equation}
where $v_1$ is the type of the first vertex and $\bfw_{v_1,v_1}$ 
is the value of the integrable vector field $\bfw$ at $z$.
\end{proposition}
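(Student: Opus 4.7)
The plan is to establish the identity $\Psi^{2k}(F_{\lambda}(z)) = \Psi(\Phi(z))$ as lattice points, and then deduce the stated congruence from the local action of $\F^4$ inside the box $B_{v_1, v_1}$.

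For the first part, I would invoke the vertex enumeration stated in the text immediately preceding the proposition. Since $z\in\Xe$ is regular, the return orbit $\Ot(z)$ is non-critical by proposition \ref{prop:Xe}, so its $4(2k-1)$ vertices decompose cleanly into four quarter-turns of $2k-1$ vertices each, with the fourth consisting of the points $(\Psi^j\circ F_{\lambda})(z)$ for $j=1,\ldots,2k-1$. Thus $\Psi^{2k-1}(F_{\lambda}(z))$ is the last vertex of the last quarter-turn, encountered just before the orbit crosses $\Fix G$ and re-enters the return domain $X$. Applying $\Psi$ once more advances to the first vertex of the new cycle's first quarter-turn, which by construction starts at $\Phi(z)\in X$. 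Hence the first vertex encountered after $\Phi(z)$ coincides with $\Psi^{2k}(F_{\lambda}(z))$, giving $\Psi(\Phi(z)) = \Psi^{2k}(F_{\lambda}(z))$.

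For the second part, regularity guarantees $\Phi(z)\in (X\cap B_{v_1,v_1})\setminus\Lambda$. By lemma \ref{lemma:Lambda}, for sufficiently small $\lambda$, $\F^4$ acts on lattice points of $B_{v_1,v_1}\setminus\Lambda$ as the pure translation $z\mapsto z + \lambda\bfw_{v_1,v_1}$. Applying formula (\ref{eq:Psi_congruence2}) at $\Phi(z)$ then gives $\Psi(\Phi(z)) = \Phi(z) + \lambda\, t(\Phi(z))\,\bfw_{v_1,v_1}$, and substituting the identity from the first part yields $\Phi(z) \equiv \Psi^{2k}(F_{\lambda}(z)) \mod{\lambda\bfw_{v_1,v_1}}$, as required.

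The main obstacle is the first step: rigorously confirming that $\Phi(z)$ sits strictly between the vertices $\Psi^{2k-1}(F_\lambda(z))$ and $\Psi^{2k}(F_\lambda(z))$ on the orbit, rather than being a vertex itself or being separated from the latter by some intervening vertex. All three regularity conditions defining $\Xe$ contribute here: (i) excludes $\Phi(z)$ from $\Lambda$ so the return is not a vertex; (ii) locks the orbit's vertex structure to match that of the integrable polygon $\Pi(z)$, validating the count of $2k-1$ vertices per quarter-turn; and (iii) keeps the orbit clear of $\Sigma$, where near-critical configurations could collapse adjacent vertex types and corrupt the enumeration.
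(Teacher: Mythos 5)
Your proposal is correct and follows essentially the same route as the paper's proof: identify $w = (\Psi^{2k-1}\circ F_\lambda)(z) \in \Lambda_{v_1,v_1}$ as the last vertex of $\Ot(z)$, note that regularity forces $\Phi(z)\in B_{v_1,v_1}\setminus\Lambda$ so that $\Phi(z)$ lies strictly between $w$ and $\Psi(w)$ (the paper phrases this as $\Phi(z)=\F^{4n}(w)$ for some $1\le n<t(w)$), deduce $\Psi(\Phi(z)) = \Psi(w) = \Psi^{2k}(F_\lambda(z))$, and close by applying equation (\ref{eq:Psi_congruence2}) at $\Phi(z)$ to get the congruence modulo $\lambda\bfw_{v_1,v_1}$. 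You have correctly singled out the same key facts (vertex enumeration, proposition \ref{prop:Xe} / regularity, lemma \ref{lemma:Lambda}) and assembled them in the same order.
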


\begin{proof} Let $z\in\Xe$ and let $w$ be the last vertex in $\Ot(z)$. 
By the preceding discussion, $w$ is given by
\begin{displaymath}
 w = (\Psi^{2k-1}\circ F_{\lambda})(z) \in \Lambda_{v_1,v_1}.
\end{displaymath}

As $z$ is regular, the point $\Phi(z)$ satisfies 
$\Phi(z)\in B_{v_1,v_1}\setminus \Lambda$.
Using the expression (\ref{eq:Psi_congruence2}) for $\Psi$ applied to $\Phi(z)$, we have
\begin{equation} \label{eq:PsiPhi(z)}
 (\Psi\circ\Phi)(z) \equiv \Phi(z) \mod{\lambda\bfw_{v_1,v_1}}.
\end{equation}
But the point $\Phi(z)$ is given by
 $$ \Phi(z) = \F^{4n}(w) $$
for some $1\leq n < t(w)$, where $t(w)$ is the transit time of $w$ to $\Lambda$.
Hence
\begin{equation} \label{eq:PsiPhi_w}
 (\Psi\circ\Phi)(z) = \Psi(w) = (\Psi^{2k}\circ F_{\lambda})(z).
\end{equation}
The result follows from combining (\ref{eq:PsiPhi(z)}) and (\ref{eq:PsiPhi_w}).
\end{proof}

For $z\in X$ regular, we use the vertices 
$(\Psi^j(z))_{1\leq j \leq 2k-1}$ in the first quarter-turn to define a 
sequence of natural numbers $\sigma(z)$ called the \textbf{orbit code} of $z$, 
which encapsulates how the perturbed orbit $\Ot(z)$ 
deviates from $\Pi(z)$.

Suppose the $j$th vertex of $\Pi(z)$ is a vertex of type $v_j$ lying on $y=n$,
and the orbit $\Ot(z)$ meets its corresponding vertex at $\Psi^j(z)$.
We define the pair $(x_j,y_j)$ via
\begin{equation}\label{eq:(xj,yj)}
 \Psi^j(z) = \lambda\left( \Bceil{\frac{v_j}{\lambda}} + x_j, \Bceil{\frac{n}{\lambda}} + y_j \right),
\end{equation}
where $x_j\geq 0$, and $|y_j|$, which is (essentially) the number of lattice points 
between $\Psi^j(z)$ and the line $y=n$, is small relative to $1/\lambda$. 
Using similar arguments to those in the proof of proposition \ref{prop:Xe} 
(i.e., by bounding the perpendicular distance from $\Psi^j(z)$ to the line $y=n$),
one can show that $y_j$ satisfies
\begin{displaymath}
 -(2v_j+1) \leq y_j <0 \hskip 20pt \mbox{or} \hskip 20pt 0\leq y_j < 2v_j+1,
\end{displaymath}
depending whether the integrable vector field is oriented in the positive or 
negative $y$-direction.
Hence the component of $\Lambda$ containing $\Psi^j(z)$ is a strip which is
$(2v_j+1)$ lattice points wide, 
and the possible values of $y_j$ form a complete set of residues modulo $2v_j+1$.
The $j$th element $\sigma_j$ of the orbit 
code $\sigma(z)$ is defined to be the unique residue in the set
$\{0,1,\dots, 2v_j\}$ which is congruent to $y_j$:
\begin{equation} \label{eq:sigma_j}
 \sigma_j \equiv y_j \mod{2v_j+1}.
\end{equation}
We call $y$ the \textbf{integer coordinate} of the vertex and $x$ the 
\textbf{non-integer coordinate}.
Similarly, if the $j$th vertex lies on $x=m$, then the $j$th element $\sigma_j$ of 
the orbit code is defined to be the residue congruent to $x_j$ modulo $2v_j+1$. 
In this case $x$ is the integer coordinate and $y$ is the non-integer coordinate.

For all vertices in the first quadrant, the fact that orbits progress clockwise 
under the action of $\F^4$ means that $y_j$ will be non-negative 
wherever $y$ is the integer coordinate, and $x_j$ will be negative 
wherever $x$ is the integer coordinate:
\begin{equation} \label{eq:xj<0,yj>0}
 -(2v_j+1) \leq x_j <0 \hskip 20pt \mbox{or} \hskip 20pt 0\leq y_j < 2v_j+1. 
\end{equation}
Thus the value of $\sigma_j$ is given explicitly by
\begin{equation} \label{eq:sigma_j_2}
 \sigma_j = x_j + 2v_j+1 \hskip 20pt \mbox{or} \hskip 20pt \sigma_j = y_j, 
\end{equation}
respectively.

In addition to the values $\sigma_{j}$ for $1\leq j \leq 2k-1$, we define $\sigma_{-1}$, 
which corresponds to the last vertex \textit{before} the symmetry line, i.e., to the point $\Psi^{-1}(z)$. 
Thus the orbit code of $z$ is a sequence $\sigma(z)=(\sigma_{-1},\sigma_1,\dots,\sigma_{2k-1})$, such that
\begin{align*}
 & 0\leq \sigma_{-1} < 2v_1+1, \\
 & 0\leq \sigma_j < 2v_j+1, \hskip 20pt 1\leq j \leq 2k-1,
\end{align*}
where the $v_j$ are the vertex types.

In the next proposition we consider how a perturbed orbit behaves at
its vertices. We find that 
the regularity of $z$ ensures that the discrete vector field $\bfv$ 
matches the Hamiltonian vector field $\lambda\bfw$ in the integer coordinate 
at $\Psi^j(z)$. The possible discrepancy in the non-integer coordinate is 
determined by the value of $\sigma_j$.
\begin{proposition} \label{thm:epsilon_j}
Let $e\in\cE$ be a critical number and let $k$ be the length of the
vertex list of the corresponding polygon class. 
For any $z\in\Xe$ and any $j\in\{-1,1,2,\ldots,2k-2\}$, 
let $m,n$ be such that $\Psi^j(z)\in\Lambda_{m,n}$.
Then the transit between the $j$th and $(j+1)$st vertices satisfies
\begin{equation}\label{eq:Psi^jp1}
 \Psi^{j+1}(z) = \Psi^j(z) + \lambda \left( t\bfw_{m,n} + \epsilon_j(\sigma_j)\bfe \right), 
\end{equation}
where $\epsilon_j$ is a function of the $j$th entry $\sigma_j$ of the orbit code $\sigma(z)$, 
$\bfe$ is the unit vector in the direction of the non-integer coordinate of the $j$th vertex,
and $t=t(\Psi^j(z))$ is the transit time.
\end{proposition}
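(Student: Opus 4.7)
The plan is to reduce the statement to an explicit calculation of the discrete vector field $\bfv$ at a transition point, using the formulas derived inside the proof of lemma \ref{lemma:Lambda}. Since $\Psi^j(z) \in \Lambda_{m,n}$, equation (\ref{eq:Psi_congruence1}) already gives $\Psi^{j+1}(z) = \Psi^j(z) + \bfv(\Psi^j(z)) + \lambda(t-1)\bfw_{m,n}$, so the proposition is equivalent to the identity $\bfv(\Psi^j(z)) - \lambda\bfw_{m,n} = \lambda\epsilon_j(\sigma_j)\bfe$. I would separate two cases according to whether the integer coordinate of the $j$th vertex is $y$ or $x$ and, by the dihedral symmetry exploited throughout theorem \ref{thm:Polygons}, treat only the former; here $\bfe = (1,0)$.

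Writing $\Psi^j(z) \in B_{M,N}$, the regularity of $z$ (proposition \ref{prop:Xe}, which rules out critical vertices) ensures that exactly one of the box coordinates changes under the single application of $\F^4$ carrying $\Psi^j(z)$ into $B_{m,n}$; since the vertex has integer $y$-coordinate, this is the $y$-coordinate, forcing $M = m$ and $N \neq n$. Invoking the formula (\ref{eq:v_abcd}) together with $c = n$ and $d = m$, one obtains
\begin{equation*}
 \bfv(\Psi^j(z)) - \lambda\bfw_{m,n} = \lambda(a - n,\; m - b),
\end{equation*}
where $a$ and $b$ are given by (\ref{eq:abcd}). The decisive step is to show that regularity forces $b = m$, eliminating the component along the integer coordinate. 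This follows from the geometric content of the condition $\Psi^j(z) \notin \Sigma$: the alternative $b = m + 1$ would require $\lambda x > m + 1 - \lambda(a+1)$, placing $\Psi^j(z)$ anomalously close to the vertical line $x = m + 1$---precisely the neighbourhood of the integer point $(m+1, N)$ that the $\Sigma$-avoidance condition from the definition of $\Xe$ excludes.

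Once $b = m$ is established, the deviation reduces to $\lambda(a - n)\bfe$ with $a - n \in \{0,1\}$ by the bounds (\ref{eq:bd_ac_sets})--(\ref{eq:bd_ac_ineq}). From $a + 1 = \ceil{\lambda y - \lambda m}$ and the parametrisation in (\ref{eq:(xj,yj)}), the value of $a - n$ is determined by whether $\lambda y_j$, augmented by the fixed discretisation offset $\lambda\ceil{N/\lambda} - N$, exceeds $\lambda m$ or not. Since $y_j$ ranges over $2v_j + 1$ consecutive integers and $\sigma_j$ is defined as the corresponding residue in $\{0, \dots, 2v_j\}$ via (\ref{eq:sigma_j}), the quantity $a - n$ is a function of $\sigma_j$ alone; setting $\epsilon_j(\sigma_j) := a - n$ completes the argument.

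The principal obstacle is the regularity step above: tracing the four iterates of $\F$ in (\ref{eq:Fabcd}) to show that avoiding $\Sigma$ and the critical-vertex set simultaneously collapses the potentially two-dimensional deviation onto the single direction $\bfe$. This will require box-crossing estimates analogous to those used in the proof of proposition \ref{prop:Xe}, but now applied at the intermediate iterate $\F(\Psi^j(z))$ rather than at the starting point $z$.
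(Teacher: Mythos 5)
Your proposal is correct and takes essentially the same approach as the paper: it reduces the claim via (\ref{eq:Psi_congruence1}) to computing $\bfv$ at the vertex, uses non-criticality (proposition \ref{prop:Xe}) to pin down that exactly one box coordinate changes, eliminates $b\neq m$ by a proximity argument, and identifies $\epsilon_j=a-n$ as a function of $\sigma_j$ through the parametrisation (\ref{eq:(xj,yj)}). The only slight difference is that the paper derives $b=m$ directly from the constraints $c=n$, $d=m$ together with the box-label bounds from the proof of lemma \ref{lemma:Lambda}, whereas you route through the $\Sigma$-avoidance condition; both are valid and closely related, since the paper itself points to proposition \ref{prop:Xe} (the $\Sigma$ argument) as the template.
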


\begin{proof}
It suffices to show that
\begin{displaymath}
\bfv(\Psi^j(z)) = \lambda \left( \bfw_{m,n} + \epsilon_j(\sigma_j) \bfe \right), 
\end{displaymath}
after which the result follows from equation (\ref{eq:Psi_congruence1}).

If $z\in X$ is regular, then the perturbed orbit $\Ot(z)$ is not critical. 
Thus for any vertex $w$, which, by construction, satisfies
\begin{displaymath}
 w\in\Lambda_{m,n}
 \qquad
 F_{\lambda}^4(w) = w+\bfv(w)\in B_{m,n} 
\end{displaymath}
for some $m,n\in\Z$, we must have either $w\in B_{m,n\pm1}$ or $w\in B_{m\pm1,n}$. 
For definiteness we suppose that $w\in B_{m,n+1}$, 
so that the vertex $w$ lies on $y=n+1$ and is of type $m$. 
The cases where $w\in B_{m,n-1}$ or $w\in B_{m\pm1, n}$ are similar.

Now the proof proceeds very much as that of proposition \ref{prop:Xe}. 
The perturbed vector field $\bfv(w)$ is given by equation (\ref{eq:v_abcd}), 
with $a,b,c,d$ as in (\ref{eq:abcd}). (Note that $R(w)=(m,n+1)$,
so the formula (\ref{eq:abcd}) must be modified accordingly.)
In this case, $R(F_{\lambda}^4(w))=(m,n)$ implies that
\begin{displaymath}
 c= n \hskip 20pt \mbox{and} \hskip 20pt d =m,
\end{displaymath}
and according to (\ref{eq:bd_ac_sets}), the remaining integers $a$ and $b$ satisfy
\begin{displaymath}
 a\in \{n,n+1\}, \hskip 40pt b=m.
\end{displaymath}
Thus we have
\begin{align*}
 \bfv(w) &= \lambda(n+a+1,-(2m+1))  \\ 
 &= \lambda (\bfw_{m,n} + (a-n)\bfe), 
\end{align*}
where $\bfe=(1,0)$ is the unit vector in the $x$-direction, the non-integer 
coordinate direction of the vertex.

If $w=\Psi^j(z)$, then $v_j=m$, and the coefficient of the difference between 
$\bfv(w)$ and $\lambda\bfw_{m,n}$ in the $x$-direction is given by
\begin{align*}
 \epsilon_j &= a-n \\
&= \ceil{\lambda(y-m)} -(n+1) \\
&= \left\{ \begin{array}{ll} 1 \; \; & \lambda y-(n+1) >\lambda m \\
            0 \; \; & \mbox{otherwise}.
           \end{array} \right.
\end{align*}
As in equation (\ref{eq:(xj,yj)}), we write
\begin{displaymath}
 y = \Bceil{\frac{n+1}{\lambda}} + y_j,
\end{displaymath}
where by (\ref{eq:xj<0,yj>0}), $y_j$ satisfies $0\leq y_j<2m+1$.
Then, by (\ref{eq:sigma_j_2}), we have $y_j=\sigma_j$, and 
the function $\epsilon_j$ is given by
\begin{displaymath}
 \epsilon_j(\sigma_j) = \left\{ \begin{array}{ll} 1 \; \; & \sigma_j > m \\
            0 \; \; & \mbox{otherwise,}
           \end{array} \right.
\end{displaymath}
which completes the proof.
\end{proof}

Note that the function $\epsilon_j$ depends on $j$ via $m$.
In what follows we shall write $\epsilon_j$, omitting the argument.

We think of an orbit as moving according to the integrable vector 
field at all points except the vertices, where there is a mismatch between
integrable and non-integrable dynamics, and points are given a small 
`kick' in the non-integer coordinate direction.

In particular, in the situation described in the proof of proposition \ref{thm:epsilon_j}, 
the strip containing $\Psi^j(z)$, 
which lies on the boundary between the boxes $B_{m,n+1}$ and $B_{m,n}$,
can be decomposed into two sub-strips: $0\leq y_j\leq m$ and $m<y_j<2m+1$.
In the sub-strip with $0\leq y_j\leq m$, which lies closest to $B_{m,n}$,
we have
 $$ \bfv(\Psi^j(z)) = \lambda\bfw_{m,n}, $$
whereas in the sub-strip with $m<y_j<2m+1$, which lies further into $B_{m,n+1}$,
we have
 $$ \bfv(\Psi^j(z)) = \frac{\lambda}{2}(\bfw_{m,n} + \bfw_{m,n+1}). $$
An analogous behaviour occurs in other situations.

\section{Proofs for section \ref{sec:MainTheorems}} \label{sec:lattice}

We prove theorems \ref{thm:Phi_equivariance} \& \ref{thm:minimal_densities} 
via several lemmas. 
The first and most significant step is to show that the orbit codes $\sigma(z)$ of points $z\in \Xe$ are in 
one-to-one correspondence with the set $\Z^2/\,\Le$ of equivalence classes modulo $\Le$. 
We do this by constructing a sequence of nested lattices whose congruence classes are the 
cylinder sets of the orbit code.

\subsection*{Orbit codes and lattice structure}

We define recursively a finite integer sequence $(q_j)$, 
$j=1,\ldots,2k-1$, as follows:
\begin{align}
   q_1&=(2v_1+1)^2\nonumber \\
   q_j&=\begin{cases}
         q_{j-1}&\mbox{if}\,\, v_j=v_{j-1}\\
         \lcm((2v_j+1)(2v_{j-1}+1),q_{j-1}) &\mbox{if}\,\,\, v_j\neq v_{j-1}
        \end{cases}
      &j>1. \label{eq:q_j}
\end{align}
Then we let
\begin{equation}\label{eq:p_j}
p_j=q_j/(2v_j+1)	\hskip 40pt	 j=1,\ldots,2k-1.
\end{equation}
By construction, $p_j$ is also an integer.
After defining the associated sequence of vectors 
\begin{equation*} 
 \bfL_j = \frac{q_j}{2v_1+1} \; (1,1), 
\end{equation*}
we let the lattices $\Le_j$ be the $\Z$-modules with basis
\begin{equation} \label{eq:lattice_j}
 \Le_j =\left\langle \bfL_j, 
   \frac{1}{2}\left(\bfL_j -\bfw_{v_1,v_1}\right) \right\rangle. 
\end{equation}
By construction
\begin{displaymath}
 \Le_{2k-1} \subseteq \Le_{2k-2} \subseteq \dots \subseteq \Le_1 \subset \Z^2.
\end{displaymath}

We claim that for all $1\leq j \leq 2k-1$, the closed form expression for $q_j$ is given by
\begin{equation}
 q_j = \lcm((2v_{\iota(1)}+1)^2,(2v_{\iota(1)}+1)(2v_{\iota(2)}+1),
  \dots,(2v_{\iota(i-1)}+1)(2v_{\iota(i)}+1)), \label{eq:q_j_closed_form}
\end{equation}
where $i$ is the number of distinct entries in the list $(v_1,v_2,\dots,v_j)$. 
That the lowest common multiple (\ref{eq:q_j_closed_form}) runs over all products 
$(2v_j+1)(2v_{j+1}+1)$ of consecutive, distinct vertex types follows from the form 
(\ref{eq:v_iota}) of the vertex list and the symmetry (\ref{eq:v_symmetry}) of the vertex types. 
Furthermore, since all distinct vertex types occur within the first $k$ vertex types, 
the expression (\ref{eq:q_j_closed_form}) implies that the sequence $(q_j)$ is eventually 
stationary:
\begin{equation} \label{eq:q_j=q}
 q_j = q, \quad \Le_j = \Le \hskip 40pt k\leq j \leq 2k-1, 
\end{equation}
where $q$ and $\Le$ are given by equations (\ref{eq:q}) and (\ref{eq:Le}).

Any $z,\tZ \in \Xe$ which are congruent modulo $\lambda\Le_j$ are related by
\begin{equation} \label{eq:z_tilde}
 \tZ = z + \frac{\lambda}{2}\left( (2a+b) \bfL_j -b\bfw_{v_1,v_1}\right),
\end{equation}
where $a,b\in\Z$ are the coordinates of $\tZ-z$ relative to the module basis. 
For a given $z$, $\tZ$ is determined uniquely by the coefficient $2a+b$, 
because if $z=\lambda(x,y)$, then
\begin{align*}
 x\geq y \; \; &\Rightarrow \; \; b\in\{0,1\}, \\
 x<y \; \; &\Rightarrow \; \; b\in\{-1,0\}.
\end{align*}
The point $z$ itself corresponds to $a=b=0$.

For given $e$, the following result details the role of the $\Le_j$ as cylinder sets of the orbit code. 
Applying the result for $j=2k-1$, along with the observation (\ref{eq:q_j=q}), implies that two points 
share the same orbit code if and only if they are congruent modulo $\lambda\Le$.

\begin{lemma} \label{thm:sigma_lattices}
Let $e$ be a critical number, let $k$ be the length of the vertex list of the 
corresponding polygon class, and let $p_j$ and $\Le_j$ be as above.
For any $1\leq j \leq 2k-1$ and all $z,\tZ\in \Xe$, the following three statements are equivalent:
\begin{enumerate}[(i)]
 \item the orbit codes of $z$ and $\tZ$ match up to the $j$th entry,
 \item $z$ and $\tZ$ are congruent modulo $\lambda\Le_j$,
 \item the points $\Psi^j(z)$ and $\Psi^j(\tZ)$ are congruent modulo 
$\lambda p_j\bfe$, where $\bfe$ is the unit vector in the 
direction of the non-integer coordinate of the $j$th vertex.
\end{enumerate}
\end{lemma}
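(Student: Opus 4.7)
The plan is to prove the equivalence by induction on $j$, establishing (i)$\Leftrightarrow$(iii) via the explicit dynamics of the strip map and proposition \ref{thm:epsilon_j}, and then deriving (ii)$\Leftrightarrow$(iii) by showing that applying $\Psi^j$ maps the two-dimensional lattice congruence modulo $\lambda\Le_j$ onto the one-dimensional congruence modulo $\lambda p_j\bfe$.

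For the base case $j=1$: since $\Xe\subset B_{v_1,v_1}\setminus\Lambda$, equation (\ref{eq:Psi_congruence2}) gives $\Psi(z) = z + \lambda t(z)\bfw_{v_1,v_1}$, where the transit time $t(z)$ depends only on the starting point's position relative to the boundary of $B_{v_1,v_1}$. A direct computation shows $\Le_1 = (2v_1+1)\Z^2$ and $p_1 = 2v_1+1$, and since the components of $\bfw_{v_1,v_1}$ are $\pm(2v_1+1)$, the residue of the non-integer coordinate of $\Psi(z)$ modulo $\lambda(2v_1+1)$ is determined by the corresponding residue of $z$. Verifying that this residue is exactly $\sigma_1$, via definition (\ref{eq:sigma_j}), closes the base case.

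For the inductive step, I would assume the equivalence at level $j$ and apply proposition \ref{thm:epsilon_j}. If the first $j$ orbit code entries of $z$ and $\tilde z$ agree, then $\epsilon_j$ takes the same value on both points and
\[
\Psi^{j+1}(z) - \Psi^{j+1}(\tilde z) = \bigl(\Psi^j(z) - \Psi^j(\tilde z)\bigr) + \lambda\bigl(t(z) - t(\tilde z)\bigr)\bfw_{m,n}.
\]
Under the inductive hypothesis (iii), $\Psi^j(z) - \Psi^j(\tilde z) = \lambda p_j c\,\bfe$ for some $c\in\Z$. The vector $\bfw_{m,n}$ has component of magnitude $2v_{j+1}+1$ along $\bfe$ and $2v_j+1$ along the new non-integer direction $\bfe'$. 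For the image to lie in a congruence class in the direction $\bfe'$ (i.e.~for $\Psi^{j+1}(z)$ and $\Psi^{j+1}(\tilde z)$ to share a strip), the $\bfe$-component of the difference must be forced to zero by the strip width, giving a linear Diophantine condition that determines $s=t(z)-t(\tilde z)$ up to a multiple of $(2v_{j+1}+1)/\gcd(p_j,2v_{j+1}+1)$. The surviving $\bfe'$-component $\lambda s(2v_j+1)$ is then a multiple of $\lambda p_{j+1}\bfe'$ exactly because the lcm recursion (\ref{eq:q_j}) for $q_{j+1}$ absorbs the factor $(2v_j+1)(2v_{j+1}+1)$. The lattice statement (ii) at level $j+1$ is then obtained by unpacking the basis (\ref{eq:lattice_j}) for $\Le_{j+1}$ and matching it against the admissible $(s,c)$ pairs via (\ref{eq:z_tilde}).

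The main obstacle will be handling two sub-cases uniformly within the induction. When $v_{j+1}=v_j$, the recursion (\ref{eq:q_j}) gives $q_{j+1}=q_j$ and no new arithmetic factor is introduced, but the geometry is also different: the orbit continues in the same strip without swapping $\bfe$ and $\bfe'$, so the bookkeeping must be adapted. When $v_{j+1}\neq v_j$, the lcm jump in $q_{j+1}$ must be shown to match exactly the Diophantine constraint from $\bfw_{m,n}$, neither more nor less. Additional care is needed for the half-shift generator $\tfrac{1}{2}(\bfL_j - \bfw_{v_1,v_1})$ in (\ref{eq:lattice_j}), which introduces a parity constraint on the coefficient $2a+b$ in (\ref{eq:z_tilde}) that must be preserved at each step. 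These parity and coprimality considerations are the delicate arithmetic core underlying the otherwise geometric induction.
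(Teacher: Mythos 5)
Your proposal follows essentially the same approach as the paper's proof: induction on $j$ with the two sub-cases $v_{j+1}=v_j$ and $v_{j+1}\neq v_j$, proposition \ref{thm:epsilon_j} as the workhorse, the Diophantine constraint on transit-time differences producing exactly the lcm jump in $q_{j+1}$, and the parity bookkeeping from the half-shift generator. The paper organizes the induction around the paired hypotheses (i)$\Leftrightarrow$(ii) and (ii)$\Leftrightarrow$(iii), but your (i)$\Leftrightarrow$(iii) plus (ii)$\Leftrightarrow$(iii) ordering is the same argument differently packaged, and all the substantive steps you flag match what the paper actually does.
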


\begin{proof} For $e\in\cE$, let $z,\tilde z\in\Xe$ and let the orbit codes
of $z$ and $\tZ$ be denoted by
$(\sigma_{-1},\sigma_1,\dots,\sigma_{2k-1})$, and
$(\tilde\sigma_{-1},\tilde\sigma_1,\dots,\tilde\sigma_{2k-1})$, respectively.
We think of $z$ as fixed and $\tZ$ as identified by the coordinates of $\tZ-z$ in the relevant module.
We will make extensive use of proposition \ref{thm:epsilon_j}, page \pageref{thm:epsilon_j},
which describes the behaviour of $\Psi$ as a function of the orbit code.

We proceed by induction on $j$, with two induction hypotheses. 
Firstly we suppose that $(i)$ is equivalent to $(ii)$, so that for any 
$1\leq j\leq 2k-1$: 
\begin{equation}
(\sigma_{-1},\sigma_1,\dots,\sigma_j)=
(\tilde\sigma_{-1},\tilde\sigma_1,\dots,\tilde\sigma_j)
\quad\Leftrightarrow\quad
 \tZ \equiv z \mod{\lambda\Le_j}. \tag{H1}\label{eq:H1}
\end{equation}
Secondly, we suppose that $(ii)$ is equivalent to $(iii)$. 
In particular:
\begin{equation} \tag{H2}\label{eq:H2}
\tZ = z + \frac{\lambda}{2}\left( (2a+b) \bfL_j -b\bfw_{v_1,v_1}\right)
\quad\Leftrightarrow\quad
\Psi^j(\tZ) = \Psi^j(z) + \lambda(2a +b) p_j \bfe,
\end{equation}
where $\bfe$ is the unit vector in the direction of the non-integer coordinate of that vertex.

We begin with the base case $j=1$. 
Suppose that the first vertex of a polygon in class $e$ lies on $y=v_1$, 
so that $y$ is its integer coordinate (if $x$ is the integer coordinate, 
then the analysis is identical). 
By symmetry, the previous vertex lies on $x=v_1$ and its integer coordinate is $x$.
Using the property of $\Psi$ given in equation (\ref{eq:Psi_congruence2})
applied to $z\in B_{v_1,v_1}\setminus\Lambda$, we have
\begin{equation} \label{eq:Psi_congr4}
  \Psi(z) \equiv z \mod{\lambda\bfw_{v_1,v_1}}.
\end{equation}
Furthermore, by proposition \ref{thm:epsilon_j}:
\begin{equation} \label{eq:Psi_congruence3}
 \Psi^{-1}(z) + \lambda\epsilon_{-1}\bfy \equiv  z \mod{\lambda\bfw_{v_1,v_1}},
\end{equation}
where $\bfy=(0,1)$ is the non-integer coordinate vector for the $(-1)$th vertex. 
Thus if
 $$ z=\lambda\left( \Bceil{\frac{v_1}{\lambda}}+x,\Bceil{\frac{v_1}{\lambda}}+y \right), $$ 
then by the definition (\ref{eq:sigma_j}) of the orbit code,
the $x$- and $y$-components of equations (\ref{eq:Psi_congruence3}) and (\ref{eq:Psi_congr4}), 
respectively, give us that the first two entries in the orbit code $\sigma(z)$ satisfy
\begin{align*}
 x \equiv \sigma_{-1} \mod{2v_1+1}, \\
 y \equiv \sigma_1 \mod{2v_1+1}.
\end{align*}
It follows that $z,\tZ\in \Xe$ share the partial code $(\sigma_{-1},\sigma_1)$ if and only if
\begin{displaymath}
 \tZ \equiv z \mod{(\lambda(2v_1+1)\Z)^2}.
\end{displaymath}
The lattice $\Le_1$ is given by (cf.~(\ref{eq:lattice_j}))
\begin{displaymath}
\Le_1 = \left\langle \bfL_1, \frac{1}{2} \left(\bfL_1 -\bfw_{v_1,v_1}\right) \right\rangle, 
\end{displaymath}
where $\bfL_1=p_1(1,1)$, $p_1=2v_1+1$ and
\begin{displaymath}
 \frac{1}{2} \left(\bfL_1 -\bfw_{v_1,v_1}\right) = \frac{1}{2} (p_1-p_1,p_1+p_1) = p_1\bfy.
\end{displaymath}
Thus $\Le_1=((2v_1+1)\Z)^2$ and the first hypothesis holds.

Now let $z,\tZ\in \Xe$ satisfy (\ref{eq:z_tilde}) with $j=1$.
If $\Psi(z) = F_{\lambda}^{4t}(z) = z +\lambda t \bfw_{v_1,v_1}$, where $t\in\N$ is the 
transit time to $\Lambda$, then the identities
\begin{align*}
\tZ + \lambda(t+ a +b)\bfw_{v_1,v_1}
&= \Psi(z) + \frac{\lambda}{2} (2a+b) \left(\bfL_1 +\bfw_{v_1,v_1}\right)  \\
&= \Psi(z) + \lambda(2a +b)p_1 \bfx,
\end{align*}
where $\bfx=(1,0)$ is the non-integer coordinate vector for the first vertex,
show that $\tZ$ has transit time $t+a+b$ ,
and therefore $\Psi(\tZ) = \Psi(z) + \lambda(2a +b)p_1 \bfx$, as required
(see figure \ref{fig:Psi_diagram}).
This completes the basis for induction.

\begin{figure}[t]
        \centering
        \includegraphics[scale=0.85]{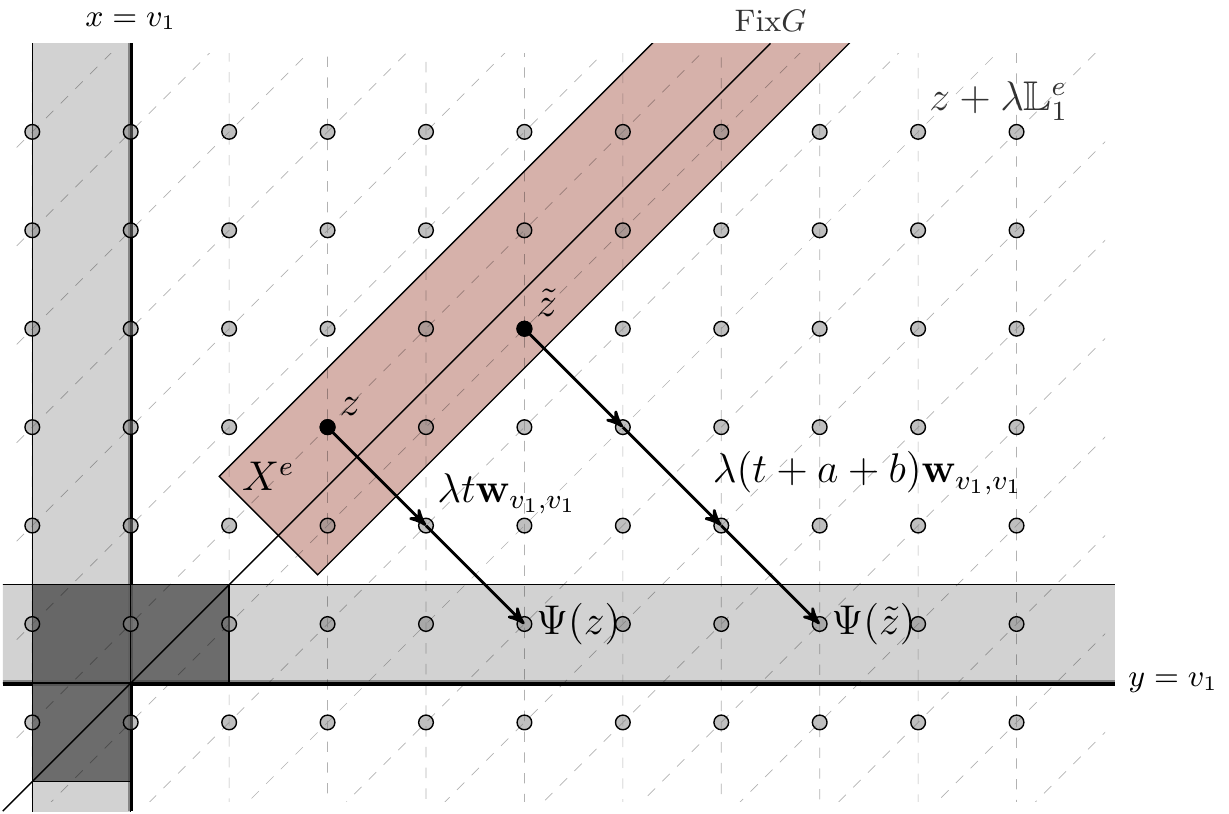}
        \caption{The points $z,\tZ\in\Xe$ and $\Psi(z),\Psi(\tZ)\in\Lambda$. 
        The set $\Xe$ is shown in red, whereas the sets $\Lambda$ and $\Sigma$ are represented in light and dark grey, respectively.}
        \label{fig:Psi_diagram}
\end{figure}

Now we proceed with the inductive step. 
Let (\ref{eq:H1}) and (\ref{eq:H2}) hold for some $j\geq 1$. 
Then $z$ and $\tZ$ are related as in equation (\ref{eq:z_tilde}), for some $a$, $b$. 
We think of $\tilde{\sigma}_{j+1}$, the $(j+1)$th entry of the orbit code of $\tZ$,
as a function of $(a,b)$. We suppose that the $j$th vertex lies 
on $y=n$ for some $n\in\Z$ (again the case in which the vertex lies on $x=m$ is identical). 
Let the pair $(x_j,y_j)$ be defined from $\Psi^j(z)$ via equation (\ref{eq:(xj,yj)}). 
Similarly, $\Psi^j(\tZ)$ defines the pair $(\tilde{x}_j,\tilde{y}_j)$.

By (\ref{eq:H2}), $\Psi^j(\tZ)$ satisfies
\begin{displaymath}
 \Psi^j(\tZ) = \Psi^j(z) + \lambda(2a +b) p_j \bfx.
\end{displaymath}
Combining this expression with proposition \ref{thm:epsilon_j} 
applied to $\Psi^j(\tZ)\in\Lambda_{v_j,n-1}$, we obtain
\begin{align}
 \Psi^{j+1}(\tZ) &= \Psi^j(\tZ) + \lambda\epsilon_j \bfx + \lambda\tilde t \bfw_{v_j,n-1} \nonumber \\
 &= \Psi^j(z) + \lambda(2a +b) p_j\bfx + \lambda\epsilon_j \bfx + \lambda\tilde t  \bfw_{v_j,n-1}, \label{eq:Psi^jp1tilde}
\end{align}
where $\tilde t$ is the transit time of $\tilde z$ to $\Lambda$.

There are now two cases to consider. \\

\noindent
\textit{Case 1: $v_j=v_{j+1}$.}

In this case the $j$th and $(j+1)$th vertices lie on parallel lines, 
which we take to be $y=n$ and $y=n-1$, so $\Psi^{j+1}(z)$ is given by
\begin{align*}
 \Psi^{j+1}(z) = \lambda\left( \Bceil{\frac{v_j}{\lambda}} + x_{j+1},\Bceil{\frac{n-1}{\lambda}} + y_{j+1} \right),
\end{align*}
and similarly for $\Psi^{j+1}(\tZ)$. 
According to the definitions (\ref{eq:q_j}), (\ref{eq:p_j}) and (\ref{eq:lattice_j}), 
we have $p_j=p_{j+1}$ and $\Le_j=\Le_{j+1}$. 
Thus, to show that (\ref{eq:H1}) continues to hold, we need to show that $\tilde{\sigma}_{j+1}=\sigma_{j+1}$ for all $(a,b)$. 
Similarly we need to show that the vector $\Psi^{j+1}(\tZ)-\Psi^{j+1}(z)$ 
is equal to the vector $\Psi^{j}(\tZ)-\Psi^{j}(z)$ of hypothesis (\ref{eq:H2}).

Because $y$ is the integer coordinate of both the $j$th and $(j+1)$th vertices,
the transit time is the same for the orbits of $z$ and $\tilde z$. 
Therefore proposition \ref{thm:epsilon_j}  
and equation (\ref{eq:Psi^jp1tilde}) with $\tilde t =t$ give us that
\begin{align*}
 \Psi^{j+1}(\tZ) &= \Psi^j(z) + \lambda(2a +b) p_j\bfx 
     + \lambda\epsilon_j \bfx + \lambda t \bfw_{v_j,n-1}, \\
 &= \Psi^{j+1}(z) + \lambda(2a +b) p_j\bfx,
\end{align*}
and the second hypothesis (\ref{eq:H2}) remains satisfied. 
Furthermore, $\Psi^{j+1}(\tZ)$ and $\Psi^{j+1}(z)$ have the same integer 
($y$) coordinate. It follows that, by the definition (\ref{eq:sigma_j}) 
of the orbit code, $\tilde \sigma_{j+1}=\sigma_{j+1}$ and (\ref{eq:H1}) 
is also satisfied.

By the $y$-component of (\ref{eq:Psi^jp1tilde}), the value of ${\sigma}_{j+1}$ 
is determined explicitly by the congruence
\begin{equation} \label{eq:sigma_j+1_case1}
 \Bceil{\frac{n-1}{\lambda}} + {\sigma}_{j+1} \equiv
     \Bceil{\frac{n}{\lambda}} + \sigma_j \mod{2v_j+1}.  
\end{equation}
This congruence shows that if $v_j=v_{j+1}$, 
then there is a permutation $\pi$ of the set $\{0,1,\dots,2v_j\}$, 
dependent on $\lambda$ but not on $z$, such that all orbit codes which have $j$th entry $\sigma_j$
will have $(j+1)$th entry $\pi(\sigma_j)$.

\medskip

\noindent
\textit{Case 2: $v_j\neq v_{j+1}$.}

In this case the $j$th and $(j+1)$th vertices lie on perpendicular lines.
We take these to be the lines $y=n$ and $x=v_j+1$, respectively, so that 
$v_{j+1}=n-1$ and $\Psi^{j+1}(z)$ is given by
\begin{align*}
 \Psi^{j+1}(z) = \lambda\left( \Bceil{\frac{v_j+1}{\lambda}} 
    + x_{j+1}, \Bceil{\frac{n-1}{\lambda}} + y_{j+1}\right).
\end{align*}
(If $x$ is the integer co-ordinate, then the analysis is identical.)
We shall demonstrate the form of $\Le_{j+1}$ by identifying those pairs 
$(a,b)$ for which $\tilde{\sigma}_{j+1}=\sigma_{j+1}$.

Taking the $x$-coordinate of equation (\ref{eq:Psi^jp1tilde}), and recalling the explicit form 
(\ref{eq:sigma_j_2}) of the orbit code, we see that $\tilde{\sigma}_{j+1}$ is determined by
\begin{equation} \label{eq:t,2a+b_eqn}
 \Bceil{\frac{v_j}{\lambda}} + x_j + (2a+b)p_j +\epsilon_j +\tilde t(2v_{j+1}+1) 
     = \Bceil{\frac{v_j+1}{\lambda}} +\tilde{\sigma}_{j+1} - (2v_{j+1}+1). 
\end{equation}
We think of this as an integer equation of the form $A(2a+b)+B\tilde t=C$, which has solutions 
$2a+b\in\Z$ and $\tilde t\in\N$ for some given value of $\tilde{\sigma}_{j+1}$ if and only if
\begin{displaymath}
 C = \Bceil{\frac{v_j+1}{\lambda}} +\tilde{\sigma}_{j+1} 
   - (2v_{j+1}+1) - \Bceil{\frac{v_j}{\lambda}} - x_j - \epsilon_j
\end{displaymath}
is sufficiently large and $C\equiv 0 \; (\mathrm{mod} \; \gcd(A,B))$, i.e., if $\lambda$ is sufficiently small and $\tilde{\sigma}_{j+1}$ satisfies the congruence
\begin{equation}
   \tilde{\sigma}_{j+1} \equiv \Bceil{\frac{v_j}{\lambda}} 
 + x_j+\epsilon_j-\Bceil{\frac{v_j+1}{\lambda}} \mod{\gcd( p_j, 2v_{j+1}+1)}. \label{eq:sigma_j+1_case2}
\end{equation}
To find the lattice $\Le_{j+1}$, we need to solve this equation in the case $\tilde{\sigma}_{j+1}=\sigma_{j+1}$.

By assumption, the point $z$, given by the module coordinates $a=b=0$, corresponds to the solution 
$2a+b=0$, $\tilde t=t$, for some transit time $t\in\N$. 
Hence the general solution of (\ref{eq:t,2a+b_eqn}) is given by
\begin{align}
 \tilde t&= t -  s\; \frac{p_j}{\gcd( p_j, 2v_{j+1}+1)}, \label{eq:GeneralSolution_t}\\
 2a+b&= s \; \frac{2v_{j+1}+1}{\gcd( p_j, 2v_{j+1}+1)}, \label{eq:GeneralSolution_2a+b}
\end{align}
for $s\in\Z$. The second of these equations implies that $s$ must have the same parity 
as $2a+b$, so we can write $s=2\tilde{a}+b$, where $\tilde{a}\in\Z$ and 
$b\in\{0,\pm 1\}$ for an appropriate choice of sign.
Substituting this expression into equation (\ref{eq:z_tilde}), the points $\tZ$ 
for which $\tilde{\sigma}_{j+1}=\sigma_{j+1}$ are given by
\begin{align*}
 \tZ &= z + \frac{\lambda}{2}\left( s \; \frac{2v_{j+1}+1}{\gcd( p_j, 2v_{j+1}+1)} \bfL_j -b\bfw_{v_1,v_1}\right) \\
 &= z  + \frac{\lambda}{2}\left( (2\tilde{a}+b) \bfL_{j+1} -b\bfw_{v_1,v_1}\right).
\end{align*}
The last equality is justified by the identities
\begin{align*}
 \frac{2v_{j+1}+1}{\gcd( p_j, 2v_{j+1}+1)} \; \bfL_j 
&= \frac{(2v_{j+1}+1)(2v_j+1)}{\gcd( (2v_j+1)p_j, (2v_{j+1}+1)(2v_j+1))} 
   \; \frac{q_j}{2v_1+1} \; (1,1) \\
&= \frac{q_{j+1}}{2v_1+1} \; (1,1) = \bfL_{j+1},
\end{align*}
where we have used the relationship $\lcm(a,b) = a b/\gcd(a,b)$.
Therefore the first hypothesis (\ref{eq:H1}) remains satisfied.

Substituting the general solution (\ref{eq:GeneralSolution_t}) and (\ref{eq:GeneralSolution_2a+b})  
into equation (\ref{eq:Psi^jp1tilde}) 
and using proposition \ref{thm:epsilon_j}, we find
\begin{align*}
\Psi^{j+1}(\tZ) &= \Psi^j(z) + \lambda(2a +b) p_j\bfx + 
      \lambda\epsilon_j \bfx + \lambda\tilde t \bfw_{v_j,v_{j+1}} \\
&= \Psi^{j+1}(z) + \frac{\lambda s p_j}{\gcd( p_j, 2v_{j+1}+1)} \left( (2v_{j+1}+1)\bfx - \bfw_{v_j,v_{j+1}}\right) \\
&= \Psi^{j+1}(z) + \frac{\lambda(2\tilde{a}+b)p_j}{\gcd( p_j, 2v_{j+1}+1)} (2v_j+1)\bfy \\
&= \Psi^{j+1}(z) + \lambda(2\tilde{a}+b)p_{j+1}\bfy.
\end{align*}
Hypothesis (\ref{eq:H2}) remains satisfied, completing the induction.

Thus hypotheses (\ref{eq:H1}) and (\ref{eq:H2}) hold for all $1\leq j\leq 2k-1$,
and the equivalence of the three statements follows.
\end{proof}

Lemma \ref{thm:sigma_lattices} gives two characterisations
of the cylinder sets of the lattices $\Le_j$.
Firstly, a cylinder set $z+\lambda\Le_j$ can be identified by the partial orbit code 
$(\sigma_{-1},\sigma_1,\dots,\sigma_j)$ 
(by the equivalence of statements (i) and (ii)).
Secondly, it can be identified by a pair of the form $(\sigma_j,\gamma_j)$,
where $\sigma_j$ is the $j$th entry in the orbit code (i.e., a residue modulo $2v_j+1$),
and $\gamma_j$ is a residue modulo $p_j=q_j/(2v_j+1)$ 
(by the equivalence of statements (ii) and (iii)---see equation (\ref{eq:H2})).

In particular, since $\Le\subset\Le_j$ for all $1\leq j \leq 2k-1$,
we can identify the cylinder sets of $\Le$ by pairs of the form $(\sigma_j,\gamma_j)$,
where $\gamma_j$ is a residue modulo $q/(2v_j+1)$.
For any $j$, there is a one-to-one correspondence between 
the set of all orbit codes and the set of pairs $(\sigma_j,\gamma_j)$.
We will use this alternative construction below.

\begin{corollary} \label{cor:sigma_lattice}
Let $e$ be a critical number, let $k$ be the length of the vertex list of the 
corresponding polygon class, and let $j$ be in the range $1\leq j \leq 2k-1$.
Then two points $z$ and $\tZ$ in $\Xe$ have the same orbit code if and only if
the points $\Psi^j(z)$ and $\Psi^j(\tZ)$ are congruent modulo $\lambda q/(2v_j+1)\,\bfe$, 
where $\bfe$ is the unit vector in the direction of the non-integer coordinate 
of the $j$th vertex.
\end{corollary}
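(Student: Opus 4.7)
The plan is to derive the corollary from Lemma \ref{thm:sigma_lattices} by exploiting the equivalence between statements (i) and (iii) at the level $j=2k-1$, combined with a change of basis between the lattices $\Le_j$ and $\Le = \Le_{2k-1}$.

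First I would establish the forward direction. Assume the orbit codes of $z$ and $\tZ$ coincide. By Lemma \ref{thm:sigma_lattices} applied at $j = 2k-1$, together with the stabilisation (\ref{eq:q_j=q}), this is equivalent to $\tZ \equiv z \pmod{\lambda\Le}$, so $\tZ - z = \tfrac{\lambda}{2}\bigl((2a+b)\bfL - b\bfw_{v_1,v_1}\bigr)$ for some $a,b\in\Z$. Using $\bfL = (q/q_j)\bfL_j$, I rewrite this in the $\Le_j$ basis as $\tZ - z = \tfrac{\lambda}{2}\bigl((2a+b)(q/q_j)\bfL_j - b\bfw_{v_1,v_1}\bigr)$, and then apply the forward direction of the equivalence (\ref{eq:H2}) of the lemma at level $j$ to obtain
\begin{equation*}
 \Psi^j(\tZ) - \Psi^j(z) = \lambda (2a+b)(q/q_j)\,p_j\,\bfe = \lambda(2a+b)\frac{q}{2v_j+1}\,\bfe,
\end{equation*}
which is the required congruence.

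For the reverse direction, suppose $\Psi^j(\tZ) - \Psi^j(z) = \lambda s\,(q/(2v_j+1))\,\bfe$ for some $s\in\Z$. Since $q_j \mid q$, we have $p_j = q_j/(2v_j+1) \mid q/(2v_j+1)$, so this is also a congruence modulo $\lambda p_j\bfe$. The equivalence (ii)$\Leftrightarrow$(iii) of the lemma then yields $\tZ - z = \tfrac{\lambda}{2}\bigl((2a+b)\bfL_j - b\bfw_{v_1,v_1}\bigr)$, with $\Psi^j(\tZ) - \Psi^j(z) = \lambda(2a+b)p_j\bfe$. Comparing the two expressions gives $(2a+b)q_j = sq$, hence $q/q_j$ divides $2a+b$; write $2a+b = c(q/q_j)$. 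Substituting back, $\tZ - z = \tfrac{\lambda}{2}\bigl(c\bfL - b\bfw_{v_1,v_1}\bigr)$, so $\tZ \equiv z \pmod{\lambda\Le}$ provided $c$ has the same parity as $b$.

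The parity issue is the one delicate point, and it is handled by the observation that every $q_j$ is odd: inductively, $q_1 = (2v_1+1)^2$ is odd, and each step in (\ref{eq:q_j}) takes an lcm of odd numbers. Hence $q/q_j$ is odd, and $c = (2a+b)(q_j/q)$ inherits the parity of $2a+b$, which is the parity of $b$. Thus $c = 2a'+b$ for some $a'\in\Z$, we have $\tZ \equiv z \pmod{\lambda\Le}$, and the orbit codes of $z$ and $\tZ$ agree by Lemma \ref{thm:sigma_lattices} at $j = 2k-1$. The main ``obstacle'' is really just keeping track of the change of basis between $\Le_j$ and $\Le$ and verifying the parity constraint; once the odd-ness of the $q_j$ is noted, the argument is essentially bookkeeping within the framework established by the lemma.
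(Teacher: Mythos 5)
Your proof is correct and fills in the details that the paper leaves implicit. The paper asserts this corollary directly from the discussion after Lemma \ref{thm:sigma_lattices}, observing that since $\Le \subset \Le_j$, the cylinder sets of $\Le$ can be parametrised by $(\sigma_j, \gamma_j)$ with $\gamma_j$ now a residue modulo $q/(2v_j+1)$ rather than modulo $p_j$; you have made that change-of-scale precise by passing between the $\Le$ and $\Le_j$ bases via $\bfL = (q/q_j)\bfL_j$ and tracking the coefficients through (\ref{eq:H2}). The one genuine subtlety you rightly identify is the parity constraint needed to match the $b$-coefficient in the two bases, and your observation that every $q_j$ is odd (inductively from (\ref{eq:q_j}), since it is an lcm of products of odd numbers $2v+1$) resolves it. A small note: the same parity check is also needed in the forward direction, to see that $(2a+b)(q/q_j)$ has the form $2a'+b$ with the \emph{same} $b$, so that (\ref{eq:H2}) applies; you only invoke oddness explicitly in the reverse direction, but the observation covers both.
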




\medskip

We have shown the equivalence between orbit codes
and congruence classes of $\Le$.
To complete the proof of theorem \ref{thm:Phi_equivariance}, 
we show that the orbit code $\sigma(z)$ 
determines uniquely the behaviour of $z$ under the return map $\Phi$.

\begin{proof}[Proof of theorem \ref{thm:Phi_equivariance}]

Consider two points $z,z+\lambda l\in\Xe$ for some $l\in\Le$ given by
\begin{displaymath}
 l= \frac{1}{2}\left( (2a+b) \bfL -b\bfw_{v_1,v_1}\right).
\end{displaymath}
According to lemma \ref{thm:sigma_lattices} applied to $j=2k-1$, 
these two points have the same orbit code and reach the $(2k-1)$th vertex 
at the points $\Psi^{2k-1}(z),\Psi^{2k-1}(z+\lambda l)\in\Lambda_{v_1,-(v_1+1)}$, 
which are congruent modulo 
$\lambda p_{2k-1}\bfe$, where $\bfe$ is the unit vector in the 
non-integer direction. In particular, $\Psi^{2k-1}(z)$ and $\Psi^{2k-1}(z+\lambda l)$ are related via
\begin{align}
\Psi^{2k-1}(z+\lambda l) &= \Psi^{2k-1}(z) + \lambda(2a+b)p_{2k-1}\bfe, \nonumber \\
&= \Psi^{2k-1}(z) + \lambda(2a+b)\frac{q}{(2v_1+1)} \; \bfe, \label{eq:vertex_2k-1}
\end{align}
where we have replaced $q_{2k-1}$ by $q$ using equation (\ref{eq:q_j=q}),
and $v_{2k-1}$ by $v_1$ using the symmetry (\ref{eq:v_symmetry}) of the vertex types. 
We will show that the points where they reach the last vertex are related by a similar equation:
\begin{equation}
(\Psi^{2k-1}\circ F_{\lambda})(z+\lambda l) = (\Psi^{2k-1}\circ F_{\lambda})(z) 
   + \lambda(2a+b)\frac{q}{(2v_1+1)} \; \bfe^{\perp}, \label{eq:vertex_8k-5}
\end{equation}
where the unit vector $\bfe^{\perp}$, the non-integer direction of the last vertex, is perpendicular to $\bfe$.

The last vertex of the return orbit $\Ot(z)$ lies in the set $\Lambda_{v_1,v_1}$, 
so must be close to the image of the $(2k-1)$th vertex under $F_{\lambda}$ (see figure \ref{fig:Phi_Psi}). 
If the $(2k-1)$th vertex lies on the line $x=v_1+1$, it is a simple exercise to show that these 
two points are in fact equal, i.e., that $(\Psi^{2k-1}\circ F_{\lambda})(z) = (F_{\lambda}\circ \Psi^{2k-1})(z)$ 
for any $z\in\Xe$. We consider the less obvious case in which the $(2k-1)$th 
vertex lies on the line $y=-v_1$ and the non-integer direction is $\bfx=(1,0)$. 
By the orientation of the vector field in the fourth quadrant, the orbit of 
the point $z$ reaches this vertex at the point $\Psi^{2k-1}(z)$ given by:
\begin{equation}
 \Psi^{2k-1}(z) = \lambda\left( \Bceil{\frac{v_1}{\lambda}} + x_{2k-1},
 \Bceil{\frac{-v_1}{\lambda}} + y_{2k-1}\right), \label{eq:Psi_2k-1}
\end{equation}
where $x_{2k-1}\geq 0$ and $0\leq y_{2k-1}< 2v_1+1$.

\begin{figure}[ht]
        \centering
        \includegraphics[scale=0.7]{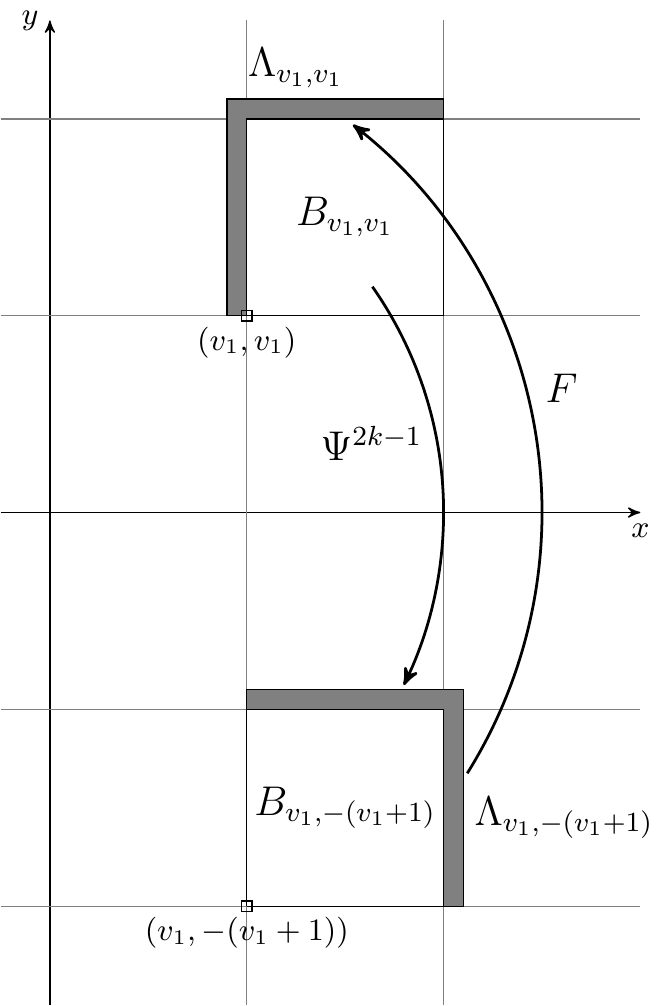}
        \caption{The $(2k-1)$th and last vertices, joined by the action of $F_{\lambda}$. }
        \label{fig:Phi_Psi}
\end{figure}

Applying $F_{\lambda}$ to (\ref{eq:vertex_2k-1}) and substituting the expression (\ref{eq:Psi_2k-1}), we get:
\begin{align*}
 (F_{\lambda}\circ \Psi^{2k-1})(z+\lambda l) 
  &=(F_{\lambda}\circ \Psi^{2k-1})(z) +\lambda(2a+b)\frac{(2v_k+1) \;p}{(2v_1+1)} \; \bfy \\
  & \hskip -20pt =\lambda\left( v_1-\Bceil{\frac{-v_1}{\lambda}} - y_{2k-1}, 
    \Bceil{\frac{v_1}{\lambda}} + x_{2k-1} \right) +\lambda(2a+b)\frac{(2v_k+1) \;p}{(2v_1+1)} \; \bfy,
\end{align*}
where the non-integer direction of the last vertex is $\bfy=(0,1)$.
By equation (\ref{eq:xj<0,yj>0}), if the first component of this point satisfies
$$ 
-(2v_1+1) \leq \left( v_1-\Bceil{\frac{-v_1}{\lambda}} 
    - y_{2k-1} \right) - \Bceil{\frac{v_1}{\lambda}} < 0,
$$
then this point is the last vertex of the orbit:
\begin{equation} \label{eq:Phi_equivariance_case1}
 (\Psi^{2k-1}\circ F_{\lambda})(z+\lambda l) = (F_{\lambda}\circ \Psi^{2k-1})(z+\lambda l). 
\end{equation}
If the above inequality is not satisfied, then it must be the upper bound that fails.
In this case $(F_{\lambda}\circ \Psi^{2k-1})(z+l)\in B_{v_1,v_1}$, 
and we apply $F_{\lambda}^{-4}$ to find:
\begin{align}
 (\Psi^{2k-1}\circ F_{\lambda})(z+\lambda l) &= (F_{\lambda}^{-3}\circ \Psi^{2k-1})(z+\lambda l) \nonumber \\
&= (F_{\lambda}\circ \Psi^{2k-1})(z+\lambda l) - \bfv((F_{\lambda}^{-3}\circ \Psi^{2k-1})(z+\lambda l)) \nonumber \\
&= (F_{\lambda}\circ \Psi^{2k-1})(z+\lambda l) -\lambda\bfw_{v_1,v_1}- \lambda\epsilon \, \bfy, \label{eq:Phi_equivariance_case2}
\end{align}
where the error term $\epsilon$ is independent of $(a,b)$ by proposition \ref{thm:epsilon_j}.
In both cases (\ref{eq:Phi_equivariance_case1}) and (\ref{eq:Phi_equivariance_case2}) the relationship (\ref{eq:vertex_8k-5}) follows.

Using (\ref{eq:vertex_8k-5}), the property (\ref{eq:Psi_congruence1}) of $\Psi$, 
and the expression (\ref{eq:Phi_Psi}) for $\Phi$, we obtain
\begin{align*}
 \Phi(z+\lambda l) &\equiv (\Psi^{2k-1}\circ F_{\lambda})(z+\lambda l) + \bfv((\Psi^{2k-1}\circ F_{\lambda})(z+\lambda l)) 
                 \mod{\lambda\bfw_{v_1,v_1}} \\
&\equiv (\Psi^{2k-1}\circ F_{\lambda})(z) + \frac{\lambda(2a+b)q}{(2v_1+1)} \; 
        \bfy + \bfv((\Psi^{2k-1}\circ F_{\lambda})(z)) \mod{\lambda \bfw_{v_1,v_1}} \\
&\equiv \Phi(z) + \frac{\lambda(2a+b)q}{(2v_1+1)} \; 
             \bfy \mod{\lambda\bfw_{v_1,v_1}} \\
&\equiv \Phi(z) + \frac{\lambda}{2}\,\frac{(2a+b)q}{(2v_1+1)} \; 
    \left((\bfy+\bfx)+(\bfy-\bfx)\right) \mod{\lambda \bfw_{v_1,v_1}} \\
&\equiv \Phi(z) + \frac{\lambda}{2}(2a+b)\left( \bfL - \frac{q}{(2v_1+1)^2} \; 
         \bfw_{v_1,v_1} \right) \mod{\lambda\bfw_{v_1,v_1}} \\
&\equiv \Phi(z) + \frac{\lambda}{2}\left((2a+b)\bfL - b \bfw_{v_1,v_1}\right) \mod{\lambda \bfw_{v_1,v_1}} \\
&\equiv \Phi(z) + \lambda l \mod{\lambda \bfw_{v_1,v_1}},
\end{align*}
where we have also used the fact that $(\bfy+\bfx)=(1,1)$, 
$(\bfy-\bfx)=(-1,1)$, and that $q/(2v_1+1)^2$ is odd.
This completes the proof of theorem \ref{thm:Phi_equivariance}.
\end{proof}

\medskip
\subsection*{The density of fixed points}

The set $\Theta^e$ of possible orbit codes is a subset of the product space
\begin{equation*} 
 \Theta^e \subseteq \{ 0,1,\dots,2v_1 \} \times \prod_{j=1}^{2k-1} \{ 0,1,\dots,2v_j \}.
\end{equation*}
The set of congruence classes modulo $\Le$ is given by the quotient space $\Z^2/\,\Le$,
so that the total number of possible orbit codes is given by
\begin{equation} \label{eq:theta_e}
\# \Theta^e = \#\left(\Z^2/\,\Le\right) = -\frac{1}{2} \; \det\left(\bfL, \bfw_{v_1,v_1}\right) = q(e). 
\end{equation}
We note that although the lattice $\Le$ is independent of $\lambda$, the set of orbit codes $\Theta^e$ is not. 

In the next lemma, we identify the orbit codes which correspond to symmetric 
fixed points of $\Phi$. 
Subsequently, in lemmas \ref{lemma:sigma_j_I} and \ref{lemma:sigma_j_II}, we identify values of $e$ 
for which the number of codes in $\Theta^e$ which satisfy the conditions of lemma 
\ref{lemma:minimal_codes} is independent of $\lambda$.
The proof of theorem \ref{thm:minimal_densities} will then follow.

\begin{lemma} \label{lemma:minimal_codes}
For any $e\in\cE$ with vertex list $(v_1,\dots,v_k)$, $z\in \Xe$ and sufficiently small $\lambda$, 
the point $z$ is a symmetric fixed point of $\Phi$ if and only if its orbit code 
$\sigma(z)=(\sigma_{-1},\sigma_1,\dots,\sigma_{2k-1})$ satisfies:
\begin{center}
 \textit{(i)} \; $\sigma_{-1}=\sigma_1$, \hspace{2cm} \textit{(ii)} \; $2\sigma_k \equiv v_k \mod{2v_k+1}$.
\end{center}
\end{lemma}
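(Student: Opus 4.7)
The plan is to establish the biconditional by identifying each of the orbit code conditions with a specific geometric symmetry of the $\F$-orbit. The overall strategy is to invoke Theorem~\ref{thm:SymmetricOrbits} applied to $\F$: for a periodic non-fixed-point orbit, $G$-symmetry is equivalent to the orbit intersecting $\Fix G \cup \Fix H$ in exactly two lattice points. The analysis then amounts to identifying where these two intersections must lie, and translating each intersection into a modular condition on $\sigma(z)$. The densities computed in Theorem~\ref{thm:minimal_densities}---the reductions by factors $1/(2v_1+1)$ and $1/(2v_k+1)$---further suggest that (i) and (ii) act as two independent constraints.

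The first intersection is forced to lie near $z$. Since $z\in \Xe$ is confined to a thin strip around the symmetry line, the $\F^4$-iterates between the vertices $\Psi^{-1}(z)$ and $\Psi(z)$ form a collinear sequence of lattice points along the direction $\bfw_{v_1,v_1}$ inside $B_{v_1,v_1}$. For one of these iterates to lie on $\Fix G=\{x=y\}$, the two bounding vertices (both of type $v_1$) must be symmetrically placed modulo $\lambda(2v_1+1)$. Using the orbit code definitions \eqref{eq:(xj,yj)}--\eqref{eq:sigma_j_2}, I would show that this symmetry is exactly $\sigma_{-1}=\sigma_1$, i.e., condition (i). Equivalently, this is the condition that $z$ lies on the fixed set $\Fix{G^e}$ of the reversing symmetry from Proposition~\ref{prop:Ge}, since a direct computation shows that $G(z)\in\Ot(z)$ with $z\in\Xe$ forces $G(z)=\F^{4l}(z)$ for $l\in\{0,1\}$, corresponding respectively to $z$ on the symmetry line or on the lower edge of the strip.

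The second intersection is forced to lie near the $k$-th vertex, which by \eqref{def:vk} sits just above the positive $x$-axis at integer coordinate $y=0$ in box $B_{v_k,0}$. Applying $\F$ once yields a lattice point near the positive $y$-axis, where $\Fix H=\{2x=\fl{\lambda y}\}$ is located in the small-$\lambda$ regime. A direct computation of $F(\Psi^k(z))$ using the explicit form \eqref{def:F} and the parametrisation \eqref{eq:(xj,yj)} gives a point with first coordinate $\lambda(v_k-y_k)$ and second coordinate close to $\lambda\ceil{v_k/\lambda}$; the condition that this point (or a nearby orbit point in its $\F^4$-segment in box $B_{0,v_k}$) satisfy $2x=\fl{\lambda y}=v_k$ reduces, after the usual sign-convention analysis using \eqref{eq:xj<0,yj>0}, to the modular equation $2\sigma_k\equiv v_k\pmod{2v_k+1}$, which is condition (ii).

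For the converse, given (i) and (ii), the two geometric intersections with $\Fix G\cup\Fix H$ exist by construction, which by Theorem~\ref{thm:SymmetricOrbits}(i) yields $G$-symmetry of $\Ot(z)$. The fixed-point condition $\Phi(z)=z$ then follows: the $G$-symmetry combined with the equivariance of $\Phi$ under $\lambda\Le$ (Theorem~\ref{thm:Phi_equivariance}) and the reversing symmetry $G^e$ of Proposition~\ref{prop:Ge} force the return map to map the $G^e$-fixed code to itself. The main obstacle will be the clean derivation of condition (ii): the $F$-image of $\Psi^k(z)$ depends on the fractional part of $\lambda\ceil{v_k/\lambda}$ through the floor function in $F$, and extracting the modular condition purely in terms of $\sigma_k$ (independent of this nuisance fractional part) requires careful tracking of the round-off at the single $\F$-step between the $k$-th vertex in Q1 and its image near the $y$-axis in Q2.
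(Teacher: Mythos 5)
Your proposal follows the same approach as the paper's proof: invoke Theorem~\ref{thm:SymmetricOrbits} to characterise symmetric fixed points of $\Phi$ by two intersections of $\Ot(z)$ with $\Fix{G}\cup\Fix{\Hl}$, then identify condition (i) with an intersection of $\Fix{G}$ near $z$ (equivalently $z\in\Fix{G^e}$) and condition (ii) with an intersection of $\Fix{\Hl}$ near the $k$-th vertex. The treatment of (i) is correct and matches the paper.

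The gap is in the treatment of (ii): you only push $\Psi^k(z)$ forward by one application of $\F$, landing near the \emph{positive} $y$-axis, and try to match against the segment $\{2x=\fl{\lambda y}=v_k\}$. But lattice points on $\Fix{\Hl}$ require $2x\in\Z$ to be a valid floor value, so a segment with $\fl{\lambda y}=v_k$ only contains lattice points when $v_k$ is even; when $v_k$ is odd, the orbit cannot meet $\Fix{\Hl}$ anywhere near the positive $y$-axis, and the relevant segment $H^-=\{2x=\fl{\lambda y}=-(v_k+1)\}$ sits near the \emph{negative} $y$-axis, where your forward $\F$-image never goes. This parity dichotomy is exactly why the final answer is a congruence $2\sigma_k\equiv v_k\ (\mathrm{mod}\ 2v_k+1)$ rather than the equation $\sigma_k=v_k/2$: the two residues $\sigma_k=v_k/2$ (even $v_k$) and $\sigma_k=(3v_k+1)/2$ (odd $v_k$) are what the congruence packages together. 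The paper avoids the asymmetry by pulling both candidate segments $H^+$ and $H^-$ \emph{back} toward the $x$-axis---via $G(H^+)$ and $F_\lambda(H^-)$ respectively---so that each can be compared directly against the parametrisation of $\Psi^k(z)$ in \eqref{eq:(xj,yj)}; your push-forward route would need to be re-derived using $\F^{-1}$ (or three more forward steps) in the odd case. Your appeal to ``sign-convention analysis'' gestures at this, but the specific step you sketch does not reach the negative half-plane, so as written the derivation of (ii) only holds for even $v_k$. Also, the invocation of Theorem~\ref{thm:Phi_equivariance} in the converse direction is unnecessary: once $\Ot(z)$ meets both $\Fix{G}$ and $\Fix{\Hl}$ exactly once each within one revolution, Theorem~\ref{thm:SymmetricOrbits}(iii) already pins down the period to match $\tau(z)$, giving $\Phi(z)=z$ without appeal to lattice equivariance.
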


\begin{proof}
Recall the reversing symmetries $G$ and $H$ of $F$, introduced in equation (\ref{def:GH}),
and their fixed spaces (\ref{eq:FixG}) and (\ref{eq:FixH}).
We have already noted that $G$ is also a reversing symmetry of $\F$,
and the rescaled version of $H$ is given by
 $$ \Hl(x,y) = (\lambda\fl{y}-x,y) \hskip 40pt \Fix{\Hl} = \{ (x,y)\in\R^2 \, : \; 2x=\lambda\fl{y} \}. $$
Recall also the reversing symmetry $G^e$ of $\Phi$ on $\Xe$ (equation (\ref{eq:Ge})).

As in the proof of proposition \ref{prop:square_orbits}, we show that orbits are symmetric
and periodic using theorem \ref{thm:SymmetricOrbits}, page \pageref{thm:SymmetricOrbits}.

Take $e\in\cE$ and a point $z\in\Xe$. Suppose that $z$ is a symmetric fixed point of $\Phi$. 
We must have $G^e(z)=z$, so the alternative expression (\ref{eq:Ge_G}) for $G^e$
gives that either $z\in\Fix{G}$, or
 $$ F^4(z) = G(z) \hskip 20pt \Leftrightarrow \hskip 20pt F^2(z)\in\Fix{G}. $$
Thus the return orbit $\Ot(z)$ intersects $\Fix{G}$.

If $z$ is non-zero, the orbit of $z$ intersects the set $\Fix{G}\cup\Fix{\Hl}$ at exactly two points, 
and as $z$ is a fixed point of $\Phi$, these two points must occur within a single revolution. 
Hence we have:
 $$ \#\left( \Ot(z) \cap \left(\Fix{G}\cup\Fix{\Hl}\right)\right)=2. $$
If the return orbit intersects $\Fix{G}$ twice, then these points must occur in the first and third quadrants, so that
$$ 
 z \in \Fix{G} \cap F^{-2}(\Fix{G}). 
$$
By theorem \ref{thm:SymmetricOrbits} part (iii), this implies that $z$ is periodic with period four, 
and we have already observed that there are no points with minimal period four. 
Thus the return orbit $\Ot(z)$ must intersect $\Fix{\Hl}$.

Points in $\Fix{\Hl}$ lie on disjoint vertical line segments of length one, in an 
$O(\lambda)$-neighbourhood of the $y$-axis. Recall that the polygon $\Pi(z)$ intersects 
the axes at vertices of type $v_k=\fl{\sqrt{e}}$, and hence intersects the $y$-axis in the boxes:
 $$ B_{0,v_k} \hskip 20pt \mbox{and} \hskip 20pt  B_{0,-(v_k+1)}. $$
If $v_k$ is even, it follows that the relevant segment $H^+$ of $\Fix{\Hl}$ is given by:
$$ 
 H^+ = \{ \lambda(x,y)\in(\lambda\Z)^2 \, : \; 2x =\fl{ \lambda y } = v_k \}, 
$$
which lies in the positive half-plane.
Similarly if $v_k$ is odd, the relevant segment $H^-$ of $\Fix{\Hl}$ is given by:
$$ 
 H^- = \{ \lambda(x,y)\in(\lambda\Z)^2 \, : \; 2x =\fl{ \lambda y } = -(v_k+1) \}, 
$$
which lies in the negative half-plane.

The proof now proceeds in two parts.  \\

\noindent
\textit{(i) $\Ot(z)$ intersects $\Fix{G}$ if and only if $\sigma_{-1}=\sigma_1$.}

If $z=\lambda(x,y)$, then the property $\sigma_{-1}=\sigma_1$ is satisfied if and only if:
\begin{displaymath}
 x \equiv y \mod{2v_1+1},
\end{displaymath}
i.e., if and only if $z\in\Fix{G^e}$. 
We have already seen that $z\in\Fix{G^e}$ implies that $\Ot(z)$ intersects $\Fix{G}$.
Since the only points in $\Ot(z)$ which are close to $\Fix{G}$ are $z$ and $\F^2(z)$,
the converse also holds. \\

\noindent
\textit{(ii) $\Ot(z)$ intersects $\Fix{\Hl}$ if and only if}
$2\sigma_k \equiv v_k$ (mod $2v_k+1$).

Instead of considering the sets $H^+$ and $H^-$ directly, we consider 
their images under $G$ and $F_{\lambda}$, respectively, which lie in a neighbourhood of the $x$-axis:
\begin{align}
 G(H^+) &= \{ \lambda(x,y)\in(\lambda\Z)^2 \, : \; 2y=\fl{\lambda x} = v_k,\nonumber \\ 
 F_{\lambda}(H^-) &= \{ \lambda(x,y)\in(\lambda\Z)^2 \, : \; 2y =\fl{-\lambda(x+1)}
   = -(v_k+1) \}. \label{eq:F(FixH)}
\end{align}
In (\ref{eq:F(FixH)}), we assume that $\lambda(v_k+1)/2<1$, 
so that $F_{\lambda}(w) = \lambda(-1-y,x)$ for all $w=\lambda(x,y)\in H^-$. 
The orbit $\Ot(z)$ intersects $\Fix{\Hl}$ if and only if it intersects the relevant 
one of these sets, according to the parity of $v_k$.

The polygon $\Pi(z)$ intersects the $x$-axis at the $k$th vertex, where $k$ is the length of the vertex list $V(e)$. 
The return orbit $\Ot(z)$ reaches the $k$th vertex at the point $\Psi^k(z)$, given in the notation 
of (\ref{eq:(xj,yj)}) by
\begin{displaymath}
 \Psi^k(z) = \lambda\left( \Bceil{\frac{v_k}{\lambda}} + x_k, y_k\right),
\end{displaymath}
where, by (\ref{eq:sigma_j_2}), $y_k=\sigma_k$ is non-negative. 
Hence if $v_k$ is even, $\Ot(z)$ intersects $\Fix{\Hl}$ if and only if:
 $$ \Psi^k(z) \in G(H^+) \hskip 20pt \Leftrightarrow \hskip 20pt \sigma_k = v_k/2. $$
If $v_k$ is odd, then $\Ot(z)$ intersects $\Fix{\Hl}$ if and only if:
\begin{align*}
 \F^4(\Psi^k(z)) \in F_{\lambda}(H^-) \hskip 20pt \Leftrightarrow \hskip 20pt 
\sigma_k &= -(v_k+1)/2 + (2v_k+1) \\
&= (3v_k+1)/2.
\end{align*}
The congruence $2\sigma_k \equiv v_k$  (mod $2v_k+1$) 
covers both of these cases, which completes the proof.
\end{proof}

\medskip

For all $e\in\cE$ and sufficiently small $\lambda$, 
the set $\Xe$---see equation (\ref{def:Xe})---is non-empty and 
contains at least one element from every congruence class 
modulo $\lambda\Le$. We now seek to identify the number of congruence classes 
whose orbit code satisfies the conditions of lemma \ref{lemma:minimal_codes}.

As discussed in the proof of lemma \ref{lemma:minimal_codes}, the points $z\in\Xe$ whose orbit code 
$\sigma(z) = (\sigma_{-1},\sigma_1,\dots,\sigma_{2k-1})$ satisfies $\sigma_{-1} = \sigma_1$ are precisely 
those which lie in $\Fix{G^e}$, i.e., with
 $$ x \equiv y \mod{2v_1+1}. $$
All such points lie on one of two lines, parallel to the first generator $\bfL$ of the lattice $\Le$. 
Furthermore, all points on one line are congruent to those on the other, 
as they are connected by the second generator $\lambda (\bfL-\bfw_{v_1,v_1})/2$.
Hence the number of points in $\Fix{G^e}$ modulo $\lambda\Le$ is
\begin{equation*} 
 \frac{\#\left(\Z^2/\,\Le\right)}{2v_1+1} = \frac{q}{2v_1+1}, 
\end{equation*}
where we have used the expression (\ref{eq:theta_e}) for $\#\left(\Z^2/\,\Le\right)$.

It remains to determine what fraction of the points in $\Fix{G^e}$ satisfy
the second condition of lemma \ref{lemma:minimal_codes}. 
We do this by identifying values of $e$ for which all possible values of 
$\sigma_k$ occur with equal frequency, independently of $\lambda$.

If $k=1$, i.e., if a polygon class has just one vertex in the first octant ($e=0$), 
then for any given $\sigma^*\in\{0,1,\dots,2v_1\}$, 
the points $z=\lambda(x,y)\in\Fix{G^e}$ with $\sigma_1 = \sigma^*$ satisfy 
\begin{displaymath}
 x \equiv y \equiv \sigma^* \mod{2v_1+1}.
\end{displaymath}
Such points form a fraction 
\begin{displaymath}
 \frac{1}{2v_1+1}
\end{displaymath}
of all points in $\Fix{G^e}$ modulo $\lambda\Le$. 
Hence all possible values of $\sigma_1$ occur with equal frequency.
(In fact this case is trivial, since $v_1=0$ and all orbits are symmetric 
fixed points of $\Phi$---see proposition \ref{prop:square_orbits}.)
More generally if $v_k=v_1$, i.e., if all vertices of the polygon class have the same type ($e=0,2,8$),
then all possible values of $\sigma_k$ occur with equal frequency in $\Fix{G^e}$ modulo $\lambda\Le$.
This follows from the fact that, whenever $v_j=v_{j+1}$, 
the map $\sigma_j \mapsto \sigma_{j+1}$ is a permutation of the set $\{0,1,\dots,2v_j\}$, 
as we saw in case 1 of the proof of lemma \ref{thm:sigma_lattices}.

The following lemma deals with the case that a polygon class has two or more distinct vertex types.
We consider cylinder sets of the form $z+\lambda\Le_j$,
i.e., sets of points whose orbit codes match up to the $j$th entry,
where $j$ is the penultimate distinct vertex type.
We show that under a certain congruence condition on the vertex list,
all possible values of $\sigma_k$ occur with equal frequency within any such cylinder set.

\begin{lemma} \label{lemma:sigma_j_I}
Let $e\in\cE$. Suppose that the vertex list $(v_1,v_2,\dots,v_k)$ of the 
associated polygon class has at least two distinct entries and let $j=\iota(l)-1$,
where $(\iota(i))_{i=1}^l$ is the sequence of distinct vertex types defined in (\ref{eq:v_iota}). 
Furthermore, suppose that the vertex types satisfy
\begin{equation}
 \gcd(2v_k+1,p_j) =1. \label{eq:coprimality_I}
\end{equation}
Then for every $z\in\Xe$, all $\sigma^*\in\{0,1,\dots,2v_k\}$,
and all sufficiently small $\lambda$, the number of points in the set $(z + \lambda\Le_j)$ modulo $\lambda\Le$ 
whose orbit code has $k$th entry $\sigma^*$ is
\begin{equation*} 
 \frac{1}{2v_k+1} \, \# \left(\Le_j/\,\Le\right). 
\end{equation*}
\end{lemma}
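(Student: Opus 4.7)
The plan is to exploit the fact that the vertex types $v_{j+1}, v_{j+2}, \ldots, v_k$ are all equal to $v_k$, so that transitions between these vertices act as permutations on the orbit-code entries, while the single transition from the $j$th to the $(j+1)$th vertex (which changes type) is governed by the coprimality assumption.

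First I would observe that, since $j = \iota(l)-1$, no new distinct vertex types appear past index $j$, so by the closed form (\ref{eq:q_j_closed_form}) we have $q_i = q_{j+1} = q$ for all $j+1 \leq i \leq 2k-1$. Consequently $\Le_{j+1} = \Le_{j+2} = \cdots = \Le_{2k-1} = \Le$, so the $\lambda\Le$-sub-cosets of a $\lambda\Le_j$-coset coincide with its $\lambda\Le_{j+1}$-sub-cosets.

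Next I would count these sub-cosets and show that they realise each admissible value of $\sigma_{j+1}$ exactly once. Since $v_j \neq v_{j+1}$, Case 2 of the proof of lemma \ref{thm:sigma_lattices} applies, and the congruence (\ref{eq:sigma_j+1_case2}) constrains $\sigma_{j+1}$ to a fixed residue class modulo $\gcd(p_j, 2v_{j+1}+1) = \gcd(p_j, 2v_k+1)$. Under the hypothesis (\ref{eq:coprimality_I}), this gcd is $1$, so every residue $\sigma_{j+1} \in \{0,1,\ldots,2v_k\}$ is admissible. A short computation using $q_{j+1} = \lcm((2v_{j+1}+1)(2v_j+1), q_j)$ and the identity $\gcd((2v_{j+1}+1)(2v_j+1),(2v_j+1)p_j) = (2v_j+1)\gcd(2v_{j+1}+1,p_j) = 2v_j+1$ then yields $\#(\Le_j/\Le_{j+1}) = q_{j+1}/q_j = 2v_k+1$, matching the number of admissible residues. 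Together with equations (\ref{eq:GeneralSolution_2a+b}) and (\ref{eq:z_tilde}) from Case 2 of lemma \ref{thm:sigma_lattices}, this establishes that the $2v_k+1$ sub-cosets are in bijection with the $2v_k+1$ values of $\sigma_{j+1}$.

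Finally I would propagate $\sigma_{j+1}$ forward to $\sigma_k$. Since $v_i = v_{i+1}$ for $j+1 \leq i \leq k-1$, Case 1 of the proof of lemma \ref{thm:sigma_lattices} shows that each map $\sigma_i \mapsto \sigma_{i+1}$ is an affine permutation of $\{0,1,\ldots,2v_k\}$ given by the congruence (\ref{eq:sigma_j+1_case1}), depending on $\lambda$ and the box positions but not on the orbit within the $\lambda\Le_j$-coset. Composing these $k-j-1$ permutations yields a single permutation $\pi$ with $\sigma_k = \pi(\sigma_{j+1})$, so each value $\sigma^* \in \{0,1,\ldots,2v_k\}$ corresponds to exactly one sub-coset. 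This gives the count $1 = \tfrac{1}{2v_k+1}\cdot(2v_k+1) = \tfrac{1}{2v_k+1}\,\#(\Le_j/\Le)$. The main technical obstacle is the bijection in the second step: one must read off from the module-coordinate computation in Case 2 of lemma \ref{thm:sigma_lattices} that distinct sub-cosets carry distinct values of $\sigma_{j+1}$, rather than (say) grouping several sub-cosets under one residue. The smallness of $\lambda$ is needed only to ensure that $\Xe$ meets every coset, so that the counted sub-cosets correspond to actual orbits.
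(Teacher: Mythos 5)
Your proposal is correct and follows essentially the same route as the paper's proof: apply Case~2 of lemma~\ref{thm:sigma_lattices} at the single transition where the vertex type changes, use the coprimality hypothesis to make the modulus of the congruence~(\ref{eq:sigma_j+1_case2}) equal to one so that every residue $\sigma_{j+1}\in\{0,\ldots,2v_k\}$ is achievable, identify $\Le_{j+1}$ with $\Le$ via $q_{j+1}=q$, and then propagate by the Case~1 permutations $\sigma_i\mapsto\sigma_{i+1}$ for $\iota(l)\leq i<k$. The only cosmetic difference is that you make the index count $\#(\Le_j/\Le_{j+1})=q_{j+1}/q_j=2v_k+1$ explicit as a sanity check, whereas the paper lets this follow implicitly from the bijection between achievable values of $\sigma_{j+1}$ and cosets modulo $\lambda\Le_{j+1}$ already built into lemma~\ref{thm:sigma_lattices}; both treatments correctly invoke the smallness of $\lambda$ only to guarantee that every coset is represented in $\Xe$.
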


\begin{proof}
Suppose that $e\in\cE$, that the vertex list $V(e)$ has at least two distinct entries,
and that the coprimality condition (\ref{eq:coprimality_I}) holds. 
Let $z\in\Xe$ have orbit code $\sigma(z) = (\sigma_{-1},\sigma_1,\dots,\sigma_{2k-1})$ 
and let the pair $(x_j,y_j)$ be defined as in equation (\ref{eq:(xj,yj)}), where 
$j=\iota(l)-1$.

Since $z+\lambda\Le_j$ is a cylinder set in the sense of lemma \ref{thm:sigma_lattices}, 
the orbit codes of all points $\tZ\in z + \lambda\Le_j$ match that of $z$ up to the $j$th entry.
We have to show that, among these orbit codes, all possible values of $\sigma_k$ occur with equal frequency.

Since $j=\iota(l)-1$, we have
 $$ v_{j+1} = v_{\iota(l)} = v_k. $$
Let $\tZ\in z + \lambda\Le_j$ and let the $(j+1)$th entry 
of the orbit code of $\tZ$ be $\tilde{\sigma}_{j+1}$. 
We show first that all possible values of $\tilde{\sigma}_{j+1}$ occur with equal frequency.

By construction $v_j\neq v_{j+1}$, so the possible values of $\tilde{\sigma}_{j+1}$ are determined by 
case 2 of the proof of lemma \ref{thm:sigma_lattices}. 
In the course of the proof, we saw that the occurrence of points $\tZ$ with some fixed value of  
$\tilde{\sigma}_{j+1}$ correspond to solutions of an integer equation, given in the case where $y$ is the 
non-integer coordinate of the $j$th vertex by equation (\ref{eq:t,2a+b_eqn}). 
(A similar equation holds when $x$ is the non-integer coordinate.) 
Each solution $(2a+b,\tilde{t})\in \Z\times\N$ determines the module coordinates $(a,b)$ of 
$\tZ-z$ in $\lambda\Le_j$ and the transit time $\tilde{t}$ of $\tZ$ from the $j$th vertex to the $(j+1)$th.

Solutions of (\ref{eq:t,2a+b_eqn}) occur for all values of $\tilde{\sigma}_{j+1}$ 
satisfying the congruence (\ref{eq:sigma_j+1_case2}), and the condition that $\lambda$ 
be sufficiently small ensures that 
at least one such solution is realised by a point $\tZ\in \Xe$. 
By construction, each distinct value of $\tilde{\sigma}_{j+1}$ 
which has a solution defines a unique point in $z + \lambda\Le_j$ modulo $\lambda\Le_{j+1}$.

However, due to the coprimality condition (\ref{eq:coprimality_I}), the modulus of the congruence 
(\ref{eq:sigma_j+1_case2}) is unity. Hence solutions occur for all possible values of $
\tilde{\sigma}_{j+1}$, and each corresponds to a unique congruence class of 
$z + \lambda\Le_j$ modulo $\lambda\Le_{j+1}$. Furthermore, by (\ref{eq:q_j=q}), the lattices 
$\Le_{j+1}$ and $\Le$ are equal, hence all possible values of $\tilde{\sigma}_{j+1}$ 
occur exactly once in $z + \lambda\Le_j$ modulo $\lambda\Le$.

If $j+1=\iota(l)=k$ then this completes the proof. If $\iota(l)<k$, take $i$ in the range 
$\iota(l) \leq i <k$. By the definition of $\iota(l)$ as the index of the last distinct vertex type, 
we have $v_{i}=v_{i+1}=v_k$. As discussed above, the map $\sigma_i \mapsto \sigma_{i+1}$ 
is a permutation of the set $\{0,1,\dots,2v_i\}$ whenever $v_i = v_{i+1}$. 
Hence the equal frequency of the possible values of $\tilde{\sigma}_{i}$ implies that of $\tilde{\sigma}_{i+1}$ 
and the result follows.
\end{proof}

\medskip

In the previous section (equation (\ref{eq:sigma_j})), we defined 
the $j$th entry $\sigma_j$ of the orbit code $\sigma(z)$ as the residue modulo $2v_j+1$ 
of the integer coordinate of $\Ot(z)$ at the $j$th vertex.
At the conclusion of the proof of lemma \ref{thm:sigma_lattices},
we remarked that we can also define the sequence $\gamma(z)$, whose $j$th entry
$\gamma_j$ is the residue of the non-integer coordinate modulo $q/(2v_j+1)$ 
(see corollary \ref{cor:sigma_lattice}, page \pageref{cor:sigma_lattice}).
The residue $\gamma_j$ is an alternative encoding of the other entries in the orbit code,
so that for any $z,\tZ\in\Xe$ and any $j$:
\begin{displaymath} \label{eq:(sigma_j,gamma_j)}
 \tZ \equiv z \mod{\lambda\Le} \hskip 20pt \Leftrightarrow  \hskip 20pt (\sigma_j,\gamma_j) = (\tilde{\sigma}_j,\tilde{\gamma}_j).
\end{displaymath}

In the following lemma, we use $\gamma(z)$ to identify polygon classes where, 
among points in $\Fix{G^e}$ and for all $j$, 
all possible values $\sigma_j\in\{0,1,\dots,2v_j\}$ occur with equal 
frequency modulo $\lambda\Le$, independently of $\lambda$.

\begin{lemma} \label{lemma:sigma_j_II}
Let $e\in\cE$. Suppose that the vertex list $(v_1,v_2,\dots,v_k)$ of the 
associated polygon class is such that $2v_1+1$ is coprime to $2v_j+1$ for all other vertex types $v_j$:
\begin{equation} \label{eq:v1_coprimality}
 \gcd(2v_1+1,2v_j+1) =1 \hskip 40pt 2\leq j \leq k, \; v_j\neq v_1. 
\end{equation}
Then for sufficiently small $\lambda$, for all $j$ in $1\leq j \leq 2k-1$,
and all $\sigma^*\in\{0,1,\dots,2v_j\}$, the number $n_j$ of points $z\in \Fix{G^e}$ 
modulo $\lambda\Le$ whose orbit code has $\sigma_j = \sigma^*$ is given by:
\begin{equation}\label{eq:n_j}
n_j=\frac{\#\left(\Z^2/\,\Le\right)}{(2v_1+1)(2v_j+1)}.
\end{equation}
\end{lemma}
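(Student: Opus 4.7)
The proof proceeds by induction on $j$, extending the cylinder-by-cylinder reasoning of Lemma \ref{lemma:sigma_j_I} to handle the global constraint $\sigma_{-1}=\sigma_1$ that characterises $\Fix{G^e}$. I would first identify orbit codes modulo $\lambda\Le$ with the quotient group $\Z^2/\lambda\Le$ of order $q$. By Lemma \ref{thm:sigma_lattices} at level $j=1$, the pair $(\sigma_{-1},\sigma_1)$ is exactly the reduction of $\lambda(x,y)$ modulo $\lambda\Le_1 = ((2v_1+1)\lambda\Z)^2$, so the condition $\sigma_{-1}=\sigma_1$ cuts out the diagonal subgroup of index $2v_1+1$; lifted to $\Z^2/\lambda\Le$, the symmetric codes form a subgroup of order $q/(2v_1+1)$, decomposed into $2v_1+1$ cylinders modulo $\lambda\Le_1$. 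The base case $j=1$ is then immediate: each such cylinder contains $|\Le_1/\Le| = q/(2v_1+1)^2$ codes modulo $\lambda\Le$, matching the claimed $n_1$.

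For the inductive step at index $j+1$ I would split into two cases. When $v_{j+1}=v_j$, equation (\ref{eq:sigma_j+1_case1}) in the proof of Lemma \ref{thm:sigma_lattices} shows that $\sigma_j \mapsto \sigma_{j+1}$ is a fixed shift modulo $2v_j+1 = 2v_{j+1}+1$, independent of the cylinder, so uniform distribution of $\sigma_j$ among symmetric codes transfers verbatim to $\sigma_{j+1}$. When $v_{j+1}\neq v_j$, equation (\ref{eq:sigma_j+1_case2}) shows that within each cylinder modulo $\lambda\Le_j$ the value of $\sigma_{j+1}$ runs over an arithmetic progression of length $(2v_{j+1}+1)/d$ in $\Z/(2v_{j+1}+1)\Z$, where $d = \gcd(p_j,2v_{j+1}+1)$, with starting offset determined by the transverse coordinate of the cylinder modulo $d$. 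Each of the $(2v_{j+1}+1)/d$ sub-cylinders modulo $\lambda\Le_{j+1}$ realises a distinct value of $\sigma_{j+1}$ and contains $|\Le_{j+1}/\Le|$ codes, so the count of symmetric codes with $\sigma_{j+1}=\sigma^{**}$ reduces to counting symmetric cylinders modulo $\lambda\Le_j$ whose offset hits $\sigma^{**}$ modulo $d$, times $|\Le_{j+1}/\Le|$.

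The crux is to show that across the $p_j/(2v_1+1)$ symmetric cylinders modulo $\lambda\Le_j$ with fixed $\sigma_j=\sigma^*$ supplied by the inductive hypothesis, the transverse residues modulo $d$ cover $\Z/d\Z$ uniformly. A short divisibility check based on (\ref{eq:v1_coprimality}) shows that $d \mid p_j/(2v_1+1)$ in both sub-cases: if $v_{j+1}=v_1$ then necessarily $v_j \neq v_1$, whence $(2v_1+1)^2 \mid p_j$ and $d=2v_1+1$; otherwise $d$ is coprime to $2v_1+1$ and divides $p_j$, and the coprimality allows the factor of $2v_1+1$ to be cancelled. Uniform coverage then follows by viewing the symmetric subgroup as the kernel of the homomorphism $\Z^2/\lambda\Le \to \Z/(2v_1+1)\Z$ sending $\lambda(x,y)\mapsto x-y$, and checking that its image under the parametrisation of cylinders modulo $\lambda\Le_j$ by $(\sigma_j,\gamma_j \bmod d)$ surjects onto $\Z/(2v_j+1)\Z \times \Z/d\Z$. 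Each residue is then attained exactly $p_j/((2v_1+1)d)$ times, and multiplying by $|\Le_{j+1}/\Le|$ produces the promised $n_{j+1}=q/((2v_1+1)(2v_{j+1}+1))$.

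The main obstacle is the surjectivity in the last step: one must verify that (\ref{eq:v1_coprimality}) prevents the factors $2v_j+1$ and $d$ from overlapping inside the symmetric subgroup in a way that would force the projection to miss some residues. This is precisely the role of the global coprimality between $2v_1+1$ and every $2v_i+1$ with $v_i\neq v_1$: it guarantees that the ``$v_1$-primary'' and ``non-$v_1$-primary'' parts of the code group $\Z^2/\lambda\Le$ split cleanly, so that the symmetric constraint affects only the former while leaving the projection onto the latter unconstrained.
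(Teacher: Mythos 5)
Your proposal has the right architecture — induction on $j$, the split into $v_{j+1}=v_j$ and $v_{j+1}\neq v_j$, the divisibility check that (\ref{eq:v1_coprimality}) forces $d=\gcd(p_j,2v_{j+1}+1)$ to divide $p_j/(2v_1+1)$, and the identification of $\Fix{G^e}$ with a subgroup of index $2v_1+1$. The arithmetic at each stage is consistent with the target count $n_{j+1}$. But there is a genuine gap at what you yourself flag as the crux: you assert that the symmetric cylinders with fixed $\sigma_j=\sigma^*$ distribute their transverse residues uniformly over $\Z/d\Z$, and justify this by ``viewing the symmetric subgroup as the kernel of \dots and checking that its image \dots surjects,'' but you never perform that check, and your induction hypothesis as stated — merely the count $p_j/(2v_1+1)$ — does not carry enough information to do so. Uniform distribution of $\gamma_j\bmod d$ across symmetric cylinders is not a consequence of the count alone; it is an additional structural fact that must itself be propagated through the induction. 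Moreover, the passage from a cylinder to its $\gamma_j$-value is \emph{affine}, not linear (it involves $\lambda$-dependent ceiling terms and the perturbation $\epsilon_j$, cf.~(\ref{eq:t,2a+b_eqn}) and (\ref{eq:gamma_j_II})), so a bare appeal to kernels and images of group homomorphisms does not directly apply.

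The paper's proof closes exactly this gap by carrying a \emph{second} inductive hypothesis: it tracks the residue of $\gamma_j$ modulo $n_j$ and proves that within $\Fix{G^e}$ this residue is in bijection with congruence classes modulo $\lambda\Le$ that share a given $\sigma_j$. In the $v_j=v_{j+1}$ case this bijection persists because both $\sigma_j\mapsto\sigma_{j+1}$ and $\gamma_j\mapsto\gamma_{j+1}$ (mod $n_j$) are translations (a skew product of residue classes); in the $v_j\neq v_{j+1}$ case the hypothesis is used together with (\ref{eq:gamma_j_II}) and the transit-time formula (\ref{eq:t,2a+b_eqn}) to show that $\gamma_{j+1}$ runs over a complete residue system modulo $n_{j+1}$. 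Your heuristic about the ``$v_1$-primary'' versus ``non-$v_1$-primary'' parts splitting cleanly is a good intuition for \emph{why} (\ref{eq:v1_coprimality}) is the right hypothesis, but to turn it into a proof you must make explicit which invariant (here, $\gamma_j\bmod n_j$) records the transverse distribution and verify that the strip-map recursion preserves its uniformity at every step.
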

\begin{proof}
We use induction on $j$. Consider points $z\in\Fix{G^e}$ whose orbit code 
has $j$th value $\sigma_j$, for some arbitrary 
$\sigma_j\in\{0,1,\dots,2v_j\}$ and $j\in\{1,\dots,2k-1\}$. 
Let the sequence $\gamma(z)$ be denoted $(\gamma_{-1},\gamma_1,\dots,\gamma_{2k-1})$. 
Our induction hypotheses are that: \\
(i) the number of such points is given by $n_j$ (equation (\ref{eq:n_j})), 
where $q(e)=\#\left(\Z^2/\,\Le\right)$ and the coprimality condition (\ref{eq:v1_coprimality}) 
ensures that $n_j$ is a natural number; \\
(ii) for each residue $r \in \{0,1,\dots, n_j-1\}$ modulo $n_j$, there is a unique congruence class modulo $\lambda\Le$ whose value of $\gamma_j$ satisfies
\begin{displaymath}
 \gamma_j \equiv r \mod{n_j}.
\end{displaymath}

The base case is $j=1$. The points $z=\lambda(x,y)\in\Fix{G^e}$ with some fixed value of $\sigma_1$ satisfy:
\begin{displaymath}
 x \equiv y \equiv \sigma_1 \mod{2v_1+1}.
\end{displaymath}
Such points are congruent modulo $(\lambda(2v_1+1)\Z)^2$, hence the number of such points modulo $\lambda\Le$ is
\begin{displaymath}
 \frac{q}{(2v_1+1)^2} = n_1.
\end{displaymath}
By lemma \ref{thm:sigma_lattices}, if $z$ is one such point, then any other point $\tZ$ reaches the first vertex at
\begin{displaymath}
 \Psi(\tZ) = \Psi(z) + \lambda s p_1 \bfe
\end{displaymath}
for some $s\in\Z$, where $p_1=2v_1+1$ and $\bfe$ is the unit vector in the non-integer coordinate direction. 
Then by the construction of $\gamma(z)$, if $\gamma(\tZ)=(\tilde{\gamma}_{-1},\tilde{\gamma}_1,\dots,\tilde{\gamma}_{2k-1})$, 
the value of $\tilde{\gamma}_1$ is related to $\gamma_1$ by
\begin{displaymath}
\tilde{\gamma}_1 \equiv \gamma_1 + s (2v_1+1) \mod{(2v_1+1) \, n_1},
\end{displaymath}
and $z$ and $\tZ$ are congruent modulo $\lambda\Le$ if and only if $\tilde{\gamma}_1 = \gamma_1$. 
Thus the $n_1$ distinct points modulo $\lambda\Le$ correspond to distinct values of $s$ modulo $n_1$. Furthermore, if we consider the value of $\tilde{\gamma}_1$ modulo $n_1$, we have:
\begin{displaymath}
 \tilde{\gamma}_1 \equiv \gamma_1 + s (2v_1+1) \mod{n_1}.
\end{displaymath}
Now $2v_1+1$ is coprime to the modulus, as by the coprimality condition (\ref{eq:v1_coprimality}) and the construction (\ref{eq:q}) of $q$, two is the highest power of $(2v_1+1)$ that divides $q$. It follows that each distinct value of $\tilde{\gamma}_1$ is distinct modulo $n_1$. This completes the base case.

To proceed with the inductive step, we suppose that the above hypotheses hold for some $j\in\{1,\dots,k-1\}$. 
In the proof of lemma \ref{thm:sigma_lattices} we used equation (\ref{eq:Psi^jp1}) to
describe the behaviour of points as they move from one vertex to the next in two cases. 
The first case occurs when $v_j = v_{j+1}$, so that the $j$th and $(j+1)$th vertices lie on parallel lines, and $n_j = n_{j+1}$. 
In this case, the value of $\sigma_{j+1}$ is determined uniquely by the value of $\sigma_j$. 
In particular, we saw that if the $j$th vertex lies on $y=n$ and the $(j+1)$th vertex lies on $y=n-1$, 
then $\sigma_{j+1}$ and $\sigma_j$ are related by equation (\ref{eq:sigma_j+1_case1}).

We can use the same methods, considering this time the non-integer component of equation 
(\ref{eq:Psi^jp1}), to show that $\gamma_{j+1}$ is determined by the 
pair $(\sigma_j,\gamma_j)$ via:
\begin{displaymath}
 \gamma_{j+1} \equiv \gamma_{j} +\epsilon_j +(2n-1)t \mod{(2v_1+1) \, n_j},
\end{displaymath}
where $\epsilon_j=\epsilon_j(\sigma_j)$, and $t = t(\sigma_j)$ is the transit time between vertices. 
The one-to-one relationship between $\sigma_j$ and $\sigma_{j+1}$, ensures that there are $n_{j+1}=n_j$ 
points in $\Fix{G^e}$ that achieve any given value of $\sigma_{j+1}$ at the $(j+1)$th vertex. 
Similarly, for any given value of $\sigma_j$, the above congruence
establishes a one-to-one relationship between $\gamma_j$ and $\gamma_{j+1}$ modulo $(2v_1+1)n_j$.
Because this bijection is a translation, it also holds modulo $n_j$.
In other words, there is a skew-product map of residue classes modulo $n_j$: 
$(\sigma_j,\gamma_j)\mapsto(\sigma_{j+1},\gamma_{j+1})$.
This completes the inductive step for the first case.

In the second case, where $v_j \neq v_{j+1}$, the $j$th and $(j+1)$th vertices lie on perpendicular lines. 
Again referring to the proof of lemma \ref{thm:sigma_lattices}, taking equation (\ref{eq:t,2a+b_eqn}) 
modulo $2v_{j+1}+1$ gives the following expression for $\sigma_{j+1}$ in terms of the pair $(\sigma_j,\gamma_j)$:
\begin{displaymath}
\Bceil{\frac{v_j+1}{\lambda}} +\sigma_{j+1} 
 \equiv \Bceil{\frac{v_j}{\lambda}} + \gamma_j +\epsilon_j \mod{2v_{j+1}+1}.
\end{displaymath}
Here we were able to replace $x_j$ with $\gamma_j$ as, by the construction (\ref{eq:q}) of $q$, 
$2v_{j+1}+1$ is a divisor of the modulus $q/(2v_j+1)=(2v_1+1)n_j$ which defines $\gamma_j$. 
If the coprimality condition (\ref{eq:v1_coprimality}) holds, then $2v_{j+1}+1$ also divides $n_j$. 
Hence for any given pair $(\sigma_j,\sigma_{j+1})$, there are $n_j/(2v_{j+1}+1)$ values of 
$\gamma_j$ modulo $n_j$ for which the following congruence is satisfied:
\begin{equation}\label{eq:gamma_j_II}
 \gamma_j \equiv \Bceil{\frac{v_j+1}{\lambda}} +\sigma_{j+1} 
 - \Bceil{\frac{v_j}{\lambda}} -\epsilon_j + s(2v_{j+1}+1) \mod{n_j}
\end{equation}
where $s\in\Z$. The total number of points with any given value of $\sigma_{j+1}$ is thus:
\begin{displaymath}
 (2v_j+1) \times \frac{n_j}{2v_{j+1}+1} = n_{j+1},
\end{displaymath}
which completes the inductive step for hypothesis (i).

Taking the second component of equation (\ref{eq:Psi^jp1}) modulo $n_{j+1}$ gives an expression for 
$\gamma_{j+1}$ in terms of the pair $(\sigma_j,\gamma_j)$:
\begin{equation} \label{eq:gamma_jp1}
 \gamma_{j+1} 
     \equiv \Bceil{\frac{n}{\lambda}} + \sigma_j + (2v_j+1)t - \Bceil{\frac{n-1}{\lambda}} \mod{n_{j+1}},
\end{equation}
where $t=t(\sigma_j,\gamma_j)$. For a given pair $(\sigma_j,\sigma_{j+1})$, 
$t$ is given by equation (\ref{eq:t,2a+b_eqn}). 
Hence taking equation (\ref{eq:t,2a+b_eqn}) modulo $n_j$, a multiple of $2v_{j+1}+1$, 
and using the expression (\ref{eq:gamma_j_II}) for $\gamma_j$, 
it follows that the values of $t$ satisfy
\begin{align*}
t+1 &\equiv \frac{ \ceil{(v_j+1)/\lambda} +\sigma_{j+1} 
  - \ceil{v_j/\lambda} - \gamma_j -\epsilon_j}{2v_{j+1}+1} \mod{\frac{n_j}{2v_{j+1}+1}} \\
 &\equiv -s \mod{n_j/(2v_{j+1}+1)},
\end{align*}
where $s\in\Z$. Thus $t$ takes all values modulo $n_j/(2v_{j+1}+1) = n_{j+1}/(2v_j+1)$. 
Applying this to equation (\ref{eq:gamma_jp1}) and letting $\sigma_j$ vary across the range $\sigma_j\in\{0,1,\dots,2v_j\}$, 
we see that $\gamma_{j+1}$ achieves a complete set of residue classes modulo $n_{j+1}$, as required. 
This completes the inductive step for hypothesis (ii) and the result follows from hypothesis (i) for $j=k$.
\end{proof}

\medskip

Finally, we can give the proof of theorem \ref{thm:minimal_densities} (page \pageref{thm:minimal_densities})
on the density of minimal orbits.

\begin{proof}[Proof of theorem \ref{thm:minimal_densities}]

Let $e\in\cE$ be given and let $\sigma^*$ be the unique element of the set $\{0,1,\dots,2v_k\}$ that satisfies
\begin{displaymath}
2\sigma^* \equiv v_k \mod{2v_k+1}.
\end{displaymath}
By lemma \ref{lemma:minimal_codes}, $z\in\Xe$ is a symmetric fixed point of
$\Phi$ if and only if its orbit code satisfies $\sigma_{-1}=\sigma_1$,
i.e., if $z\in\Fix{G^e}$, and $\sigma_k=\sigma^*$.

We will show that the number of points in $\Fix{G^e}$ modulo $\lambda\Le$
whose orbit code has $\sigma_k=\sigma^*$ is given by
 $$ \frac{q}{(2v_1+1)(2v_k+1)}, $$
where $q$, given by (\ref{eq:q}), is the total number of points modulo $\lambda\Le$.

For $e=0,2,8$, all elements of the vertex list are the same. This case is dealt
with by the discussion preceding lemma \ref{lemma:sigma_j_I}.
Thus we assume that the vertex list contains at least two distinct elements.

Suppose first that $2v_1+1$ is coprime to $2v_j+1$ for all $v_j\neq v_1$,
so that the condition (\ref{eq:v1_coprimality}) for lemma \ref{lemma:sigma_j_II}  holds.
Applying the lemma for $j=k$, we have that for sufficiently small $\lambda$, 
the number of points in $\Fix{G^e}$ modulo $\lambda\Le$ whose orbit code has $k$th entry $\sigma^*$ is
given by 
 $$ n_j = \frac{q}{(2v_1+1)(2v_k+1)}, $$
as required.

Suppose now that $2v_k+1$ is coprime to $2v_j+1$ for all $v_j\neq v_k$.
Let $j=\iota(l)-1$ be the penultimate distinct vertex type in the vertex list.
Note that $v_j = v_{\iota(l-1)}$ and $q_j=q_{\iota(l-1)}$, where
$q_j$ is given in closed form by (\ref{eq:q_j_closed_form}),
and that $v_i\neq v_k$ for all $1\leq i \leq j$.
It follows that $2v_k+1$ is coprime to $q_{\iota(l-1)}$.
Similarly $2v_k+1$ is coprime to $p_{\iota(l-1)}=q_{\iota(l-1)}/(2v_j+1)$,
and the condition (\ref{eq:coprimality_I}) of lemma \ref{lemma:sigma_j_I} holds.
Applying the lemma, we have that in every cylinder set of $\lambda\Le_j$, 
the number of points modulo $\lambda\Le$ whose orbit code has $k$th entry $\sigma^*$ is given by
\begin{displaymath}
 \frac{1}{2v_k+1} \#\left(\Le_j/\Le\right) = \frac{1}{2v_k+1}\,\frac{q}{q_j}.
\end{displaymath}
The set $\Fix{G^e}$ is the union of $q_j/(2v_1+1)$ such cylinder sets.
Hence, as before, the number of symmetric fixed points in $\Xe$ modulo $\lambda\Le$ is
\begin{displaymath}
\frac{1}{2v_k+1}\,\frac{q}{q_j}\times \frac{q_j}{2v_1+1} = \frac{q}{(2v_1+1)(2v_k+1)}.
\end{displaymath}
This number is independent of $\lambda$, which completes the proof of the first statement.

We have shown that for sufficiently small $\lambda$, and if (\ref{eq:v1_k_coprimality}) 
holds, then the fraction of symmetric fixed points of $\Phi$ in each fundamental domain of $\lambda\Le$ is
\begin{displaymath}
\frac{1}{(2v_1+1)(2v_k+1)} = \frac{1}{(2\fl{\sqrt{e/2}}+1)(2\fl{\sqrt{e}}+1)},
\end{displaymath}
where we have used equations (\ref{def:v1}) and (\ref{def:vk}) for $v_1$ and $v_k$. 
It remains to show that the density $\delta(e,\lambda)$ of symmetric fixed points in $\Xe$ converges 
to this fraction as $\lambda\rightarrow 0$.

By equation (\ref{def:Xe}), the domain $\Xe$ is a subset of the lattice $(\lambda\Z)^2$ bounded 
by a rectangle lying parallel to the symmetry line $\Fix{G}$.
Similarly, a fundamental domain of the lattice $\lambda\Le$ is a subset of $(\lambda\Z)^2$ bounded 
by a parallelogram of the form
\begin{displaymath}
\{ \alpha \bfL + \frac{\beta}{2}(\bfL-\bfw_{v_1,v_1}) \, : \; \alpha,\beta\in[0,\lambda) \},
\end{displaymath}
where the generator $\bfL$ is also parallel to the symmetry line. 
These parallelograms tile the plane under translation by the elements of $\lambda\Le$.

The width of $\Xe$ (taken in the direction perpendicular to $\Fix{G}$) is $\lambda\|\bfw_{v_1,v_1}\|$---exactly twice 
that of the above parallelogram (see figure \ref{fig:lattice_Le}, page \pageref{fig:lattice_Le}). 
The number of parallelograms which fit lengthwise into $\Xe$, however, goes to infinity as $\lambda$ goes to zero.
If $\Ie(\lambda)=(\alpha_1,\alpha_2)\subset\cIe$, then the length $d$ of $\Xe$ parallel to $\Fix{G}$ is given by
\begin{align*}
 d &= \sqrt{2}\left(P^{-1}(\alpha_2/2)-P^{-1}(\alpha_1/2)\right) \\
  &=\frac{1}{\sqrt{2}}\left(\frac{|\Ie|}{2v_1+1}\right) \\
  &=\frac{1}{\sqrt{2}}\left(\frac{|\cIe|}{2v_1+1}\right) + O(\lambda)
\end{align*}
as $\lambda\to 0$, where we have used the expression (\ref{def:Pinv}) for $P^{-1}$, 
and proposition \ref{prop:Ie} for the length of $\Ie(\lambda)$.
Thus, the number of parallelograms which can be contained in the rectangle bounding $\Xe$ is at least
\begin{displaymath}
2 \left(\Bfl{ \frac{d}{\lambda \|\bfL\|} } -1 \right) - 8,
\end{displaymath}
where $\fl{d/\lambda \|\bfL\|}$ is the number of times that the vector 
$\bfL$ fits lengthways into the rectangle, 
we subtract $1$ for the slope of the parallelogram, and we subtract $8$ for the parallelograms which intersect 
the boundary. Each parallelogram contains a complete fundamental domain of $\lambda\Le$, and their 
contribution to $\delta(e,\lambda)$ dominates in the limit $\lambda\rightarrow 0$.

Explicitly, the number of points in $\Xe$ scales like
\begin{align*}
 \# \Xe &= \frac{2(2v_1+1)}{\lambda}\left(P^{-1}(\alpha_2/2)-P^{-1}(\alpha_1/2)\right) + O(1) \\
 &= \frac{|\Ie|}{\lambda} + O(1) \\
 &= \frac{|\cIe|}{\lambda} + O(1)
\end{align*}
as $\lambda\to 0$, whereas the length of $\bfL$ is given by (\ref{def:L}) as:
 $$ \|\bfL\| = \frac{\sqrt{2}q}{2v_1+1}, $$
and $\#\left(\Z^2/\,\Le\right)=q(e)$.
Hence the density $\delta(e,\lambda)$ satisfies
\begin{align*}
\delta(e,\lambda) &= \frac{\#\left(\Z^2/\,\Le\right)}{\# \Xe}
\left(  \frac{2\Bfl{ d/\lambda \|\bfL\| } - 10}{(2v_1+1)(2v_k+1)} + O(1) \right) \\
&= \left(\frac{\lambda q}{|\cIe|} + O(\lambda^2) \right)
\left(  \frac{2 d/\lambda \|\bfL\|}{(2v_1+1)(2v_k+1)} + O(1) \right) \\
&= \left(\frac{\lambda q}{|\cIe|} + O(\lambda^2) \right)
\left(  \frac{ |\cIe|/\lambda q(e)}{(2v_1+1)(2v_k+1)} + O(1) \right) \\
 & = \frac{1}{(2v_1+1)(2v_k+1)} + O(\lambda)
\end{align*}
as $\lambda\rightarrow 0$.
\end{proof}

\chapter{The limit $e\to\infty$} \label{chap:Apeirogon}

In chapter \ref{chap:IntegrableLimit} we introduced the piecewise-affine Hamiltonian $\cP$, 
which describes the behaviour of the rescaled discretised rotation $\F$ in the integrable limit ($\lambda\rightarrow 0$). 
We saw that orbits of the flow $\varphi$ associated with $\cP$ are convex polygons 
(theorem \ref{thm:Polygons}, page \pageref{thm:Polygons}), 
which have a natural classification indexed by the set of critical numbers $\cE$.

In this chapter we consider the behaviour of the Hamiltonian system at infinity, 
where the index $e\in\cE$ diverges. 
In this limit the number of discontinuities experienced by an orbit 
(i.e., the number of vertices of the polygons) is unbounded, 
whilst the magnitude of these discontinuities becomes arbitrarily small.

We find that the limiting behaviour is a dichotomy. 
Typically the limiting flow is linear, like the underlying rigid rotation: 
however, for a certain subsequence of values of $e$ the nonlinearity persists. 
We focus on the return map of the flow introduced in section \ref{sec:IntegrableReturnMap}: 
the integrable counterpart to the return map $\Phi$. 

\section{A change of coordinates} \label{sec:cylinder_coordinates}

In section \ref{sec:IntegrableReturnMap}, we remarked that it is natural 
to think of the first return map of the flow as a twist map on a cylinder.
To formalise this description, we need to make a change of coordinates.

Recall the domain $\cX$ of the integrable return map, introduced in equation (\ref{eq:cX}), page \pageref{eq:cX}.
By analogy with the perturbed case (cf. section \ref{sec:RegularDomains}, page \pageref{def:regular}), 
we call a point $z\in\cX$ with $\cP(z)\in\cIe$ \defn{regular} with respect to the flow if
\begin{equation*} 
 \phil(z) = z + \lambda\bfw(z) = z + \lambda\bfw_{v_1,v_1}.
\end{equation*}
(Recall that regular points $z\in\Xe$ satisfy $\F^4=z + \lambda\bfw_{v_1,v_1}$, among other things.)
Then we can define the sequence of sets:
\begin{equation} \label{def:cXe}
 \cX^e = \{ z\in\cX \, : \; \cP(z) \in \Ie \},
\end{equation}
where $\Ie\subset \cIe$ is the largest interval such that all points in $\cX^e$ are regular.
As in the perturbed case, it is straightforward to show that the union 
of the $\cX^e$ have full density in $\cX$.
Note that the domain $\Xe$ is a subset of $\cX^e$, since
 $$ z\in\Xe \quad \Rightarrow \quad z\notin\Lambda \quad \Rightarrow \quad \phil(z) = z + \lambda\bfw_{v_1,v_1}. $$

To compare the actions of the unperturbed return map for varying values of $e$, we define the two-parameter family of maps:
 $$ \eta^e(\lambda): \cX^e \rightarrow \bbS^1 \times \R \hskip 40pt e\in\cE, \; \lambda>0. $$
The map $\eta^e(\lambda)$ is a change of coordinates $z=(x,y)\mapsto (\theta,\rho)$, with
\begin{equation} \label{def:rho_theta}
 \theta(z) = \frac{1}{\lambda} \, \frac{x-y}{2(2v_1+1)} \hskip 20pt \rho(z) = \frac{1}{\lambda} \, \frac{x+y-2x_0}{2(2v_1+1)},
\end{equation}
where $v_1 = \fl{\sqrt{e/2}}$ and $z_0=(x_0,x_0)\in\cX^e$ is some fixed point of the return map lying on $\Fix{G}$. This change of coordinates is the composition of several elements: a rotation through an angle $\pi/4$, which maps the symmetry line $\Fix{G}$ onto the $\rho$-axis; a rescaling of the plane by a factor of $1/\lambda\sqrt{2}(2v_1+1)$, which normalises the range of the coordinate $\theta$; and a translation, which ensures that the preimage $z_0$ of the origin $(\theta,\rho)=(0,0)$ is a fixed point. Such a fixed point is guaranteed to exist for sufficiently small $\lambda$, as by proposition \ref{prop:cPhi(z)}, we may take any $z_0\in\Fix{G}$ satisfying:
\begin{equation} \label{eq:z_0}
 \frac{1}{4} - \frac{\cT(z_0)}{4\lambda} \equiv 0 \mod{ 1}.
\end{equation}
In what follows, we omit the $\lambda$ dependence and simply write $\eta^e$.

For $z,\varphi^{\lambda t}(z)\in\cX^e$, the flow acts as:
 $$ \varphi^{\lambda t}(z) = z + \lambda t\bfw_{v_1,v_1}, $$
where $\bfw_{v_1,v_1}$ is perpendicular to $\Fix{G}$,
so that the coordinate $\theta$ is parallel to the direction of flow, and $\rho$ is perpendicular to it:
\begin{equation*} 
 \varphi^{\lambda t}(\theta, \rho) = (\theta + t, \rho).
\end{equation*}
(We use the symbol $\varphi$ to denote the flow in both coordinate spaces.)
Thus if we identify the interval $[-1/2,1/2)$ with the unit circle $\bbS^1$, it is straightforward to see that the coordinate $\theta$ plays the same role as in the expression (\ref{eq:cX}) for $\cX$.

In the following proposition, we show that under this change of coordinates, the unperturbed return map acts as a linear twist map.

\begin{theorem} \label{thm:Omega_e}
For $e\in\cE$, let $\Omega^e$ be the map
\begin{equation} \label{def:Omega^e}
 \Omega^e : \bbS^1 \times \R \rightarrow \bbS^1 \times \R
    \hskip 40pt 
 \Omega^e(\theta,\rho) = \left( \theta - \frac{1}{2} (2v_1+1)^2\rho\cT^{\prime}(e) , \rho \right).
\end{equation}
Furthermore, let $z\in\cX^e$, and let $z^{\prime}$ be the first return of $z$ to $\cX^e$ \hl{under $\cF$}.
Then for any sufficiently small $\lambda>0$:
 $$ \eta^e(z)=(\theta,\rho) \hskip 20pt \Rightarrow  \hskip 20pt \eta^e (z^{\prime}) = \Omega^e(\theta,\rho). $$
\end{theorem}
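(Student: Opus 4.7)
The plan is to combine proposition \ref{prop:cPhi(z)}, the piecewise-affine structure of the period function $\cT$ on each critical interval $\cIe$, and the definition of the coordinate change $\eta^e$. Throughout I take $\lambda$ small enough that $\cX^e$ is non-empty and contains the fixed base point $z_0$ satisfying \eqref{eq:z_0}.

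First I would check that $\rho$ is preserved by the return map. By definition of $\cX^e$, the point $z \in \cX^e$ lies in the box $B_{v_1,v_1}$, and the Hamiltonian vector field there is $\bfw_{v_1,v_1}=(2v_1+1,-(2v_1+1))$, which is orthogonal to the level direction of $\rho$; equivalently $\cP$ is piecewise-affine and reads $\cP(x,y) = (2v_1+1)(x+y) - 2v_1(v_1+1)$ on $B_{v_1,v_1}$, so
\begin{equation*}
\cP(z) - \cP(z_0) = (2v_1+1)\bigl((x+y) - 2x_0\bigr) = 2\lambda(2v_1+1)^2\,\rho.
\end{equation*}
Since $\cF$ preserves $\cP$ (as $\cF^4 = \phil$ and $\cP$ is an integral of the flow), $\rho$ is constant along any orbit of $\cF$, so in particular $\rho(z') = \rho(z)$.

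Next I would match the $\theta$ coordinate of $\eta^e$ with the parametrisation used in proposition \ref{prop:cPhi(z)}. Writing $z = \varphi^{\lambda\theta_*}(x,x)$ with $\theta_*\in[-1/2,1/2)$ and $(x,x)\in\Fix{G}$, and using $\varphi^{\lambda t}(x,x) = (x,x) + \lambda t\,\bfw_{v_1,v_1}$ in $\cX^e$, one obtains $x - y = 2\lambda\theta_*(2v_1+1)$, i.e.\ $\theta(z) = \theta_*$ as defined in \eqref{def:rho_theta} (under the standard $[-1/2,1/2)\leftrightarrow\bbS^1$ identification). Thus proposition \ref{prop:cPhi(z)} gives directly
\begin{equation*}
\theta(z') \equiv \theta(z) + \frac{1}{4} - \frac{\cT(z)}{4\lambda} \pmod{1}.
\end{equation*}

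The final step is to linearise $\cT(z)$ in $\rho$. Since $z, z_0 \in \cX^e$ implies $\cP(z), \cP(z_0) \in \Ie \subset \cIe$, and $\cT$ is affine on $\cIe$ with derivative $\cT'(e)$ by \eqref{eq:Tprime(alpha)}, one has $\cT(z) = \cT(z_0) + \cT'(e)\bigl(\cP(z) - \cP(z_0)\bigr)$, and substituting the identity $\cP(z) - \cP(z_0) = 2\lambda(2v_1+1)^2\rho$ yields
\begin{equation*}
\frac{\cT(z)}{4\lambda} = \frac{\cT(z_0)}{4\lambda} + \frac{1}{2}(2v_1+1)^2\rho\,\cT'(e).
\end{equation*}
Invoking \eqref{eq:z_0} to kill the constant term modulo $1$, this reduces to $\theta(z') \equiv \theta(z) - \tfrac{1}{2}(2v_1+1)^2\rho\,\cT'(e) \pmod 1$, matching $\Omega^e$ on the $\bbS^1$ factor. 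Together with $\rho(z')=\rho(z)$, this is the claim.

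The only delicate point is verifying that the affine formula for $\cP$ and the equality $\F^4 = \phil$ hold throughout the orbit under consideration, which is exactly what the definition of $\cX^e$ (and the choice of $\Ie$) is designed to guarantee; everything else is bookkeeping once proposition \ref{prop:cPhi(z)}, the explicit form of $\cT'$, and the fixed-point condition on $z_0$ are in place. I expect the main subtlety will be making sure the ``mod $1$'' in proposition \ref{prop:cPhi(z)} interacts correctly with the identification $[-1/2,1/2)\cong\bbS^1$, so that the formula for $\Omega^e$ holds as a genuine equality on $\bbS^1\times\R$ rather than merely up to lifts.
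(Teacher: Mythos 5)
Your proof is correct and follows essentially the same route as the paper's: parametrise $z$ along $\Fix{G}$, invoke proposition \ref{prop:cPhi(z)} for the return angle, use the affine form of $\cP$ on $B_{v_1,v_1}$ to identify $\cP(z)-\cP(z_0)$ with $2\lambda(2v_1+1)^2\rho$, linearise $\cT$ on $\cIe$, and use the fixed-point condition \eqref{eq:z_0} to cancel the constant modulo $1$. The extra checks you supply (that $\rho$ is $\cF$-invariant, that $\theta(z)=\theta_*$) are left implicit in the paper but are accurate and do not change the argument.
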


Here the derivative $\cT^{\prime}$ of the period function is undefined on the critical numbers, 
and constant on the sequence of sets $\cIe$ (see equation (\ref{eq:Tprime(alpha)})). 
Hence we write $\cT^{\prime}(e)$ to denote the value of $\cT^{\prime}$ on $\cIe$:
 $$ \cT^{\prime}(e) = \lim_{\alpha\rightarrow e^+} \cT^{\prime}(\alpha). $$

\begin{proof}
For $e\in\cE$, pick $z\in\cX^e$ and let $(\theta,\rho)=\eta^e(z)$. 
Then we have $z=\varphi^{\lambda\theta}(u,u)$ and $z^{\prime}=\varphi^{\lambda\theta^{\prime}}(u,u)$ 
for some $u\geq 0$, where $\theta^{\prime}$ is given by equation (\ref{eq:theta_prime}) of proposition \ref{prop:cPhi(z)}.
In the new coordinates, this is equivalent to
 $$ \eta^e (z^{\prime}) = \left( \theta + \frac{1}{4} - \frac{\cT(z)}{4\lambda} , \rho \right), $$
where we have used the fact that the coordinate $\theta$ is periodic.
 
Let $z=(x,y)$ and $z_0=(x_0,y_0)$. 
Since the Hamiltonian function $\cP$ is affine on $\cX^e$, expanding $\cP$ about $z_0$ gives
\begin{align}
 \cP(z) &= \cP(z_0) + (x+y-2x_0)(2v_1+1) \nonumber \\
 &= \cP(z_0) + 2\lambda(2v_1+1)^2\rho. \label{eq:cP(z)_rho_expansion}
\end{align}
Similarly, since $\cT$ is affine on the interval $\cIe$:
 $$ \cT(z) = \cT(z_0) + 2\lambda(2v_1+1)^2\rho\cT^{\prime}(e). $$
As $z_0$ is a fixed point, it must satisfy (\ref{eq:z_0}).
Thus we obtain
\begin{align*}
 \eta^e (z^{\prime}) &= \left( \theta + \frac{1}{4} - \frac{\cT(z_0) + 2\lambda(2v_1+1)^2\rho\cT^{\prime}(e)}{4\lambda} , \rho \right) \\
 &= \left( \theta - \frac{1}{2} (2v_1+1)^2\rho\cT^{\prime}(e) , \rho \right),
\end{align*}
which completes the proof.
\end{proof}

\begin{figure}[t]
  \centering
  \includegraphics[scale=1]{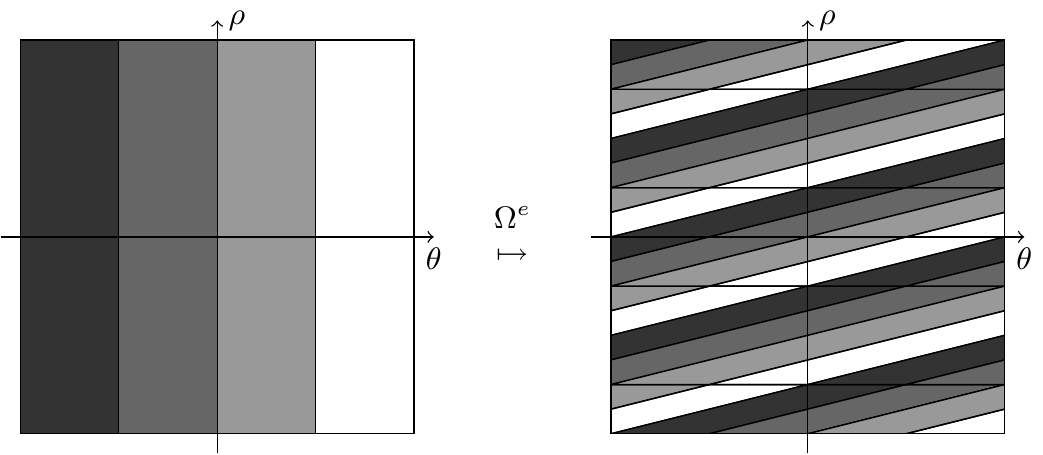} 
  \caption{A schematic representation of the action of $\Omega^e$ on the space $\bbS^1\times\R$}
\end{figure}

Note that the action of the conjugate map $\Omega^e$ is independent of the parameter $\lambda$, 
which appears only in the domain $\eta^e(\cX^e)$ in which the conjugate map is valid. 
Using the definition (\ref{def:cXe}) of the domain $\cX^e$ and the above expansion 
(\ref{eq:cP(z)_rho_expansion}) of $\cP$ about the fixed point $z_0$, 
we see that this domain is given by
\begin{displaymath}
 \eta^e(\cX^e) = \{ (\theta,\rho)\in\bbS^1 \times \R \, : \; \cP(z_0) + 2\lambda(2v_1+1)^2\rho \in \Ie \}.
\end{displaymath}
The range of values of $\rho$ in the domain grows like
\begin{displaymath}
 \frac{|\Ie|}{2\lambda(2v_1+1)^2} = \frac{|\cIe|}{2\lambda(2v_1+1)^2} + O(1)
\end{displaymath}
as $\lambda\rightarrow 0$, and in the integrable limit, we think of the conjugate map as valid on the whole of $\bbS^1\times\R$.

The map $\Omega^e$ is a linear twist map, and we \hl{denote the twist by}
\begin{equation} \label{def:K(e)}
 K(e) = -\frac{1}{2} (2v_1+1)^2 \cT^{\prime}(e) \hskip 40pt e\in\cE.
\end{equation}
Each $\Omega^e$ is reversible, and can be written as the composition of the involutions $\cG$ and $\cH^e$, where
\begin{equation*}
 \cG(\theta,\rho) = (-\theta,\rho) \hskip 40pt \cH^e(\theta,\rho) = \left( -\theta + K(e)\rho , \rho \right).
\end{equation*}
The fixed spaces of these involutions are given by 
\begin{align}
 \Fix{\cG} &= \{ (\theta, \rho) \in \bbS^1\times\R  \, : \; \theta\in\{-1/2,0\} \}, \nonumber \\ 
 \Fix{\cH^e} &= \{ (\theta, \rho) \in \bbS^1\times\R \, : \; \theta=\frac{1}{2} \, K(e)\rho \}. \label{eq:Fix(cH)} 
\end{align}
(Note the connection between the involution $\cG$ and the reversing symmetry $G^e$ of the perturbed return map $\Phi$, which was introduced in section \ref{sec:MainTheorems}.)

The map $\Omega^e$ is also reversible with respect to the reflection $(\theta,\rho)\mapsto(\theta,-\rho)$,
and equivariant under the group \hl{generated by the translation}
\begin{equation} \label{eq:rho_bar}
 \rho \mapsto \rho + \bar{\rho} \hskip 40pt \bar{\rho} = \frac{1}{K(e)} = \frac{-2}{(2v_1+1)^2\cT^{\prime}(e)}.
\end{equation}
Each circle $\rho=$ const. is invariant under $\Omega^e$, 
and motion restricted to this circle is a rotation with rotation number $\rho/\bar{\rho}$ (mod $1$).

\section{The limiting dynamics} \label{sec:limiting_dynamics} \label{SEC:LIMITING_DYNAMICS}

Now we are in a position to study the dynamics of the sequence of maps $\Omega^e$ in the limit $e\rightarrow\infty$. 
As in section \ref{sec:IntegrableReturnMap}, where we studied the nonlinearity of the flow, 
we turn our attention to the behaviour of the period function $\cT$.

\subsection*{The period function at infinity}

We wish to study the behaviour of the period $\cT(\alpha)$ of the Hamiltonian flow 
$\varphi$ in the limit $\alpha\rightarrow \infty$. 
As one would expect, the period of the piecewise-affine flow converges to the period $\pi$ of the underlying rotation. 
However, the period undergoes damped oscillations and, after a suitable rescaling to restore these oscillations, 
we find that the divergence of the period from its asymptotic value converges to a limiting functional form. 
We give this limiting form in the following theorem.

\begin{theorem} \label{thm:T_asymptotics}
Let $b\in[0,1)$, and let $\alpha=\alpha(v_k,b)$ be given by:
\begin{equation} \label{def:b}
 \alpha = (v_k+b)^2 \hskip 40pt v_k\in\N,
\end{equation}
so that $b$ is the fractional part of $\sqrt{\alpha}$ and $v_k$ is the integer part.
Then as $v_k\rightarrow \infty$:
\begin{equation} \label{eq:T_asymptotics}
 v_k^{3/2} \left(\frac{\cT(\alpha)-\pi}{4}\right) \rightarrow \frac{1}{3}(2b+1)^{3/2} - \sqrt{2b} - \epsilon(b),
\end{equation}
where $\epsilon$ is a bounded function.
\end{theorem}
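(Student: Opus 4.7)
The plan is to work from the explicit expression for the octant time given in Proposition \ref{prop:T(alpha)} and to compare it term-by-term with the integral representation $\pi/8 = \int_{\sqrt{\alpha/2}}^{\sqrt{\alpha}} dx/(2\sqrt{\alpha-x^2})$ of the octant time of the underlying circular rotation $\cQ(x,y) = x^2 + y^2$. Writing $\cT(\alpha)/8 - \pi/8$ as a sum of three contributions separates the problem naturally: the leftmost term $P^{-1}(\alpha/2)/(2v_1+1)$ compared against $\int_{\sqrt{\alpha/2}}^{v_1+1} dx/(2\sqrt{\alpha-x^2})$; the rightmost term $P^{-1}(\alpha - v_k^2)/(2v_k+1)$ against $\int_{v_k}^{v_k+b} dx/(2\sqrt{\alpha-x^2})$; and the bulk sum $\sum_{n=v_1+1}^{v_k-1} d(n)/(2n+1)$ against $\sum_{n=v_1+1}^{v_k-1} \int_n^{n+1} dx/(2\sqrt{\alpha-x^2})$. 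The key observation is that the dominant $1/v_k$-size discrepancy on each strip is driven by the difference $P^{-1}(y) - \sqrt{y}$, which is $O(1/\sqrt{y})$ by the explicit formula for $P^{-1}$.

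For the $n = v_k$ endpoint, I would use the identity $\alpha - v_k^2 = 2v_k b + b^2$ and expand $P^{-1}(2v_k b + b^2)/(2v_k+1)$ using $P^{-1}(y) = \fl{\sqrt y} + (y - \fl{\sqrt y}^2)/(2\fl{\sqrt y}+1)$, while simultaneously expanding the circle integral $\tfrac{1}{2}(\pi/2 - \arccos(v_k/(v_k+b)))$ via $\arccos(1-u) = \sqrt{2u}\,(1 + u/12 + \cdots)$. The leading terms $\sim \sqrt{b/(2v_k)}$ cancel, and the next-order correction, scaled by $v_k^{3/2}$ and tracked through the prefactors, yields the $-\sqrt{2b}$ contribution in the limit.

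For the $n = v_1$ endpoint and the bulk, I would perform analogous expansions. Writing $\sqrt{\alpha/2} = v_1 + \tilde c$ with $\tilde c \in [0,1)$ isolates the length scale at the symmetry line; the $\tfrac{1}{3}(2b+1)^{3/2}$ term should emerge from the refined expansion of the leftmost difference together with a sum-to-integral comparison across the bulk strips, where the per-strip discrepancy contributes at order $v_k^{-5/2}$, aggregating to $O(v_k^{-3/2})$ across $O(v_k)$ strips. The residual $v_k$-dependent oscillations, driven by fractional parts of $\sqrt{\alpha-n^2}$ and of $\sqrt{\alpha/2}$, are bounded and will be absorbed into the function $\epsilon(b)$.

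The main obstacle will be the bulk sum. Each term contributes only $O(v_k^{-5/2})$, but these small discrepancies depend sensitively on how $\sqrt{\alpha - n^2}$ lies modulo $1$ for each integer $n$, and this sequence behaves erratically as $v_k \to \infty$ with $b$ fixed. A careful Euler--Maclaurin expansion with controlled remainder, together with an equidistribution or averaging argument exploiting the smoothness of $n \mapsto \sqrt{\alpha - n^2}$ within strips, will be required to separate the deterministic contribution (yielding the explicit $\tfrac{1}{3}(2b+1)^{3/2}$ term) from the purely oscillatory bounded remainder, which is then identified with $\epsilon(b)$.
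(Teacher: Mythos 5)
Your plan shares the high-level goal of the paper's proof---compare the polygonal octant time term-by-term with the circular octant time and track the $v_k^{-3/2}$-order discrepancy---but it uses a genuinely different starting decomposition, and there is a misconception in the treatment of the bulk error that would block you at the crucial step.

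\textbf{Different decomposition.} You work from the un-summed form $\cT(\alpha)/8 = \sum_{n=v_1}^{v_k} d(n)/(2n+1)$ and compare each $t(n)$ with the exact circular arc-time $\int_n^{n+1} dx/(2\sqrt{\alpha-x^2})$. The paper instead works from the Abel-summed form in proposition~\ref{prop:T(alpha)},
\[
\frac{\cT(\alpha)}{8} = \frac{P^{-1}(\alpha/2)}{2v_1+1} - 2\sum_{n=v_1+1}^{v_k}\frac{P^{-1}(\alpha-n^2)}{4n^2-1},
\]
and compares the summand with $\sqrt{\alpha-n^2}/(4n^2)$ (lemma~\ref{lemma:f_sum}) before applying a sum-to-integral expansion (lemma~\ref{lemma:f_integral}). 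These two starting points are algebraically equivalent, but the Abel-summed form is much better adapted: the denominator $4n^2-1$ decays quadratically, so the per-term error $|P^{-1}(\alpha-n^2)-\sqrt{\alpha-n^2}| \cdot O(n^{-2})$ aggregates to $O(1/\alpha)$---see (\ref{eq:f_sum})--(\ref{eq:Tprime_bound2})---and vanishes after multiplying by $v_k^{3/2}$. In your form the denominators are only $2n+1$, so the $P^{-1}$-vs-$\sqrt{}$ error appears \emph{twice} per interior strip (in $d(n)$ and $d(n-1)$) with different denominators; a naive per-term bound on a single summand near $n=v_k$ is already $O(v_k^{-3/2})$, i.e.\ $O(1)$ after scaling. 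To recover the cancellation you would have to notice the near-telescoping structure, which is precisely what Abel summation makes explicit. So your path is workable in principle, but only if you first perform (or notice the effect of) the summation by parts that the paper builds in from the outset.

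\textbf{The gap.} The real problem is your final paragraph. You propose to handle the bulk with an ``equidistribution or averaging argument exploiting the smoothness of $n\mapsto\sqrt{\alpha-n^2}$'' and to absorb the ``residual $v_k$-dependent oscillations, driven by fractional parts of $\sqrt{\alpha-n^2}$'' into $\epsilon(b)$. Both claims are off the mark. First, the fractional-part fluctuations of $\sqrt{\alpha-n^2}$ do \emph{not} survive the limit: lemma~\ref{lemma:f_sum} shows they aggregate to $O(1/\alpha)$, hence $O(v_k^{-1/2})$ after scaling by $v_k^{3/2}$, so no equidistribution statement is needed (or available---the paper proves nothing of that sort). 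Second, $\epsilon(b)$ is \emph{not} an oscillatory remainder: it is the deterministic, convergent limit of the Euler--Maclaurin / sum-to-integral error for the smooth function $f(x)=\sqrt{\alpha-x^2}/x^2$, namely the quantity in (\ref{eq:epsilon(b)}). Its existence is a non-trivial fact, established in lemma~\ref{lemma:epsilon_bounds} by expanding the Taylor remainder of $\sqrt{1-2A_ny}$ into a power series and running a Cauchy-sequence argument on the resulting coefficients $S_j(v_k)$---not by any averaging over fractional parts. If you carry your plan forward looking for an equidistribution argument, you will be searching for a mechanism that is not present; the limit exists because the curvature contribution of $f$ converges pointwise in $b$, and the round-off fluctuations disappear at a strictly lower order. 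You would also need to verify that your $-\sqrt{2b}$ and $\tfrac{1}{3}(2b+1)^{3/2}$ bookkeeping matches: in the paper the $-\sqrt{2b}$ comes from the $f(v_k)$ term in (\ref{eq:sum_int_expansion}), and the $\tfrac13(2b+1)^{3/2}$ term comes entirely from the analytic expansion of $\int_{v_1+1/2}^{v_k-1/2} f\,dx$ near its right endpoint (lemma~\ref{lemma:f_integral}, via $\tan\theta-\theta\sim\theta^3/3$); neither is a sum-to-integral correction in the sense you describe.
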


In what follows, we will give an expression for the function $\epsilon$ and bound its range explicitly,
but for now we think of it as an error term which is small but non-vanishing as $\alpha\rightarrow \infty$. 
The first two terms are sufficient to give an accurate qualitative description of the function (see figure \ref{fig:cT(alpha)}).
Furthermore, we claim without proof that the convergence in theorem \ref{thm:T_asymptotics} is uniform in $b$.

\begin{figure}[!h]
        \centering
        \includegraphics[scale=0.4]{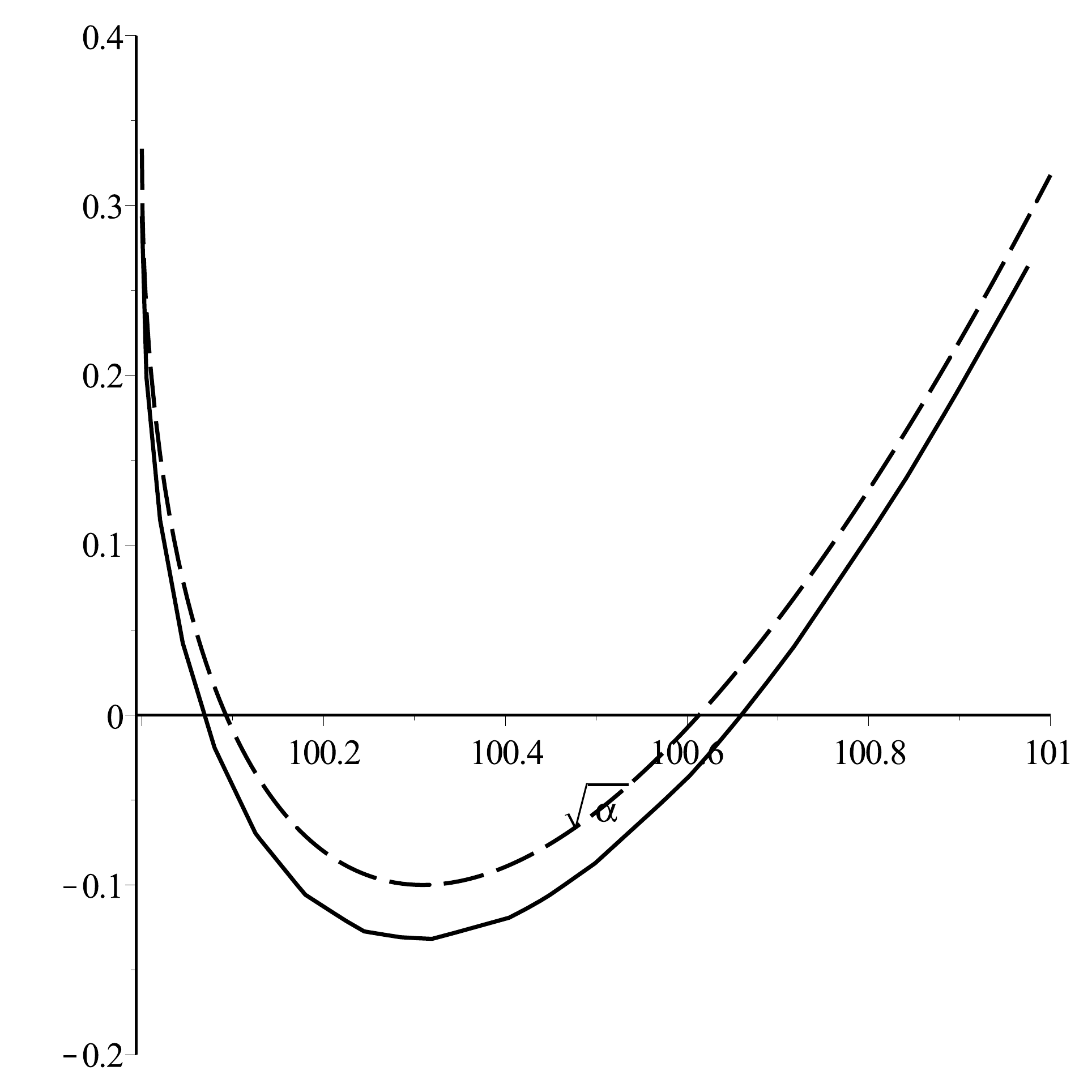} 
        \caption{This figure plots the function $v_k^{3/2} \left(\cT(\alpha)-\pi\right)/4$ (solid line) against $\sqrt{\alpha}$ for $\sqrt{\alpha}\in[100,101)$, i.e., for $v_k=100$ and $b\in[0,1)$. The dotted line shows the function $\frac{1}{3}(2b+1)^{3/2} - \sqrt{2b}$.}
        \label{fig:cT(alpha)}
\end{figure}

We prove this theorem via a number of lemmas, the proofs of which we postpone until the next section.
To begin, we consider the formula for the period function $\cT(\alpha)$, 
as given in proposition \ref{prop:T(alpha)} (page \pageref{prop:T(alpha)}):
\begin{equation} \label{eq:cT(alpha)_II}
 \frac{\cT(\alpha)}{8} = \frac{P^{-1}(\alpha/2)}{2v_1+1} -2 \sum_{n=v_1+1}^{v_k} \frac{P^{-1}(\alpha - n^2)}{4n^2-1}.
\end{equation}
The function $P^{-1}$, defined in (\ref{def:Pinv}), admits the alternative expression:
\begin{equation} \label{eq:Pinv2}
 P^{-1}(x) = \sqrt{x} - \frac{\{\sqrt{x}\}(1-\{\sqrt{x}\})}{2\fl{ \sqrt{x} } + 1},
\end{equation}
where $\{ x \} = x-\fl{x}$ represents the fractional part of a real number $x$. 
For large argument, $P^{-1}$ is well approximated by a square-root. 
We use this fact to approximate the summand in (\ref{eq:cT(alpha)_II}).

\begin{lemma} \label{lemma:f_sum} \label{LEMMA:F_SUM}
As $\alpha\rightarrow\infty$, we have:
 \begin{equation} \label{eq:sqrt_estimate}
 \sum_{n=v_1+1}^{v_k} \left( \frac{P^{-1}(\alpha - n^2)}{4n^2-1} - \frac{\sqrt{\alpha - n^2}}{4n^2} \right) = O\left(\frac{1}{\alpha}\right),
\end{equation}
where $v_1=\fl{\sqrt{\alpha/2}}$ and $v_k=\fl{\sqrt{\alpha}}$.
\end{lemma}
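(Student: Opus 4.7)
The plan is to substitute the alternative expression (\ref{eq:Pinv2}) for $P^{-1}$ into the summand and estimate the two resulting error terms independently. Writing $x_n = \alpha - n^2$ and $\beta_n = \{\sqrt{x_n}\}$, I obtain the decomposition
\begin{equation*}
\frac{P^{-1}(x_n)}{4n^2 - 1} - \frac{\sqrt{x_n}}{4n^2} \;=\; \underbrace{\frac{\sqrt{x_n}}{4n^2(4n^2 - 1)}}_{(I)} \;-\; \underbrace{\frac{\beta_n(1-\beta_n)}{(4n^2 - 1)(2\lfloor\sqrt{x_n}\rfloor + 1)}}_{(II)},
\end{equation*}
where $(I)$ arises from the denominator discrepancy $\tfrac{1}{4n^2 - 1} - \tfrac{1}{4n^2}$, and $(II)$ from the correction $\sqrt{x} - P^{-1}(x)$.

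For $(I)$, the bounds $\sqrt{x_n} \leq \sqrt{\alpha}$ and $n > \sqrt{\alpha/2}$ yield a summand of at most $\sqrt{\alpha}/(16 n^4)$, and comparison with the convergent tail $\int_{v_1}^{\infty} n^{-4}\,dn = 1/(3v_1^3)$ produces a total of order $\sqrt{\alpha}/v_1^3 = O(\alpha^{-1})$. For $(II)$, I would first use the bound $\beta_n(1 - \beta_n) \leq 1/4$, and then isolate the endpoint $n = v_k$, where $x_{v_k}$ may be small or even zero: using only $2\lfloor\sqrt{x_{v_k}}\rfloor + 1 \geq 1$, this term contributes $O(v_k^{-2}) = O(\alpha^{-1})$. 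For the interior terms $v_1 + 1 \leq n \leq v_k - 1$, I would combine two elementary estimates: first, the inequality $2\lfloor\sqrt{x}\rfloor + 1 \geq \sqrt{x}$ (valid for all $x \geq 0$, checked by cases on $\lfloor\sqrt{x}\rfloor$), and second, the factorisation $x_n = (v_k + b - n)(v_k + b + n) \geq (v_k - n)\,v_k$, where $b = \{\sqrt{\alpha}\}$, which uses only $b \geq 0$ and $n \geq 1$. Together with $4n^2 - 1 \geq \alpha$ (since $n^2 > \alpha/2$), these yield
\begin{equation*}
(II) \;\leq\; \frac{1}{4\alpha\sqrt{v_k}} \cdot \frac{1}{\sqrt{v_k - n}},
\end{equation*}
and substituting $k = v_k - n$, the sum reduces to $\sum_{k=1}^{v_k - v_1 - 1} k^{-1/2} \leq 2\sqrt{v_k}$, giving a total contribution of $O(\alpha^{-1})$, as required.

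The main obstacle — which this approach is designed to bypass — is that a direct integral comparison for $(II)$ is delicate: the natural integrand $1/(4n^2\sqrt{\alpha - n^2})$ is non-monotonic on $[\sqrt{\alpha/2}, \sqrt{\alpha}]$ (its critical point $n = \sqrt{2\alpha/3}$ lies inside the interval) and has a square-root singularity at the upper endpoint $n = \sqrt{\alpha}$. Factoring the square root as $\sqrt{(v_k - n)\,v_k}$ separates the problematic behaviour into a discrete tail sum $\sum k^{-1/2}$, which has a sharp known bound and avoids the need for any careful endpoint analysis.
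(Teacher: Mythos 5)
Your decomposition into the error terms (I) and (II) is identical to the paper's, as is the bound $\beta_n(1-\beta_n)\le 1/4$, the use of $2\lfloor\sqrt{x}\rfloor+1\ge\sqrt{x}$, and the separate treatment of the endpoint $n=v_k$. The routes diverge in how the interior terms are summed. The paper merges (I) and (II) into a single bound $\tfrac{1}{6n^2\sqrt{\alpha-n^2}}$ (via $4n^2-1\ge 3n^2$), then compares the sum $\sum_{n=v_1+1}^{v_k-1}\tfrac{1}{n^2\sqrt{\alpha-n^2}}$ directly to the integral $\int_{v_1+1/2}^{v_k-1/2}x^{-2}(\alpha-x^2)^{-1/2}\,dx$, evaluated via $x=\sqrt{\alpha}\cos\theta$, with a claimed multiplicative error $1+O(\alpha^{-1/4})$. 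That relative-error claim is not justified in detail, and, exactly as you observe, the integrand is non-monotonic on the range and singular at the upper endpoint, so the sum-to-integral step is genuinely delicate. Your factorisation $x_n=(v_k+b-n)(v_k+b+n)\ge(v_k-n)v_k$ replaces that comparison with the trivial tail estimate $\sum_{k=1}^{v_k}k^{-1/2}\le 2\sqrt{v_k}$, producing the same $O(1/\alpha)$ bound by elementary means, with no approximation step and no forward dependence on the trigonometric computations of the following lemma. The only slip is cosmetic: in (I), $4n^2(4n^2-1)\ge 12n^4$ for $n\ge 1$, so your summand bound should read $\sqrt{\alpha}/(12n^4)$ rather than $\sqrt{\alpha}/(16n^4)$; the $O(1/\alpha)$ conclusion is unaffected.
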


Then we approximate the sum in equation (\ref{eq:sqrt_estimate}) with an integral. 
To do this, we note that for any function $f$ which is integrable on the interval $[v_1,v_k]$, we can re-write the sum over $f$ as:
\begin{equation} \label{eq:sum_int_expansion}
 \sum_{n=v_1+1}^{v_k} f(n) = \int_{v_1+1/2}^{v_k-1/2} f(x) \,dx + f(v_k) + \sum_{n=v_1+1}^{v_k-1} \int_{n-1/2}^{n+1/2} f(n) - f(x) \, dx.
\end{equation}
All but one of the terms in the sum are approximated by an integral, with the sum over integrals constituting the error in this approximation.
The remaining term---$f(v_k)$---cannot be approximated in this way since the interval on which $f$ is integrable does not allow.

We apply this formula to $f(x) = x^{-2}\sqrt{\alpha-x^2}$. Recall from (\ref{def:b}) that $b$ denotes the fractional part of $\sqrt{\alpha}$. We write $a$ for the fractional part of $\sqrt{\alpha/2}$:
\begin{equation} \label{def:a}
 \frac{\alpha}{2} = (v_1+a)^2 \hskip 40pt a\in[0,1),
\end{equation}
which gives the following expression for the behaviour of the integral in (\ref{eq:sum_int_expansion}).

\begin{lemma} \label{lemma:f_integral} \label{LEMMA:F_INTEGRAL}
As $\alpha\rightarrow\infty$, we have:
\begin{equation*} 
\int_{v_1+1/2}^{v_k-1/2} \frac{\sqrt{\alpha - x^2}}{x^2} \, dx
= 1 - \frac{\pi}{4} + \frac{2a-1}{\sqrt{2\alpha}} - \frac{1}{3}\frac{(2b+1)^{3/2}}{\alpha^{3/4}} + O\left(\frac{1}{\alpha}\right),
\end{equation*}
where $a$ and $b$ denote the fractional parts of $\sqrt{\alpha/2}$ and $\sqrt{\alpha}$, respectively.
\end{lemma}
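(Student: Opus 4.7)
The plan is to evaluate the integral in closed form and then expand each boundary contribution asymptotically. A routine check gives the antiderivative
\begin{equation*}
\int \frac{\sqrt{\alpha-x^2}}{x^2}\,dx
= -\frac{\sqrt{\alpha-x^2}}{x} - \arcsin\!\left(\frac{x}{\sqrt{\alpha}}\right) + C,
\end{equation*}
so that, writing $l = v_1 + 1/2$ and $u = v_k - 1/2$, the integral is
\begin{equation*}
\left[-\frac{\sqrt{\alpha-u^2}}{u} + \frac{\sqrt{\alpha-l^2}}{l}\right]
+ \left[-\arcsin\!\left(\frac{u}{\sqrt{\alpha}}\right) + \arcsin\!\left(\frac{l}{\sqrt{\alpha}}\right)\right].
\end{equation*}
Using the parametrisations $\sqrt{\alpha} = v_k + b$ and $\sqrt{\alpha/2} = v_1 + a$, I would expand each of these four terms to order $O(\alpha^{-1})$.

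At the upper endpoint, $\alpha - u^2 = (b + 1/2)(2\sqrt{\alpha} - b - 1/2)$, from which
$\sqrt{\alpha - u^2} = (2b+1)^{1/2}\alpha^{1/4}[1 - (b+1/2)/(4\sqrt{\alpha}) + O(\alpha^{-1})]$; combined with the expansion of $1/u$ this yields a singular contribution of order $\alpha^{-1/4}$. The corresponding $\arcsin$ is handled by the expansion $\arccos(1-\delta) = \sqrt{2\delta}\,(1 + \delta/12 + O(\delta^2))$ with $\delta = (b+1/2)/\sqrt{\alpha}$, giving another $\alpha^{-1/4}$-singular contribution. At the lower endpoint, $l/\sqrt{\alpha} = 1/\sqrt{2} + (1/2 - a)/\sqrt{\alpha}$ is a regular point of $\arcsin$, so I would Taylor-expand $\arcsin$ at $1/\sqrt{2}$ using $\arcsin'(1/\sqrt{2}) = \sqrt{2}$ and $\arcsin''(1/\sqrt{2}) = 2$, and use the factorisation $\alpha - l^2 = (\alpha/2)\bigl(1 + (2a-1)\sqrt{2}/\sqrt{\alpha} - 2(a-1/2)^2/\alpha\bigr)$ to expand $\sqrt{\alpha - l^2}/l$ to order $O(\alpha^{-1})$.

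The key step---and the only one that is not completely mechanical---is verifying that the two $\alpha^{-1/4}$ contributions at the upper endpoint cancel: the term $-(2b+1)^{1/2}\alpha^{-1/4}$ from $-\sqrt{\alpha-u^2}/u$ cancels exactly with $+(2b+1)^{1/2}\alpha^{-1/4}$ from $-\arcsin(u/\sqrt{\alpha}) = -\pi/2 + \arccos(u/\sqrt{\alpha})$. This is essentially forced by integrability of $\sqrt{\alpha-x^2}/x^2$ near $x = \sqrt{\alpha}$, but must be tracked carefully to extract the next order. At order $\alpha^{-3/4}$, the corresponding contributions combine as
\begin{equation*}
\left[-\tfrac{3}{4} + \tfrac{1}{12}\right](2b+1)^{1/2}(b+1/2)\,\alpha^{-3/4}
= -\tfrac{2}{3}(2b+1)^{1/2}(b+1/2)\,\alpha^{-3/4}
= -\tfrac{1}{3}(2b+1)^{3/2}\,\alpha^{-3/4},
\end{equation*}
using $(b+1/2) = (2b+1)/2$; this produces the claimed $\alpha^{-3/4}$-term.

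Finally, at order $\alpha^{-1/2}$ only the lower endpoint contributes. From $\sqrt{\alpha-l^2}/l$ I would obtain $2(2a-1)/\sqrt{2\alpha}$ and from $\arcsin(l/\sqrt{\alpha})$ I would obtain $(1-2a)/\sqrt{2\alpha}$; these sum to $(2a-1)/\sqrt{2\alpha}$, matching the statement. The constant term is $-\pi/2 + \pi/4 = -\pi/4$ from the arcsines, plus $+1$ from $\sqrt{\alpha-l^2}/l$, giving $1 - \pi/4$. All remaining contributions are manifestly $O(\alpha^{-1})$. The main obstacle is purely bookkeeping: keeping track of the $\alpha^{-1/4}$ cancellation precisely enough to extract the genuine $\alpha^{-3/4}$ coefficient, which requires the second term in the expansion $\arccos(1-\delta) = \sqrt{2\delta}(1 + \delta/12 + \cdots)$ rather than just the leading one.
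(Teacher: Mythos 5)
Your proposal is correct and arrives at the same answer by what is essentially the same computation, only parametrised differently. The paper substitutes $x=\sqrt{\alpha}\cos\theta$ so the antiderivative becomes $\tan\theta-\theta$ and then expands $\theta_1,\theta_2,\tan\theta_1,\tan\theta_2$ separately, whereas you write the antiderivative $-\sqrt{\alpha-x^2}/x-\arcsin(x/\sqrt\alpha)$ directly in $x$; these are related by $\theta=\arccos(x/\sqrt\alpha)$, and your observation that the $\alpha^{-1/4}$ contributions from the two boundary terms cancel at $x=u$ to leave a coefficient $-\tfrac34+\tfrac1{12}=-\tfrac23$ on $(2b+1)^{1/2}(b+1/2)\alpha^{-3/4}$ is precisely the paper's cancellation between $\tan\theta_2$ (coefficient $3/8$) and $\theta_2$ (coefficient $1/24$), giving $3/8-1/24=1/3$ times $(2b+1)^{3/2}\alpha^{-3/4}$, which is the same thing.
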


Finally we define the function $\epsilon(b)$ as the rescaled limit of the error term in the expression (\ref{eq:sum_int_expansion}).
We defer the proof of lemma \ref{lemma:epsilon_bounds} to appendix \ref{chap:Appendix}.

\begin{lemma} \label{lemma:epsilon_bounds}
For $b\in[0,1)$ and $v_k\in\N$, let $v_1=\fl{(v_k+b)/\sqrt{2}}$.
Then the following limit exists
\begin{equation} \label{eq:epsilon(b)}
 \epsilon(b) = \lim_{v_k\rightarrow\infty} \left( v_k^{3/2} \; \sum_{n=v_1+1}^{v_k-1} 
      \int_{n-1/2}^{n+1/2} \frac{\sqrt{(v_k+b)^2 - n^2}}{n^2} - \frac{\sqrt{(v_k+b)^2 - x^2}}{x^2} \, dx \right),
\end{equation}
and satisfies
\begin{equation*} 
 \frac{1}{36} \, \frac{1}{\sqrt{3(b+1)}} \leq \epsilon(b) \leq \frac{1}{12} \, \frac{1}{\sqrt{b+1}} \, \frac{2b+3}{2b+2}.
\end{equation*}
\end{lemma}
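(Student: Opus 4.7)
The plan is to identify $\epsilon(b)$ with a convergent series of midpoint-rule quadrature errors and then bound that series by elementary majorisations. Writing $n=v_k-m$ and using the factorisation $(v_k+b)^2-(v_k-m)^2=(2v_k+b-m)(m+b)$, one checks that for each fixed $m\geq 1$ and uniformly in $t\in[-1/2,1/2]$,
\begin{equation*}
v_k^{3/2}\,\frac{\sqrt{(v_k+b)^2-(v_k-m+t)^2}}{(v_k-m+t)^2} \,\longrightarrow\, \sqrt{2(m+b-t)} \qquad \text{as } v_k\to\infty.
\end{equation*}
Consequently the summand corresponding to $n=v_k-m$, after multiplication by $v_k^{3/2}$, converges to
\begin{equation*}
c_m \,:=\, \sqrt{2(m+b)}\,-\,\int_{m+b-1/2}^{m+b+1/2}\sqrt{2u}\,du,
\end{equation*}
which is exactly the midpoint-rule error for the concave function $u\mapsto\sqrt{2u}$ on a unit interval centred at $u=m+b$.

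To justify the exchange of limit and summation, I would truncate the sum at $v_k-n=M$ and bound the tail uniformly in $v_k$. The standard quadrature error formula $E_n=-f''(\xi_n)/24$ with $f(x)=\sqrt{(v_k+b)^2-x^2}/x^2$, combined with the estimate $|f''(x)|=O\bigl((v_k+b)^2\,x^{-2}\bigl((v_k+b)^2-x^2\bigr)^{-3/2}\bigr)$ governing the region near $x=v_k+b$, shows that $v_k^{3/2}|E_{v_k-m}|$ is dominated by a constant multiple of $(m+b)^{-3/2}$, uniformly in $v_k$. This summable majorant permits dominated convergence and yields $\epsilon(b)=\sum_{m=1}^{\infty}c_m$.

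For the bounds, the midpoint-rule error formula applied to $u\mapsto\sqrt{2u}$, whose second derivative is $-(2u)^{-3/2}$, gives
\begin{equation*}
c_m \,=\, \frac{1}{24\,(2\xi_m)^{3/2}}, \qquad \xi_m\in\bigl(m+b-\tfrac{1}{2},\,m+b+\tfrac{1}{2}\bigr).
\end{equation*}
For $m\geq 1$ and $b\geq 0$ the elementary inequality $m+b\leq 2\xi_m\leq 3(m+b)$ yields
\begin{equation*}
\frac{1}{72\sqrt{3}\,(m+b)^{3/2}} \,\leq\, c_m \,\leq\, \frac{1}{24\,(m+b)^{3/2}}.
\end{equation*}
Comparing $\sum_{m=1}^{\infty}(m+b)^{-3/2}$ to the integral $\int_{1}^{\infty}(m+b)^{-3/2}\,dm=2(b+1)^{-1/2}$ gives the lower bound $2(b+1)^{-1/2}$ on the sum, while $\sum_{m=1}^{\infty}(m+b)^{-3/2}\leq(1+b)^{-3/2}+2(b+1)^{-1/2}=(2b+3)(b+1)^{-3/2}$ gives the upper bound. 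Multiplying through by the per-term bounds produces precisely the stated inequalities for $\epsilon(b)$.

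The main obstacle is the uniform tail estimate. For each fixed $m$ the termwise convergence is immediate, but controlling the summands with $m$ comparable to $v_k$---where the small-$m$ approximation breaks down and where in fact $f$ changes convexity, so that individual terms $E_n$ may change sign---requires a careful uniform majorant. One must verify that the bound $v_k^{3/2}|E_{v_k-m}|=O((m+b)^{-3/2})$ holds throughout the summation range, not merely asymptotically for small $m$.
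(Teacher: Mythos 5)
Your proposal is correct and takes a genuinely different, cleaner route than the paper's. The paper substitutes $y=x-n$, expands the integrand in powers of $1/n$ via Taylor's theorem with an explicit remainder $R_2$, isolates the contribution of $R_2$, bounds it termwise, and then establishes convergence of the limit by a somewhat laborious Cauchy-sequence argument (reindexing the sum, interchanging with a binomial series, and tracking the discrepancy between $S_j(v_k)$ and $S_j(v_k+l)$). You instead substitute $m=v_k-n$, compute the termwise limit $c_m$ directly — recognising it as the midpoint-rule error of $u\mapsto\sqrt{2u}$ on a unit interval centred at $m+b$ — and invoke dominated convergence. The two routes arrive at exactly the same per-term bounds ($72\sqrt{3}$ and $24$ factors) and hence the same final inequalities, but yours makes the structure transparent: the rescaled quadrature errors have a universal limit, and the error of $\sqrt{2u}$ is explicitly computable from $f''(u)=-(2u)^{-3/2}$. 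The paper's series-in-$j$ machinery is essentially packaging the same second-order information less directly.

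The one point you flag as a "main obstacle" is in fact sound and deserves being spelled out, because it is where the paper's argument expends most of its effort. Computing $f''$ explicitly for $f(x)=\sqrt{R^2-x^2}/x^2$ with $R=v_k+b$ gives
\begin{equation*}
f''(x) \,=\, \frac{6R^4-9R^2x^2+2x^4}{x^4\,(R^2-x^2)^{3/2}},
\end{equation*}
whose numerator changes sign at $x/R=\sqrt{(9-\sqrt{33})/4}\approx 0.9$, inside the summation range $n/R\in(1/\sqrt{2},1)$ — so individual $E_n$ do change sign, as you observe. But $|6R^4-9R^2x^2+2x^4|\leq 17R^4$ on $[0,R]$, and $x\geq v_1$ gives $R^4/x^4\leq 4+O(1/v_k)$, so $|f''(\xi_n)|\lesssim(R^2-\xi_n^2)^{-3/2}$ uniformly over the range. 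Since $R^2-\xi_n^2\geq(m+b-\tfrac{1}{2})(2v_k+b-m+\tfrac{1}{2})\gtrsim(m+b)v_k$ for $m\geq 1$ (using $2v_k-m\geq v_k$ over the range and $m+b-\tfrac{1}{2}\geq\tfrac{1}{4}(m+b)$), the majorant $v_k^{3/2}|E_{v_k-m}|\lesssim(m+b)^{-3/2}$ holds uniformly, which is exactly what dominated convergence needs. So the gap you identify is real but closes cleanly with the ingredients you already have in hand; once written out, this is a shorter and more illuminating proof than the one in the paper.
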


Using these three lemmas, we proceed with the proof of theorem \ref{thm:T_asymptotics}.

\begin{proof}[Proof of Theorem \ref{thm:T_asymptotics}]
The period $\cT(\alpha)$ of the flow on $\Pi(\alpha)$ is given by equation (\ref{eq:cT(alpha)_II}): 
combining this with lemma \ref{lemma:f_sum}, we have that $\cT(\alpha)$ satisfies
\begin{equation} \label{eq:cT(alpha)_asymptotics}
 \frac{\cT(\alpha)}{4} = \frac{2P^{-1}(\alpha/2)}{2v_1+1} - \sum_{n=v_1+1}^{v_k} \frac{\sqrt{\alpha - n^2}}{n^2} + O\left(\frac{1}{\alpha}\right)
\end{equation}
as $\alpha\rightarrow\infty$.

To evaluate the first term of the above, we recall the definition (\ref{def:a}) of $v_1$ and $a$ as the integer and fractional parts of $\sqrt{\alpha/2}$, respectively, and apply the formula (\ref{eq:Pinv2}) for $P^{-1}$ to obtain:
\begin{align*}
\frac{P^{-1}(\alpha/2)}{2v_1+1} &=  \frac{\sqrt{\alpha/2}}{2v_1+1} + O\left(\frac{1}{\alpha}\right) \\
&=  \frac{v_1+a}{2v_1+1} + O\left(\frac{1}{\alpha}\right) \\
&=  \frac{1}{2} + \frac{a-1/2}{\sqrt{2\alpha}} + O\left(\frac{1}{\alpha}\right).
\end{align*}
For the sum, we apply the formula (\ref{eq:sum_int_expansion}) to $f(x) = x^{-2}\sqrt{\alpha-x^2}$, 
then use lemma \ref{lemma:f_integral} to get
\begin{align*}
 \sum_{n=v_1+1}^{v_k} \frac{\sqrt{\alpha - n^2}}{n^2} 
  &= 1 - \frac{\pi}{4} + \frac{2a-1}{\sqrt{2\alpha}} - \frac{1}{3}\frac{(2b+1)^{3/2}}{\alpha^{3/4}}  + \frac{\sqrt{\alpha - v_k^2}}{v_k^2} \\
 &\hskip 40pt + \sum_{n=v_1+1}^{v_k-1} \int_{n-1/2}^{n+1/2} \frac{\sqrt{\alpha - n^2}}{n^2} - \frac{\sqrt{\alpha - x^2}}{x^2} \, dx + O\left(\frac{1}{\alpha}\right).
\end{align*}
Using the definition (\ref{def:b}) of $b$, we observe that the $f(v_k)$ term behaves like:
\begin{align*}
 \frac{\sqrt{\alpha - v_k^2}}{v_k^2} &= \frac{\sqrt{2v_k b + b^2}}{v_k^2} \\
 &= \frac{\sqrt{2b}}{v_k^{3/2}} + O\left( \frac{1}{\alpha^{5/4}}\right) . 
\end{align*}

Substituting the above three expressions into (\ref{eq:cT(alpha)_asymptotics}), 
we find that the terms involving $a$ cancel, giving
\begin{align*}
 \frac{\cT(\alpha)-\pi}{4} &=  \frac{1}{3}\frac{(2b+1)^{3/2}}{\alpha^{3/4}} - \frac{\sqrt{2b}}{v_k^{3/2}} - \sum_{n=v_1+1}^{v_k-1} \int_{n-1/2}^{n+1/2} \frac{\sqrt{\alpha - n^2}}{n^2} - \frac{\sqrt{\alpha - x^2}}{x^2} \, dx + O\left(\frac{1}{\alpha}\right).
\end{align*}
We can replace the $\alpha^{3/4}$ in the denominator of the first term by $v_k^{3/2}$, since
 $$ \alpha^{-3/4} = (v_k+b)^{-3/2} = v_k^{-3/2} \left( 1 + O\left(\frac{1}{\sqrt{\alpha}}\right) \right). $$
Then multiplying by $v_k^{3/2}$, taking the limit, and noting the definition (\ref{eq:epsilon(b)}) of $\epsilon(b)$ gives the formula (\ref{eq:T_asymptotics}), as required.
The boundedness of $\epsilon(b)$ follows from lemma \ref{lemma:epsilon_bounds}.
\end{proof}

The key feature of the limiting distribution (\ref{eq:T_asymptotics}) is the singularity in its derivative at $b=0$, 
since it is the derivative of the period function which determines the behaviour of the integrable return map.

\subsection*{The integrable return map at infinity}

Recall the map $\Omega^e$ of theorem \ref{thm:Omega_e}: 
a linear twist map whose twist $K(e)$ (equation (\ref{def:K(e)})) 
determines the nonlinearity of the integrable flow $\varphi$ in $\cX^e$.
In this section we show that the twist is singular in the limit $e\rightarrow\infty$, 
and thus that there are two distinct regimes of asymptotic behaviour.

\begin{proposition} \label{prop:Tprime_asymptotics}
Let $b\in[0,1)$ and let $\alpha=\alpha(v_k,b)$ be given as in (\ref{def:b}).
Then as $v_k\rightarrow \infty$:
\begin{equation} \label{eq:Tprime_asymptotics}
 \frac{1}{2} (2v_1+1)^2 \cT^{\prime}(\alpha(v_k,b)) \rightarrow -4\delta_0(b),
\end{equation}
where $v_1=\fl{(v_k+b)/\sqrt{2}}$, and $\delta_0$ is the indicator function at zero:
 $$ \delta_0(x) = \left\{ \begin{array}{ll} 1 \quad & x=0, \\ 0 \quad & \mathrm{otherwise.}\end{array} \right. $$
\end{proposition}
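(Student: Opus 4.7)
The plan is to argue from the explicit formula (\ref{eq:Tprime(alpha)}) for $\cT'$, which expresses
\begin{displaymath}
-\tfrac{1}{2}(2v_1+1)^2\cT'(\alpha) \;=\; -2 + 8(2v_1+1)^2\, S(e), \qquad S(e) \;:=\; \sum_{n=v_1+1}^{v_k}\frac{1}{(4n^2-1)(2\fl{\sqrt{e-n^2}}+1)},
\end{displaymath}
on each class $\cI^e$, where $v_1 = \fl{\sqrt{e/2}}$ and $v_k = \fl{\sqrt{e}}$. Writing $K(e)$ for the left-hand side, the proposition is equivalent to $K(e) \to 4\delta_0(b)$ as $v_k \to \infty$. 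One has $(2v_1+1)^2 = 2e + O(\sqrt e)$ and $e = v_k^2 + O(v_k)$, so the multiplier $8(2v_1+1)^2$ behaves like $16v_k^2$.

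The idea is to split $S(e) = S_{\mathrm{end}}(e) + S_{\mathrm{bulk}}(e)$, where $S_{\mathrm{end}}$ is the single summand $n = v_k$ and $S_{\mathrm{bulk}}$ is the remainder, and to show that the dichotomy is carried entirely by $S_{\mathrm{end}}$: specifically,
\begin{displaymath}
8(2v_1+1)^2\, S_{\mathrm{end}}(e) \longrightarrow 4\delta_0(b), \qquad 8(2v_1+1)^2\, S_{\mathrm{bulk}}(e) \longrightarrow 2.
\end{displaymath}
For the endpoint: when $b = 0$, the identity $\alpha(v_k,0) = v_k^2$ forces the class label $e = v_k^2$, so $\fl{\sqrt{e-v_k^2}} = 0$ and $S_{\mathrm{end}} = 1/(4v_k^2-1)$, whose product with $8(2v_1+1)^2$ tends to $4$. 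When $b > 0$, all critical numbers of the form $v_k^2 + k^2$ with $1 \le k \le \fl{\sqrt{2v_kb+b^2}}$ lie in $(v_k^2, \alpha(v_k,b)]$, so the class label satisfies $e - v_k^2 \ge \fl{\sqrt{2v_kb+b^2}}^2 \to \infty$; hence $\fl{\sqrt{e-v_k^2}}$ diverges, and $8(2v_1+1)^2 S_{\mathrm{end}} \to 0$.

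For the bulk, the key point is that $e - n^2 \ge 2v_k - 1$ throughout the summation range, so that the replacement
\begin{displaymath}
\frac{1}{2\fl{\sqrt{e-n^2}}+1} \;=\; \frac{1}{2\sqrt{e-n^2}}\bigl(1 + O(v_k^{-1/2})\bigr)
\end{displaymath}
is uniform and produces a relative error that is $o(1)$ after multiplication by $(2v_1+1)^2$. The resulting main sum is then compared with the integral $\tfrac{1}{8}\int_{v_1+1/2}^{v_k-1/2}\frac{dn}{n^2\sqrt{e-n^2}}$ via the midpoint decomposition (\ref{eq:sum_int_expansion}), whose remainder can be controlled with the same sandwich technique used in lemma \ref{lemma:epsilon_bounds}. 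The substitution $n = \sqrt{e}\sin\theta$ reduces the integral to the elementary primitive $-\cot\theta/(8e)$; in the limit $v_k \to \infty$ the lower endpoint contributes $\cot\theta \to 1$ (since $(v_1+1/2)/\sqrt e \to 1/\sqrt 2$) and the upper endpoint contributes $\cot\theta \to 0$ (since $\cos\theta \sim \sqrt{(2b+1)/v_k}$), so the integral is $(1+o(1))/(8e)$, and its product with $8(2v_1+1)^2 \sim 16e$ tends to $2$.

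The principal obstacle is the uniform-in-$b$ control of the two bulk error sources (floor versus square root, sum versus integral), and in particular keeping the estimates uniform for $b$ near zero, since for small $b$ the class label $e$ may lie far from $\alpha(v_k,b)$. Both errors have the same character as those in lemmas \ref{lemma:f_sum} and \ref{lemma:epsilon_bounds}, and I would handle them by sandwiching each summand between integrals of the monotone branches of $n \mapsto 1/(n^2\sqrt{e-n^2})$ on $[n-1/2,n+1/2]$; a routine bound shows that both errors are $o(v_k^{-2})$, and so are absorbed by the multiplier $(2v_1+1)^2 \sim 2v_k^2$. Assembling bulk and endpoint contributions yields $K(e) = -2 + 2 + 4\delta_0(b) = 4\delta_0(b)$, which is the statement of the proposition.
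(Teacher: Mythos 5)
Your proposal is correct and takes essentially the same approach as the paper's proof: isolate the $n=v_k$ summand as the source of the dichotomy, and show that the bulk sum, once approximated by $\int x^{-2}(\alpha-x^2)^{-1/2}\,dx$, contributes exactly $2$ after multiplication by $8(2v_1+1)^2$, cancelling the leading constant in formula (\ref{eq:Tprime(alpha)}). Your two minor presentational variations---a sine rather than cosine substitution for the integral, and the critical-number argument for $\fl{\sqrt{e-v_k^2}}\to\infty$ in place of the paper's direct use of $\fl{\sqrt{\alpha-v_k^2}}=\fl{\sqrt{2v_kb+b^2}}$---do not change the structure of the argument.
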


The convergence (\ref{eq:Tprime_asymptotics}) is not uniform: 
if $b$ is close to zero, the convergence can be made arbitrarily slow.
For a plot of $(2v_1+1)^2 \cT^{\prime}(\alpha)/2$, see figure \ref{fig:rho_bar}(a), page \pageref{fig:rho_bar}.

The function $\cT^{\prime}$ is piecewise-constant on the sequence of intervals $\cIe$, $e\in\cE$.
To observe the convergence (\ref{eq:Tprime_asymptotics}) in the sequence of twists $K(e)$ for a given value of $b\in[0,1)$,
we define the subsequence of critical numbers $e(v_k,b)$ satisfying
\begin{equation} \label{def:e(vk,b)}
 \alpha(v_k,b)=(v_k+b)^2\in \cI^{e(v_k,b)} \hskip 40pt v_k\in\N.
\end{equation}
Then as $v_k\to\infty$, $K(e(v_k,b))\to 4\delta_0(b)$, as above.
The two regimes of behaviour for the sequence of return maps $\Omega^e$ follow directly.

\begin{corollary} \label{corollary:Omega_asymptotics}
Let $b\in[0,1)$ and let $e(v_k,b)$ be as above.
If $b=0$, then the sequence $e(v_k,b)$ is simply the sequence of squares,
and as $v_k\rightarrow\infty$, the sequence of functions $\Omega^{e(v_k,b)}$ converges pointwise to a limiting function $\Omega^{\infty}$ given by
\begin{equation*}
 \Omega^{\infty}: \bbS^1\times\R \rightarrow \bbS^1\times\R
 \hskip 40pt 
 \Omega^{\infty}(\theta,\rho) = \left( \theta + 4\rho, \rho \right).
\end{equation*}
If $b>0$, then the sequence $\Omega^{e(v_k,b)}$ converges pointwise to the identity.
\end{corollary}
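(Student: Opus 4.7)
The plan is to reduce the corollary directly to Proposition \ref{prop:Tprime_asymptotics} via the definition of the twist and the explicit form of $\Omega^e$. By (\ref{def:K(e)}) and (\ref{def:Omega^e}), the action of $\Omega^{e(v_k,b)}$ on any fixed $(\theta,\rho)\in\bbS^1\times\R$ is governed entirely by the scalar quantity
\begin{displaymath}
 K(e(v_k,b)) = -\frac{1}{2}(2v_1+1)^2\,\cT^{\prime}(\alpha(v_k,b)),
\end{displaymath}
with $v_1=\fl{(v_k+b)/\sqrt{2}}$. Proposition \ref{prop:Tprime_asymptotics} gives immediately that $K(e(v_k,b))\to 4\delta_0(b)$ as $v_k\to\infty$, and everything in the corollary flows from this.

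Before invoking the proposition, I would verify the parametrisation in the case $b=0$. Here $\alpha(v_k,0)=v_k^2$ is a perfect square, hence a sum of two squares (via the trivial decomposition $v_k^2=v_k^2+0^2$), so $v_k^2\in\cE$. Using the convention $\cT^{\prime}(e)=\lim_{\alpha\to e^+}\cT^{\prime}(\alpha)$ adopted just after Theorem \ref{thm:Omega_e}, the critical number $e(v_k,0)$ coincides with $v_k^2$, and the subsequence $(e(v_k,0))_{v_k\in\N}$ is precisely the sequence of perfect squares, as claimed. For $b>0$, $\alpha(v_k,b)$ lies in the interior of some $\cI^{e(v_k,b)}$ for all sufficiently large $v_k$, so no boundary ambiguity arises.

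With the parametrisation in hand, I would split into the two regimes and pass to the limit in the formula $\Omega^{e(v_k,b)}(\theta,\rho)=(\theta+K(e(v_k,b))\rho,\rho)$. When $b=0$, $K(e(v_k,0))\to 4$, giving the pointwise limit $(\theta+4\rho,\rho)=\Omega^{\infty}(\theta,\rho)$. When $b>0$, $K(e(v_k,b))\to 0$, giving the pointwise limit $(\theta,\rho)$, the identity on $\bbS^1\times\R$.

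There is no real obstacle: the analytical content is entirely contained in Theorem \ref{thm:T_asymptotics} and Proposition \ref{prop:Tprime_asymptotics}, and the corollary is merely a repackaging of those results through the conjugacy $\eta^e$. The only subtle point worth flagging in writing is that the convergence is pointwise but not uniform in $(\theta,\rho,b)$, inherited from the non-uniformity of the limit in Proposition \ref{prop:Tprime_asymptotics}: for $b$ close to zero the twist $K(e(v_k,b))$ decays arbitrarily slowly with $v_k$, which is precisely the phenomenon that produces the dichotomy described in the corollary.
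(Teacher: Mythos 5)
Your proof is correct and follows exactly the route the paper intends: the paper leaves the corollary as an immediate consequence of Proposition \ref{prop:Tprime_asymptotics} combined with the formula (\ref{def:Omega^e}), noting only that $K(e(v_k,b))\to 4\delta_0(b)$, which is precisely what you spell out, including the small parametrisation check at $b=0$.
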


The (reversing) symmetries of $\Omega^{e(v_k,b)}$ also converge as $v_k\rightarrow\infty$, 
leading, for example, to an asymptotic version of $\cH^e$. 
Here we note in particular the translation invariance (\ref{eq:rho_bar}) of the sequence $\Omega^{e(v_k,b)}$, \hl{whose magnitude satisfies}
\begin{equation} \label{eq:rho_bar_asymptotics}
 |\bar{\rho}| \rightarrow \left\{ \begin{array}{ll} 1/4 & \quad b=0 \\ \infty & \quad b>0 \end{array} \right.
\end{equation}
as $v_k\rightarrow\infty$.

\section{Proofs for section \ref{sec:limiting_dynamics}}

In the following proofs, we make extensive use of Taylor's Theorem (see, for example, \cite[Theorem 4.82]{Burkill}).

\begin{proof}[Proof of lemma \ref{lemma:f_sum}]
For any $n$ in the range $v_1+1\leq n\leq v_k$, the alternative form (\ref{eq:Pinv2}) of the function $P^{-1}$ gives us that
\begin{align*}
 \left| \, \frac{P^{-1}(\alpha-n^2)}{4n^2-1} -\frac{\sqrt{\alpha-n^2}}{4n^2} \, \right|
	 &= \left| \, \frac{\sqrt{\alpha - n^2}}{4n^2(4n^2-1)}  -
	 \frac{\{\sqrt{\alpha - n^2}\}(1-\{\sqrt{\alpha - n^2}\})}{(4n^2-1)(2\fl{ \sqrt{\alpha - n^2} } + 1)} \, \right|\\
	 &\leq \frac{\sqrt{\alpha - n^2}}{4n^2(4n^2-1)} + \frac{1/4}{(4n^2-1)(2\fl{ \sqrt{\alpha - n^2} } + 1)},
\end{align*}
where the inequality follows from the triangle inequality, and from the observation that
 $$ 0 \leq x(1-x) \leq 1/4 \hskip 40pt x\in[0,1]. $$
Furthermore, since $n\geq 1$, we have $4n^2-1 \geq 3n^2$, and hence
\begin{equation} \label{eq:f_sum_inequalityI}
 \left| \, \frac{P^{-1}(\alpha-n^2)}{4n^2-1} -\frac{\sqrt{\alpha-n^2}}{4n^2} \, \right|
	 \leq \frac{1}{12n^2} \left( \frac{\sqrt{\alpha - n^2}}{n^2} + \frac{1}{2\fl{ \sqrt{\alpha - n^2} } + 1} \right).
\end{equation}

Now we have two cases. For $n=v_k$, the square root $\sqrt{\alpha - n^2}$ satisfies
 $$ 0 \leq \sqrt{\alpha - v_k^2} < \sqrt{2v_k+1}, $$
so we observe from the inequality (\ref{eq:f_sum_inequalityI}) that
 $$ \left| \, \frac{P^{-1}(\alpha-v_k^2)}{4v_k^2-1} -\frac{\sqrt{\alpha-v_k^2}}{4v_k^2} \, \right|
	 < \frac{1}{12v_k^2} \left( \frac{\sqrt{2v_k+1}}{v_k^2} + 1 \right) = O\left(\frac{1}{\alpha}\right). $$
For $n<v_k$, the square root satisfies
 $$ 0 < \sqrt{\alpha - n^2} < 2\fl{\sqrt{\alpha - n^2}} + 1, $$
in which case the inequality (\ref{eq:f_sum_inequalityI}) gives
  $$ \left| \, \frac{P^{-1}(\alpha-n^2)}{4n^2-1} -\frac{\sqrt{\alpha-n^2}}{4n^2} \, \right|
	 <  \frac{1}{12n^2\sqrt{\alpha - n^2}} \left( \frac{\alpha - n^2}{n^2} + 1 \right) < \frac{1}{6n^2\sqrt{\alpha - n^2}}, $$
where the last inequality uses the fact that $n\geq v_1+1>\sqrt{\alpha/2}$.

Summing over $n$, we obtain
\begin{align} \label{eq:f_sum}
  \left| \, \sum_{n=v_1+1}^{v_k} \left( \frac{P^{-1}(\alpha-n^2)}{4n^2-1} -\frac{\sqrt{\alpha-n^2}}{4n^2} \right) \, \right|
  &\leq  \sum_{n=v_1+1}^{v_k} \,  \left| \, \frac{P^{-1}(\alpha-n^2)}{4n^2-1} -\frac{\sqrt{\alpha-n^2}}{4n^2} \, \right| \nonumber \\
	 &< \sum_{n=v_1+1}^{v_k-1} \, \left( \frac{1}{6n^2\sqrt{\alpha - n^2}} \right) + O\left(\frac{1}{\alpha}\right).
\end{align}

We can approximate the sum on the right hand side of (\ref{eq:f_sum}) with an integral:
\begin{align}
 \sum_{n=v_1+1}^{v_k-1} \frac{1}{n^2\sqrt{\alpha - n^2}} 
 &= \int_{v_1+1/2}^{v_k-1/2} \frac{1}{x^2\sqrt{\alpha - x^2}} \left(1 + O\left(\frac{1}{\alpha^{1/4}}\right) \right) \, dx \nonumber \\
 &= \frac{1}{\alpha} \, \Big[ \tan{\theta} \Big]_{\theta_2}^{\theta_1} \left(1 + O\left(\frac{1}{\alpha^{1/4}}\right) \right) \nonumber \\
 &= \frac{1}{\alpha} \left(1 + O\left(\frac{1}{\alpha^{1/4}}\right) \right), \label{eq:Tprime_bound2}
\end{align}
where we have used the substitution $x=\sqrt{\alpha}\cos{\theta}$, and $\tan{\theta_1}$, $\tan{\theta_2}$ are given by
\begin{align*}
 \tan{\theta_1} &= 1 + O\left(\frac{1}{\sqrt{\alpha}}\right) \\
 \tan{\theta_2} &= O\left(\frac{1}{\alpha^{1/4}}\right)
\end{align*}
(see equations (\ref{eq:tan(theta1)}) and (\ref{eq:tan(theta2)}) of the next proof).
Thus, combining (\ref{eq:Tprime_bound2}) and (\ref{eq:f_sum}), we have
 $$ \sum_{n=v_1+1}^{v_k} \left( \frac{P^{-1}(\alpha-n^2)}{4n^2-1} -\frac{\sqrt{\alpha-n^2}}{4n^2} \right)
  = O\left(\frac{1}{\alpha}\right). $$
\end{proof}

\medskip

\begin{proof}[Proof of lemma \ref{lemma:f_integral}]
We treat the integral using the substitution $x=\sqrt{\alpha}\cos{\theta}$, which gives:
\begin{align} \label{eq:tan_integral}
\int_{v_1+1/2}^{v_k-1/2} \frac{\sqrt{\alpha-x^2}}{x^2} \, dx
&= \int_{\theta_2}^{\theta_1} \tan^2\theta  \, d\theta \nonumber \\
&= \Big[ \tan\theta -\theta \, \Big]_{\theta_2}^{\theta_1},
\end{align}
where the limits $\theta_1$ and $\theta_2$ satisfy:
\begin{align}
 \cos{\theta_1} &= \frac{v_1+1/2}{\sqrt{\alpha}} = \frac{1}{\sqrt{2}}\left(  1 - \frac{a-1/2}{\sqrt{\alpha/2}} \right), \label{eq:cos1}\\
 \cos{\theta_2} &= \frac{v_k-1/2}{\sqrt{\alpha}} = 1 - \frac{b+1/2}{\sqrt{\alpha}}.  \label{eq:cos2}
\end{align}

Using Taylor's theorem, we have that
 $$ \cos^{-1}\left(\frac{1-x}{\sqrt{2}}\right) =  \frac{\pi}{4} + x + O(x^2) $$
as $x\rightarrow0$. Thus if we let $x=(a-1/2)/\sqrt{\alpha/2}$, then (\ref{eq:cos1}) gives that
\begin{equation} \label{eq:theta1}
 \theta_1 = \frac{\pi}{4} + \frac{a-1/2}{\sqrt{\alpha/2}} + O\left(\frac{1}{\alpha}\right)
\end{equation}
as $\alpha\rightarrow\infty$.

Next we have
 $$ \frac{\cos^{-1}(1-x)}{\sqrt{2x}} = 1 + \frac{x}{12} + O(x^2) $$
as $x\rightarrow0$. Applying this to (\ref{eq:cos2}) with $x=(b+1/2)/\sqrt{\alpha}$ gives
\begin{equation} \label{eq:theta2}
 \theta_2 = \frac{\sqrt{2b+1}}{\alpha^{1/4}}\left( 1 + \frac{1}{24} \frac{2b+1}{\sqrt{\alpha}} \right)
 + O\left(\frac{1}{\alpha^{5/4}}\right)
\end{equation}
as $\alpha\rightarrow\infty$.

Similarly, we find:
\begin{align}
 \tan{\theta_1} &= 1 + \frac{2a-1}{\sqrt{\alpha/2}}
      + O\left(\frac{1}{\alpha}\right), \label{eq:tan(theta1)}\\
 \tan{\theta_2} &= \frac{\sqrt{2b+1}}{\alpha^{1/4}}\left( 1 + \frac{3}{8} \frac{2b+1}{\sqrt{\alpha}} \right) 
      + O\left(\frac{1}{\alpha^{5/4}}\right). \label{eq:tan(theta2)}
\end{align}

Substituting the expressions (\ref{eq:theta1}), (\ref{eq:theta2}), (\ref{eq:tan(theta1)}) and (\ref{eq:tan(theta2)}) into the integral (\ref{eq:tan_integral}) gives the required result.
\end{proof}

\medskip

\begin{proof}[Proof of proposition \ref{prop:Tprime_asymptotics}]
From the formula (\ref{eq:Tprime(alpha)}) for the derivative of the period function, if $\alpha=(v_k+b)^2$ then
\begin{equation} \label{eq:Tprime(alpha)_II}
 \frac{1}{2} (2v_1+1)^2 \cT^{\prime}(\alpha) = 2 -8(2v_1+1)^2 \sum_{n=v_1+1}^{v_k} \frac{1}{(4n^2-1)(2\fl{\sqrt{\alpha - n^2}}+1)}.
\end{equation}
(Recall that if $\alpha\in\cIe$ for some $e\in\cE$, the floor function in the denominator of the summand satisfies $\fl{\sqrt{\alpha - n^2}} = \fl{\sqrt{e - n^2}}$.)

We consider the summand in (\ref{eq:Tprime(alpha)_II}). For $n$ in the range $v_1+1\leq n\leq v_k-1$, we have
\begin{equation*}
 \frac{1}{(4n^2-1)(2\fl{\sqrt{\alpha - n^2}}+1)} =  \frac{1}{8n^2\sqrt{\alpha - n^2}} \, \left( 1 - \frac{1}{4n^2} \right)^{-1} \left( 1 - \frac{\{\sqrt{\alpha - n^2}\}-1/2}{\sqrt{\alpha - n^2}} \right)^{-1}.
\end{equation*}
Here the square root is bounded below by
 $$ \sqrt{\alpha - n^2} \geq \sqrt{v_k^2 - (v_k-1)^2} = \sqrt{2v_k -1} > \alpha^{1/4}, $$
whereas $n^2 >\alpha/2$. Hence as $\alpha\rightarrow\infty$, we have
\begin{equation*} 
 \frac{1}{(4n^2-1)(2\fl{\sqrt{\alpha - n^2}}+1)} = \frac{1}{8n^2\sqrt{\alpha - n^2}} \left(1 + O\left(\frac{1}{\alpha^{1/4}}\right) \right).
\end{equation*}

Summing over $n$, we can use (\ref{eq:Tprime_bound2}) from the previous proof to obtain
\begin{align*}
 & 8(2v_1+1)^2 \sum_{n=v_1+1}^{v_k-1} \frac{1}{(4n^2-1)(2\fl{\sqrt{\alpha - n^2}}+1)} \\
 &= (2v_1+1)^2 \sum_{n=v_1+1}^{v_k-1} \frac{1}{n^2\sqrt{\alpha - n^2}} \left(1 + O\left(\frac{1}{\alpha^{1/4}}\right) \right) \\
 &= \frac{1}{\alpha} (2v_1+1)^2 \left(1 + O\left(\frac{1}{\alpha^{1/4}}\right) \right) \\
 &= 2 + O\left(\frac{1}{\alpha^{1/4}} \right)
\end{align*}
as $\alpha\rightarrow\infty$. Thus substituting this into (\ref{eq:Tprime(alpha)_II}), we see that only the $n=v_k$ term remains:
\begin{align} 
 \frac{1}{2} (2v_1+1)^2 \cT^{\prime}(\alpha) 
 &= \frac{-8(2v_1+1)^2}{(4v_k^2-1)\left(2\Bfl{\sqrt{\alpha-v_k^2}}+1\right)} + O\left(\frac{1}{\alpha^{1/4}} \right) \nonumber \\
 &= \frac{-4}{2\fl{\sqrt{2v_kb+b^2}}+1} \left(1 + O\left(\frac{1}{\sqrt{\alpha}}\right) \right) + O\left(\frac{1}{\alpha^{1/4}} \right). \label{eq:Tprime_final_term}
\end{align}
If $b>0$, then
\begin{equation*}
 \frac{1}{2\fl{\sqrt{2v_kb+b^2}}+1} \rightarrow 0
\end{equation*}
as $v_k\rightarrow\infty$. 
Thus the remaining term in (\ref{eq:Tprime_final_term}) goes to zero and $(2v_1+1)^2 \cT^{\prime}(\alpha)$ vanishes in the limit.
However, if $b=0$, then we have
 $$ \frac{1}{2} (2v_1+1)^2 \cT^{\prime}(\alpha) = -4 + O\left(\frac{1}{\alpha^{1/4}} \right). $$
\end{proof}

\chapter{The perturbed dynamics at infinity} \label{chap:PerturbedAsymptotics} \label{CHAP:PERTURBEDASYMPTOTICS}

Now we revert to the perturbed dynamics, and the return map $\Phi$ defined in section \ref{sec:Recurrence}. 
In chapter \ref{chap:PerturbedDynamics}, we showed that in the integrable limit $\lambda\to 0$, 
the map $\Phi$ has a natural finite structure over $\Xe$ for all $e\in\cE$: 
it is equivariant under the group of translations generated by the lattice $\lambda\Le$ 
(theorem \ref{thm:Phi_equivariance}, page \pageref{thm:Phi_equivariance}). 
Furthermore, in the previous chapter, we saw that under a suitable change of coordinates, 
the unperturbed motion corresponds to a linear twist map $\Omega^e$ on the cylinder 
(theorem \ref{thm:Omega_e}, page \pageref{thm:Omega_e}). 
The behaviour of the twist of $\Omega^e$ was shown to be singular in the limit $e\to\infty$.

We begin this chapter by reconciling these two features of $\Phi$---the lattice structure and the underlying twist map---and 
giving a qualitative description of the dynamics in the limit $e\to\infty$. 
We find that, under the aforementioned change of coordinates, 
the dynamics are that of a sequence of discrete twist maps with vanishing discretisation length. 
In the regime where the underlying twist vanishes in the limit, 
the remaining fluctuations result in intricate resonance structures, 
reminiscent of the island chains observed in Hamiltonian perturbation theory. 
Conversely, in the regime where the twist persists, 
there are no discernible phase space features.

In section \ref{sec:pdf} we turn to the period distribution function of $\Phi$. 
The reduction of $\Phi$ modulo the sequence of lattices $\lambda\Le$ 
provides a natural finite setting in which we can compare the periods of $\Phi$ 
to those of the random reversible map discussed in theorem \ref{thm:GammaDistribution} 
(page \pageref{thm:GammaDistribution}).
However, we find that the reduction is superfluous: 
the number of congruence classes of $\Z^2/\,\Le$ grows much faster than the range of a typical orbit. 
Furthermore, in the regime where the dynamics of $\Phi$ are asymptotically uniform, 
the period distribution function of $\Phi$ is well approximated by local data, 
calculated over a vanishing subset of congruence classes.

We describe an extensive numerical experiment in which we calculate the period distribution function of $\Phi$ 
for increasing values of $e$. 
The results suggests that, as $e\to\infty$, 
the distribution of periods approaches the universal distribution $\cR(x)$ of equation (\ref{def:R(x)}),
and thus is consistent with random reversible dynamics. 
As in the case of the random reversible map, we observe that symmetric periodic orbits dominate.

Throughout this chapter we adopt an informal approach, focussing on qualitative observations and numerical evidence.

\section{Qualitative description and phase plots}

Recall the map $\Omega^e$ (equation (\ref{def:Omega^e})), 
which corresponds to the unperturbed return map under the change of coordinates 
$\eta^e(\lambda):(x,y)\mapsto(\theta,\rho)$ of equation (\ref{def:rho_theta}). 
In the integrable limit $\lambda\to 0$, the image of the return domain 
$\cX^e$ under $\eta^e$ approaches the unit cylinder $\bbS^1\times\R$.
The domain $\Xe$ of the perturbed return map $\Phi$ is a subset of $\cX^e$, 
thus we can also apply the change of coordinates $\eta^e$ to $\Xe$.
Using the definition (\ref{def:Xe}) of $\Xe$, we see that its image under $\eta^e$ 
approaches the following set\footnote{Recall that in the definition of $\eta^e$, 
the preimage of the origin is some fixed point $z_0$ of the unperturbed dynamics. 
In the discrete case we round $z_0$ onto the lattice via the function $R_{\lambda}$ of equation (\ref{eq:R_lambda}). 
The choice of the fixed point $z_0$ will have no bearing on the qualitative or statistical properties of the dynamics.}
(see figure \ref{fig:Lattice2}):
\begin{equation} \label{eq:rho_theta_lattice}
 \left\{ \frac{1}{2(2v_1+1)} \, (i,i+2j) \,: \; i,j\in\Z, \; -(2v_1+1)\leq i < 2v_1+1 \right\} \subset \bbS^1\times\R,
\end{equation}
where $v_1$ of equation (\ref{def:v1}) is the integer part of $\sqrt{e/2}$.
(The image of the point $\lambda(x,y)$ corresponds to $i=x-y$ and $j=y-x_0$.)
This set is a rotated square lattice in the unit cylinder,
\hl{where the identification of points modulo $\langle \lambda\bfw_{v_1,v_1}\rangle$ 
in $(x,y)$ coordinates is reflected in the periodicity of the angular coordinate $\theta$.
Adjacent lattice points are separated by a lattice spacing of $1/\sqrt{2}(2v_1+1)$,
so that $e\to\infty$ corresponds to the continuum limit. }
We think of the dynamics of $\Phi$ as taking place in the $(\theta,\rho)$ coordinates, 
and of $\Phi$ as a discrete version of $\Omega^e$.

\begin{figure}[h]
  \centering
  \includegraphics[scale=0.8]{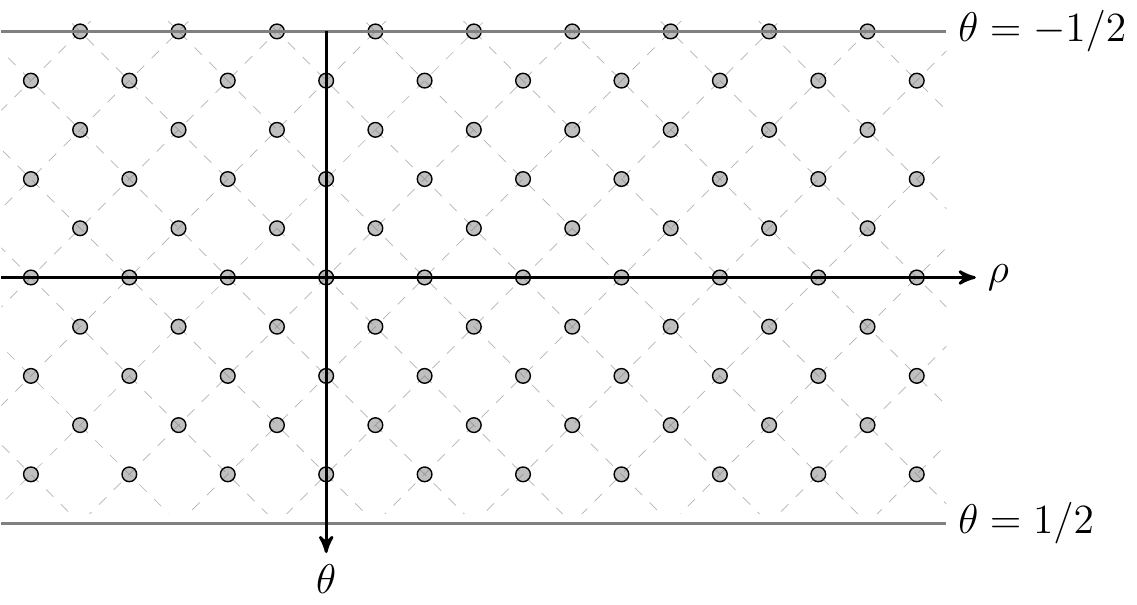} 
  \caption{The image of the lattice $\lZ$ under the change of coordinates $\eta^e(\lambda)$. 
  There are $2v_1+1$ lattice points per unit length in each of the coordinate directions.}
  \label{fig:Lattice2}
\end{figure}

The map $\Omega^e$ is a linear twist map, with characteristic length scale $\brho$ in the $\rho$-direction 
(see equation (\ref{eq:rho_bar})).
We compare this to the characteristic length scale of $\Le$: 
the lattice of section \ref{sec:MainTheorems} which characterises the symmetry properties of $\Phi$. 
In particular, $\Phi$ is invariant under translation by the vector $\lambda\bfL$ of (\ref{def:L}) which, 
in the $(\theta,\rho)$ coordinates, corresponds to the translation
\begin{equation} \label{eq:rho_tilde}
 \rho \mapsto \rho + \trho  \hskip 40pt \trho = \frac{q(e)}{(2v_1+1)^2}.
\end{equation}
A careful inspection of the definitions (\ref{eq:q}) of $q$ and (\ref{eq:Tprime(alpha)}) 
of $\cT^{\prime}(e)$ confirms that $\trho$ is an integer multiple of $\brho$, 
and hence that the group of symmetries of $\Phi$ generated by this translation form a 
subgroup of those of $\Omega^e$ generated by the translation (\ref{eq:rho_bar}).

\begin{proposition}
 For $e\in\cE$, let $\brho$ be the periodicity of $\Omega^e$ in the $\rho$-direction, as given by (\ref{eq:rho_bar}),
 and let $\trho$ be corresponding periodicity of $\Phi$ on $\eta^e(\Xe)$, as given by (\ref{eq:rho_tilde}).
 Then $\trho$ is an integer multiple of $\brho$.
\end{proposition}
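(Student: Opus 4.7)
The plan is to reduce the proposition to an arithmetic divisibility statement. Combining (\ref{eq:rho_bar}), (\ref{eq:rho_tilde}) and (\ref{def:K(e)}),
\[
 \frac{\trho}{\brho} \;=\; \trho\,K(e) \;=\; -\,\frac{q(e)\,\cT'(e)}{2},
\]
so the proposition is equivalent to $q(e)\,\cT'(e)/2\in\Z$. Substituting the formula (\ref{eq:Tprime(alpha)}) and writing $w_n:=\fl{\sqrt{e-n^2}}$ for the type of the integer-$x$ vertex on the line $x=n$,
\[
 \frac{q(e)\,\cT'(e)}{2} \;=\; \frac{2\,q(e)}{(2v_1+1)^2} \;-\; 8\,q(e)\sum_{n=v_1+1}^{v_k}\frac{1}{(2n-1)(2n+1)(2w_n+1)}.
\]
The first term is automatically an integer because $(2v_1+1)^2$ is one of the entries of the lcm defining $q$ in (\ref{eq:q}). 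I would then establish the stronger per-summand divisibility $(2n-1)(2n+1)(2w_n+1)\mid q(e)$ for every $n\in\{v_1+1,\ldots,v_k\}$, from which integrality of the second term follows at once.

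The crux is a local description of the vertex list around each integer-$x$ vertex. Using $w_n\leq n-1$ and comparing transit times, I would show that in the two boxes $B_{n-1,w_n}$ and $B_{n,w_n}$ adjacent to the line $x=n$, the flow enters through the top edge $y=w_n+1$ and leaves through the bottom edge $y=w_n$. In the generic case $w_n<n-1$, both horizontal lines lie strictly inside the first octant and supply two integer-$y$ vertices of types $n-1$ and $n$, flanking the integer-$x$ vertex of type $w_n$ in $V(e)$. The three types are distinct and consecutive, so the rule (\ref{eq:q}) immediately gives $(2n-1)(2w_n+1)\mid q(e)$ and $(2w_n+1)(2n+1)\mid q(e)$; combining these via $\gcd(2n-1,2n+1)=1$ yields the desired divisibility.

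The remaining case $w_n=n-1$ can arise only for $n=v_1+1$, when the polygon enters the first octant through $x=v_1+1$ rather than through $y=v_1$. Here the putative integer-$y$ vertex at $y=v_1+1$ lies above the symmetry line and is absent from the list, but the definition (\ref{eq:q}) supplies the factor $(2v_1+1)^2=(2n-1)^2=(2n-1)(2w_n+1)$ directly; combining with $(2w_n+1)(2n+1)=(2v_1+1)(2v_1+3)\mid q(e)$, which comes from the consecutive distinct pair $(v_1,v_1+1)$ that opens $V(e)$ in this regime, one again obtains $(2n-1)(2n+1)(2w_n+1)=(2n-1)^2(2n+1)\mid q(e)$. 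The main obstacle will be making the local flow comparison precise and confirming that this boundary configuration is absorbed by the $(2v_1+1)^2$ entry of (\ref{eq:q}); once the per-summand divisibility is in hand, the conclusion follows immediately.
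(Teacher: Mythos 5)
Your reduction to the per-summand divisibility $(2n-1)(2n+1)(2w_n+1)\mid q(e)$ with $w_n=\fl{\sqrt{e-n^2}}$, derived from the flanking vertex types $n-1$, $w_n$, $n$ together with coprimality of $2n-1$ and $2n+1$, is exactly the argument given in the paper, and your handling of the boundary case $n=v_1+1$ (where $w_n=v_1$) via the $(2v_1+1)^2$ entry of the least common multiple likewise coincides. The proof is correct and follows essentially the same route as the paper's.
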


\begin{proof}
Let $e\in\cE$ and $\alpha\in\cIe$, 
so that $\cT^{\prime}(e)=\cT^{\prime}(\alpha)$, and 
$\Pi(\alpha)$ belongs to the polygon class associated with $e$.

By (\ref{eq:rho_tilde}), $\trho$ is given by 
 $$ \trho = \frac{q(e)}{(2v_1+1)^2}. $$
The integer $q(e)$, given by (\ref{eq:q}), is the lowest common multiple of $(2v_1+1)^2$ and 
the sequence of factors $(2v_j+1)(2v_{j+1}+1)$, where $v_j$ and $v_{j+1}$ are consecutive
distinct vertex types of the vertex list $V(e)$.
Combining this with the definition (\ref{eq:rho_bar}) of $\brho$ 
and the formula (\ref{eq:Tprime(alpha)}) for $\cT^{\prime}(\alpha)$,
we have that the ratio $\trho/\brho$ is given by 
 $$ \frac{\trho}{\brho} = -\frac{q(e)}{2} \, \cT^{\prime}(e) 
 = -2q(e) \left( \frac{1}{(2v_1+1)^2} -4 \sum_{n=v_1+1}^{v_k} \frac{1}{(4n^2-1)(2\fl{\sqrt{e - n^2}}+1)} \right). $$
We claim that the denominator of every term in the bracketed sum divides $q(e)$.

To see that our claim holds, we note first that $(2v_1+1)^2$ divides $q(e)$ by construction.
Then, for every $n$ in the range $v_1+1\leq n \leq v_k$, we can write 
 $$ (4n^2-1)(2\fl{\sqrt{e - n^2}}+1) = (2n+1)(2\fl{\sqrt{e - n^2}}+1)(2n-1). $$
We must show that this product divides $q(e)$ for each $n$.
The first factor in this product divides $q(e)$, since for each $n$ 
there is at least one vertex $(x,v)$ of $\Pi(\alpha)$ in the first octant 
which has vertex type $n$, i.e., with $\fl{x}=n$ and $v\in\Z$.
Let $v$ be the maximal integer for which this occurs.

Now consider the vertex of $\Pi(\alpha)$ which occurs prior to $(x,v)$.
This vertex has coordinates $(n,y)$ and type $v=\fl{y}$ (see figure \ref{fig:ConsecutiveVertices}), 
where the properties (\ref{eq:SqrtP}) of $P$ give us that
 $$ \alpha = n^2 + P(y)\in\cIe \hskip 20pt \Rightarrow \hskip 20pt v = \fl{\sqrt{\alpha-n^2}} = \fl{\sqrt{e-n^2}}. $$
Consequently $n$ and $\fl{\sqrt{e-n^2}}$ are consecutive distinct vertex types,
and the product
 $$ (2n+1)(2\fl{\sqrt{e - n^2}}+1) $$
divides $q(e)$ by construction.

\begin{figure}[!h]
  \centering
  \includegraphics[scale=1]{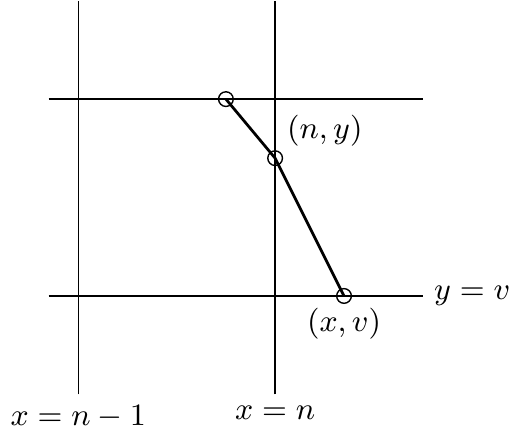} 
  \caption{Three consecutive vertices of the polygon $\Pi(\alpha)$.}
  \label{fig:ConsecutiveVertices}
\end{figure}

It remains to show that the product $(2\fl{\sqrt{e - n^2}}+1)(2n-1)$
divides $q(e)$: then the claim follows from the fact that $(2n+1)$ and $(2n-1)$
are consecutive odd numbers, and hence are coprime.
There are two cases to consider.
If $(n,y)$ is the first vertex, then $v=n-1=v_1$, and
 $$ (2\fl{\sqrt{e - n^2}}+1)(2n-1) = (2v_1+1)^2, $$
which we have already seen divides $q(e)$.
If $(n,y)$ is not the first vertex, then the vertex prior to this must be of type $n-1$.
Thus $n-1$ and $\fl{\sqrt{e-n^2}}$ are also consecutive distinct vertex types,
and the proof is complete.
\end{proof}

Thus we see that the symmetry properties of $\Phi$ are consistent with those of $\Omega^e$.
In fact, the characteristic length scale $\trho$ of the symmetry group of $\Phi$ 
is a diverging multiple of $\brho$---see table \ref{table:nu_range} (page \pageref{table:nu_range}).

Experimental observations confirm that $\Phi$ can be considered as a 
perturbation of $\Omega^e$, whose fluctuations originate from the discretisation.
These fluctuations are small relative to the width of the cylinder as $e\rightarrow\infty$, 
do not have any obvious structure, and can perturb the dynamics in both the $\rho$ and $\theta$ directions.

\medskip

Recall the sequence $e(v_k,b)$ of critical numbers defined in (\ref{def:e(vk,b)}).
By corollary \ref{corollary:Omega_asymptotics}, if $b\neq0$, then the sequence of maps $\Omega^{e(v_k,b)}$ 
converge non-uniformly to the identity as $v_k\to\infty$,
whereas if $b=0$, then $\Omega^{e(v_k,b)}$ converges to the map $(\theta,\rho)\mapsto(\theta+4\rho,\rho)$.
Accordingly, the characteristic length scale $|\brho|$ of the twist dynamics is also singular in the limit, 
\hl{with $|\brho|\to\infty$ or $|\brho|\to 1/4$}, for the $b\neq 0$ and $b=0$ cases, respectively 
(see equation \eqref{eq:rho_bar_asymptotics}).

We define the \defn{rotation number} $\nu$ of a point on the cylinder
to be its rotation number under the twist map $\Omega^e$, i.e.,
 $$ \nu(\theta,\rho) = \frac{\rho}{\brho} \mod{1} \hskip 40pt (\theta,\rho)\in\bbS^1\times\R. $$
Similarly for a point $z\in\Xe$, we write $\nu(z)$ to denote the rotation number 
of the corresponding point $\eta^e(z)$ on the cylinder.
Subsets of the cylinder of the form 
\begin{equation} \label{eq:fundamental_domain}
 \left\{ (\theta,\rho) \,: \; \nu(\theta,\rho) -m \in [-1/2,1/2) \right\}\subset \bbS^1\times\R \hskip 40pt m\in\Z 
\end{equation}
are referred to as \defn{fundamental domains} of the dynamics.
The number $N$ of lattice points of $\eta^e(\Xe)$ per fundamental domain varies like
\begin{equation} \label{eq:pts_per_fundamental_domain}
 N \sim 2(2v_1+1)^2|\brho|
\end{equation}
as $e\rightarrow\infty$.
For a given value of $e$, we expect the dynamics of $\Phi$ to be qualitatively the same
in each fundamental domain, but to vary locally according to the rotation number.
Consequently, in order to observe the global behaviour of $\Phi$, 
we need to sample whole fundamental domains.
However, this is made difficult by the divergence of $\brho$ for $b\neq 0$.

To better understand the divergence of $\brho$, we consider the limiting form 
(\ref{eq:T_asymptotics}) of the period function $\cT(\alpha)$.
By differentiating the limit and neglecting the term $\epsilon(b)$, and for large $v_k$,
we expect the piecewise-constant function $\cT^{\prime}(\alpha)$ to behave approximately as
 $$ \frac{1}{2}(2v_1+1)^2 \cT^{\prime}(\alpha(v_k,b)) \approx \frac{2}{\sqrt{v_k}} \left(\sqrt{2b+1}-1/\sqrt{2b}\right) 
    \hskip 40pt b\neq 0. $$
By (\ref{eq:rho_bar}), this leads us to expect that 
 $$ \brho(\alpha(v_k,b)) \approx \frac{\sqrt{v_k}}{2} \left(1/\sqrt{2b}-\sqrt{2b+1}\right)^{-1} \hskip 40pt b\neq 0. $$
Figure \ref{fig:rho_bar} shows that this rough analysis is valid, 
although in both cases the relationship between the two functions is far from uniform.

\begin{figure}[!h]
        \centering
        \begin{minipage}{7cm}
          \centering
	  \includegraphics[scale=0.35]{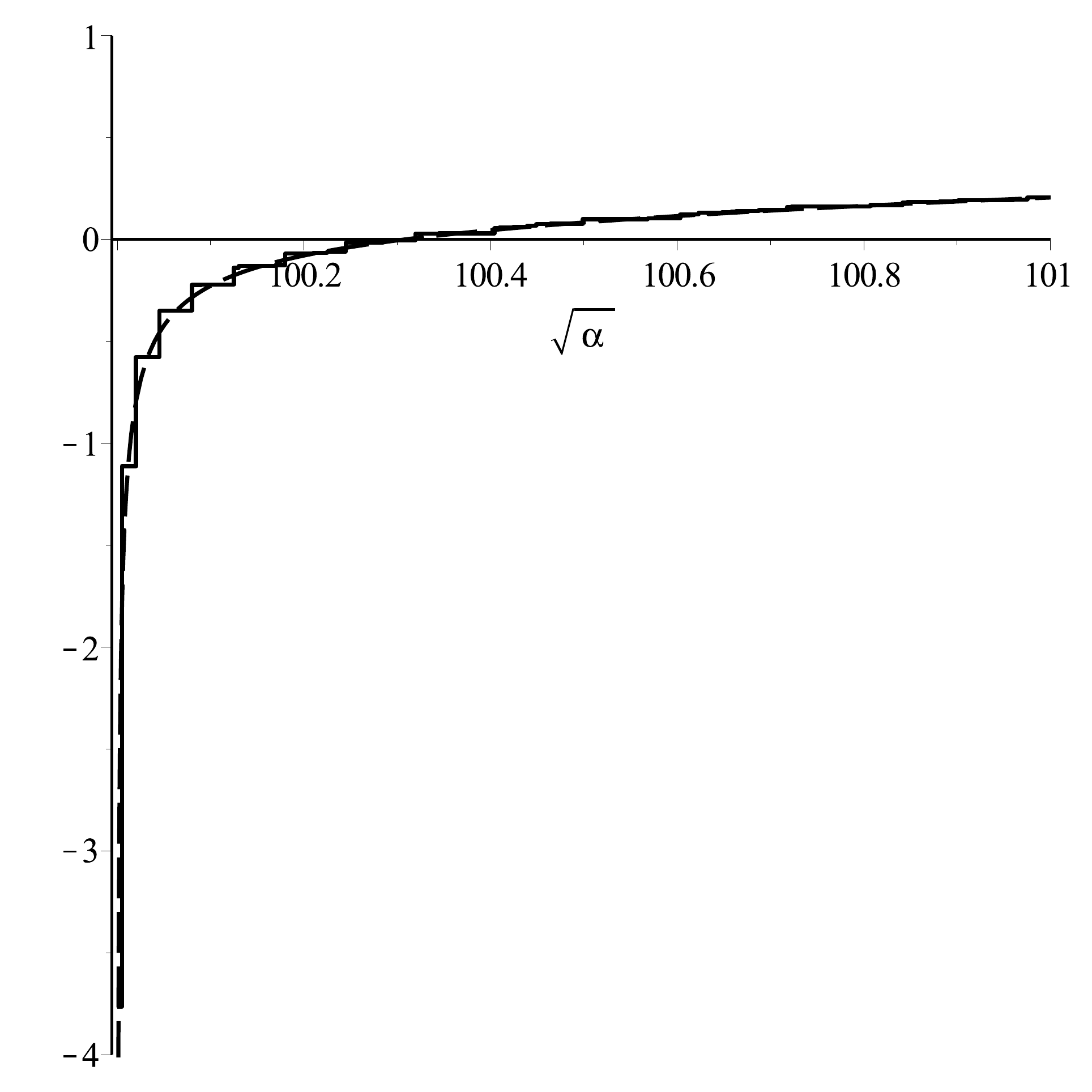} \\
	  (a) $\frac{1}{2}(2v_1+1)^2 \cT^{\prime}(\alpha)$ \\
        \end{minipage}
        \quad
        \begin{minipage}{7cm}
	  \centering
	  \includegraphics[scale=0.35]{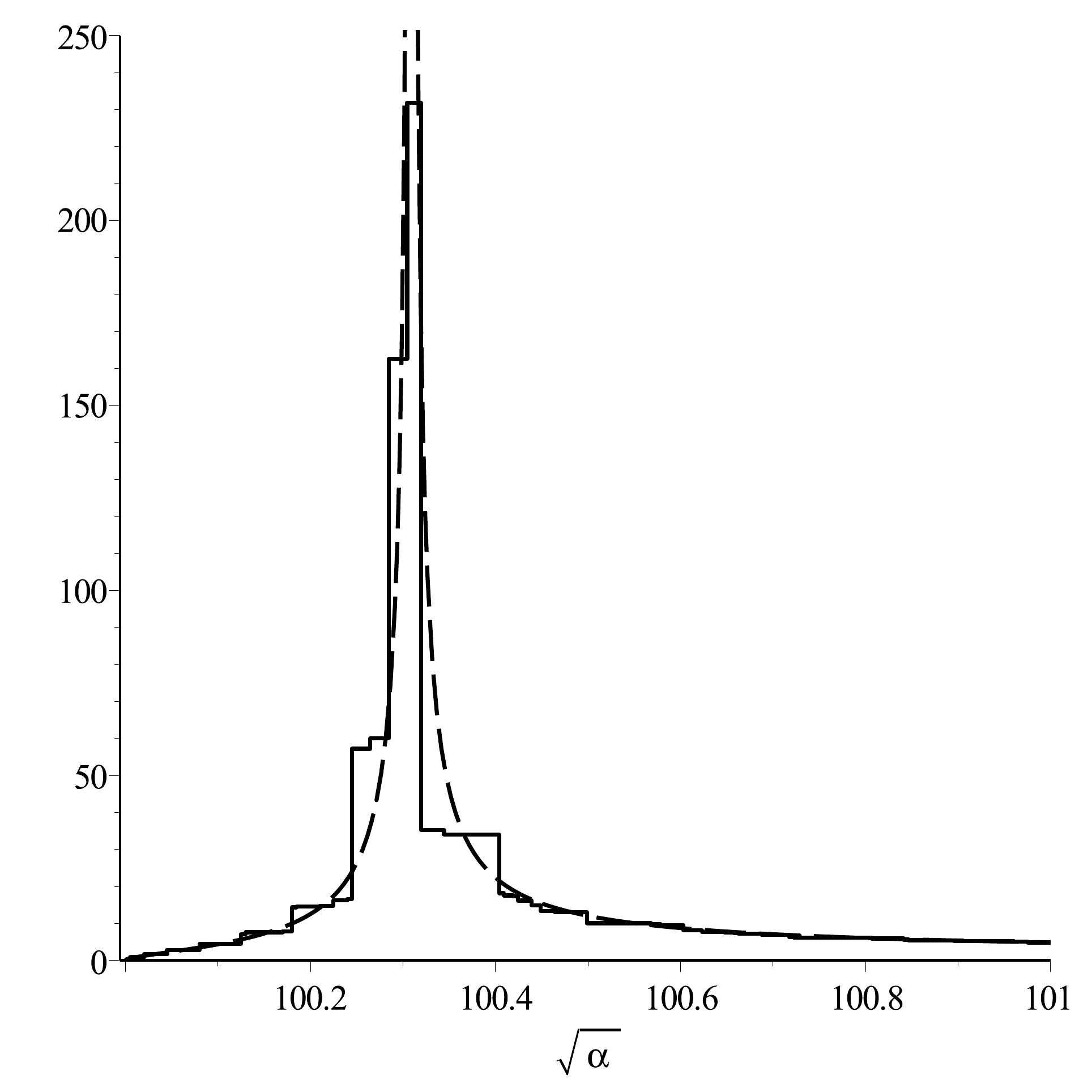} \\
	  (b)\hl{ $|\brho(\alpha)| = 2/(2v_1+1)^2 |\cT^{\prime}(\alpha)|$} \\
	\end{minipage}
        \caption{A plot of (a) $\frac{1}{2}(2v_1+1)^2 \cT^{\prime}(\alpha)$ and (b)  $|\brho(\alpha)|$ (solid lines)
        against $\sqrt{\alpha}$ for $\sqrt{\alpha}\in[100,101)$, i.e., for $v_k=100$ and $b\in[0,1)$. 
        The dashed lines show the approximate limiting functions given above.}
        \label{fig:rho_bar}
\end{figure}

Thus we see that $\brho(\alpha(v_k,b))$ typically grows slowly, like $\sqrt{v_k}$, 
except near $b=(-1+\sqrt{5})/4\approx0.3$, 
where our approximate form for the derivative $\cT^{\prime}$ is zero.
We can exploit this behaviour to examine orbits of $\Phi$ in domains $\Xe$ where the twist $K(e)$ of $\Omega^e$ is 
large ($b=0$---see figure \ref{fig:resonance}(a)), 
moderate ($b=0.8$---see figure \ref{fig:resonance}(b)) 
or almost zero ($b=0.3$---see figure \ref{fig:PrimaryResonances}).

What we see in the case of vanishing twist ($b\neq 0$) is a sea of discrete resonance structures:
the discrete analogue of the island chains of Hamiltonian perturbation theory.
The global structure of the twist map recedes to infinity,
and the dynamics are dominated by the local rotation number.
Although orbits may wander over a significant range in the $\rho$-direction,
the variation of the rotation number $\nu(z)$ along orbits is vanishingly small
(see table \ref{table:nu_range}(b)).


\begin{figure}[h!] 
        \centering
        \includegraphics[scale=0.38]{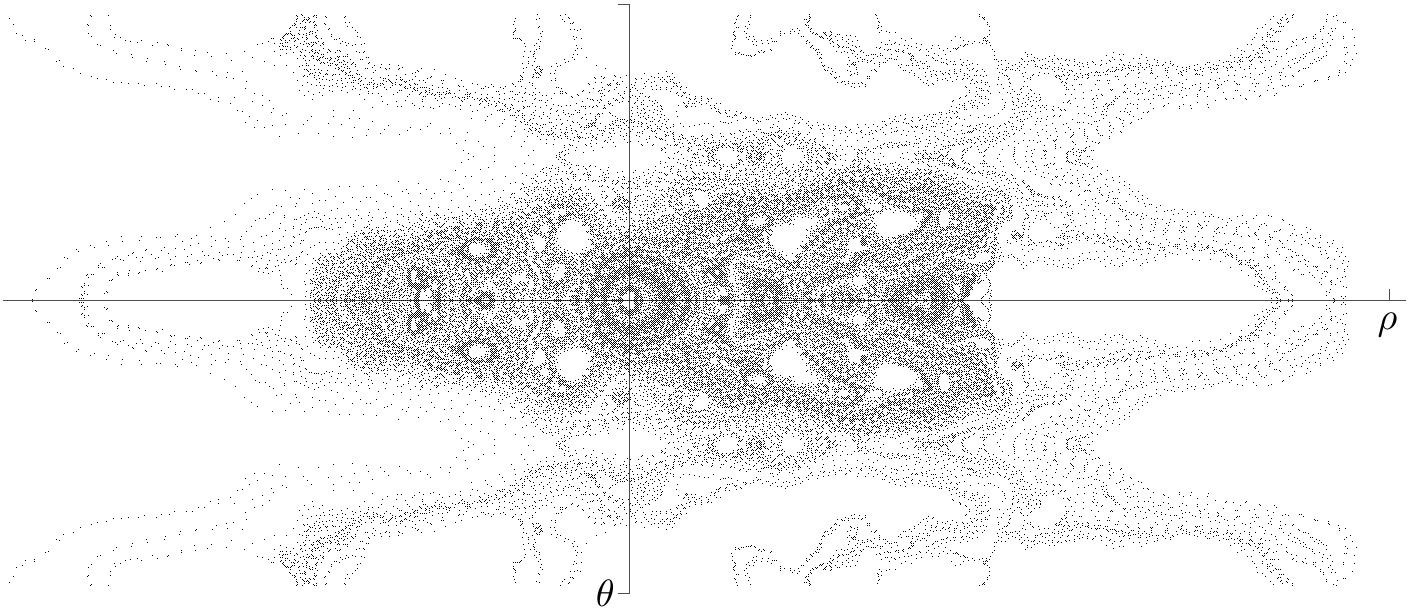} \\
        \caption{\hl{A pixel plot of a primary resonance for 
        $e=40925\approx202.3^2$ and $\lambda\approx 2\times 10^{-8}$. 
        The plot shows a large number of symmetric orbits of $\Phi$ in the cylindrical coordinates $(\theta,\rho)\in\bbS^1 \times \R$.
        The resolution of the plot is such that the (unit) width of the cylinder consists of approximately $280$ lattice sites.
        For this value of $e$, the natural lengthscale $\brho$ of the twist dynamics in the $\rho$-direction is large ($\brho\approx -150$).}  }
        \label{fig:PrimaryResonances}
\end{figure}


\begin{table}[h!]
        \centering
        \small
        \begin{tabular}{ c c c c | c|c | c|c |}
                \cline{5-8}
                 & & & & \multicolumn{2}{ |c| }{$\Delta\rho$} & \multicolumn{2}{ |c| }{$\Delta\nu$} \\
                \hline
                \multicolumn{1}{ |c| }{$e$} & \multicolumn{1}{ |c| }{$v_k$} & \multicolumn{1}{ |c| }{$\brho$} & \multicolumn{1}{ |c| }{$\trho$} 
			      & Median & Maximum & Median & Maximum \\
                \hline
                \multicolumn{1}{|c|}{$10\,000$} & \multicolumn{1}{|c|}{$100$} & \multicolumn{1}{|c|}{$0.266$} & \multicolumn{1}{|c|}{$9.5\times 10^{71}$} 
			    & $0.18$ & $0.65$ & $0.69$ & $2.5$ \\
                \multicolumn{1}{ |c| }{$40\,000$} & \multicolumn{1}{ |c| }{$200$} & \multicolumn{1}{ |c| }{$0.259$} & \multicolumn{1}{ |c| }{$5.1\times 10^{147}$} 
			    & $0.16$ & $0.55$ & $0.63$ & $2.1$ \\
                \multicolumn{1}{ |c| }{$160\,000$} & \multicolumn{1}{ |c| }{$400$} & \multicolumn{1}{ |c| }{$0.257$} & \multicolumn{1}{ |c| }{$2.0\times 10^{297}$} 
			    & $0.14$ & $0.48$ & $0.54$ & $1.9$\\
                \multicolumn{1}{ |c| }{$640\,000$} & \multicolumn{1}{ |c| }{$800$} & \multicolumn{1}{ |c| }{$0.255$} & \multicolumn{1}{ |c| }{$4.3\times 10^{605}$} 
			    & $0.13$ & $0.49$ & $0.51$ & $1.9$ \\
                \hline
        \end{tabular}\\[0.2cm]
        (a) $b=0$ \\[0.5cm]

        \begin{tabular}{ c c c c | c|c | c|c |}
                \cline{5-8}
                 & & & & \multicolumn{2}{ |c| }{$\Delta\rho$} & \multicolumn{2}{ |c| }{$\Delta\nu$} \\
                \hline
                \multicolumn{1}{ |c| }{$e$} & \multicolumn{1}{ |c| }{$v_k$} & \multicolumn{1}{ |c| }{$\brho$} & \multicolumn{1}{ |c| }{$\trho$} 
			      & Median & Maximum & Median & Maximum \\
                \hline
                \multicolumn{1}{ |c| }{$10\,057$} & \multicolumn{1}{|c|}{$100$} & \multicolumn{1}{ |c| }{$163$} & \multicolumn{1}{ |c| }{$1.4\times 10^{77}$} 
			    & $0.14$ & $2.9$ & $8.7\times 10^{-4}$ & $1.8\times 10^{-2}$ \\
                \multicolumn{1}{ |c| }{$40\,113$} & \multicolumn{1}{|c|}{$200$} & \multicolumn{1}{ |c| }{$106$} & \multicolumn{1}{ |c| }{$3.4\times 10^{153}$} 
			    & $0.25$ & $2.0$ & $2.3\times 10^{-3}$ & $1.9\times 10^{-2}$ \\
                \multicolumn{1}{ |c| }{$160\,234$} & \multicolumn{1}{|c|}{$400$} & \multicolumn{1}{ |c| }{$4105$} & \multicolumn{1}{ |c| }{$1.8\times 10^{285}$} 
			    & $4.8$ & $8.5$ & $1.2\times 10^{-3}$ & $2.1\times 10^{-3}$ \\
                \hline
        \end{tabular}\\[0.2cm]
        (b) $b=0.3$
        \caption{A table showing the values of $\brho$ and $\trho$ for various values of $e$, 
        and the typical range $\Delta\rho$ and $\Delta\nu$ of $\rho(z)$ and $\nu(z)$, respectively, along orbits. 
        The distribution of the range was calculated according to the fraction of points sampled whose orbit has the given range.}
        \label{table:nu_range}
\end{table}

In the $b=0$ case the global structure remains.
Within each fundamental domain there is little scope for resonance to develop
and phase portraits are largely featureless.
The typical variation in the rotation number along orbits is $1/2$ (see table \ref{table:nu_range}(a)), 
leading to orbits which typically do not cluster in the $\theta$-direction.
In this case it makes sense to consider the statistical properties of orbits of $\Phi$,
and in the next section we consider the period distribution function.


\section{The period distribution function} \label{sec:pdf}

The lattice structure described in section \ref{sec:MainTheorems}, 
whereby the dynamics of $\Phi$ on each of the domains $\Xe$ is equivariant under
the group of lattice translations generated by $\lambda\Le$, gives a natural
finite structure on which to define the period distribution function of $\Phi$.

For $e\in\cE$ and $z\in\Z^2$, we write $[z]$ to denote the equivalence class of $z$ modulo $\Le$:
 $$ [z] = z+ \Le. $$
The set of all equivalence classes is denoted $\Z^2/\,\Le$.
For sufficiently small $\lambda$, and for all equivalence classes $[z]\in\Z^2/\,\Le$, 
$[z]$ has a representative in $\Xe$ whose image under $\Phi$ also lies in $\Xe$:
 $$ \exists \, w\in[z]: \; \lambda w, \Phi(\lambda w)\in\Xe. $$
Thus we can let $\Phi$ act on $\Z^2/\,\Le$ by defining
 $$ \Phi([z]) = [\Phi(\lambda w)/\lambda]. $$
By the equivariance of $\Phi$ described in theorem \ref{thm:Phi_equivariance} (page \pageref{thm:Phi_equivariance}),
this action is well defined.
Similarly the reversing symmetry $G^e$ of proposition \ref{prop:Ge} 
can be defined over $\Z^2/\,\Le$.

The set $\Z^2/\,\Le$ is finite, with size $N$ given by
 $$ N = \#\left(\Z^2/\,\Le\right) = q(e) = (2v_1+1)^2\trho $$
(see equation (\ref{eq:theta_e})), and hence all orbits of $\Phi$ are periodic.
We define the period $T$ of $\Phi$ over $\Z^2/\,\Le$ in the usual fashion:
 $$ T([z]) = \min\{k\in\N \,: \; \Phi^k([z])=[z] \}. $$
Then the period distribution function $\cD^e = \cD^e(\lambda)$ of $\Phi$ is given as follows (cf.~equation (\ref{def:pdf})): 
\begin{displaymath}
 \cD^e(x) = \frac{1}{N} \, \# \{ [z]\in\Z^2/\,\Le : \; T([z])\leq \kappa x \},
\end{displaymath}
where $\kappa=\kappa(e,\lambda)$ is the scaling constant given by (cf.~theorem \ref{thm:GammaDistribution}, page \pageref{thm:GammaDistribution})
\begin{equation*} 
 \kappa = \frac{2N}{g+h} \hskip 40pt g=\#\Fix{G^e} \hskip 40pt h=\#\Fix{(\Phi\circ G^e)} .
\end{equation*}

For $e=v_k^2$, i.e., $b=0$, we wish to investigate whether the dynamics of $\Phi$ are sufficiently 
disordered that its period statistics are consistent with the limiting distribution $\cR$ of 
theorem \ref{thm:GammaDistribution}, which corresponds to random dynamics.
In principle, it is possible to calculate $\cD^e$ exactly;
however, the factorially diverging size of $N$ (and hence of $\trho$---see table \ref{table:nu_range}) makes this unfeasible.
In practice, we find we can approximate $\cD^e$ by calculating a sequence of local distributions.

\FloatBarrier

\begin{figure}[t]
        \centering
        \includegraphics[scale=1.1]{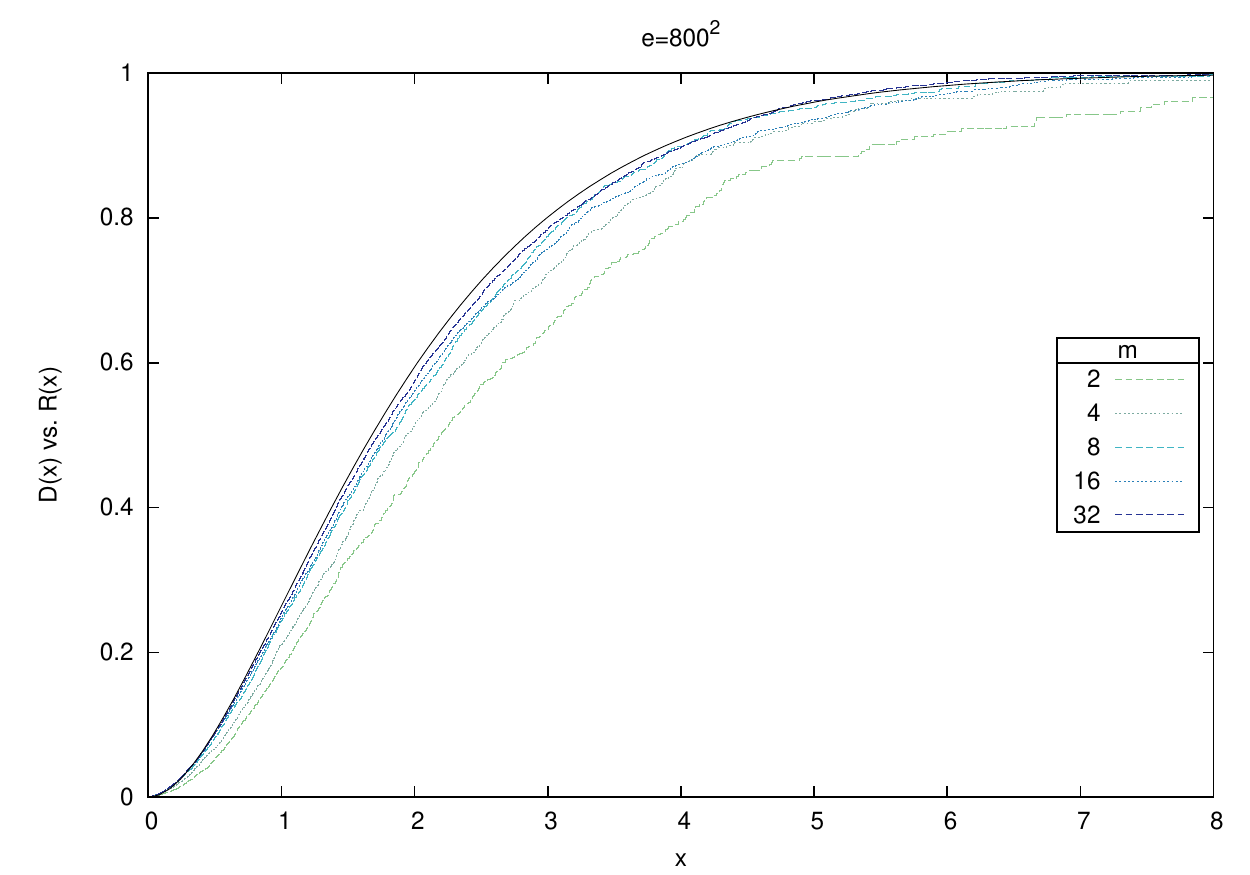} \\
        \caption{A number of the distributions $\cD(x)$ calculated for $v_k=800$ (green-blue dashed lines). 
        The number $m$ refers to the multiple of the characteristic length scale $\brho$ sampled. 
        The solid black line is the limiting distribution $\cR(x)$.}
        \label{fig:Distribution}
\end{figure}


\begin{table}[b]
        \centering
        \begin{tabular}{ |c|c|c|c|c|}
                \hline
                $v_k$ & $m$ & Sample size & $\int (\cR-\cD) dx$ & Approx. time \\
                \hline
                $100$ & $32$ & $350\,000$ & $0.03$ & 15sec \\
                $200$ & $32$ & $1\,360\,000$ & $0.04$ & 2min \\
                $400$ & $32$ & $5\,370\,000$ & $0.06$ & 15min\\
                $800$ & $32$ & $21\,200\,000$ & $0.06$ & 2hr \\
                $1600$ & $16$ & $42\,900\,000$ & $0.11$ & 11hr \\
                \hline
        \end{tabular}
        \caption{Some data relating to the calculation of distributions $\cD$ for various different values of $v_k$, 
        and their corresponding maximum values of $m$. 
        For a given value of $m$, nine distinct distributions were calculated: 
        for three different values of $\lambda$, each with three different values of $z_0$ (recall $z_0=(\eta^e)^{-1}(0,0))$.
        Neither the value of $\lambda$ nor $z_0$ was found to have any discernible effect on the distribution.
        The sample size is indicative of the number of points sampled during the calculation of each distribution, 
        similarly for the approximate calculation time. 
        The integral $\int(\cR-\cD)dx$ was calculated over the interval $[0,16]$ (every distribution calculated satisfies $\cD(16)=1$),
        and has been averaged over the nine individual distributions calculated.}
        \label{table:D_table}
\end{table}

\vfill

\FloatBarrier

Experimental observations show that the reduction of the dynamics modulo $\lambda\Le$ is unnecessary:
not only are all the orbits we computed already periodic, as conjecture \ref{conj:Periodicity} suggests,
but the number of equivalence classes in $\Z^2/\,\Le$ grows much faster than the range of any orbit
(compare the characteristic length scale $\trho$ to the typical range $\Delta\rho$ of an orbit
as given in table \ref{table:nu_range}, page \pageref{table:nu_range}).
We observe that the reduction has no effect on the dynamics of $\Phi$ or its period function,
so that the period distribution $\cD^e$ is also representative of the dynamics of $\Phi$ on its original domain $\Xe$.
In what follows, it will always be the case that the period $T$ of $\Phi$ on $\Z^2/\,\Le$ is
equal to the period $\tau$ of $\Phi$ on $\Xe$:
 $$ T([z]) = \tau(z) \hskip 40pt z\in\Xe, $$
and in our discussion we assume that this holds for all orbits.

Furthermore, the dynamics of $\Phi$ are sufficiently uniform from one fundamental domain to the next,
that we can achieve good approximations to $\cD^e$ by sampling the periods in just a small number such domains,
i.e., over vanishing subsets of $\Z^2/\,\Le$. 
It is this fact that allows us to estimate $\cD^e$ with numerical calculations, which we describe in the next section.

\subsection*{Computational investigation} 

For $e=v_k^2\in\cE$, we wish to calculate a sequence of period distribution functions, 
which will serve as approximations to $\cD^e$ as $v_k\to\infty$.
The factorially diverging number of equivalence classes of $\Z^2/\,\Le$
dictates that we must approximate $\cD^e$ by sampling a much smaller subset of the phase space.
However, discounting the lattice structure, there are no natural $\Phi$-invariant 
subsets of $\Xe$. Below, we construct a sequence of $\Phi$-invariant sets which mimic
the fundamental domains defined in (\ref{eq:fundamental_domain})---the natural invariant structures of the twist dynamics.

We consider subsets $A=A(v_k,m,\lambda)$ of $\Xe$ of the form
\begin{equation} \label{eq:A_m}
 A(v_k,m,\lambda) = \{ z\in\Xe \,: \; \nu(z) \in [-1/2,m-1/2] \} \hskip 40pt m\in\N. 
\end{equation}
For sufficiently small $\lambda$, the counterpart of $A$ on the cylinder
covers $m$ copies of the fundamental domain of the twist dynamics, so that as $v_k\to\infty$:
 $$ \# A \sim 2m(2v_1+1)^2|\brho| \to \infty $$
(cf.~equation (\ref{eq:pts_per_fundamental_domain})).
Furthermore, since the length $\brho$ is small relative to the length $\trho$ of the lattice $\Le$,
the set $A$ represents a vanishing subset of the equivalence classes of $\lambda\Le$:
 $$ \frac{\# A}{N} \sim \frac{2m|\brho|}{\trho} \to 0. $$

The set $A$ is not invariant under the perturbed dynamics $\Phi$. 
Hence we define $\bA$ to be the smallest invariant set which contains $A$:
\begin{equation*} 
 \bA(v_k,m,\lambda) = \bigcup_{n\in\Z} \Phi^n(A(v_k,m,\lambda)).
\end{equation*}
In what follows, it is assumed that there is a critical parameter value $\lambda_c(v_k,m)$ 
such that $\bA\subset\Xe$ for all $\lambda<\lambda_c$.

We observe that the overspill from $A$ under the map $\Phi$, 
i.e., the set $\bA\setminus A$, 
is small relative to $A$ as $m\to\infty$ (see figure \ref{fig:NoOfPts}(a)).
\begin{observation} \label{prop:A_m}
Let $v_k\in\N$. Then for $m\in\N$ and $(\lambda(m))_{m\in\Z}$ satisfying $\lambda(m)<\lambda_c(v_k,m)$, we have:
 $$ \frac{\#\bA(v_k,m,\lambda(m))}{\# A(v_k,m,\lambda(m))} \to 1 $$
as $m\to\infty$.
\end{observation}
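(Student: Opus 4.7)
The plan is to show that the overspill $\bA \setminus A$ forms a thin buffer zone of $m$-independent width adjacent to the boundary of $A$, so that although $\#A$ grows linearly in $m$, the overspill remains of bounded size. Throughout I work in cylinder coordinates via the change of variables $\eta^e$ of section \ref{sec:cylinder_coordinates}, where the rotation number $\nu$ is conjugate to $\rho$ via $\nu = \rho/\brho$.

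First, I would introduce the maximum orbit range in $\nu$:
$$ \Delta\nu_{\max} = \sup_{z \in A(v_k,m,\lambda)} \; \sup_{n \in \Z} |\nu(\Phi^n(z)) - \nu(z)| . $$
Since the hypothesis $\lambda < \lambda_c(v_k,m)$ guarantees $\bA \subset \Xe$, and $\Xe$ is a finite set on which $\nu$ assumes only finitely many values, this supremum is finite. Then, directly from the definition of $\bA$, any point $z \in \bA \setminus A$ has $\nu(z) \notin [-1/2, m-1/2]$ yet satisfies $\Phi^n(z) \in A$ for some $n \in \Z$; consequently $\bA \setminus A$ is contained in the two-sided buffer
$$ B = \{z \in \Xe : \nu(z) \in [-1/2 - \Delta\nu_{\max}, \, -1/2) \cup (m-1/2, \, m-1/2 + \Delta\nu_{\max}] \} . $$

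Next, using the lattice structure of $\eta^e(\Xe)$ described in \eqref{eq:rho_theta_lattice} and the fact that the number of lattice points per fundamental domain of the twist dynamics is approximately $2(2v_1+1)^2|\brho|$ (cf.\ \eqref{eq:pts_per_fundamental_domain}), I would estimate
$$ \#B \,\sim\, 4\Delta\nu_{\max}(2v_1+1)^2|\brho|, \qquad \#A \,\sim\, 2m(2v_1+1)^2|\brho|, $$
so that
$$ \frac{\#\bA}{\#A} \,=\, 1 + \frac{\#(\bA \setminus A)}{\#A} \,\leq\, 1 + \frac{\#B}{\#A} \,\sim\, 1 + \frac{2\Delta\nu_{\max}}{m} . $$
The desired conclusion then follows provided $\Delta\nu_{\max}$ does not grow linearly in $m$ (along the sequence $\lambda(m)$).

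The main obstacle lies precisely in controlling $\Delta\nu_{\max}$. Since $\Phi$ differs from the exact $\nu$-preserving twist $\Omega^e$ only by discretisation fluctuations of amplitude small compared with $|\brho|$, one expects the drift in $\nu$ along any orbit to be bounded by a constant depending only on $e$ (and not on $m$ or $\lambda$), at least in the regime $b=0$ where the phase portrait is locally uniform over length scales comparable with $\brho$. The numerical evidence in table \ref{table:nu_range}(a) strongly supports this expectation: the maximum observed $\Delta\nu$ is of order unity across the range of $v_k$ considered, whereas $m$ is taken much larger. A rigorous bound, however, would require a detailed long-time analysis of the cumulative drift of $\nu$ under the perturbation $\Phi - \Omega^e$, and is closely tied to the boundedness problem underlying conjecture \ref{conj:Periodicity}; it is for this reason that the statement is formulated as an empirically-supported observation rather than a theorem.
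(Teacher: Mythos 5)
The paper does not prove this statement: it is formulated as an \emph{Observation}, supported only by the numerical evidence in figure \ref{fig:NoOfPts}(a), which plots $\#\bA/\#A-1$ against $m$ for $v_k=800$. The whole of chapter \ref{chap:PerturbedAsymptotics} is announced as informal and numerical, so there is no paper proof to compare against.

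That said, your conditional argument is sound and correctly identifies the structure of why the claim should hold. The reduction is clean: since $A$ is defined by the rotation number $\nu$ lying in $[-1/2,m-1/2]$, any point of $\bA\setminus A$ must enter $A$ under some iterate of $\Phi$, hence lies within $\Delta\nu_{\max}$ of the boundary in the $\nu$-coordinate. Combining this with the lattice-density estimate \eqref{eq:pts_per_fundamental_domain} gives $\#(\bA\setminus A)/\#A = O(\Delta\nu_{\max}/m)$. Your conclusion — that the whole matter hinges on whether $\Delta\nu_{\max}$ grows sublinearly in $m$ along the admissible sequence $\lambda(m)$ — is exactly the right diagnosis, and you are right that a rigorous bound on the cumulative drift of $\nu$ under the perturbation $\Phi-\Omega^e$ is not available (it is essentially a quantitative form of the boundedness problem behind conjecture \ref{conj:Periodicity}). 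One small caveat: table \ref{table:nu_range}(a) tabulates $\Delta\nu$ as $v_k$ varies (at fixed $m$), so it doesn't directly probe the $m\to\infty$ regime at fixed $v_k$ that the observation concerns; the evidence that directly bears on this limit is figure \ref{fig:NoOfPts}(a). With that minor adjustment, your account matches both the content and the epistemic status of the observation as the paper presents it.
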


Then we measure the period distribution function $\cD=\cD(v_k,m,\lambda)$ of $\Phi$ over $\bA$:
\begin{displaymath}
 \cD(x) = \frac{\# \{ z\in \bA \,: \; \tau(z)\leq \kappa x \}}{\# \bA},
\end{displaymath}
where the scaling constant $\kappa=\kappa(v_k,m,\lambda)$ is given by
\begin{equation} \label{eq:Phikappa_approx}
 \kappa = \frac{2\#\bA}{g+h} \hskip 20pt g=\#\left(\Fix{G^e}\cap\bA\right) \hskip 20pt h=\#\left(\Fix{(\Phi\circ G^e)}\cap\bA\right).
\end{equation}

For any $e=v_k^2\in\cE$, $m\in\N$ and $\lambda(m)<\lambda_c(v_k,m)$, we have:
 $$ (\cD(v_k,m,\lambda(m)) - \cD^e) \to 0 \hskip 40pt m\to\infty, $$
where $\cD^e$ is the period distribution function of $\Phi$ over $\Z^2/\,\Le$.
To study the behaviour of $\cD^e$ as $v_k\to\infty$,
we need to let both $m$ and $v_k$ go to infinity simultaneously.
We do not have sufficient numerical evidence to specify a scheme $m(v_k)$ for
which the convergence
 $$ (\cD(v_k,m(v_k),\lambda(v_k,m)) - \cD^e) \to 0 \hskip 40pt v_k\to\infty $$
holds. However, we do note that small values of $m$
were sufficient in all numerical experiments (see table \ref{table:D_table}),
which suggests that a scheme of the form
 $$ m = C(2v_1+1) \hskip 40pt C>0 $$
may be sufficient.

\medskip

Since $\Fix{G^e}$ is the pair of lines $x=y$ and $x-y=-\lambda(2v_1+1)$, 
the corresponding set on the cylinder is given by (cf.~(\ref{eq:rho_theta_lattice}))
 $$ \eta^e(\Fix{G^e}) = \left\{ \frac{1}{2(2v_1+1)} \, (i,i+2j)\,: \; i\in\{-(2v_1+1),0\}, \; j\in\Z \right\}. $$
Intersecting this with $A$ restricts the index $j$ according to
 $$ \frac{i+2j}{2\brho(2v_1+1)} \in \left[ -\frac{1}{2}, m-\frac{1}{2} \right) $$
(see equation (\ref{eq:A_m})).
Thus, equating $A$ with $\bA$ in the limit, we have
 $$ g \sim \#\left(\Fix{G^e}\cap A\right) \sim 2m\brho(2v_1+1) \to \infty $$
as $m,v_k\to\infty$. 
The fixed space $\Fix{(\Phi\circ G^e)}$ is the lattice equivalent of the line $\Fix{\cH^e}$ of equation (\ref{eq:Fix(cH)}). We have the following experimental observation for the size of $h$ (see figure \ref{fig:NoOfPts}(b)).

\begin{observation} \label{obs:g_h}
 Let $v_k,m\in\N$, $\lambda(v_k,m)<\lambda_c(v_k,m)$,
 and $g$, $h$ be as in equation (\ref{eq:Phikappa_approx}).
 Then as $m,v_k\to\infty$:
  $$ g \sim \sqrt{2} h. $$
\end{observation}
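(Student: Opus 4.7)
The plan is to transfer both counts to cylinder coordinates via the map $\eta^e$ of (\ref{def:rho_theta}), where the image of $\lZ$ is the rotated square lattice of figure \ref{fig:Lattice2}, and the fixed spaces $\Fix{G^e}$, $\Fix{\Phi\circ G^e}$ reduce asymptotically to the fixed spaces of the continuous involutions $\cG$ and $\cH^e$ of $\Omega^e$. By observation \ref{prop:A_m}, the set $\bA$ is asymptotically equivalent to $A$, whose image is a strip of $\rho$-length $m|\brho|$ on the cylinder. The count $g$ is then immediate: $\Fix{G^e}$ lifts to the two vertical lines $\theta = 0$ and $\theta = -1/2$, each carrying lattice points at $\rho$-spacing $1/(2v_1+1)$, so
\[
g \;\sim\; 2(2v_1+1)\,m|\brho|.
\]
This estimate is exact in the sense that every lattice point on either line is a genuine fixed point of $G^e$.

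The count $h$ is more delicate. Off the bottom edge of $\Xe$, the condition $\Phi(G^e(z)) = z$ is the discrete counterpart of the continuous equation $\cH^e(z) = z$, i.e., $\theta \equiv K(e)\rho/2 \pmod{1/2}$. On the subsequence of perfect squares $b = 0$ the twist $K(e) \to 4$, so the limit curve consists of a pair of lines of slope $2$ on the cylinder. A naive count of lattice points on these two lines gives the same leading order as $g$, predicting $g/h \to 1$; the observed factor $\sqrt{2}$ must therefore come from the discreteness of the fixed-point equation, not from the geometry of the limit curves alone.

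The main obstacle is to extract this factor by analysing the discrete equation $\Phi(G(z)) = z$ directly, using the strip-map machinery of section \ref{sec:StripMap}. For each lattice point $z \in \bA$ close to the limit curve, this condition translates into a congruence modulo the lattice $\lambda \Le$ of theorem \ref{thm:Phi_equivariance}, involving the orbit code of $G(z)$ and the transit times between successive vertices. The core difficulty will be to show that only a density $1/\sqrt{2}$ of the candidate lattice points satisfies this congruence, exploiting the specific lattice orientation of figure \ref{fig:Lattice2}, the rational slope $K(e)/2$, and the perfect-square regime $b = 0$. Given the empirical status of the observation and the absence of any obvious continuum mechanism producing $\sqrt{2}$, the proof is likely to rest on an arithmetic analysis in the spirit of section \ref{sec:lattice}, complemented by numerical validation along the lines of figure \ref{fig:NoOfPts}(b).
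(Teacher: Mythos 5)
This statement is presented in the paper as an \emph{Observation}, not a theorem: the surrounding text explicitly calls it an ``experimental observation'' for the size of $h$, supported only by the numerics of figure \ref{fig:NoOfPts}(b) (which plots $h/g - 1/\sqrt{2}$ as $m$ grows), and chapter \ref{chap:PerturbedAsymptotics} declares an informal, numerical approach throughout. There is therefore no paper proof to compare against, and your proposal --- which is an investigation sketch rather than a proof, as you acknowledge --- is in fact consistent with the paper's own treatment.

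Your preliminary analysis is sound and adds some genuine content. The estimate $g \sim 2(2v_1+1)m|\brho|$ agrees with the paper, and your claim that a naive count of lattice points on the limiting curve $\Fix{\cH^e}$ gives $h \approx g$ rather than $h \approx g/\sqrt{2}$ is correct: taking $K(e)=4$ exactly, the two branches $\theta = 2\rho$ and $\theta = 2\rho + 1/2$ of $\Fix{\cH^e}$ together supply exactly one lattice point per $\rho$-row (one branch lands on the even-parity columns, the other on the odd-parity ones), precisely as $\Fix{G^e}$ does. So the factor $\sqrt{2}$ cannot come from the continuum geometry of the limiting fixed curves, and locating it in the arithmetic of the discrete involution $\Phi\circ G^e$ is a reasonable guess. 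But the proposal stops there: it correctly identifies where the difficulty lies without resolving it, and since the paper does not resolve it either, the origin of the factor $\sqrt{2}$ remains an open empirical fact rather than a provable statement by routine means. Do not present this as something you could reduce to existing lemmas; a genuine proof would be a new result.
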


\begin{figure}[t]
        \centering
        \begin{minipage}{7cm}
	  \centering
	  \includegraphics[scale=0.55]{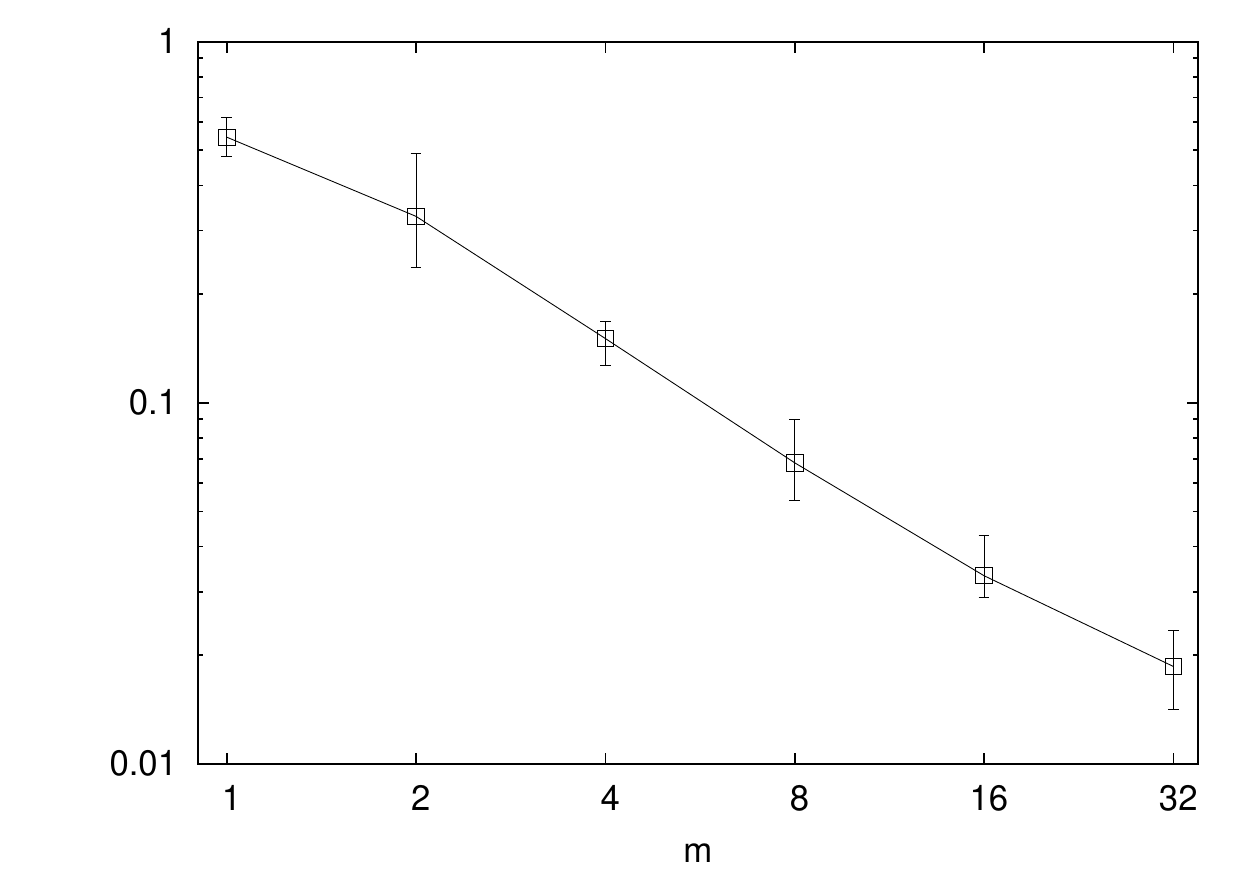} \\
	  (a) $\# \bA/\# A-1$ \\
	\end{minipage}
        \quad
        \begin{minipage}{7cm}
          \centering
	  \includegraphics[scale=0.55]{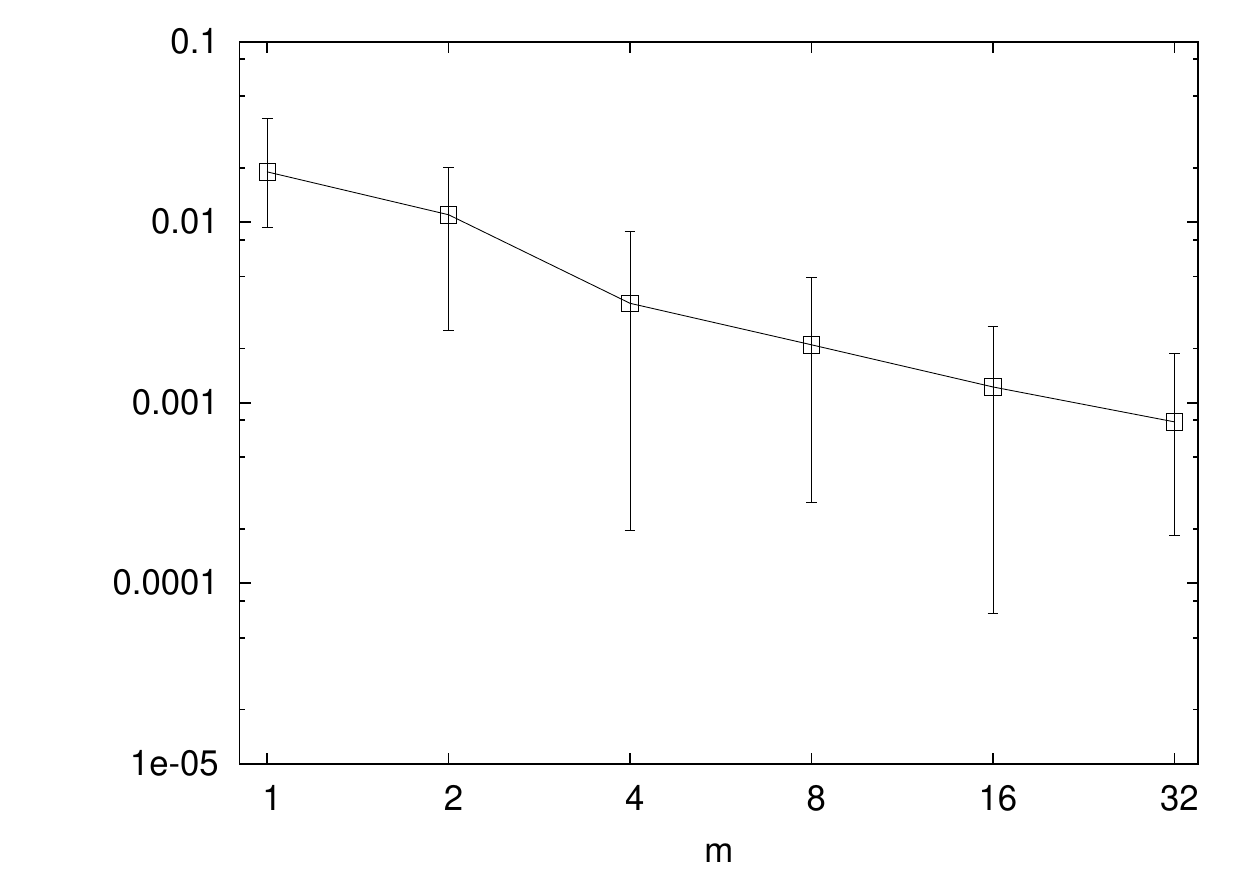} \\
	  (b) $h/g - 1/\sqrt{2}$ \\
        \end{minipage}
        \caption{The convergence (a) of the ratio $\# \bA/\# A$ to $1$ and (b) of the ratio $h/g$ to $1/\sqrt{2}$ as $m$ becomes large, for $v_k=800$. The line shows the average value of the relevant ratio among all experiments performed: the error bars indicate its minimum and maximum value. All axes are displayed with a logarithmic scale.}
        \label{fig:NoOfPts}
\end{figure}


From this observation, it follows that 
 $$ \frac{g+h}{\#\bA} \sim \frac{(2+\sqrt{2})}{2(2v_1+1)} \to 0 $$
as $m,v_k\to\infty$, and hence that the quantities $g$ and $h$ satisfy the conditions 
(\ref{eq:g_h_conds}) of theorem \ref{thm:GammaDistribution}.
Indeed, we observe that the universal distribution $\cR(x)$ 
is the limiting distribution for $\cD$ in the limits $m,v_k\to\infty$ 
(see figure \ref{fig:Distribution}).

\begin{observation} \label{obs:De}
Let $v_k,m\in\N$ and $\lambda(v_k,m)<\lambda_c(v_k,m)$.
Then as $m,v_k\to\infty$:
  $$ \cD(v_k,m,\lambda(v_k,m)) \to \cR, $$
where $\cR$ is the universal distribution of equation \eqref{def:R(x)}.
\end{observation}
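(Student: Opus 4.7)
The plan is to deduce Observation \ref{obs:De} from Theorem \ref{thm:GammaDistribution} by showing that, in the regime $e = v_k^2$ where the twist $K(e) \to 4$ persists, the reversible pair $(G^e, \Phi \circ G^e)$ restricted to $\bar{A}$ is statistically indistinguishable from a pair of random involutions with matching fixed-point cardinalities. The first step is to verify the conditions \eqref{eq:g_h_conds}: combining Observation \ref{obs:g_h} with the counts $g \sim 2m\bar\rho(2v_1+1)$ (from intersecting the two lines comprising $\Fix{G^e}$ with $A$) and $\#\bar{A} \sim 2m(2v_1+1)^2|\bar\rho|$ (from observation \ref{prop:A_m} and \eqref{eq:pts_per_fundamental_domain}), we get $(g+h)/\#\bar{A} = O(1/v_1) \to 0$ while $g+h \to \infty$, with the scaling $\kappa$ in \eqref{eq:Phikappa_approx} matching that of Theorem \ref{thm:GammaDistribution}.

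The core difficulty is passing from the deterministic, arithmetically specified pair of involutions to the random-involution model. I would approach this via the strip-map representation of section \ref{sec:StripMap}. A return orbit in $\bar{A}$ decomposes into $2k-1 = O(v_k)$ transits $\Psi^j$, and at each vertex proposition \ref{thm:epsilon_j} tells us that the perturbation acts as a threshold function of the orbit code entry $\sigma_j$, adding either $0$ or $1$ to the local translation in the non-integer coordinate direction. The thresholds depend on $\lambda$ through the ceilings $\lceil v_j/\lambda\rceil$; as $\lambda$ varies or as one moves across the cylinder, these thresholds sweep through a complete set of residues modulo $2v_j+1$. The idea is to show that, over a fundamental domain and in the limit $v_k \to \infty$, the composition of $O(v_k)$ such thresholded kicks produces a map whose action on congruence classes of $\Z^2/\Le$ is equidistributed, so that orbits of $\Phi$ are as effectively disordered as those of a random involution pair.

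Concretely, I would try to reduce the problem to verifying an equidistribution statement of the form: for almost every $\lambda < \lambda_c$, the joint distribution of orbit codes $(\sigma_{-1},\sigma_1,\ldots,\sigma_{2k-1})$ of points in $\bar{A}$ approaches the uniform distribution on the product of the sets $\{0,\ldots,2v_j\}$ that is cut out by lemma \ref{thm:sigma_lattices}. Given such decoupling, the two-involution structure together with the size conditions just verified triggers the renewal argument of \cite{RobertsVivaldi09} underlying Theorem \ref{thm:GammaDistribution}, delivering convergence to $\cR(x)$ and the dominance of symmetric orbits. As a halfway step one could first replace the deterministic thresholds by uniform random shifts, establish the gamma limit in that stochastic model, and then use a Vitali-type covering argument in $\lambda$ to transfer to the deterministic system.

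The hard part will be proving genuine equidistribution of the orbit codes in the absence of any hyperbolicity or positive entropy. The map $\F$ is a piecewise isometry and is conjectured to be everywhere periodic, so the dynamics are arithmetic rather than stochastic; any mixing statement must therefore be quantitative in $v_k$ and exploit the singular behaviour of $\cT'$ at $b=0$, which produces the $O(1)$ variation of the rotation number $\nu$ along typical orbits reported in table \ref{table:nu_range}(a). I would expect that establishing this decoupling, even for a subsequence of $v_k$, is the single obstruction to promoting the numerical observation to a theorem; everything else fits the template of Theorem \ref{thm:GammaDistribution} once the reversible structure and the size estimates are in hand.
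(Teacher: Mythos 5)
The paper offers no proof of this statement: it is deliberately labelled an \emph{Observation}, and the supporting evidence is purely numerical (figure \ref{fig:Distribution}, table \ref{table:D_table}). The chapter's introduction even says ``we adopt an informal approach, focussing on qualitative observations and numerical evidence''. So there is no authorial argument to compare yours against; the question is whether your outline is a sound programme and whether it correctly identifies the missing ingredient.

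On the parts that \emph{can} be checked, you agree with the paper: your verification of the size conditions \eqref{eq:g_h_conds}, using $g \sim 2m\bar{\rho}(2v_1+1)$, $\#\bar{A} \sim 2m(2v_1+1)^2|\bar{\rho}|$, and Observation \ref{obs:g_h} to get $(g+h)/\#\bar{A} \sim (2+\sqrt{2})/2(2v_1+1) \to 0$ while $g+h\to\infty$, is exactly the calculation the paper performs just before stating Observation \ref{obs:De}, and it is correct. Your identification of the remaining obstruction — an equidistribution statement for the orbit codes $\sigma(z)$ over $\bar{A}$ that would let the random-involution model of Theorem \ref{thm:GammaDistribution} apply — is also the right diagnosis; the paper does not even attempt this step, and it is precisely because this step is open that the authors record the statement as an observation rather than a theorem. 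Your proposed route (replace the deterministic $\lambda$-dependent thresholds of proposition \ref{thm:epsilon_j} by uniform random shifts, establish the gamma limit in the stochastic model, then transfer by a covering argument in $\lambda$) is a reasonable attack, but you are right that nothing in the paper's machinery delivers the required decoupling: $\F$ is a piecewise isometry, conjecturally everywhere periodic, and the sequence of lattice congruences in lemma \ref{thm:sigma_lattices} is rigidly arithmetic, so equidistribution of the joint code cannot be had for free. In short, you have not proved the statement — but neither does the paper, and you have correctly isolated the single step that would need to be supplied.
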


Finally we note that, as in theorem \ref{thm:GammaDistribution}, 
the symmetric orbits of $\Phi$ have full density (see figure \ref{fig:symm_points}).

\begin{observation} \label{obs:symm_points}
Let $v_k,m\in\N$ and $\lambda(v_k,m)<\lambda_c(v_k,m)$. 
Furthermore, let $S=S(v_k,m,\lambda)$ be the set of points in $\bA$ whose orbit under $\Phi$ is symmetric:
 $$ S = \{ z\in \bA \,: \; \cO(z)=G^e(\cO(z)) \}. $$
Then $S$ has full density in $\bA(m)$ as $m,v_k\to\infty$:
  $$ \frac{\#S(v_k,m,\lambda(v_k,m))}{\#\bA(v_k,m,\lambda(v_k,m))}\to 1. $$
\end{observation}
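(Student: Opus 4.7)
The plan is to derive Observation \ref{obs:symm_points} from Observation \ref{obs:De} by combining the combinatorial structure of symmetric orbits with the scaling of the mean period. By Theorem \ref{thm:SymmetricOrbits}, every symmetric periodic orbit of $\Phi$ which is not a fixed point intersects the set $\Fix{G^e} \cup \Fix{\Phi \circ G^e}$ in exactly two points. Discarding the negligible contribution of fixed points of $\Phi$, this yields the identity $n_s = (g+h)/2$ for the number of symmetric orbits contained in $\bA$, where $g$ and $h$ are as in equation (\ref{eq:Phikappa_approx}).

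Next, I would use Observation \ref{obs:De} to control the total orbit count $n = n_s + n_a$ in $\bA$. Writing $\#\bA = \sum_O T_O$ and $\sum_{z \in \bA} T(z) = \sum_O T_O^2$, the convergence $\cD \to \cR$ forces the asymptotics
\[ \frac{1}{\#\bA} \sum_{z \in \bA} T(z) \sim 2\kappa, \]
since the distribution $\cR$ of equation \eqref{def:R(x)} has mean $2$. Because the point-period distribution $\cD$ is the size-biased sample of the orbit-length distribution, a short calculation identifies the underlying orbit-length distribution as asymptotically exponential with mean $\kappa$, whence $n \sim \#\bA/\kappa = (g+h)/2$. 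Comparing with the identity $n_s = (g+h)/2$, we conclude that $n_a = o(n)$, i.e., asymmetric orbits form a vanishing fraction of all orbits.

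The main obstacle is then to promote this count from orbits to points. A vanishing density of asymmetric orbits does not a priori imply a vanishing density of points on asymmetric orbits, as the asymmetric orbits might be anomalously long. To close the argument, I would need to show that the orbit-length distribution restricted to asymmetric orbits remains controlled, specifically that $\sum_{O \text{ asymmetric}} T_O = o(\#\bA)$. In the random-reversible-map model of Theorem \ref{thm:GammaDistribution}, symmetric and asymmetric orbits share the same exponential length statistics (asymmetric orbits come in $G^e$-conjugate pairs of equal period), so the vanishing fraction of asymmetric orbits directly yields a vanishing fraction of asymmetric points. In our deterministic setting, the numerical data support an analogous statement, but rigorously establishing this uniformity of orbit statistics across the symmetric/asymmetric dichotomy—absent a priori knowledge that the system behaves statistically like a random reversible map—is the essential difficulty, and is precisely why this statement is recorded as an observation rather than a theorem.
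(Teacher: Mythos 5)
This statement is recorded in the paper as an \emph{observation}, not a theorem, and the paper offers no proof: chapter \ref{chap:PerturbedAsymptotics} adopts an explicitly informal approach, and observation \ref{obs:symm_points} is supported only by the numerical data displayed in figure \ref{fig:symm_points} (the measured quantity $1-\#S/\#\bA$ against $v_k$, averaged over the experiments described in table \ref{table:D_table}). Your proposal therefore takes a genuinely different route: rather than measuring the ratio directly, you attempt a heuristic derivation from observation \ref{obs:De} and the combinatorics of theorem \ref{thm:SymmetricOrbits}, essentially mimicking how the conclusion ``almost all points lie on symmetric orbits'' is obtained in the random-reversible setting of theorem \ref{thm:GammaDistribution}. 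The chain of steps is sound as a heuristic --- the identity $n_s\approx(g+h)/2$ from double-counting intersection points, the size-biasing relation between the point-period distribution and the orbit-length distribution, and the inference that $\cD\to\cR$ corresponds to exponentially distributed orbit lengths with mean $\kappa=2\#\bA/(g+h)$ --- and you correctly identify the two places where it is not rigorous: (i) convergence in distribution does not by itself give convergence of the mean (one needs uniform integrability to justify $\frac{1}{\#\bA}\sum T(z)\sim 2\kappa$), and (ii) even granting $n_a=o(n)$ at the level of orbit counts, promoting this to point counts requires ruling out anomalously long asymmetric orbits, which is exactly what one cannot do without knowing a priori that the dynamics behaves statistically like a random reversible map. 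Your final paragraph is the right conclusion: the missing uniformity is precisely why the paper records this as an observation rather than a theorem, and your conditional argument is a useful theoretical complement to the paper's purely empirical evidence, though it does not (and does not claim to) constitute a proof.
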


\begin{figure}[!h]
        \centering
        \includegraphics[scale=0.55]{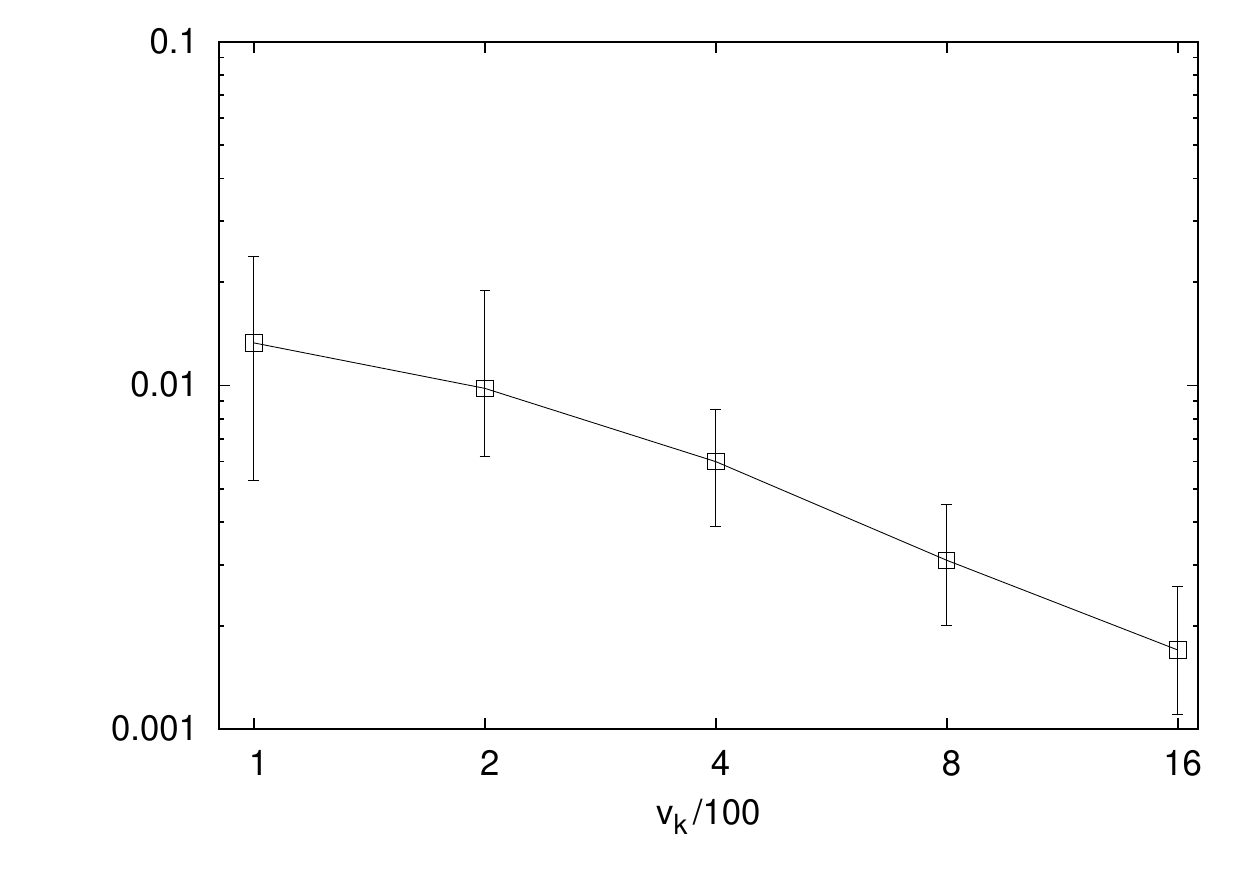} 
        \caption{The quantity $1-\#S/\#\bA$ as $v_k$ becomes large. 
        The line shows the average value of the relevant ratio over all experiments performed 
        (including over $m=1,2,4,8,16,32$---unlike the distribution $\cD$ of figure \ref{fig:Distribution}, this ratio does not vary significantly with $m$): the error bars indicate its minimum and maximum value.
        The axes are displayed with a logarithmic scale.}
        \label{fig:symm_points}
\end{figure}


\chapter*{Concluding remarks}

In this thesis we investigated the dynamics of the discretised rotation (\ref{def:F})
in a new parameter regime: the limit $\lambda\to 0$.
A natural embedding of the lattice $\Z^2$ into the plane
transformed the discretised rotation into a perturbation $\F$ of an integrable,
piecewise-affine Hamiltonian system,
which was found to be nonlinear.
Thus we were lead to consider $\F$ as a discrete near-integrable system.

In this setting, the perturbation mechanism was no longer that of round-off,
but of linked strip maps: 
in each of the polygonal annuli defined by the polygon classes,
indexed by the sums of squares $e\in\cE$,
the dynamics of $\F$ are similar to those of a polygonal outer billiard.
This structure introduced a non-Archimedean character to the behaviour of $\F$.
We defined a symbolic coding associated with the strip map,
built out of a sequence of congruences modulo two-dimensional lattices,
which, for sufficiently small $\lambda$, induces a lattice structure on the return map $\Phi$.

This lattice structure removes $\lambda$ from its role as the perturbation parameter.
Instead, a change of coordinates allowed us to consider $\Phi$
as a sequence of discretised twist maps on the cylinder:
one for each polygon class.
In this setting, the limit of vanishing discretisation, 
and hence of vanishing perturbation, corresponds to the limit $e\to\infty$.

The twist $K(e)$ also varies between polygon classes.
In the case where the twist vanishes in the limit, i.e., $K(e)\to 0$, we found discrete resonances,
whose behaviour depends on the local rotation number.
By contrast, for the sequence of perfect squares, where $K(e)\to 4$, we found that the limiting period statistics 
coincide with those of a random reversible map on a discrete phase space.

Finally, we discuss open questions and avenues for further investigation.

\medskip

In the introduction to this work, we outlined the difficulty in reproducing 
the features of Hamiltonian perturbation theory in a discrete phase space.
At the outset, figure \ref{fig:PolygonalOrbits} suggested that such features
could be found for the map $F$ in the limit $\lambda\to 0$,
when considered relative to the correct `unperturbed' dynamics.
This proposal was later reinforced by phase plots of the return map, 
such as figures \ref{fig:resonance}(b) and \ref{fig:PrimaryResonances}.

We identified the minimal orbits, which close after just one revolution around the origin,
as the analogue of KAM curves: the minimal orbits are the simplest orbits,
which retain the natural recurrence time of the underlying dynamics (rather than some larger multiple thereof),
and are confined to convex invariant polygons, each of which is a small perturbation of an invariant curve of the integrable system.
However, like all orbits of $F$ encountered in this study, the minimal orbits are periodic,
and do not disconnect the space like their quasi-periodic counterparts on the continuum.

The apparent island chains we observe are more complex.
Although orbits cluster in the $\theta$-direction according to the local rotation number,
preliminary numerical experiments suggest that the organisation within each island
does not conform to the phenomenology of smooth Hamiltonian perturbation theory.
In particular, islands are not necessarily invariant: 
orbits can wander between one island and the next---see figure \ref{fig:ResonancesCloseUp}.

\begin{figure}[h] 
        \centering
        \includegraphics[scale=0.45]{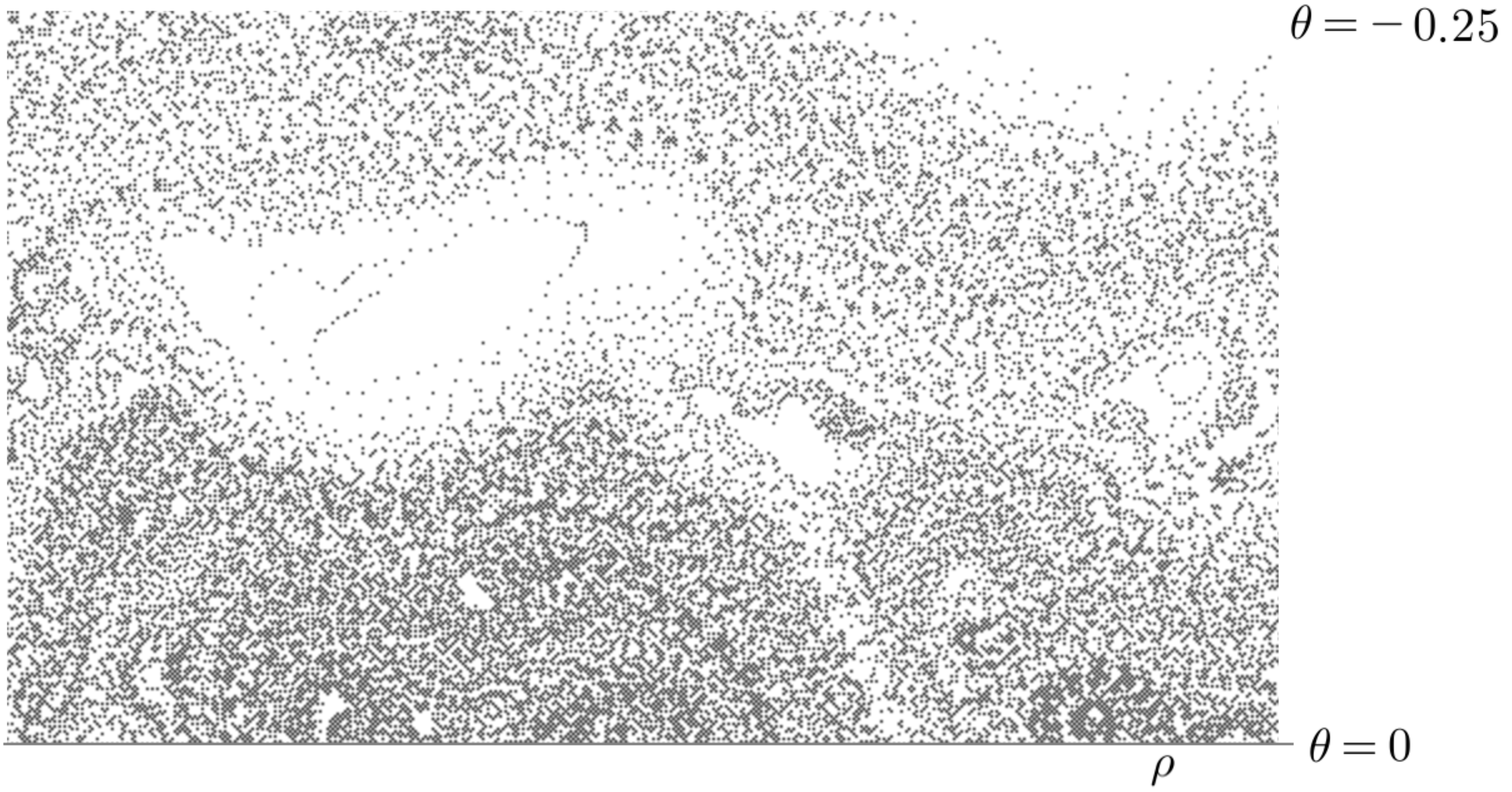} \\
        \caption{\hl{A close-up of a primary resonance for $e=160234\approx400.3^2$ and $\lambda\approx 3.5\times 10^{-9}$. 
        The plot shows part of a large number of symmetric orbits of $\Phi$ in the cylindrical coordinates $(\theta,\rho)\in\bbS^1 \times \R$.
        There are approximately $560$ lattice sites per unit length in each of the coordinate directions.
        We see that there are parts of the cylinder which are filled with symmetric orbits, 
        and others which are devoid of them, but no sharp boundary between the two. } }
        \label{fig:ResonancesCloseUp}
\end{figure}


To explore this new phenomenon, further extensive numerical investigation is required,
and a simplification of the model may prove necessary.
The character of the perturbation which distinguishes the return map $\Phi$ 
from the unperturbed dynamics is still unclear: the perturbation could be probabilistic in nature,
and hence best modelled by a random reversible perturbation;
could have a complicated but deterministic structure; or could be a mixture of the two.

One of the few similar systems found in the literature is 
a toy model of a discrete twist map, investigated numerically in \cite{ZhangVivaldi}.
In that case, a suitably chosen one-dimensional surface of section revealed
an interval exchange transformation on an infinite sequence of intervals.
We do not expect our dynamics to be so simple.
However, a first step in studying the perturbation could be to define a 
suitable (quasi-one-dimensional) surface of section for the $\Phi$ dynamics.

\appendix

\chapter{Tedious proofs} \label{chap:Appendix}

\begin{prop_nonumber}[Proposition \ref{prop:octagon_orbits}, page \pageref{prop:octagon_orbits}]
For all $\lambda>0$ and $x\in\N$ in the range
\begin{equation} \label{eq:xrange1_II}
 \frac{1}{2\lambda}+2 \leq x\leq \frac{1}{\lambda}-1,
\end{equation}
the orbit of $z=(x,x)$ under $F$ is symmetric and minimal if and only if
 $$ 2x + \Bceil{\frac{1}{\lambda}} - 2\left\fl{ \frac{1}{\lambda}\right} \equiv 2 \mod{3}. $$
\end{prop_nonumber}

\begin{proof}
As in the proof of proposition \ref{prop:square_orbits}, page \pageref{prop:square_orbits}, 
we begin by considering the fourth iterates of $F$.
From equation (\ref{eq:F4}), we have that
\begin{equation} \label{eq:box1}
  F^4(x,y) = (x+1,y-1) 
  			\hskip 40pt 
  			0\leq x \leq \frac{1}{\lambda}-1, \quad 
  			1\leq y \leq \frac{1}{\lambda}.
\end{equation}
A similar calculation reveals another set of lattice points 
on which the fourth iterates of $F$ produce a uniform translation:
\begin{equation} \label{eq:box2}
  F^4(x,y) = (x+1,y-3) 
  			\hskip 40pt 
  			\frac{1}{\lambda} \leq x < \frac{2}{\lambda}-1, \quad 
  			3\leq y \leq \frac{1}{\lambda}+1.
\end{equation}
We use these two regimes of uniform behaviour to trace symmetric orbits from $\Fix{G}$ to $\Fix{H}$,
taking care at the boundaries between regimes.
  
We consider the orbit of $(x,x)$ with $x$ in the range (\ref{eq:xrange1_II}).
For any natural number $m$ satisfying
 $$ x + (m-1) \leq \frac{1}{\lambda}-1, \hskip 40pt x - (m-1) \geq 1, $$
the behaviour (\ref{eq:box1}) gives us that
 $$ F^{4m}(x,x) = (x+m,x-m). $$
Hence we let $m=\fl{1/\lambda}-x$, so that $F^{4m}(x,x)$ is given by
 $$ F^{4m}(x,x) = \left( \Bfl{\frac{1}{\lambda}}, 2x - \Bfl{\frac{1}{\lambda}} \right), $$
and by the range (\ref{eq:xrange1_II}) of $x$:
 $$ 4 \leq 2x - \Bfl{\frac{1}{\lambda}} \leq \frac{1}{\lambda}-1. $$
  
There are now two cases to consider.
If $\fl{1/\lambda}=1/\lambda$, i.e., if $1/\lambda\in\N$, then $F^{4m}(x,x)$ belongs to the set described by (\ref{eq:box2}),
in which case
 $$ F^{4(m+1)}(x,x) = \left(\Bfl{\frac{1}{\lambda}}+1,2x - \Bfl{\frac{1}{\lambda}}-3 \right). $$
If not, then this point lies on the boundary between the regimes (\ref{eq:box1}) and (\ref{eq:box2}),
so we must calculate the behaviour of $F^4$ explicitly.
We find
 $$ F^{4(m+1)}(x,x) = \left(\Bfl{\frac{1}{\lambda}}+1,2x - \Bfl{\frac{1}{\lambda}}-2 \right). $$
We summarise these two cases by writing
 $$ F^{4(m+1)}(x,x) = \left(\Bfl{\frac{1}{\lambda}}+1,2x-3 +\Bceil{\frac{1}{\lambda}} - 2\Bfl{\frac{1}{\lambda}} \right). $$
  
Now the point $F^{4(m+1)}(x,x)$ is described by (\ref{eq:box2}), and for any natural number $n$ satisfying
 $$ \Bfl{\frac{1}{\lambda}}+1 + (n-1) < \frac{2}{\lambda}-1, 
 			\hskip 40pt 
 			2x-3 +\Bceil{\frac{1}{\lambda}} - 2\Bfl{\frac{1}{\lambda}} -3(n-1) \geq 3, $$
we have
 $$ F^{4(m+n+1)}(x,x) = \left( \Bfl{\frac{1}{\lambda}}+n+1,
	2x +\Bceil{\frac{1}{\lambda}} - 2\Bfl{\frac{1}{\lambda}}-3(n+1) \right). $$
Hence we take
 $$ n = \left\lfloor \frac{1}{3} \left( 2x -3 +\Bceil{\frac{1}{\lambda}} - 2\Bfl{\frac{1}{\lambda}} \right) \right\rfloor, $$
so that the $y$-coordinate of $F^{4(m+n+1)}(x,x)$ is given by
 $$ 2x +\Bceil{\frac{1}{\lambda}} - 2\Bfl{\frac{1}{\lambda}}-3(n+1) = \delta, $$
where $\delta\in\{0,1,2\}$ is the residue of $2x +\ceil{1/\lambda} - 2\fl{1/\lambda}$ modulo $3$.
 
The point $F^{4(m+n+1)}(x,x)$ lies just above the positive $x$-axis. We apply $F^3$, to move the orbit close to the negative $y$-axis:
 $$ F^{4(m+n+1)+3}(x,x) = \left( \delta - 3,
	\fl{\lambda(1-\delta)} -\left( \Bfl{\frac{1}{\lambda}}+n+1 \right) \right). $$
The orbit is symmetric and minimal if and only if this point lies in $\Fix{H}$.
In this case, the relevant segment of $\Fix{H}$ is given by
 $$ \left\{(x,y)\in\Z^2\,: \; x=-1, \; \fl{\lambda y}=-2 \right\}, $$
so the orbit is symmetric and minimal if and only if $\delta-3=-1$, or
 $$ 2x +\Bceil{\frac{1}{\lambda}} - 2\Bfl{\frac{1}{\lambda}} \equiv 2 \mod{3}. $$
\end{proof}

\medskip

\begin{lemma_nonumber}[Lemma \ref{lemma:cP_variation}, page \pageref{lemma:cP_variation}]
Let $w\in\R^2$ and let $z=R_{\lambda}(w)$ be the lattice point in $\lZ$ associated with $w$. 
Then as $\lambda\to 0$:
 $$ \forall \xi\in \Ot(z): \hskip 20pt |\cP(\xi)-\cP(w)| = O(\lambda). $$
\end{lemma_nonumber}

\begin{proof}
Let $r>0$ and $A(r,\lambda)$ be as in equation (\ref{eq:A}).
We begin by bounding the change in $\cP$ under $\F^4$ in the set $A(r,\lambda)$.
By lemma \ref{lemma:Lambda} (page \pageref{lemma:Lambda}), we have that for sufficiently small $\lambda$, 
all non-zero $z\in A(r,\lambda)\setminus\Lambda$ satisfy $\F^4(z) = z + \lambda\bfw(z)$.
For such $z$, there is no change in $\cP$ under $\F^4$:
 $$ \cP(\F^4(z)) - \cP(z) = 0 \hskip 40pt z\in A(r,\lambda)\setminus\Lambda. $$
If $z\in A(r,\lambda)\cap\Lambda$, then $\F^4(z) = z + \bfv(z)$,
where an explicit expression for $\bfv$ is given in equation (\ref{eq:v_abcd}), page \pageref{eq:v_abcd}.

For any $z,v\in\R^2$ we have
\begin{align}
 \left| \cP(z+v)- \cP(z) \right| &= \left| \; \int_{[z,z+v]} \nabla\cP(\xi)\cdot d\mathbf{\xi} \; \right| \nonumber \\
    &\leq \max_{\xi\in[z,z+v]} \left( \|\nabla\cP(\xi)\| \right) \, \|v\|, \label{eq:Delta_cP_bound}
\end{align}
where $[z,z+v]$ denotes the line segment joining the points $z$ and $z+v$, $d\mathbf{\xi}$ is 
the line element tangent to this segment, and $\nabla\cP$ is the gradient of $\cP$, given by
 $$ \nabla\cP(x,y) = (2\fl{x} +1,2\fl{y} +1) \hskip 40pt 
(x,y)\in \R^2\setminus \Delta. $$
If $z=\lambda(x,y)$ and $v=\bfv(z)$ is the discrete vector field, then for sufficiently small $\lambda$, 
equations (\ref{eq:v_abcd}) and (\ref{eq:bd_ac_sets}) can be combined to give
\begin{align*}
 \| \bfv(z) \| 
 &\leq \lambda \sqrt{(|2\fl{\lambda y}+1| +2)^2 + (|2\fl{\lambda x}+1| +1)^2} \\
 &\leq \lambda \sqrt{(2|\fl{\lambda y}|+3)^2 + (2|\fl{\lambda x}|+2)^2} \\
 &< \lambda \sqrt{(2|\lambda y|+5)^2 + (2|\lambda x|+4)^2} \\
 &\leq \lambda\sqrt{2} (2\|z\|_{\infty} + 5).
\end{align*}
This inequality ensures that the length of the line segment $[z,z+v]$ goes to zero with $\lambda$, 
so that for sufficiently small $\lambda$, the piecewise-constant form of the gradient $\nabla\cP$ gives
\begin{align*}
 \max_{\xi\in[z,z+v]} (\|\nabla\cP(\xi)\|) 
 &\leq \sqrt{(|2\fl{\lambda x}+1| +2)^2 + (|2\fl{\lambda y}+1| +2)^2} \\
 &\leq \sqrt{(2|\fl{\lambda x}|+3)^2 + (2|\fl{\lambda y}|+3)^2} \\
 &\leq \sqrt{(2|\lambda x|+5)^2 + (2|\lambda y|+5)^2} \\
 &\leq \sqrt{2} (2\|z\|_{\infty} + 5).
\end{align*}
Substituting these into the inequality (\ref{eq:Delta_cP_bound}), we have that for sufficiently small 
$\lambda$:
 $$ \left| \cP(\F^4(z))- \cP(z) \right| = \left|\cP(z+\bfv(z))- \cP(z) \right| 
 \leq 2\lambda (2\|z\|_{\infty} + 5)^2. $$

Similarly we consider the change in $\cP$ under $\F$.
If $z=\lambda(x,y)$, then by the same sort of analysis, we have that for sufficiently small $\lambda$:
\begin{align*}
 \left| \cP(\F(z))- \cP(z) \right|  &= \left| P(\lambda(\fl{\lambda x} -y)) - P(\lambda y) \right|\\
&= \left| P(\lambda(y-\fl{\lambda x})) - P(\lambda y) \right| \\
&\leq \lambda |\fl{\lambda x}| \, (|P^{\prime}(\lambda y)|+2) \\
&\leq \lambda |\fl{\lambda x}| \, (2|\fl{\lambda y}| +3) \\
&\leq \lambda (|\lambda x| +1) \, (2|\lambda y| +5) \\
&\leq 2\lambda (\|z\|_{\infty}+3)^2,
\end{align*}
where $P$ is the piecewise-affine function defined in equation (\ref{def:P}). 
(We refer the reader to page \pageref{thm:Polygons} for the proof that $P$ is even.)

It follows that for any orbit contained in $A(r,\lambda)$, if $k\in\Z_{\geq0}$ and $0\leq l<4$, then
\begin{equation} \label{eq:Delta_cP_bound_II}
 \left| \cP(\F^{4k+l}(z)) - \cP(z) \right| \leq 2\lambda(m+l) (2\|z\|_{\infty} + 5)^2,
\end{equation}
where $m$ is the number of transition points in the orbit of $z$ under $\F^4$:
 $$ m = \# \left( \{ z, \F^4(z), \dots, \F^{4k}(z) \} \cap \Lambda \right). $$
Similar expressions hold in backwards time, for iterates of $\F^{-4}$ and $\F^{-1}$.
For fixed $\lambda$, this estimate bounds the perturbed orbit of a point $z\in\lZ$ to a polygonal annulus 
around the polygon $\Pi(z)$, which grows in thickness as the number of transition points in the orbit increases. 

By construction, the return orbit of $z$ under $\F^4$ contains exactly one 
transition point for every time the orbit passes from one of the boxes $B_{m,n}$ to another.
Furthermore, the fourth iterates of $\F$ move parallel to the flow within each box,
so that, per revolution, there is one transition point per box that the return orbit intersects.
This number is (essentially) equal to the number of sides of $\Pi(w)$, and does not scale with $\lambda$.
Hence, we have
 $$ \left| \cP(\xi) - \cP(z) \right| =O(\lambda) $$
for all $\xi\in\Ot(z)$.
\end{proof}

\medskip

\begin{lemma_nonumber}[Lemma \ref{lemma:epsilon_bounds}, page \pageref{lemma:epsilon_bounds}]
For $b\in[0,1)$ and $v_k\in\N$, let $v_1=\fl{(v_k+b)/\sqrt{2}}$.
Then the following limit exists
\begin{equation} \label{eq:epsilon(b)_II}
 \epsilon(b) = \lim_{v_k\rightarrow\infty} \left( v_k^{3/2} \; \sum_{n=v_1+1}^{v_k-1} 
      \int_{n-1/2}^{n+1/2} \frac{\sqrt{(v_k+b)^2 - n^2}}{n^2} - \frac{\sqrt{(v_k+b)^2 - x^2}}{x^2} \, dx \right),
\end{equation}
and satisfies
\begin{equation} \label{eq:epsilon_bounds_II}
 \frac{1}{36} \, \frac{1}{\sqrt{3(b+1)}} \leq \epsilon(b) \leq \frac{1}{12} \, \frac{1}{\sqrt{b+1}} \, \frac{2b+3}{2b+2}.
\end{equation}
\end{lemma_nonumber}

\begin{proof}
For $n$ in the range $v_1+1 \leq n \leq v_k-1$, let
\begin{equation*} 
 I_n(v_k,b) = \int_{n-1/2}^{n+1/2} \frac{\sqrt{(v_k+b)^2-n^2}}{n^2} - \frac{\sqrt{(v_k+b)^2-x^2}}{x^2} \, dx.
\end{equation*}
Using the substitution $y=x-n$, we can write $I_n$ as
 $$ I_n = \frac{\sqrt{(v_k+b)^2-n^2}}{n^2} \, \int_{-1/2}^{1/2} 1 - \left( 1 + \frac{y}{n} \right)^{-2}
      \sqrt{1 - \frac{2ny+y^2}{(v_k+b)^2-n^2}} \, dy. $$
To simplify notation, we define the sequence
 $$ A_n(v_k,b) = \frac{n}{(v_k+b)^2-n^2}  \hskip 40pt v_1+1 \leq n \leq v_k-1, $$
which is increasing in $n$ and bounded according to
\begin{equation} \label{eq:An_range}
 \frac{\sqrt{2}}{v_k+b} <  A_n < \frac{1}{2}.
\end{equation}
Then $I_n$ becomes
\begin{equation} \label{eq:I_n_A_n}
I_n = \frac{1}{n^{3/2}\sqrt{A_n}} \, 
 \int_{-1/2}^{1/2} 1 - \left( 1 + \frac{y}{n} \right)^{-2} \sqrt{1 - 2A_ny -\frac{A_n y^2}{n}} \, dy.
\end{equation}

We expand the integrand of $I_n$ in powers of $1/n$, 
retaining any terms which are order $1/n$ or larger.
Firstly, expanding the inverse power, we have
\begin{equation} \label{eq:I_n_expansion_I}
 \left( 1 + \frac{y}{n} \right)^{-2} = 1 - \frac{2y}{n} + O\left(\frac{1}{n^2}\right) \hskip 40pt y\in[-1/2,1/2]
\end{equation}
as $n\rightarrow\infty$.
Then we tackle the square root by writing
 $$ \sqrt{1 - 2A_ny -\frac{A_n y^2}{n}} = \sqrt{1 - 2A_n y} \, \sqrt{1 - \frac{A_n y^2}{n(1 - 2A_n y)}}. $$
The second of these factors can be expanded as follows:
\begin{equation} \label{eq:I_n_expansion_III}
 \sqrt{1 - \frac{A_n y^2}{n(1 - 2A_n y)}} = 1 - \frac{A_n y^2}{2n(1 - 2A_n y)} + O\left(\frac{1}{n^2}\right) \hskip 40pt y\in[-1/2,1/2].
\end{equation}
The first factor, however, cannot be expanded in powers of $1/n$.
Instead, we use Taylor's Theorem (see, for example, \cite[Theorem 4.82]{Burkill}), applied to $f(x)=\sqrt{1+x}$ at $x=0$, 
to obtain an explicit remainder term. This gives
\begin{equation} \label{eq:I_n_expansion_II}
 \sqrt{1 - 2A_n y}  = 1 - A_n y - R_2(y) \hskip 40pt y\in[-1/2,1/2],
\end{equation}
where $R_2$ is given by
\begin{equation} \label{eq:R2}
 R_2(y) = \frac{A_n^2 y^2}{2(1 - 2\theta(y) A_n y)^{3/2}} \hskip 40pt \theta(y)\in(0,1).
\end{equation}
Thus, combining the expansions (\ref{eq:I_n_expansion_I}), (\ref{eq:I_n_expansion_III}), (\ref{eq:I_n_expansion_II}) and simplifying,
the integrand of $I_n$ is given by
\begin{align}
 &1 - \left( 1 + \frac{y}{n} \right)^{-2} \sqrt{1 - 2A_ny -\frac{A_n y^2}{n}} \nonumber \\
 &= A_n y + \frac{2y}{n} - \frac{3A_n y^2}{2n} + \frac{A_n^2 y^3}{2n(1-2A_n y)} 
 		+ R_2(y) \left( 1 + O\left(\frac{1}{n}\right)\right) + O\left(\frac{1}{n^2}\right). \label{eq:I_n_integrand}
\end{align}

Now we integrate the expression (\ref{eq:I_n_integrand}) over $y$.
The terms which are linear in $y$ integrate to zero:
 $$ \int_{-1/2}^{1/2} A_n y + \frac{2y}{n} \, dy = 0, $$
whereas the quadratic term integrates to give
 $$ \int_{-1/2}^{1/2} - \frac{3A_n y^2}{2n} \, dy = -\frac{A_n}{8n}. $$
Using the definition (\ref{eq:R2}) of $R_2(y)$, the remaining terms in (\ref{eq:I_n_integrand}) can be regrouped to give
 $$\int_{-1/2}^{1/2} \frac{A_n^2 y^3}{2n(1-2A_n y)} + R_2(y) \left( 1 + O\left(\frac{1}{n}\right)\right) \, dy 
  = \left( \int_{-1/2}^{1/2} R_2(y) \, dy \right) \left( 1 + O\left(\frac{1}{n}\right)\right). $$
Thus, by (\ref{eq:I_n_A_n}), $I_n$ is given by
\begin{equation} \label{eq:I_n_R}
 I_n = -\frac{\sqrt{A_n}}{8n^{5/2}} 
	  +\frac{1}{n^{3/2}\sqrt{A_n}} \left( \int_{-1/2}^{1/2} R_2(y) \, dy \right) \left( 1 + O\left(\frac{1}{n}\right)\right) +
 		O\left(\frac{1}{n^3}\right)
\end{equation}
as $n\rightarrow\infty$. 
(In the final error term, we have used the fact that $(n^{3/2}\sqrt{A_n})^{-1}=O(1/n)$---cf. equation (\ref{eq:An_range}).)

We consider the behaviour of each term in $I_n$ as we sum over $n$.
We have already seen in the proof of proposition \ref{prop:Tprime_asymptotics}, equation (\ref{eq:Tprime_bound2}), 
that the sum over the first term in (\ref{eq:I_n_R}) behaves like
\begin{equation*} 
 \sum_{n=v_1+1}^{v_k-1} \, \frac{\sqrt{A_n}}{n^{5/2}} = \sum_{n=v_1+1}^{v_k-1} \, \frac{1}{n^2\sqrt{(v_k+b)^2-n^2}}
 		= O \left(\frac{1}{v_k^2}\right)
\end{equation*}
as $v_k\to\infty$.
Thus this term does not contribute:
noting that the sum is over order $n$ (i.e., order $v_k$) terms, equation (\ref{eq:I_n_R}) gives us that
\begin{equation} \label{eq:I_n_R_II}
 \sum_{n=v_1+1}^{v_k-1} I_n 
 = \sum_{n=v_1+1}^{v_k-1} \left[ \frac{1}{n^{3/2}\sqrt{A_n}}
 		 \left( \int_{-1/2}^{1/2} R_2(y) \, dy \right)\right] \left( 1 + O\left(\frac{1}{v_k}\right)\right) 
 		 + O \left(\frac{1}{v_k^2}\right),
\end{equation}
so that the only relevant contribution comes from the $R_2$ term.

\medskip

We bound the following integral over $y$:
 $$ \frac{\sqrt{2}}{18\sqrt{3}} = \int_{-1/2}^{1/2} \frac{y^2}{(3/2)^{3/2}} \, dy
  < \int_{-1/2}^{1/2} \frac{y^2}{(1 - 2\theta(y) A_n y)^{3/2}} \, dy 
  < \int_{-1/2}^{1/2} \frac{y^2}{(1/2)^{3/2}} \, dy = \frac{\sqrt{2}}{6}, $$
so that by the definition (\ref{eq:R2}) of $R_2$:
\begin{equation} \label{eq:R2_sum_bounds}
 \frac{\sqrt{2}}{36\sqrt{3}} \, \left(\frac{A_n}{n}\right)^{3/2} 
	< \frac{1}{n^{3/2}\sqrt{A_n}} \left( \int_{-1/2}^{1/2} R_2(y) \, dy \right)
 		 < \frac{\sqrt{2}}{12} \, \left(\frac{A_n}{n}\right)^{3/2}.
\end{equation}
Now we consider the sum
\begin{align*}
 \sum_{n=v_1+1}^{v_k-1} \, \left(\frac{A_n}{n}\right)^{3/2}
  &= \sum_{n=v_1+1}^{v_k-1} \, \left(\frac{1}{(v_k+b)^2-n^2}\right)^{3/2}.
\end{align*}
The summand is increasing in $n$, so we can bound the sum according to
\begin{align*}
 \sum_{n=v_1+1}^{v_k-1} \, \left(\frac{A_n}{n}\right)^{3/2} 
 &\geq \int_{v_1}^{v_k-1} \, \left(\frac{1}{(v_k+b)^2-x^2}\right)^{3/2} \, dx \\
 &= \frac{1}{(v_k+b)^2} \left[ \frac{x}{\sqrt{(v_k+b)^2-x^2}} \right]_{v_1}^{v_k-1} \\
 &= \frac{1}{(v_k+b)^2} \left( \frac{v_k-1}{\sqrt{(v_k+b)^2-(v_k-1)^2}} - \frac{v_1}{\sqrt{(v_k+b)^2-v_1^2}}\right)\\
  &= \frac{1}{(v_k+b)^2} \left( \frac{v_k-1}{\sqrt{2v_k(b+1)}} + O\left(1\right) \right)\\
  &= \frac{1}{v_k^{3/2}} \, \frac{1}{\sqrt{2(b+1)}} + O\left(\frac{1}{v_k^2}\right).
\end{align*}
Combining this with (\ref{eq:R2_sum_bounds}), we have that
 $$ \liminf_{v_k\to\infty} 
	\left[ \sum_{n=v_1+1}^{v_k-1} \, \frac{v_k^{3/2}}{n^{3/2}\sqrt{A_n}} \left( \int_{-1/2}^{1/2} R_2(y) \, dy \right) \right]
	\geq \frac{1}{36\sqrt{3}} \, \frac{1}{\sqrt{b+1}}. $$
Similarly
\begin{align*}
 \sum_{n=v_1+1}^{v_k-1} \, \left(\frac{A_n}{n}\right)^{3/2} 
 & \leq \int_{v_1+1}^{v_k-1} \, \left(\frac{1}{(v_k+b)^2-x^2}\right)^{3/2} \, dx +\left(\frac{A_{v_k-1}}{v_k-1}\right)^{3/2}  \\
 & = \frac{1}{v_k^{3/2}} \left( \frac{1}{\sqrt{2(b+1)}} + \frac{1}{(2(b+1))^{3/2}} \right) + O\left(\frac{1}{v_k^2}\right) \\
 & = \frac{1}{v_k^{3/2}} \, \frac{1}{\sqrt{2(b+1)}} \, \frac{2b+3}{2b+2} + O\left(\frac{1}{v_k^2}\right),
\end{align*}
which combines with (\ref{eq:R2_sum_bounds}) to give
\begin{equation} \label{eq:limsup}
 \limsup_{v_k\to\infty} 
	\left[ \sum_{n=v_1+1}^{v_k-1} \, \frac{v_k^{3/2}}{n^{3/2}\sqrt{A_n}} \left( \int_{-1/2}^{1/2} R_2(y) \, dy \right) \right]
	\leq \frac{1}{12} \, \frac{1}{\sqrt{b+1}} \, \frac{2b+3}{2b+2}.
\end{equation}
Equation (\ref{eq:I_n_R_II}) gives us that the same limit inferior and limit superior apply to the sum over $v_k^{3/2} I_n$:
thus if the limit $\epsilon(b)$ exists, then it must satisfy (\ref{eq:epsilon_bounds_II}).

\medskip
 
It remains to show the convergence of the sum over the remainder term $R_2$.
This is not straightforward since both the bounds of the sum and the terms themselves vary as $v_k\to\infty$:
although all terms are positive and the number of terms increases with $v_k$, the size of each term also varies with $v_k$.

To get an explicit expression for $R_2$, we use the full Taylor's series representation \cite[Theorem 5.8]{Burkill}, whereby
\begin{equation} \label{eq:R2_II}
 R_2(y) = \sum_{j=2}^{\infty} \binom{1/2}{j} (-2A_n y)^j,
\end{equation}
and the binomial coefficients are defined as follows:
 $$ \binom{1/2}{j} = \prod_{k=1}^j \frac{1/2-(j-k)}{k}. $$
Now the sum under consideration is given by
\begin{align*}
& \sum_{n=v_1+1}^{v_k-1} \left[ \frac{v_k^{3/2}}{n^{3/2}\sqrt{A_n}}
 	\left( \int_{-1/2}^{1/2} R_2(y) \, dy \right)\right] \\
 &= \sum_{n=v_1+1}^{v_k-1} \left[ \frac{v_k^{3/2}}{n^{3/2}\sqrt{A_n}} \left( \int_{-1/2}^{1/2} \,
	\sum_{j=2}^{\infty} \left[\binom{1/2}{j} (-2A_n y)^j \right] dy \right) \right] \\
 &= \sum_{n=v_1+1}^{v_k-1} \left[ \frac{v_k^{3/2}}{n^{3/2}\sqrt{A_n}} \, \sum_{j=1}^{\infty} 
	\left[ \frac{1}{2j+1} \binom{1/2}{2j} A_n^{2j} \right] \right]  \\
 &= \sum_{n=v_1+1}^{v_k-1} \left[ \frac{v_k^{3/2}}{n^{3/2}} \, \sum_{j=1}^{\infty} 
	\left[ \frac{1}{2j+1} \binom{1/2}{2j} A_n^{2j-1/2}  \right] \right]. 
\end{align*}
Note that all terms are positive, so the series in $j$ converges absolutely.
Furthermore, the sum over $n$ is finite. Thus we may exchange the order of summation to obtain
\begin{equation} \label{eq:I_n_IV}
 \sum_{j=1}^{\infty} \left[ \frac{1}{2j+1} \binom{1/2}{2j} \sum_{n=v_1+1}^{v_k-1} \left[ \left(\frac{v_k}{n}\right)^{3/2}  
	 A_n^{2j-1/2} \right]\right].
\end{equation}

To prove that this sum converges, we let
 $$ S_j(v_k) = \sum_{n=v_1+1}^{v_k-1} \left(\frac{v_k}{n}\right)^{3/2} A_n^{2j-1/2} \hskip 40pt j\in\N, \; v_k\in\N, $$
and show first that $S_j(v_k)$ converges as $v_k\to\infty$ for all values of $j$.
We do this by showing that the sequence is Cauchy, i.e., that for all $\delta>0$ there exists $N\in\N$ such that
 $$ v_k>N, \; l\in\N \quad \Rightarrow \quad |S_j(v_k+l)-S_j(v_k)|<\delta. $$
We begin by replacing the index $n$ by $m=v_k-n$, which gives
\begin{align*}
 S_j(v_k) &= \sum_{n=v_1+1}^{v_k-1} \left(\frac{v_k}{n}\right)^{3/2} \left(\frac{n}{(v_k+b)^2-n^2}\right)^{2j-1/2} \\
 &= \sum_{m=1}^{v_k-v_1-1} \, \left(\frac{v_k}{v_k-m}\right)^{3/2} \, \left(\frac{v_k-m}{(v_k+b)^2 - (v_k-m)^2}\right)^{2j-1/2} \\
 &= \sum_{m=1}^{v_k-v_1-1} \, \left(1 + \frac{m}{v_k-m}\right)^{3/2} \, \left(\frac{v_k-m}{(b+m)(2v_k+b-m)}\right)^{2j-1/2}.
\end{align*}
Recall the definition $v_1=\fl{(v_k+b)/\sqrt{2}}$ of $v_1$. For $l\in\N$, we write
 $$ v^{\prime}_1 = \Bfl{\frac{v_k+l+b}{\sqrt{2}}}. $$
Then the difference between terms in the sequence $S_j(v_k)$ behaves like
\begin{align*}
 & | S_j(v_k+l) - S_j(v_k) | \\
  & \hskip 20pt = \left| \sum_{m=1}^{v_k+l-v^{\prime}_1-1} \, \left(1 + \frac{m}{v_k+l-m}\right)^{3/2} \, 
	    \left(\frac{v_k+l-m}{(b+m)(2v_k+2l+b-m)}\right)^{2j-1/2} \right. \\
    & \hskip 90pt \left. - \sum_{m=1}^{v_k-v_1-1} \, 
	    \left(1 + \frac{m}{v_k-m}\right)^{3/2} \, \left(\frac{v_k-m}{(b+m)(2v_k+b-m)}\right)^{2j-1/2} \right| \\
    & \hskip 20pt = \left| \sum_{m=1}^{v_k-v_1-1} \, \left(1 + \frac{m}{v_k-m}\right)^{3/2} \, \left(\frac{v_k-m}{(b+m)(2v_k+b-m)}\right)^{2j-1/2} \right.\\
    & \hskip 45pt \times \left[ \left(1 - \frac{m}{v_k}\left(\frac{l}{v_k+l-m}\right)\right)^{3/2} \, \left(\frac{1+l/(v_k-m)}{1+2l/(2v_k+b-m)}\right)^{2j-1/2} - 1 \right] \\
    & \hskip 70pt \left. + \sum_{m=v_k-v_1}^{v_k+l-v^{\prime}_1-1} \, 
	    \left(1 + \frac{m}{v_k+l-m}\right)^{3/2} \, \left(\frac{v_k+l-m}{(b+m)(2v_k+2l+b-m)}\right)^{2j-1/2} \right| \\
    & \hskip 20pt = S_j(v_k) \, O\left(\frac{1}{v_k}\right) + O\left(\frac{1}{v_k^{2j-1/2}}\right)
\end{align*}
as $v_k\rightarrow\infty$.
We know that $S_j(v_k)$ is bounded, since by (\ref{eq:limsup}) and the above exchange of summation:
 $$ \limsup_{v_k\to\infty} 
	\left[ \sum_{j=1}^{\infty} \left[ \frac{1}{2j+1} \binom{1/2}{2j} \, S_j(v_k) \right] \right]
	\leq \frac{1}{12} \, \frac{1}{\sqrt{b+1}} \, \frac{2b+3}{2b+2}.$$
Thus the distance $| S_j(v_k+l) - S_j(v_k) |$ can be made arbitrarily small for sufficiently large $v_k$,
and the sequence $S_j(v_k)$ is Cauchy.

Furthermore, when we substitute this bound into (\ref{eq:I_n_IV}), we have
\begin{align*}
 & v_k^{3/2} \, \left| \, \sum_{n=v^{\prime}_1+1}^{v_k+l-1} \, I_n(v_k+l,b) - \sum_{n=v_1+1}^{v_k-1} \, I_n(v_k,b) \, \right| \\
 & = \sum_{j=2}^{\infty} \left[ \frac{-1}{2j+1} \binom{1/2}{2j} | S_j(v_k+l) - S_j(v_k) |\right] + O\left(\frac{1}{\alpha^{1/4}}\right) \\
  & = \sum_{j=2}^{\infty} \left[ \frac{-1}{2j+1} \binom{1/2}{2j} \left( S_j(v_k) \, O\left(\frac{1}{v_k}\right) + O\left(\frac{1}{v_k^{2j-1/2}}\right) \right) \right] + O\left(\frac{1}{\sqrt{v_k}}\right) \\
    & = v_k^{3/2} \, \sum_{n=v^{\prime}_1+1}^{v_k+l-1} \, I_n(v_k,b) \, O\left(\frac{1}{v_k}\right)
    + \sum_{j=2}^{\infty} \left[ \frac{-1}{2j+1} \binom{1/2}{2j} O\left(\frac{1}{v_k^{2j-1/2}}\right) \right] + O\left(\frac{1}{\sqrt{v_k}}\right) \\
    & = v_k^{3/2} \, \sum_{n=v^{\prime}_1+1}^{v_k+l-1} \, I_n(v_k,b) \, O\left(\frac{1}{v_k}\right)
     + O\left(\frac{1}{\sqrt{v_k}}\right) \rightarrow 0
\end{align*}
as $v_k\rightarrow0$. Again we know that the sum over $I_n$ is bounded, and the convergence of the limit $\epsilon(b)$ follows.
\end{proof}

Note that the bound can be chosen to be uniform in $b$.

\backmatter
\bibliographystyle{alpha}
\bibliography{Bibliography}

\end{document}